\newtheorem{Theorem}{Theorem}[section]
\newtheorem{Lemma}[Theorem]{Lemma}
\newtheorem*{Theorem*}{Theorem}
\newtheorem*{Corollary*}{Corollary}
\newtheorem{Corollary}[Theorem]{Corollary}
\newtheorem{Proposition}[Theorem]{Proposition}
\newtheorem{Theorem-Definition}[Theorem]{Theorem-Definition}
\theoremstyle{definition}
\newtheorem{Definition}[Theorem]{Definition}
\newtheorem{Example}[Theorem]{Example}
\newtheorem{Notation}[Theorem]{Notation}
\newtheorem{Remark}[Theorem]{Remark}}
\numberwithin{equation}{section}
\DeclareMathOperator{\init}{in}
\DeclareMathOperator{\Gr}{Gr}
\DeclareMathOperator{\GF}{GF}
\DeclareMathOperator{\trop}{Trop}
\DeclareMathOperator{\val}{\mathfrak v}
\DeclareMathOperator{\univ}{univ}
\definecolor{caribbeangreen}{rgb}{0.0, 0.8, 0.6}
\definecolor{capri}{rgb}{0.0, 0.75, 1.0}
\begin{document}

\newcommand{\arXivNumber}{2007.14972}

\renewcommand{\PaperNumber}{059}

\FirstPageHeading

\ShortArticleName{Families of Gr\"obner Degenerations, Grassmannians and Universal Cluster Algebras}

\ArticleName{Families of Gr\"obner Degenerations, Grassmannians\\ and Universal Cluster Algebras}

\Author{Lara BOSSINGER~$^{\rm a}$, Fatemeh MOHAMMADI~$^{\rm bc}$ and Alfredo N\'AJERA CH\'AVEZ~$^{\rm ad}$}

\AuthorNameForHeading{L.~Bossinger, F.~Mohammadi and A.~N\'ajera Ch\'avez}

\Address{$^{\rm a)}$~Instituto de Matem\'aticas UNAM Unidad Oaxaca,\\
\hphantom{$^{\rm a)}$}~Le\'on 2, altos, Oaxaca de Ju\'arez, Centro Hist\'orico, 68000 Oaxaca, Mexico}
\EmailD{\href{mailto:lara@im.unam.mx}{lara@im.unam.mx}, \href{mailto:najera@matem.unam.mx}{najera@matem.unam.mx}}
\URLaddressD{\url{https://www.matem.unam.mx/~lara/}, \url{https://www.matem.unam.mx/~najera/}}

\Address{$^{\rm b)}$~Department of Mathematics: Algebra and Geometry, Ghent University, 9000 Gent, Belgium}
\EmailD{\href{mailto:fatemeh.mohammadi@ugent.be}{fatemeh.mohammadi@ugent.be}}
\URLaddressD{\url{https://www.fatemehmohammadi.com}}

\Address{$^{\rm c)}$~Department of Mathematics and Statistics, UiT -- The Arctic University of Norway,\\
\hphantom{$^{\rm c)}$}~9037 Troms\o, Norway}

\Address{$^{\rm d)}$~Consejo Nacional de Ciencia y Tecnolog\'ia, Insurgentes Sur 1582,\\
\hphantom{$^{\rm d)}$}~Alcald\'ia Benito Ju\'arez, 03940 CDMX, Mexico}

\ArticleDates{Received October 21, 2020, in final form May 29, 2021; Published online June 10, 2021}

\Abstract{Let $V$ be the weighted projective variety defined by a weighted homogeneous ideal $J$ and $C$ a maximal cone in the Gr\"obner fan of $J$ with $m$ rays. We construct a~flat family over $\mathbb A^m$ that assembles the Gr\"obner degenerations of $V$ associated with all faces of $C$. This is a multi-parameter generalization of the classical one-parameter Gr\"obner degeneration associated to a weight. We explain how our family can be constructed from Kaveh--Manon's recent work on the classification of toric flat families over toric varieties: it is the pull-back of a toric family defined by a Rees algebra with base $X_C$ (the toric variety associated to $C$) along the universal torsor $\mathbb A^m \to X_C$. We apply this construction to the Grassmannians ${\rm Gr}(2,\mathbb C^n)$ with their Pl\"ucker embeddings and the Grassmannian ${\rm Gr}\big(3,\mathbb C^6\big)$ with its cluster embedding. In each case, there exists a unique maximal Gr\"obner cone whose associated initial ideal is the Stanley--Reisner ideal of the cluster complex. We show that the corresponding cluster algebra with universal coefficients arises as the algebra defining the flat family associated to this cone. Further, for ${\rm Gr}(2,\mathbb C^n)$ we show how Escobar--Harada's mutation of Newton--Okounkov bodies can be recovered as tropicalized cluster mutation.}

\Keywords{cluster algebras; Gr\"obner basis; Gr\"obner fan; Grassmannians; flat degenerations; Newton--Okounkov bodies}

\Classification{13F60; 14D06; 14M25; 14M15; 13P10}

\section{Introduction}

The theory of Gr\"obner fans, introduced by Mora and Robbiano~\cite{mora1988grobner}, is a popular tool in commutative algebra, and one of its modern applications is degenerating ideals into simpler ones such as monomial, binomial or toric ideals.
More precisely, let $\mathbb K$ be an algebraically closed field and $J\subseteq \mathbb{K}[x_1,\dots,x_n]$ a ${\bf d}$-weighted homogeneous ideal for some ${\bf d}\in \mathbb Z_{>0}^n$.
The Gr\"obner fan of $J$ is a complete fan in $\mathbb{R}^n$ whose elements represent weight vectors on the variables $x_1,\dots,x_n$.
Two weight vectors lie in the same open cone if and only if they give rise to the same initial ideal of $J$, see~\thref{def:init}.
The ideal $J$ defines a weighted projective variety $V$ inside the weighted projective space $\mathbb P({\bf d})$.
Every open cone in the Gr\"obner fan gives rise to a one-parameter flat family degenerating $V$ to the variety defined by the associated initial ideal of $J$.
This construction is realized by choosing a weight in the relative interior of the cone.

We modify the classical one-parameter construction as follows: for a maximal cone $C$ in the Gr\"obner fan of $J$ we choose integral generators of its rays $r_1,\dots,r_m$ and denote by ${\bf r}$ the $(m\times n)$-matrix whose rows are $r_1,\dots,r_m$.
For an element $f=\sum_{\alpha\in \mathbb Z_{\ge 0}^n} c_\alpha{\bf x}^\alpha$ of $J$,
we define $\mu(f)\in \mathbb Z^m$ as the vector whose $i$-th entry is $\min_{c_\alpha\not=0}\{r_i\cdot\alpha\}$.
Then the {\it lift} of $f$ is
\begin{displaymath}
 \tilde f:=f\big({\bf t}^{{\bf r}\cdot e_1}x_1,\dots, {\bf t}^{{\bf r}\cdot e_n}x_n\big) {\bf t}^{-\mu(f)},
\end{displaymath}
where ${\bf t}^{\bf a}$ for ${\bf a}\in \mathbb Z^m$ denotes the monomial $\prod_{i=1}^mt_i^{a_i}$.
The {\it lifted ideal} $\tilde J \subseteq \mathbb K[t_1,\dots,t_m][x_1,\dots,x_n]$ is the ideal generated by the lifts of all polynomials in $J$.
We prove that $\tilde J$ is generated by the lifts of elements of the reduced Gr\"obner basis for $J$ and $C$, see~\thref{prop:lifted generators}.
The lifts of these elements are independent of the choice of ${\bf r}$ and homogeneous with respect to the ${\bf d}$-grading on $x_i$'s, see~\thref{prop:change rays}.
Consequently, $\tilde J$ is independent of the choice of ${\bf r}$ and it defines a~variety inside $\mathbb P({\bf d})\times \mathbb A^m$. Our first main result is the following.

\begin{Theorem}[\thref{thm:family}]
Let $J$ be a weighted homogeneous ideal, $C$ a maximal cone in the Gr\"obner fan of $J$ and ${\bf r}$ an $(m\times n)$-matrix whose rows are integral ray generators of $C$.
Then the algebra $\tilde A:=\mathbb K[t_1,\dots,t_m][x_1,\dots,x_n]/\tilde J$ is a free $\mathbb K[t_1,\dots,t_m]$-algebra.
It defines a flat family
 \begin{displaymath}
 \begin{tikzcd}[ampersand replacement=\&,cramped]
 {\rm Proj}\big(\tilde A\big) \ar[r,hook]\ar[d,"\pi"] \& \mathbb P({\bf d})\times \mathbb A^m \ar[ld,twoheadrightarrow]\\
 \mathbb A^m \&
 \end{tikzcd}
 \end{displaymath}
 such that for every face $\tau$ of $C$ there exists ${\bf a}_\tau\in \mathbb A^m$ with fiber
 $\pi^{-1}({\bf a}_\tau)$ isomorphic to the variety defined by the initial ideal associated to $\tau$.
 In particular, generic fibers are isomorphic to~${\rm Proj}(A)$, where $A= \mathbb{K}[x_1, \dots , x_n]/J$, and there exist special fibers for every proper face $\tau\subset C$.
\end{Theorem}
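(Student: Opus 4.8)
The plan is to pin down an explicit $\mathbb{K}[t_1,\dots,t_m]$-module basis of $\tilde A$; once $\tilde A$ is known to be free, flatness of $\pi$ and the list of special fibres come out by base change. Fix a weight $\mathbf{w}\in\operatorname{relint}(C)$ refined by a term order $<$, and let $G=\{g_1,\dots,g_s\}$ be the reduced Gr\"obner basis of $J$ for $J$ and $C$, so that $\tilde J=\langle\tilde g_1,\dots,\tilde g_s\rangle$ by \thref{prop:lifted generators}. Write $\mathbf{x}^{\beta_j}=\init_{<}(g_j)$ and let $\mathrm{SM}$ denote the set of standard monomials, i.e.\ those divisible by no $\mathbf{x}^{\beta_j}$; recall that $\mathrm{SM}$ is a $\mathbb{K}$-basis both of $A=\mathbb{K}[\mathbf{x}]/J$ and of $\mathbb{K}[\mathbf{x}]/\init_C(J)$. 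I would first record the elementary observation that, for a face $\tau$ of $C$ spanned by $\{r_i:i\in I_\tau\}$ and any $f\in J$, the initial form $\init_\tau(f)$ is the sum of the terms $c_\alpha\mathbf{x}^\alpha$ of $f$ that minimise $r_i\cdot\alpha$ \emph{simultaneously} over $i\in I_\tau$: the inclusion ``$\supseteq$'' holds because every weight in $\operatorname{relint}(\tau)$ is a positive combination of the $r_i$, and ``$\subseteq$'' follows by letting one coordinate $r_i$ dominate inside $\operatorname{relint}(\tau)$. Applying this with $I_\tau=\{1,\dots,m\}$ and using that $\init_C(g_j)=\mathbf{x}^{\beta_j}$ is a single monomial (because $C$ is a maximal Gr\"obner cone) gives $\mathbf{r}\cdot\beta_j=\mu(g_j)$; hence $\mathbf{x}^{\beta_j}$ occurs in $\tilde g_j$ with coefficient $1$, every other monomial of $\tilde g_j$ carries a positive power of some $t_i$, and $\tilde g_j|_{\mathbf{t}=\mathbf{0}}=\mathbf{x}^{\beta_j}$.

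Next, freeness. Since the $\tilde g_j$ are monic in the $x$-variables over the coefficient ring $\mathbb{K}[\mathbf{t}]$, with leading monomials $\mathbf{x}^{\beta_j}$ for $<$, the multivariate division algorithm over $\mathbb{K}[\mathbf{t}]$ terminates and shows that $\mathrm{SM}$ generates $\tilde A$ as a $\mathbb{K}[\mathbf{t}]$-module. For $\mathbb{K}[\mathbf{t}]$-linear independence I would use the $\mathbb{Z}^m$-grading on $\mathbb{K}[\mathbf{t}][\mathbf{x}]$ with $\deg x_i=\mathbf{r}\cdot e_i$ and $\deg t_k=-e_k$: each $\tilde g_j$ is homogeneous of degree $\mu(g_j)$, so $\tilde J$ is a $\mathbb{Z}^m$-homogeneous ideal. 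Given a relation $\sum_{\mathbf{x}^\alpha\in\mathrm{SM}}p_\alpha(\mathbf{t})\,\mathbf{x}^\alpha\in\tilde J$, each of its $\mathbb{Z}^m$-homogeneous components again lies in $\tilde J$; such a component necessarily has the shape $\sum_\alpha c_\alpha\,\mathbf{t}^{\mathbf{r}\cdot\alpha-\delta}\mathbf{x}^\alpha$ with distinct $\mathbf{x}^\alpha\in\mathrm{SM}$, and setting $t_1=\dots=t_m=1$ — which sends $\tilde J$ into $J$ because $\tilde g_j|_{\mathbf{t}=\mathbf{1}}=g_j$ — produces $\sum_\alpha c_\alpha\mathbf{x}^\alpha\in J$. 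Linear independence of standard monomials in $A$ forces all $c_\alpha=0$, hence all $p_\alpha=0$. Therefore $\tilde A$ is free over $\mathbb{K}[t_1,\dots,t_m]$ with basis $\mathrm{SM}$.

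Freeness gives flatness of $\tilde A$ over $\mathbb{K}[\mathbf{t}]$. By \thref{prop:change rays} the $\tilde g_j$ are $\mathbf{d}$-homogeneous in the $x_i$, so with $\deg t_k=0$ the algebra $\tilde A$ is a $\mathbb{Z}_{\ge 0}$-graded $\mathbb{K}[\mathbf{t}]$-algebra with $\tilde A_0=\mathbb{K}[\mathbf{t}]$, generated by $x_1,\dots,x_n$; the surjection $\mathbb{K}[\mathbf{t}][\mathbf{x}]\twoheadrightarrow\tilde A$ of graded $\mathbb{K}[\mathbf{t}]$-algebras then yields the asserted closed immersion $\mathrm{Proj}(\tilde A)\hookrightarrow\mathbb{P}(\mathbf{d})\times\mathbb{A}^m$ over $\mathbb{A}^m$, with $\pi$ the structure morphism. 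Flatness of $\pi$ can be checked on the affine charts $D_+(x_i)=\operatorname{Spec}(\tilde A_{x_i})_0$, and $(\tilde A_{x_i})_0$ is a $\mathbb{K}[\mathbf{t}]$-module direct summand of the localisation $\tilde A_{x_i}$, which is flat over $\mathbb{K}[\mathbf{t}]$.

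Finally the fibres. For a face $\tau$ with ray index set $I_\tau$, put $(\mathbf{a}_\tau)_i=0$ for $i\in I_\tau$ and $(\mathbf{a}_\tau)_i=1$ otherwise. Base change along $\mathbb{K}[\mathbf{t}]\twoheadrightarrow\kappa(\mathbf{a}_\tau)$ gives $\pi^{-1}(\mathbf{a}_\tau)\cong\mathrm{Proj}\bigl(\mathbb{K}[\mathbf{x}]/\langle\tilde g_1|_{\mathbf{a}_\tau},\dots,\tilde g_s|_{\mathbf{a}_\tau}\rangle\bigr)$, and evaluating the lift formula together with the combinatorial observation above shows $\tilde g_j|_{\mathbf{a}_\tau}=\init_\tau(g_j)$; since $G$ is a Gr\"obner basis of $J$ for every weight of $C$ (standard Gr\"obner-fan theory, cf.~\cite{mora1988grobner}), $\langle\init_\tau(g_1),\dots,\init_\tau(g_s)\rangle=\init_\tau(J)$, so $\pi^{-1}(\mathbf{a}_\tau)$ is the variety defined by the initial ideal associated to $\tau$. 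The vertex $\tau=\{0\}$ gives $\mathbf{a}_\tau=(1,\dots,1)$ and fibre $\mathrm{Proj}(A)$; more generally, for any $u\in(\mathbb{K}^\ast)^m$ the diagonal automorphism $\psi_u\colon x_i\mapsto u^{\mathbf{r}\cdot e_i}x_i$ satisfies $\langle\tilde g_1|_{\mathbf{t}=u},\dots,\tilde g_s|_{\mathbf{t}=u}\rangle=\psi_u(J)$, whence $\pi^{-1}(u)\cong\mathrm{Proj}(A)$ over the dense torus $(\mathbb{K}^\ast)^m\subseteq\mathbb{A}^m$, while every proper face $\tau$ contributes a special fibre over a point $\mathbf{a}_\tau$ with a vanishing coordinate. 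The one genuinely substantial step is the freeness assertion of the first two paragraphs — the division argument and, above all, the homogeneous-component argument for $\mathbb{K}[\mathbf{t}]$-independence; the remainder is base change together with the dictionary between the specialisations $t_i\to 0$ and the faces of $C$.
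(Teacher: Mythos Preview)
Your proof is correct and reaches the same conclusion as the paper, but the technical execution of both main steps differs from the paper's. For freeness, the paper extends $<$ to a monomial order $\ll$ on $\mathbb K[t_1,\dots,t_m,x_1,\dots,x_n]$ (with the $t_i$ smaller than all $x_j$), invokes \thref{prop:lifted generators} to get that $\tilde{\mathcal G}$ is a Gr\"obner basis for $\tilde J$ with respect to $\ll$, and then argues that the standard monomial basis $\mathbb B_{\ll}$, reduced to a $\mathbb K[\mathbf t]$-basis, coincides with $\mathbb B_<$. Your route---division over $\mathbb K[\mathbf t]$ for spanning, then the $\mathbb Z^m$-grading together with the evaluation $\mathbf t\mapsto\mathbf 1$ for independence---is more self-contained and avoids introducing $\ll$ altogether; the $\mathbb Z^m$-grading you use is exactly the simultaneous refinement of the one-parameter gradings $w'$ appearing in \thref{lem:w'homogeneous} and \thref{rmk:homogeneous weights}. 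For the fibres, the paper passes through the classical one-parameter family via \thref{prop:special fibers} and then cites~\cite[Theorem~15.17]{eisenbud2013commutative}, whereas you compute $\tilde g_j|_{\mathbf a_\tau}=\init_\tau(g_j)$ directly and appeal to the fact that a reduced Gr\"obner basis for $C$ is a Gr\"obner basis for every weight in $C$. Both arguments are clean; yours is a bit more elementary, the paper's packages the one-parameter reduction as a reusable proposition.

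One caution on your ``elementary observation'': as stated for \emph{arbitrary} $f\in J$ it is not quite right, because $\init_w(f)$ need not be constant for $w$ in the relative interior of a face---the paper's \thref{exp:lem facet} gives an explicit $h\in I^{\rm ex}$ and two weights in $C^\circ$ with different initial forms. What makes your argument work is that you only apply it to the $g_j\in\mathcal G$, and for these the leading exponent $\beta_j$ is a \emph{simultaneous} minimiser of all $r_i\cdot\alpha$ (this is the content of \thref{lem:lifts}); once that is known, your `$\supseteq$' and `$\subseteq$' justifications go through and in particular show that $\init_w(g_j)$ is constant on $\operatorname{relint}(\tau)$. It would be cleaner to phrase the observation for the $g_j$ from the outset, or to add the hypothesis that the set of simultaneous minimisers is nonempty.
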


Next, we explain how the algebra $\tilde{A} $ arises in Kaveh--Manon's recent work on the classification of affine toric flat families of finite type over toric varieties~\cite{KM-toricbundles}.
Consider a fan $\Sigma$ defining a toric variety $X_\Sigma$ which contains a dense torus $T$.
Then a {\it toric family} is a $T$-equivariant flat sheaf $\mathcal A$ of positively graded algebras of finite type over $X_\Sigma$ such that: $(i)$ the relative spectrum of $\mathcal A$ has reduced fibers and $(ii)$ its generic fibers are isomorphic to the spectrum of some positively graded $\mathbb K$-algebra $A$.
Such families are classified by so-called PL-quasivaluations on $A$ whose codomain is the semifield of piecewise linear functions on the intersection of $\Sigma$ with the cocharacter lattice of $T$ (see~\cite[Section~1.1]{KM-toricbundles} or Section~\ref{sec:toric families} below).
Given a PL-quasivaluation, Kaveh--Manon construct a sheaf of Rees algebras on $X_\Sigma$ that is a toric family.
In the special case when $\Sigma$ is a~cone, their construction yields a single Rees algebra rather than a sheaf.
We prove the following result:

\begin{Theorem}[\thref{thm:KM max cone}]
Let $J\subseteq\mathbb K[{x_1,\dots,x_n}]$ be a weighted homogeneous ideal and~$C$ a~maximal cone in the Gr\"obner fan of $J$.
Let $\mathcal R_C(A)$ be the Rees algebra associated to the PL-quasivaluation on $A=\mathbb K[{x_1,\dots,x_n}]/J$ defined by $C$ and let $\psi\colon \operatorname{Spec}(\mathcal R_C(A))\to X_C$ be the corresponding toric family.
The morphism $\pi\colon \operatorname{Spec}\big(\tilde A\big)\to \mathbb A^m$ fits into a pull-back diagram as follows:
 \begin{displaymath}
 \begin{tikzcd}[ampersand replacement=\&,cramped]
 \operatorname{Spec}(\mathcal R_C (A))\ar[d,"\psi"'] \&
 \operatorname{Spec}\big(\tilde A\big)\ar[d,"\pi"] \ar[l]\\
 X_{C}\& \mathbb A^m. \ar[l,"p_C"]
 \end{tikzcd}
 \end{displaymath}
Here $p_C\colon \mathbb A^m\to X_C$ is the universal torsor of $X_C$ $($see~\eqref{eq:Cox C} for details$)$ and $m$ is the number of rays of $C$.
Both flat families, the one defined by $\psi$ and the one defined by $\pi$, are direct sums of line bundles indexed by the standard monomial basis associated to the maximal cone $C$.
\end{Theorem}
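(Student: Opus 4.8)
All four schemes in the square are affine, so proving it is a pull‑back amounts to exhibiting an isomorphism of $\mathbb K[t_1,\dots,t_m]$‑algebras
\[
 \tilde A\;\cong\;\mathcal R_C(A)\otimes_{\mathbb K[X_C]}\mathbb K[t_1,\dots,t_m],
\]
where $\mathbb K[X_C]$ is the semigroup algebra of $C^\vee\cap\mathbb Z^n$ and the map $\mathbb K[X_C]\to\mathbb K[t_1,\dots,t_m]$ is the comorphism of the universal torsor, $\chi^u\mapsto\prod_{i=1}^m t_i^{\,u\cdot r_i}$ (see~\eqref{eq:Cox C}). The plan is to realize $\tilde A$ as an explicit $\mathbb K[t_1,\dots,t_m]$‑subalgebra of $A\otimes_{\mathbb K}\mathbb K[t_1^{\pm1},\dots,t_m^{\pm1}]$ and then to recognize that subalgebra as the pulled‑back Rees algebra. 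Concretely, I would first introduce the $\mathbb K$‑algebra map
\[
 \Psi\colon \mathbb K[t_1,\dots,t_m][x_1,\dots,x_n]\longrightarrow A\otimes_{\mathbb K}\mathbb K[t_1^{\pm1},\dots,t_m^{\pm1}],\qquad t_i\mapsto t_i,\quad x_j\mapsto \bar x_j\,\mathbf t^{-\,\mathbf r\cdot e_j},
\]
and check that it annihilates $\tilde J$: a direct computation gives $\Psi(\tilde f)=\mathbf t^{-\mu(f)}\bar f=0$ for $f\in J$, the point being that the substitution $x_j\mapsto\bar x_j\,\mathbf t^{-\mathbf r\cdot e_j}$ exactly undoes the dilation $x_j\mapsto\mathbf t^{\mathbf r\cdot e_j}x_j$ built into the lift $\tilde f=f(\mathbf t^{\mathbf r\cdot e_1}x_1,\dots)\mathbf t^{-\mu(f)}$, leaving only the unit $\mathbf t^{-\mu(f)}$. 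Hence $\Psi$ factors through $\bar\Psi\colon\tilde A\to A\otimes_{\mathbb K}\mathbb K[t^{\pm1}]$, which is injective: by (the proof of) \thref{thm:family} the standard monomials $\mathbf x^\alpha$, $\alpha\in\mathbb B$, form a free $\mathbb K[t_1,\dots,t_m]$‑basis of $\tilde A$, and they are sent to $\bar{\mathbf x}^\alpha\,\mathbf t^{-\,\mathbf r\cdot\alpha}$, which are $\mathbb K[t_1,\dots,t_m]$‑linearly independent because $\{\bar{\mathbf x}^\alpha\}_{\alpha\in\mathbb B}$ is a $\mathbb K$‑basis of $A$.

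Next I would bring in Kaveh--Manon. Since $C$ is a \emph{maximal} Gr\"obner cone, the standard monomial basis $\mathbb B$ attached to $C$ is simultaneously a basis of $A/\init_{\mathbf w}(J)$ for every $\mathbf w\in C$, i.e.\ an adapted basis for the PL‑quasivaluation $\mathfrak v_C$, and $\mathfrak v_C(\bar{\mathbf x}^\alpha)$ is the linear functional $\mathbf w\mapsto\mathbf w\cdot\alpha$ for each $\alpha\in\mathbb B$. The structure theorem of~\cite{KM-toricbundles} then presents $\mathcal R_C(A)=\bigoplus_{\alpha\in\mathbb B}\mathcal L_\alpha$ as a direct sum of line bundles over $X_C$, with $\mathcal L_\alpha$ determined by $\mathfrak v_C(\bar{\mathbf x}^\alpha)$ and with algebra structure that of $A$ (reduction modulo $J$ against $\mathbb B$) twisted by the subadditivity defect of $\mathfrak v_C$. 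Pulling back along $p_C$ sends $\chi^u$ to $\mathbf t^{\,u\cdot r_i}$‑monomials, hence turns each $\mathcal L_\alpha$ into a free rank‑one $\mathbb K[t_1,\dots,t_m]$‑module whose equivariant generator is $\bar{\mathbf x}^\alpha\,\mathbf t^{-\,\mathbf r\cdot\alpha}$, which is exactly $\bar\Psi(\mathbf x^\alpha)$; moreover the defect twists become honest powers of the $t_i$, and these are precisely the $t$‑coefficients that appear when one reduces a product of standard monomials using the lifted reduced Gr\"obner basis of~\thref{prop:lifted generators}. Comparing the two descriptions term by term then shows that the image of $\bar\Psi$ is $\mathcal R_C(A)\otimes_{\mathbb K[X_C]}\mathbb K[t_1,\dots,t_m]$, with matching multiplication.

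Putting this together, I would conclude that $\bar\Psi$ is an isomorphism of $\mathbb K[t_1,\dots,t_m]$‑algebras onto the pulled‑back Rees algebra (injective by the first paragraph, surjective by the second, the two free $\mathbb K[t]$‑bases indexed by $\mathbb B$ corresponding under $\bar\Psi$), and it is compatible with the projections to $\operatorname{Spec}\mathbb K[t_1,\dots,t_m]=\mathbb A^m$ and, via $p_C$, to $X_C$, since $\Psi$ was built from the universal‑torsor comorphism. Hence the square is a pull‑back. The last assertion follows at once: $\mathcal R_C(A)=\bigoplus_{\alpha\in\mathbb B}\mathcal L_\alpha$ is the direct‑sum‑of‑line‑bundles statement for $\psi$ by~\cite{KM-toricbundles}, and applying $p_C^*$ (which takes line bundles to line bundles) gives $\tilde A=\bigoplus_{\alpha\in\mathbb B}\mathbb K[t_1,\dots,t_m]\cdot\mathbf x^\alpha$, i.e.\ the corresponding statement for $\pi$, with both decompositions indexed by $\mathbb B$.

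\textbf{Main difficulty.} The structural skeleton above is essentially formal; the actual work is twofold. First, one must verify carefully that $\mathbb B$ really is an adapted basis for $\mathfrak v_C$ and unwind exactly what the Rees algebra $\mathcal R_C(A)$ and the family $\psi$ of~\cite{KM-toricbundles} specialize to here --- in particular pinning down the torus‑equivariant generators of the $\mathcal L_\alpha$ and the precise shape of the defect twists. Second, one must reconcile the orientation conventions on the two sides (the $\min$‑convention for initial ideals together with the $\mu(f)$‑correction, versus the sign in the Rees grading, and the choice of integral ray generators $r_i$) so that the twist $\mathbf t^{-\,\mathbf r\cdot\alpha}$ agrees \emph{literally} on both sides rather than merely up to an automorphism of $\mathbb A^m$; here keeping everything consistent with~\thref{prop:change rays} and~\eqref{eq:Cox C} is what needs attention.
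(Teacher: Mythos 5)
Your proposal is correct in outline, but it travels a different road than the paper. The paper reduces the pull--back claim to showing $\mathcal R_C(A)\otimes_{\mathbb K[S_C]}\mathbb K[t_1,\dots,t_m]\cong\tilde A$ and then does this by \emph{presentations}: Proposition~\ref{prop:Rees quotient} gives $\mathcal R_C(A)\cong\mathbb K[S_C][x_1,\dots,x_n]/\hat J$ with $\hat J$ generated by the $C$-homogenizations $\hat g$ of elements of the reduced Gr\"obner basis $\mathcal G$, and the proof then simply checks (via the explicit comorphism computation~\eqref{eq:t's}, using Lemma~\ref{lem:init ell} to expand $\alpha-\gamma$ over the $r_i^*$) that $-\otimes_{\mathbb K[S_C]}\mathbb K[t_1,\dots,t_m]$ sends $\hat g$ to the lift $\tilde g$, so that $\hat J$ extends to $\tilde J$ and the isomorphism falls out of the presentations in one line. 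You instead construct an explicit $\mathbb K[t_1,\dots,t_m]$-algebra embedding $\bar\Psi\colon\tilde A\hookrightarrow A\otimes_{\mathbb K}\mathbb K[t_1^{\pm1},\dots,t_m^{\pm1}]$ via $x_j\mapsto\bar x_j\,\mathbf t^{-\mathbf r\cdot e_j}$, do the same for the pulled-back Rees algebra, and compare images term by term against the adapted basis $\mathbb B_<$. Both routes hinge on the same comorphism identity $p_C^\#\colon\chi^u\mapsto\mathbf t^{\mathbf r\cdot(\cdot)}$, but they differ in where the work is located: the paper's route pushes the hard content into Proposition~\ref{prop:Rees quotient} (which you do not invoke), so the proof of the theorem itself is short; your route is more self-contained in spirit but, as you rightly flag in your ``Main difficulty,'' must recover by hand the adapted-basis and defect-twist description of $\mathcal R_C(A)$ --- which is exactly what Proposition~\ref{prop:Rees quotient} packages. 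Your treatment of the line-bundle assertion is also different but sound: the paper gets freeness of $\tilde A$ over $\mathbb K[t_1,\dots,t_m]$ with basis $\mathbb B_<$ from Theorem~\ref{thm:family}$(i)$, whereas you obtain it by pulling back the Kaveh--Manon decomposition $\bigoplus_{\alpha\in\mathbb B}\mathcal L_\alpha$ along $p_C$ and using triviality of line bundles on $\mathbb A^m$; both give the same conclusion.
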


Toric degenerations are a particular class of flat families with a single special fiber that is a toric variety.
They are of significant interest and have been widely studied in recent years.
Among the faces of the maximal cone $C$ one may find binomial prime initial ideals that hence define toric Gr\"obner degenerations of $V$.
Such faces lie in the tropicalization of $J$ denoted $\trop(J)$, which by definition is the subfan of the Gr\"obner fan consisting of cones whose associated initial ideal does not contain monomials.
Let $\Sigma$ be the intersection of $C$ with the tropicalization of $J$.
Then the affine space $\mathbb A^m$ contains the universal torsor $\mathcal T_\Sigma$ of the toric variety $X_\Sigma$ and all fibers of $\pi^{-1}(\mathcal T_\Sigma)\to \mathcal T_\Sigma$ correspond to initial ideals of cones in $\Sigma$.
In~\thref{thm:KM vs grobner} we show how the family defined by $\pi\vert_{\pi^{-1}(\mathcal T_\Sigma)}$ also arises as a pull-back from a toric family defined by a~sheaf of Rees algebras on $X_\Sigma$.

\medskip

\noindent{\bf Grassmannians and cluster algebras.}
The interest of studying toric degenerations in the context of cluster algebras has grown in the last years (see for example~\cite{BFMN,GHKK}).
Therefore, we would like to understand the framework introduced above from the perspective of cluster algebras.
As a first step in this direction, we analyze in depth the situation for the Grassmannians $\Gr(2,\mathbb C^n)$ and $\Gr\big(3,\mathbb C^6\big)$ whose coordinate rings are cluster algebras.

\looseness=-1 For $\Gr(2,\mathbb C^n)$ we choose its Pl\"ucker embedding and obtain the homogeneous coordinate ring~$A_{2,n}$ as a quotient of the polynomial ring in Pl\"ucker variables $\mathbb C[p_{ij}\colon 1\le i<j\le n]$ by the Pl\"ucker ideal $I_{2,n}$.
It was shown in~\cite{FZ02} that $A_{2,n}$ 
is a cluster algebra in which the cluster variables are in one-to-one correspondence with Pl\"ucker coordinates $\bar p_{ij}\in A_{2,n}$ and the
seeds are in one-to-one correspondence with triangulations of the $n$-gon.
Every triangulation of the $n$-gon gives rise to a toric degeneration of $\Gr(2,\mathbb C^n)$ obtained by adding {\it principal coefficients} to $A_{2,n}$ at the corresponding seed~\cite{GHKK}.
Principal coefficients were introduced by Fomin and Zelevinsky in~\cite{FZ_clustersIV} and are a core concept in the theory of cluster algebras.
A central piece of the construction is endowing every cluster variable (i.e.,~every Pl\"ucker coordinate) with~a~so-called ${\bf g}$-vector depending on the fixed seed.
We want to understand the toric degenerations coming from principal coefficients in the context of Gr\"obner theory:
to achieve this first fix a~triangulation $T$.
We use ${\bf g}$-vectors for Pl\"ucker coordinates to construct a weight vector ${\bf w}_T$, see~\thref{def:wt of T}.
In~\thref{thm:toric wt ideals} we prove that the initial ideal with respect to ${\bf w}_T$ is binomial and prime, hence ${\bf w}_T$ lies in the tropicalization of $I_{2,n}$.
Moreover, the central fiber of the Gr\"obner degeneration induced by ${\bf w}_T$ is isomorphic to the central fiber of the toric degeneration induced by endowing~$A_{2,n}$ with principal coefficients at the seed determined by $T$, see~\thref{cor:central fibre}.


There is a single object in cluster theory that simultaneously encodes principal coefficients at all seeds and thus the ${\bf g}$-vectors of all Pl\"ucker coordinates with respect to all triangulations.
Namely, the cluster algebra with {\it universal coefficients} $A^{\rm univ}_{2,n}$ associated to $A_{2,n}$, see~\thref{def:Plucker univ}.
In spirit, this algebra is very similar to the algebra defining the flat family associated to a maximal cone in the Gr\"obner fan of $I_{2,n}$ as it encodes various toric degenerations of $\Gr(2,\mathbb C^n)$ at the same time.
It is therefore natural to ask if the cluster algebra with universal coefficients fits into the above framework.
The following result answers this question 
for $\Gr(2,\mathbb C^n)$:

\begin{Theorem}
There exists a maximal cone $C$ in the Gr\"obner fan of $I_{2,n}$ whose rays are in bijection with diagonals of the $n$-gon. Moreover, the unique cone $C$ has the following properties:
\begin{itemize}\itemsep=0pt
 \item[$(i)$] The standard monomial basis for $A_{2,n}$ associated with $C$ coincides with the basis of cluster monomials $($\thref{cluster vs SM basis}$)$.
 \item[$(ii)$] For every triangulation $T$ of the $n$-gon the weight vector ${\bf w}_T$ lies in the boundary of $C$ and the intersection of $C$ with the tropicalization of $I_{2,n}$ is the totally positive part of $\trop(I_{2,n})$ $($\thref{prop:unique mono ideal} and~\thref{thm:trop+}$)$.
 \item[$(iii)$] Let $\tilde I_{2,n}$ be the lift of $I_{2,n}$ with respect to $C$ and denote by $\tilde A_{2,n}$ the quotient $\mathbb C[t_{ij}\colon 2\le i+1<j\le n][p_{ij}\colon 1\le i<j\le n]/\tilde I_{2,n}$.
 Then there exists a canonical isomorphism between the cluster algebra with universal coefficients $A_{2,n}^{\rm univ}$ and $\tilde A_{2,n}$ $($\thref{thm:A univ as quotient}$)$.
 \item[$(iv)$] The monomial initial ideal of $I_{2,n}$ with respect to $C$ is squarefree and coincides with the Stanley--Reisner ideal of the cluster complex $($\thref{cor:Stanley-Reisner}$)$.
\end{itemize}
\end{Theorem}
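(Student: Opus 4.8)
The plan is to pin the cone $C$ down by its monomial initial ideal and then read off (i)--(iv), so the work splits into producing a weight whose initial ideal is the Stanley--Reisner ideal of the cluster complex, analysing the rays of the resulting maximal Gr\"obner cone, and matching the lifted ideal with the universal cluster algebra.

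\textbf{Constructing $C$ and proving (iv).} Write $J_{\mathrm{SR}}\subseteq\mathbb C[p_{ij}]$ for the Stanley--Reisner ideal of the cluster complex of $A_{2,n}$, viewed inside $\mathbb C[p_{ij}]$ with the frozen variables $p_{i,i+1}$ as cone points; its minimal generators are the ``crossing'' monomials $p_{ik}p_{jl}$ for $1\le i<j<k<l\le n$. First I would exhibit an explicit weight ${\bf w}_0$, e.g.\ $w_{ij}=(j-i)^2-N(j-i)$ with $N>n$, and verify on the three-term Pl\"ucker relations $p_{ik}p_{jl}-p_{ij}p_{kl}-p_{il}p_{jk}$ that $\init_{{\bf w}_0}$ picks out the crossing monomial; since these relations generate $I_{2,n}$ this gives $J_{\mathrm{SR}}\subseteq\init_{{\bf w}_0}(I_{2,n})$, and equality then follows by comparing Hilbert functions, using the straightening law (non-crossing Pl\"ucker monomials form a $\mathbb C$-basis of $A_{2,n}$). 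As $J_{\mathrm{SR}}$ is a monomial ideal, ${\bf w}_0$ lies in the relative interior of a unique maximal Gr\"obner cone $C$ with $\init_C(I_{2,n})=J_{\mathrm{SR}}$; uniqueness of $C$ is then automatic and (iv) holds, $J_{\mathrm{SR}}$ being squarefree.

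\textbf{Rays and (ii).} The Gr\"obner fan of $I_{2,n}$ carries an $n$-dimensional lineality space coming from the torus action $p_{ij}\mapsto t_it_jp_{ij}$, so $C$ has dimension $\binom{n}{2}-n=n(n-3)/2$ modulo lineality --- already the number of diagonals. I would present $C$ as the closed cone cut out by $w_{ik}+w_{jl}\le\min(w_{ij}+w_{kl},\,w_{il}+w_{jk})$ over all $i<j<k<l$, compute its facets, and show that modulo lineality $C$ is simplicial with one extreme ray $\rho_{ab}$ per diagonal $\{a,b\}$, read off from the ${\bf g}$-vector data of \thref{def:wt of T}. For the first half of (ii), \thref{thm:toric wt ideals} gives that $\init_{{\bf w}_T}(I_{2,n})$ is binomial and prime, hence not monomial, so ${\bf w}_T\notin\operatorname{int}(C)$; to see ${\bf w}_T\in C$ I would check $\init_{{\bf w}_0}\big(\init_{{\bf w}_T}(I_{2,n})\big)=J_{\mathrm{SR}}$ --- primeness forbids the crossing monomial from ever dropping out of $\init_{{\bf w}_T}$ of a Pl\"ucker relation, while the toric ideal of the triangulation $T$ has $J_{\mathrm{SR}}$ among its initial ideals --- which puts ${\bf w}_T+\varepsilon{\bf w}_0$ in $\operatorname{int}(C)$, hence ${\bf w}_T$ on $\partial C$. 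The same perturbation places every positive-tropical maximal cone $\sigma_T$ inside $C$, so $\trop^+(I_{2,n})\subseteq C\cap\trop(I_{2,n})$; conversely any ${\bf w}\in C\cap\trop(I_{2,n})$ satisfies $\init_{{\bf w}_0}(\init_{{\bf w}}(I_{2,n}))=J_{\mathrm{SR}}$, and a short computation with the phylogenetic-tree description of $\trop(I_{2,n})$ (Speyer--Sturmfels) shows this forces ${\bf w}$ to come from a cyclically compatible tree --- a non-planar tree would degenerate some $p_{ab}p_{cd}-p_{a'b'}p_{c'd'}$ to a monomial not lying in $J_{\mathrm{SR}}$ --- i.e.\ ${\bf w}\in\trop^+(I_{2,n})$.

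\textbf{Properties (i) and (iii).} For (i): the standard monomials of $A_{2,n}$ for $C$ are exactly the monomials avoiding $\init_C(I_{2,n})=J_{\mathrm{SR}}$, i.e.\ the monomials supported on pairwise non-crossing sets of Pl\"ucker variables; every such set extends to a triangulation and so lies in a single cluster, whence these are precisely the cluster monomials of $A_{2,n}$, and as both families are $\mathbb C$-bases they coincide. For (iii) I would invoke \thref{prop:lifted generators} and \thref{prop:change rays}: $\tilde I_{2,n}$ is generated by the lifted three-term relations, which in $\tilde A_{2,n}$ read $p_{ik}p_{jl}={\bf t}^{{\bf a}_{ijkl}}p_{ij}p_{kl}+{\bf t}^{{\bf b}_{ijkl}}p_{il}p_{jk}$, the exponent vectors ${\bf a}_{ijkl},{\bf b}_{ijkl}$ being computed from $\mu$ and hence from the ray generators $\rho_{ab}$; matching these against the universal exchange coefficients of the Pl\"ucker mutations (Fomin--Zelevinsky's explicit universal coefficients in finite type, here type $A_{n-3}$, cf.~\thref{def:Plucker univ}) identifies the presentation of $\tilde A_{2,n}$ with a presentation of $A^{\rm univ}_{2,n}$, under $p_{ij}\mapsto$ universal Pl\"ucker coordinate and $t_{ij}\mapsto$ the matching universal coefficient. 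The main obstacle is this last identification: it requires a clean dictionary between the ray generators of $C$ --- equivalently the PL-quasivaluation of \thref{thm:KM max cone} --- and the ${\bf g}$-/${\bf c}$-vector combinatorics governing universal coefficients in finite type, together with the input (via \thref{prop:lifted generators}) that the lifted Pl\"ucker relations already suffice, so that no hidden relations among the $p_{ij}$ survive in $\tilde A_{2,n}$.
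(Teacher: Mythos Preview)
Your strategy matches the paper's at the top level---pin down $C$ via its monomial initial ideal, then read off (i)--(iv)---but several steps diverge in ways worth flagging.

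\textbf{On (iv) and the construction of $C$.} Your Hilbert-function argument (containment $J_{\mathrm{SR}}\subseteq\init_{{\bf w}_0}(I_{2,n})$ plus equality of Hilbert functions from the straightening law) is a clean alternative to the paper's route, which instead checks Buchberger's criterion directly on the Pl\"ucker relations (\thref{lem: mono ideal}). Either works; yours is arguably shorter.

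\textbf{On the rays.} You propose to extract the rays by analysing the facet inequalities $w_{ik}+w_{jl}\le\min(\ldots)$ directly and then arguing simpliciality. The paper avoids this computation entirely: it first establishes $C\cap\trop(I_{2,n})=\trop^+(I_{2,n})$ (\thref{thm:trop+}), and since every $(d+1)$-dimensional face of $C$ is then a maximal cone of $\trop^+(I_{2,n})$, every ray of $\overline{C}$ is automatically a ray of $\trop^+(I_{2,n})$. Speyer--Sturmfels identifies those rays with partitions of $[n]$ into two cyclic intervals, i.e.\ diagonals (\thref{cor:C simplicial}). Deducing the rays from (ii) rather than before it is both shorter and conceptually cleaner.

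\textbf{A wrong justification in (ii).} You write ``primeness forbids the crossing monomial from ever dropping out of $\init_{{\bf w}_T}$ of a Pl\"ucker relation''. This is not correct: primeness of $\init_{{\bf w}_T}(I_{2,n})$ only says the initial form is not a monomial---it does not rule out the binomial $p_{ij}p_{kl}+p_{il}p_{jk}$, and indeed for non-planar trees exactly this happens (this is the mechanism behind the converse inclusion in \thref{thm:trop+}). The correct input is the one you already cite, \thref{thm:toric wt ideals}, which asserts directly that the initial binomial of each Pl\"ucker relation under ${\bf w}_T$ contains the crossing monomial. Once that is in hand, the Eisenbud criterion (Remark~\ref{rem:Eisenbud}) applied to the reduced Gr\"obner basis $\{R_{ijkl}\}$ places ${\bf w}_T$ on $\partial C$, and your perturbation argument is fine. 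For the reverse inclusion $C\cap\trop(I_{2,n})\subseteq\trop^+(I_{2,n})$, the paper argues the contrapositive on trees exactly as you sketch: a non-planar labelling forces some $\init_\Upsilon(R_{ijkl})=p_{ij}p_{kl}+p_{il}p_{jk}$, which misses the crossing monomial, so cannot degenerate to $J_{\mathrm{SR}}$.

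\textbf{On (iii).} Your plan is the paper's plan. The ``clean dictionary'' you flag as the obstacle is supplied in two pieces: first an explicit computation of the lifts $\tilde R_{ijkl}$ in terms of the ray generators (\thref{prop:univ and groebner}), and then, in the proof of \thref{thm:A univ as quotient}, a computation of the ${ik}$-entry of the rows of $U_{Q_T^{\rm mut}}$ via shear coordinates on the reflected triangulation $T^\vee$, which matches the $t$-exponents exactly. Injectivity of the induced map $\tilde A_{2,n}\to A_{2,n}^{\rm univ}$ then comes, as you anticipate, from both sides being free over the coefficient ring with the same (cluster-monomial) basis---this uses \thref{thm:family}(i) on one side and \cite{GHKK} on the other.
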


Similarly, we describe the cluster algebra with universal coefficients for $\Gr\big(3,\mathbb C^6\big)$ from the viewpoint of Gr\"obner theory.
In this case, we fix its {\it cluster embedding} obtained as follows: consider the Pl\"ucker embedding and let $A_{3,6}$ be the homogeneous coordinate ring.
As a cluster algebra of type $\mathtt D_4$, $A_{3,6}$ has 22 cluster variables, 20 of which are the Pl\"ucker coordinates.
The~addi\-tional two cluster variables are homogeneous binomials in Pl\"ucker coordinates of deg\-ree~2.
Hence, we can present $A_{3,6}$ as the quotient of a polynomial ring in 22 variables by a weighted homogeneous ideal denoted by $I^{\rm ex}$.
This yields an embedding of $\Gr\big(3,\mathbb C^6\big)$ in the weighted projective space $\mathbb P(1,\dots,1,2,2)$, where the Pl\"ucker coordinates correspond to the coordinates of weight one, and the additional cluster variables to those of weight two, see Section~\ref{sec:Gr(3,6)}.
Our~main result is the following:

\begin{Theorem}
There exists a unique maximal cone $C$ in the Gr\"obner fan of $I^{\rm ex}$ such that
\begin{itemize}\itemsep=0pt
 \item[$(i)$] The algebra $\tilde A_{3,6}$ is canonically isomorphic to the cluster algebra with universal coeffici\-ents~$A^{\rm univ}_{3,6}$, where rays of $C$ are identified with mutable cluster variables of $A_{3,6}$ $($Theo\-rem~$\ref{thm:Gr36}(ii))$.
 \item[$(ii)$] For every seed of $A_{3,6}$ there exists a face of $C$ whose associated initial ideal is a totally positive binomial prime ideal. More precisely, the intersection of $C$ with $\trop(I^{\rm ex})$ is the totally positive part of $\trop(I^{\rm ex})$ $($\thref{thm:Gr36}$(iii))$.
 \item[$(iii)$] The monomial initial ideal $\init_C(I^{\rm ex})$ associated to $C$ is squarefree and coincides with the Stanley--Reisner ideal of the cluster complex. In particular, the basis of standard monomials associated to $C$ coincides with the basis of cluster monomials $($\thref{cor:Stanley-Reisner}$)$.
\end{itemize}
\end{Theorem}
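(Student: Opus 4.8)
The argument follows the pattern of the proof for $\Gr(2,\mathbb{C}^n)$, with the simplification that the combinatorics of $A_{3,6}$, a cluster algebra of finite type $\mathtt{D}_4$ --- $50$ seeds, $16$ mutable and $6$ frozen cluster variables --- lets one verify all three statements by an explicit (computer-assisted) computation whose outcome is predicted at every step by the $\mathbf{g}$- and $\mathbf{c}$-vector combinatorics. Fix an initial seed of $A_{3,6}$ in the cluster embedding and, exactly as for the weight vectors $\mathbf{w}_T$ attached to triangulations in the $\Gr(2,\mathbb{C}^n)$ case (\thref{def:wt of T}), use the $\mathbf{g}$-vectors of all $22$ cluster variables to attach to each seed $s$ a weight vector $\mathbf{w}_s$ on the coordinates of $\mathbb{P}(1,\dots,1,2,2)$. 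I expect all the $\mathbf{w}_s$ to lie in the closure of a single full-dimensional Gr\"obner cone $C$: the common refinement of the inequalities coming from the exchange relations of $A_{3,6}$, where for each exchange relation $xx' = M_{+}+M_{-}$, viewed as an element of $I^{\rm ex}$ in the $22$ cluster-embedding variables, one demands that $\mathbf{w}$ attain its minimum uniquely on the monomial $xx'$. The central computation is that $C$ is full-dimensional and that its rays, modulo the lineality of the Gr\"obner fan, are in bijection with the $16$ mutable cluster variables of $A_{3,6}$ --- this is the labelling of rays used in the statement.

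\textbf{The initial ideal and (iii).} Next I would exhibit a reduced Gr\"obner basis $\mathcal{G}$ of $I^{\rm ex}$ for $C$, consisting of the exchange relations of $A_{3,6}$ (among them the two relations expressing the extra cluster variables as binomials in Pl\"ucker coordinates) together with, possibly, further relations supported on non-cluster monomials, and check that the initial terms $\init_C(g)$ for $g\in\mathcal{G}$ are, collectively, exactly the monomials $xx'$ ranging over incompatible pairs of cluster variables --- the cluster complex of finite type being a flag complex, these quadratic monomials are precisely its minimal non-faces. Hence $\init_C(I^{\rm ex})$ equals the Stanley--Reisner ideal of the cluster complex. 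The standard monomials of $A_{3,6}$ for $C$ are then exactly the monomials supported on a face of the cluster complex, i.e.\ the products of simultaneously compatible cluster variables; since cluster monomials form a $\mathbb{C}$-basis of the finite type algebra $A_{3,6}$, the standard monomial basis coincides with the cluster monomial basis, which is (iii). Uniqueness of $C$ is then automatic: maximal cones of the Gr\"obner fan of $I^{\rm ex}$ correspond bijectively to its monomial initial ideals, so $C$ is the unique maximal cone with $\init_C(I^{\rm ex})$ equal to the Stanley--Reisner ideal of the cluster complex, hence the unique one satisfying (iii), and a fortiori the unique one satisfying (i)--(iii).

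\textbf{Part (ii).} By construction each seed $s$ determines a face $\tau_s$ of $C$, and one checks directly that $\init_{\tau_s}(I^{\rm ex})$ is a binomial prime ideal whose variety is the affine toric variety attached to the $\mathbf{g}$-vector cone of $s$; in particular $\tau_s\subseteq\trop(I^{\rm ex})$ and $\tau_s$ lies in the totally positive part. Now $C\cap\trop(I^{\rm ex})$ is a subfan containing $\bigcup_s\tau_s$; conversely, using that the totally positive part of $\trop(I^{\rm ex})$ is cluster-theoretic --- its maximal cones indexed by the $50$ seeds of $A_{3,6}$, in the spirit of the Speyer--Williams description of the totally positive tropical $\Gr\big(3,\mathbb{C}^6\big)$ adapted to the cluster embedding --- a dimension count (both fans are pure of the same dimension modulo lineality, with maximal cones among the $\tau_s$) shows that $C\cap\trop(I^{\rm ex})=\bigcup_s\tau_s$ is the totally positive part of $\trop(I^{\rm ex})$.

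\textbf{Part (i) and the main obstacle.} By \thref{prop:lifted generators} and \thref{prop:change rays} the lifted ideal $\tilde I_{3,6}$ is generated by the lifts $\tilde g$ of the elements $g\in\mathcal{G}$, and each $\tilde g$ is a polynomial over the polynomial ring in the ray-variables $t_r$ whose extra coefficients are monomials $\mathbf{t}^{\mu}$, with exponents $\mu$ read off from the ray generators of $C$. On the other hand, the cluster algebra with universal coefficients $A^{\rm univ}_{3,6}$ of finite type $\mathtt{D}_4$ admits an explicit presentation in which the exchange relations carry coefficient monomials governed by the $\mathbf{c}$-vectors; using the bijection between rays of $C$ and mutable cluster variables one matches the $t_r$ with the universal coefficients and reduces the question to the finite verification that the lifted exchange relations are exactly the universal ones. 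The assignment sending each Pl\"ucker variable $p_{ijk}$ to the corresponding Pl\"ucker cluster variable with universal coefficients, the two extra variables $q_1,q_2$ to the two extra cluster variables, and each $t_r$ to the matching universal coefficient, then defines a $\mathbb{C}$-algebra homomorphism $\tilde A_{3,6}\to A^{\rm univ}_{3,6}$, which is an isomorphism since both sides are free modules over the respective polynomial rings with bases indexed by the same cluster monomials, by (iii) and \thref{thm:family}. The main obstacle is exactly this identification, together with the determination of $C$: since $I^{\rm ex}$ lives in $22$ variables the Gr\"obner-fan data must be produced with a computer algebra system, and the delicate point is to show that the $\mathbf{g}$- and $\mathbf{c}$-vector combinatorics of $\mathtt{D}_4$ control, simultaneously and consistently, the rays of $C$, the squarefree initial ideal $\init_C(I^{\rm ex})$, and the exponents appearing in the lifts $\tilde g$, so that the isomorphism $\tilde A_{3,6}\cong A^{\rm univ}_{3,6}$ reduces to a finite check on explicit exchange relations rather than an unstructured exploration of the Gr\"obner fan.
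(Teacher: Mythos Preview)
Your overall strategy matches the paper's: identify $C$ explicitly, compute $\init_C(I^{\rm ex})$, show it is the Stanley--Reisner ideal of the cluster complex, match the lifted ideal with the universal exchange relations, and prove the isomorphism by comparing free-module bases. Two points deserve correction.

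\textbf{The non-exchange Gr\"obner basis elements.} You correctly allow that $\mathcal{G}$ may contain ``further relations supported on non-cluster monomials,'' but in your argument for (i) you then write that the isomorphism ``reduces to the finite verification that the lifted exchange relations are exactly the universal ones.'' This is the gap. In the paper the reduced Gr\"obner basis $\mathcal{G}_C(I^{\rm ex})$ has $54$ elements: the $52$ exchange relations together with two further elements $f$ (with leading term $p_{135}p_{246}$) and $g$ (with leading term $XY$). These are \emph{not} exchange relations, so matching the lifted exchange relations to the universal ones does not by itself show $\tilde f,\tilde g\in\ker(\Psi)$. The paper handles this by a separate argument: one multiplies $\tilde f$ and $\tilde g$ by suitable monomials in the $t$'s and a Pl\"ucker variable and expresses the result as a $\mathbb{C}[t_i][p_I,X,Y]$-combination of lifts of exchange relations (equations~\eqref{eq:f} and~\eqref{eq:g}); since $A_{3,6}^{\rm univ}$ is a domain, this forces $\Psi(\tilde f)=\Psi(\tilde g)=0$. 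Without this step your surjection $\tilde A_{3,6}\to A_{3,6}^{\rm univ}$ is not established.

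\textbf{The argument for $C\cap\trop(I^{\rm ex})=\trop^+(I^{\rm ex})$.} Your dimension-count argument presupposes that the maximal cones of $\trop^+(I^{\rm ex})$ are exactly the $\tau_s$, which you justify only ``in the spirit of'' the Speyer--Williams description for the Pl\"ucker embedding; that description does not transfer automatically to the cluster embedding $I^{\rm ex}$ in $22$ variables. The paper's argument is shorter and avoids this input: once $\mathcal{G}_C(I^{\rm ex})$ is computed, one observes that for every $h\in\mathcal{G}_C(I^{\rm ex})$ the initial monomial $\init_C(h)$ is the \emph{unique} monomial of $h$ with positive coefficient. Hence for any $w\in C$, the initial form $\init_w(h)$ contains a monomial if and only if it contains a nonzero element of $\mathbb{R}_{\ge 0}[p_I,X,Y]$, giving $C\cap\trop(I^{\rm ex})=\trop^+(I^{\rm ex})$ directly.
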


As mentioned before, the toric fibers of the above families are of particular interest.
They arise from maximal cones in the tropicalization whose initial ideal is binomial and prime; such cones are called {\it maximal prime cones}.
The corresponding projective toric varieties (respectively, their normalizations) have associated polytopes.
Following~\cite{KM16} these polytopes can be realized as Newton--Okounkov bodies of full rank valuations constructed from maximal prime cones.
In~the recent preprint~\cite{EH-NObodies} Escobar and Harada study wall-crossing phenomena of Newton--Okounkov bodies associated to maximal prime cones that intersect in a facet.
They give piecewise linear maps called {\it flip} and {\it shift} that relate the Newton--Okounkov bodies.
For cluster algebras like~$A_{3,6}$ and~$A_{2,n}$ it has been shown in several cases that Newton--Okounkov bodies can equivalently be obtained from the cluster structure (see, e.g.,~\cite{BCMN,BF19,FO20,RW17}).
Hence, one might wonder how Escobar--Harada's wall-crossing formulas can be understood in the context of cluster algebras.
In~the particular case of $A_{2,n}$ each Newton--Okounkov body arising from $\trop(I_{2,n})$ is unimodularly equivalent to the convex hull of {\bf g}-vectors of Pl\"ucker coordinates for some triangulation.
More precisely, we obtain the following result:

\begin{Corollary}[\thref{cor:mutate NObodies}]
Let $\sigma_1$ and $\sigma_2$ be two maximal prime cones in $\trop(I_{2,n})$ that intersect in a facet. Then their associated Newton--Okounkov bodies
are $($up to unimodular equivalence$)$ related by a shear map obtained from tropicalizing a cluster mutation.
\end{Corollary}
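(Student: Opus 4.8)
The plan is to deduce the statement from the ${\bf g}$-vector description of the Newton--Okounkov bodies $\Delta_\sigma$ obtained above, combined with the classical transformation rule for ${\bf g}$-vectors under a single cluster mutation.

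First I would record the combinatorial dictionary. The maximal prime cones of $\trop(I_{2,n})$ are in bijection with the triangulations of the $n$-gon, equivalently with the seeds of the cluster algebra $A_{2,n}$, in such a way that two of them $\sigma_1,\sigma_2$ meet in a facet precisely when the corresponding triangulations $T_1,T_2$ differ by flipping a single diagonal $d$ to a diagonal $d'$; equivalently, the seeds attached to $T_1$ and $T_2$ are related by the single mutation $\mu_d$ in the direction of $p_d$. By the identification of Newton--Okounkov bodies recalled above --- the full-rank valuations of~\cite{KM16} attached to maximal prime cones --- each $\Delta_{\sigma_i}$ is unimodularly equivalent to the polytope $P_{T_i}:=\conv\{{\bf g}^{T_i}_{jl}\colon 1\le j<l\le n\}$, the convex hull of the ${\bf g}$-vectors of all Pl\"ucker coordinates taken with respect to the seed $T_i$.

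Next I would invoke the ${\bf g}$-vector mutation formula of~\cite{FZ_clustersIV} (see also~\cite{GHKK}): passing from the seed of $T_1$ to that of $T_2$ sends each ${\bf g}^{T_1}_{jl}$ to ${\bf g}^{T_2}_{jl}$ by the piecewise-linear map $\mu_d^{\rm trop}$ that tropicalizes the cluster mutation $\mu_d$. This map is bent along the hyperplane $\{g_d=0\}$, and on each of the two closed half-spaces it is the composite of the reflection $g_d\mapsto -g_d$ with a shear fixing that hyperplane. I would then check that $\mu_d^{\rm trop}$ carries the whole polytope $P_{T_1}$ onto $P_{T_2}$, and not merely its finite set of vertices: this holds because $\mu_d^{\rm trop}$ is a piecewise-linear automorphism that maps the ${\bf g}$-vector fan of $T_1$ onto that of $T_2$ preserving the cone structure, sending ${\bf g}^{T_1}_{jl}\mapsto {\bf g}^{T_2}_{jl}$ and primitive generators to primitive generators, while by construction the face fan of $P_{T_i}$ is exactly the ${\bf g}$-vector fan of $T_i$. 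Together with the unimodular equivalences $\Delta_{\sigma_i}\cong P_{T_i}$ this already shows that $\Delta_{\sigma_1}$ and $\Delta_{\sigma_2}$ are related, up to unimodular equivalence, by the shear $\mu_d^{\rm trop}$, i.e., by a tropicalized cluster mutation.

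The remaining step, which I expect to be the main obstacle, is to match $\mu_d^{\rm trop}$ with the wall-crossing maps of~\cite{EH-NObodies}: under the equivalences $\Delta_{\sigma_i}\cong P_{T_i}$ one must verify that Escobar--Harada's \emph{flip} and \emph{shift} attached to the common facet $\tau=\sigma_1\cap\sigma_2$ coincide with $\mu_d^{\rm trop}$ (respectively with its two linear pieces). Concretely, one has to show that the linear span of $\tau$ is carried to the bending hyperplane $\{g_d=0\}$ and that the shear induced on each side is the one prescribed by the primitive normal vectors of $\tau$. The input for this is the explicit combinatorial description of the ${\bf g}$-vectors ${\bf g}^{T}_{jl}$ in terms of diagonals of the $n$-gon together with the explicit weight vectors ${\bf w}_{T_1},{\bf w}_{T_2}$ lying in $\sigma_1$ and $\sigma_2$ that were constructed above; everything else is formal once the Newton--Okounkov bodies have been identified with the ${\bf g}$-vector polytopes $P_{T_i}$.
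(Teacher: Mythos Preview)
Your proposal contains a genuine gap at the very first step: you assert that the maximal prime cones of $\trop(I_{2,n})$ are in bijection with triangulations of the $n$-gon (equivalently, seeds of $A_{2,n}$). This is false. By Speyer--Sturmfels, maximal cones of $\trop(I_{2,n})$ correspond to \emph{all} trivalent trees with $n$ labeled leaves, and there are $(2n-5)!!$ of these, far more than the Catalan number $C_{n-2}$ of triangulations. Only the \emph{planar} trivalent trees, i.e., those cones lying in $\trop^+(I_{2,n})$, correspond to triangulations. So for general $\sigma_1,\sigma_2$ you cannot immediately speak of ``the corresponding triangulations $T_1,T_2$'' or of a flip of a diagonal.

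The paper bridges this gap by a reduction you are missing. First, it quotes \cite[Lemma~5.13]{EH-NObodies} to arrange that $\sigma_1$ and $\sigma_2$ are faces of a common maximal Gr\"obner cone. Then it uses the $S_n$-action on $\trop(I_{2,n})$ (permuting leaf labels of the trees) to move that Gr\"obner cone to the distinguished cone $C$ of Section~\ref{sec:monomial_degeneration}; after this relabeling the images of $\sigma_1,\sigma_2$ lie in $C\cap\trop(I_{2,n})=\trop^+(I_{2,n})$ and hence do correspond to triangulations $T_1,T_2$ differing by a flip. The $S_n$-action gives isomorphisms $V(\init_{\sigma_i}(I))\cong V(\init_{T_i}(I))$, which is exactly what produces the ``up to unimodular equivalence'' in the statement. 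Only then does the ${\bf g}$-vector mutation argument (your second paragraph, which is essentially the content of \thref{lem:mutate NObodies}) apply. Your third paragraph, on matching with Escobar--Harada's flip and shift, is not required for the corollary as stated; in the paper that comparison is a separate remark following the proof.
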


In combination with Escobar--Harada's results about the flip map for $\Gr(2,\mathbb C^n)$ this corollary implies that it is of cluster nature (for details see Section~\ref{sec:NO}).

We would like to remark that this paper is not the first to make a connection between cluster algebras and Gr\"obner theory.
In~\cite{MRZ18} Muller, Rajchgot and Zykoski obtained presentations for lower bounds of cluster algebras using Gr\"obner theory.
Further, it is worth noticing that the theory of universal coefficients for cluster algebras is particularly well-developed for finite and surface type cluster algebras, see~\cite{Rea_surface}.
We believe that the above results can be extended to projective varieties containing a cluster variety of finite type.
It is further an interesting and challenging problem to extend the results of this paper to cluster algebras of (infinite) surface type as it would involve Gr\"obner theory of non-Noetherian algebras.

\medskip

\noindent{\bf Structure of the paper.} In Section~\ref{sec:prep} we recall the background on weighted projective varieties and Gr\"obner basis theory.
In Section~\ref{sec:flat families} we introduce the construction of the flat families and prove the main theorem.
We relate to Kaveh--Manon's work in Section~\ref{sec:toric families}.
In Section~\ref{sec:Gr} we turn to the Grassmannians $\Gr(2,\mathbb C^n)$ and $\Gr\big(3,\mathbb C^6\big)$.
We recall the background on cluster algebras and universal coefficients in Section~\ref{sec:cluster}.
We explain in detail how toric degenerations from cluster algebras arise as Gr\"obner degenerations for $\Gr(2,\mathbb C^n)$ in Section~\ref{sec: toric deg}.
Then we apply the main construction to $\Gr(2,\mathbb C^n)$ in Section~\ref{sec:monomial_degeneration} and afterwards to $\Gr\big(3,\mathbb C^6\big)$ in Section~\ref{sec:Gr(3,6)}.
We~explore further connections to Escobar--Harada's work in Section~\ref{sec:NO}.
Finally, in the Appendix~\ref{sec:data Gr36} we present computational results used for the application to $\Gr\big(3,\mathbb C^6\big)$.

\section{Preliminaries}\label{sec:prep}
We first fix our notation throughout the note. Let $\mathbb{K}$ be an algebraically closed field. We are mainly interested in the case when $\mathbb{K}= \mathbb{C}$. In the polynomial ring $\mathbb K[x_1, \dots , x_n]$, we fix the notation ${\bf x}^\alpha$ with $\alpha=(a_1,\ldots,a_n)\in \mathbb Z_{\ge 0}^n$ denoting the monomial $x_1^{a_1}\dots x_n^{a_n}$.
Throughout when we write $f = \sum_{\alpha\in \mathbb{Z}^n_{\geq 0}} c_{\alpha} {\bf x}^{\alpha}$ for $f \in \mathbb K[x_1, \dots , x_n]$ we refer to the expression of $f$ in the basis of~monomials.

\subsection{Weighted projective varieties}
Fix a vector $ {\bf d}=(d_1, \dots , d_n)\in \mathbb R^n$.
Let $\mathbb K_{\bf d}[x_1, \dots , x_n]$ be the polynomial ring $\mathbb K [x_1, \dots , x_n]$ endowed with the $\mathbb R$-grading determined by setting $\text{deg} (x_i)=d_i$ for $i = 1, \dots , n$.

\begin{Definition}\thlabel{def:d homog}
For $c \in \mathbb K^{\ast}$ and $ \alpha=(a_1,\dots , a_n) \in \mathbb Z_{\geq 0}^n$ the {\it weight} of the monomial $ c {\bf x}^{\alpha}\in \mathbb K_{\bf d}[x_1, \dots , x_n]$ is ${\bf d}\cdot \alpha = \sum_{i=1}^nd_ia_i$.
We extend this definition and say $f \in \mathbb K_{\bf d}[x_1, \dots , x_n]$ is {\bf d}-{\it homogeneous} if $f$ can be expressed as a sum of monomials of the same weight.
Similarly, an~ideal $J\subseteq \mathbb K_{\bf d}[x_1, \dots , x_n]$ is {\bf d}-{\it homogeneous} if it is generated by ${\bf d}$-homogeneous elements.
\end{Definition}

Assume ${\bf d}\in \mathbb Z^n_{> 0}$. In this case $\mathbb K_{\bf d}[x_1, \dots , x_n]$ is $\mathbb Z$-graded and we can define the {\it weighted projective space} $\mathbb P (d_1, \dots , d_n)$ as the quotient of $\mathbb K^n$ under the equivalence relation
\begin{displaymath}
(x_1,\dots , x_n) \sim \big(\lambda^{d_1}x_1, \dots ,\lambda^{d_n} x_n\big) \qquad\text{for}\quad\lambda \in \mathbb K^{\ast}.
\end{displaymath}
For $\mathbb K = \mathbb C$ it is well-known that $\mathbb P (d_1, \dots , d_n)$ is a projective toric variety~\cite{CLS}.
Moreover, as a scheme $\mathbb P(d_1,\dots,d_n)$ is the homogeneous spectrum (or Proj) of $\mathbb K_{\bf d}[x_1,\dots,x_n]$~\cite[Notation~1.1]{Dolgachev}.
For simplicity, we denote $\mathbb P (d_1, \dots , d_n)$ by $\mathbb P ({\bf d})$ and let $[x_1:\dots : x_n]$ denote the class of $(x_1, \dots , x_n)$ in $\mathbb P ({\bf d})$.
Given a {\bf d}-homogeneous ideal $J \subseteq \mathbb K_{\bf d}[x_1, \dots , x_n]$ its set of zeros is
\begin{displaymath}
V(J)=\big\{ [x_1:\dots : x_n]\in \mathbb P ({\bf d}) \colon f(x_1,\dots , x_n)=0\text{ for all }f\in J\big\}.
\end{displaymath}
Conversely, given a subset $X \subseteq \mathbb P (d_1,\dots,d_n)$, its associated ideal $I(X)$ is generated by polynomials:
\begin{displaymath}
\big\{ f \in \mathbb K_{\bf d}[x_1,\dots , x_n] \colon f \text{ is }{\bf d}\text{-homogeneous and }f(p)=0\text{ for all }p\in X \big\}.
\end{displaymath}
Subsets of $\mathbb P({\bf d})$ of the form $V(J)$ for some weighted projective ideal $J$ are called weighted projective varieties.
These sets are the closed sets of a Zariski-type topology on $ \mathbb P({\bf d})$.
The~the\-ory of weighted projective varieties is very similar to the theory of usual projective varieties and background on this theory can be found for example in~\cite{Dolgachev,Hosgood,Reid}. In particular, the projective space $\mathbb{P}^{n-1}$ can be realized as $\mathbb{P}
^{n-1}=\mathbb P ({\bf 1})$, where ${\bf 1}=(1, \dots , 1)\in \mathbb Z^n$. So every projective variety can be considered as a weighted projective variety of $\mathbb P ({\bf 1})$.

\begin{Definition}
The {\it weighted homogeneous coordinate ring} of a weighted projective variety $X\subseteq \mathbb P ({\bf d})$ is defined as
\begin{displaymath}
S(X):= \mathbb{K}_{\bf d}[x_1,\dots , x_n]/ I(X).
\end{displaymath}
\end{Definition}
By construction, $S(X)$ is a positively graded ring. Moreover, the weighted homogeneous coordinate ring of a projective variety (considered as a weighted projective variety) coincides with its homogeneous coordinate ring.
There is a natural notion of morphism between weighted projective varieties which we will not need in this work.
It will be sufficient to recall that if~$X\subseteq \mathbb P(d_1, \dots , d_n)$ and $Y\subseteq \mathbb P(d'_1, \dots , d'_m)$ are weighted projective varieties, then an isomorphism between the graded rings $S(X)$ and $S(Y)$ induces an isomorphism of the weighted projective varieties $X$ and $Y$.

\subsection{Gr\"obner basis theory}
We first review some results in Gr\"obner basis theory 
in order to fix our notation and
keep the paper self-contained. Most of the material here is well-known and we refer to Buchberger's thesis~\cite{buchberger2006bruno} and standard books (e.g.,~\cite{loustaunau1994introduction,herzog2011monomial,kreuzer2005computational,GB_Bernd}) for proofs and more details.

\begin{Definition}\thlabel{def:init}
Let $f = \sum_{\alpha\in \mathbb{Z}^n_{\geq 0}} c_{\alpha} {\bf x}^{\alpha} \in \mathbb K[x_1, \dots , x_n]$.
Given a {\it weight vector} $w\in \mathbb R^n$ the {\bf initial form} of $f$ with respect to $w$ is defined as
\begin{displaymath}
\init_w(f):=\sum_{\alpha\colon w\cdot \alpha=a} c_{\alpha}{\bf x}^{\alpha},
\end{displaymath}
where $a= \min \{w\cdot \alpha \colon c_{\alpha}\not =0\}$. For an ideal $J\subseteq \mathbb K[x_1, \dots , x_n]$ its {\it initial ideal} with respect to $w$ is defined as $\init_w(J)\colon =\langle \init_w(f)\colon f\in J\rangle$.
A finite set $\mathcal{G}=\{g_1,\ldots,g_r\}\subseteq J$ is called a {\it Gr\"obner basis} for $J$
with respect to $w$ if $\init_w(J)=\langle \init_w(g_1),\ldots,\init_w(g_r)\rangle$.
\end{Definition}

\begin{Definition}\thlabel{def:mono order}
A {\it monomial term order} on $\mathbb K[x_1, \dots , x_n]$ is a total order $<$ on the set of monic monomials in $\mathbb K[x_1, \dots , x_n]$ such that for every $\alpha,\beta,\gamma$ in $\mathbb Z_{\ge 0}^n$ we have that
\[
(i)\quad 1\le {\bf x}^\alpha,\qquad \text{and}\qquad
(ii)\quad \text{if}\quad {\bf x}^\alpha<{\bf x}^\beta, \quad \text{then}\quad {\bf x}^{\alpha+\gamma}<{\bf x}^{\beta+\gamma}.
\]
The {\it initial monomial} of an element $f=\sum_{\alpha\in \mathbb{Z}^n_{\geq 0}} c_{\alpha} {\bf x}^{\alpha}\in \mathbb K[x_1, \dots , x_n]$ with respect to $<$ is $\init_<(f):=c_\beta{\bf x}^{\beta}$, where ${\bf x}^\beta=\max_<\{{\bf x}^\alpha\colon c_\alpha\not=0\}$.
The {\it initial ideal} of an ideal $J\subseteq \mathbb K[x_1, \dots , x_n]$ with respect to $<$ is defined as $\init_<(J):=\langle \init_<(f)\colon f\in J\rangle$.
\end{Definition}

Recall that given an ideal $J\subseteq \mathbb K[x_1, \dots , x_n]$ and a monomial term order $<$ there always exists a weight vector $-w\in \mathbb{Z}_{\geq 0}^n$
such that $\init_w(J)=\init_<(J)$, see, e.g.,~\cite[Theorem~3.1.2]{herzog2011monomial} (Note the sign switch here, this is due to our $\min$-convention for initial ideals with respect to weight vectors and $\max$-convention for initial ideals with respect to monomial term orders.)
On the other hand, if $\init_w(J)$ is generated by monomials and there exists a monomial term order $<$ such that $\init_w(J)=\init_{<}(J)$, then
we say that $w$ is {\it compatible} with $<$.

\begin{Definition}\thlabel{def:SMT}
Let $\init_<(J)$ be a monomial initial ideal of the ideal $J$ for some monomial term order $<$ on $\mathbb{K}[x_1,\ldots,x_n]$. 
Then the set $\mathbb B_{<}:=\{\bar{\bf x}^\alpha \colon {\bf x}^\alpha\not \in \init_<(J)\}$ is a vector space basis of~$\mathbb K[x_1, \dots , x_n]/J$ (and $\mathbb K[x_1, \dots , x_n]/\init_<(J)$) called {\it standard monomial basis}.
\end{Definition}

{\samepage\begin{Theorem}[{the division algorithm}]\thlabel{division alg}
Fix a monomial term order $<$ on $\mathbb K[x_1,\dots,x_n]$ and let $g_1,\dots,g_s$ be non-zero polynomials in $\mathbb K[x_1,\dots,x_n]$. Given $0\not=f\in\mathbb K[x_1,\dots,x_n]$ there exist $f_1,\dots,f_s,f'\in \mathbb K[x_1,\dots,x_n]$ with $f= f_1g_1+\dots+f_sg_s+f'$, such that
\begin{itemize}\itemsep=0pt
\item[$(i)$] if $f'\neq0$ then no monomial of $f'$ is divisible by any of the monomials $\init_<(g_1),\dots,\init_<(g_s);$
\item[$(ii)$] if $f_i\not=0$ then $\init_<(f)\ge\init_<(f_ig_i)$ for all $i$.
\end{itemize}
We say that $f$ reduces to $f'$ with respect to $\{g_1,\dots,g_s\}$.
\end{Theorem}

}

\begin{Definition}
Let $<$ be a monomial term order on $\mathbb K[x_1, \dots , x_n]$ and $\mathcal{G}=\{g_1,\ldots,g_s\}$ a finite generating set of an ideal $J$.
Then the {\it $S$-polynomial} of $g_i$ and $g_j$ is defined as
\begin{displaymath}
S(g_i,g_j):=\frac{{\rm lcm}\big(\init_{<}(g_i),\init_{<}(g_j)\big)}{\init_{<}(g_i)}g_i-\frac{{\rm lcm}\big(\init_{<}(g_i),\init_{<}(g_j)\big)}{\init_{<}(g_j)}g_j.
\end{displaymath}
We say that {\it Buchberger's criterion} holds if for all $1\le i<j\le s$, the $S$-pairs reduce to zero with respect to $\{g_1,\dots,g_s\}$.
If Buchberger's criterion holds, then $\mathcal{G}$ forms a Gr\"obner basis for~$J$ with respect to $< $.
Moreover, a Gr\"obner basis $\mathcal{G}$ for $J$ with respect to $<$ is {\it reduced} if
\begin{itemize}\itemsep=0pt
 \item[$(i)$] $\init_<(g_i)$ is monic for all $1\leq i\leq s$, and
\item[$(ii)$] for $i\neq j$ no monomial in $g_i$ is divisible by $\init_<(g_j)$.
\end{itemize}
A reduced Gr\"obner basis for $J$ with respect to $<$ always exists and is unique, see, e.g.,~\cite[Theorem~2.2.7]{herzog2011monomial}.
We let $\mathcal{G}_{<}(J)$ denote the reduced Gr\"obner basis for $J$ with respect to $<$.
\end{Definition}

Studying all possible initial ideals of a given ideal leads to the notion of Gr\"obner fan (see, e.g.,~\cite[Proposition 2.4]{GB_Bernd}):

\begin{Definition}\thlabel{def:Gfan}
Let $J\subseteq \mathbb K[x_1, \dots , x_n]$ be an ideal.
The {\it Gr\"obner region} of $J$ denoted by $\text{GR}(J)$ is the set of $w\in \mathbb R^n$ such that there exists a monomial term order $<$ with $\init_<(\init_w(J))=\init_<(J)$.
The {\it Gr\"obner fan} of $J$ denoted by $\GF(J)$ is a fan with support $\text{GR}(J)$ in which a pair of elements $u,w\in \mathbb R^n$ lie in the relative interior of the same cone $C\subset \mathbb R^n$ (denoted by $C^\circ$) if and only if $\init_u(J)=\init_w(J)$.
We introduce the notation $\init_C(J):=\init_w(J)$ for any $w\in C^\circ$.
By~definition of~$\text{GR}(J)$,
for every full-dimensional cone $C$, there exists a monomial term order $<$ such that~$C$ is the topological closure of $\{w\in \mathbb{R}^n\colon \init_w(J)=\init_<(J)\}$.
Moreover, we define the {\it lineality space}~$\mathcal L(J)$ as the linear subspace of $\GF(J)$ that contains all elements $u$ for which $\init_u(J)=J$.
\end{Definition}

Integral weight vectors in $\GF(J)$ can be seen as points in a lattice $N=\mathbb Z^n$ whose dual lattice $M=(\mathbb Z^n)^*$ contains exponent vectors of monomials in $\mathbb K[x_1,\dots,x_n]$.
Consequently, for $w\in N$ and $\alpha\in M$ we denote by $w\cdot \alpha$ the pairing between the two lattices.

\begin{Remark}\label{rem:Eisenbud}
Proposition~15.16 in~\cite{eisenbud2013commutative} gives a criterion for whether a weight vector is compatible with a monomial term order $<$ for $J$, or not. Namely, a weight $w$ is compatible with a monomial term order $<$ if and only if $\init_w(g)=\init_{<}(g)$ for every element of $\mathcal{G}_<(J)$. Let $C$ be the topological closure of the corresponding Gr\"obner cone of $<$, then $w\in C$ if and only if $\init_<(g)=\init_<(\init_w(g))$ for every $g\in \mathcal{G}_<(J)$.
\end{Remark}

\begin{Lemma}\thlabel{lem:wt homog}
Let ${\bf d}\in \mathbb Z_{>0}^n$ and $J\subseteq \mathbb K_{\bf d}[x_1, \dots , x_n]$ be a ${\bf d}$-homogeneous ideal. Then ${\bf d} \in \mathcal{L}(J)$.
More generally, $\mathcal L(J)$ contains all elements ${\bf u}\in \mathbb R^n$ such that $J$ is ${\bf u}$-homogeneous.
\end{Lemma}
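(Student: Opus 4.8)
The plan is to reduce both assertions to the single claim that $\init_{\mathbf u}(J)=J$ whenever $J$ is $\mathbf u$-homogeneous; the first statement is then the special case $\mathbf u=\mathbf d$, which holds by hypothesis. Throughout I regard $\mathbf u\in\mathbb R^n$ as defining a grading on $\mathbb K[x_1,\dots,x_n]$ by the additive subgroup of $\mathbb R$ generated by $u_1,\dots,u_n$, via $\deg(x_i)=u_i$, so that "$\mathbf u$-homogeneous" in the sense of~\thref{def:d homog} is exactly homogeneity for this grading, and the weight $\mathbf u\cdot\alpha$ of a monomial $\mathbf x^\alpha$ is its degree. For an arbitrary $f\in\mathbb K[x_1,\dots,x_n]$ I write $f=\sum_a f_{(a)}$ for its (finite) decomposition into $\mathbf u$-homogeneous components.

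First I would record two elementary observations. $(a)$ If $f$ is $\mathbf u$-homogeneous, then all its monomials share the same weight, so directly from~\thref{def:init} we get $\init_{\mathbf u}(f)=f$; more generally, for any $f$ the initial form $\init_{\mathbf u}(f)$ is precisely the $\mathbf u$-homogeneous component $f_{(a)}$ of smallest weight $a$. $(b)$ A $\mathbf u$-homogeneous ideal $J$ contains all $\mathbf u$-homogeneous components of each of its elements: writing $f=\sum_k c_k g_k$ with $g_1,\dots,g_s$ a set of $\mathbf u$-homogeneous generators of $J$ and $c_k\in\mathbb K[x_1,\dots,x_n]$, and expanding each $c_k=\sum_a c_{k,(a)}$, one sees that $c_{k,(a)}g_k$ has weight $a+\deg_{\mathbf u}(g_k)$, so collecting terms of a fixed total weight $b$ gives $f_{(b)}=\sum_k c_{k,\,(b-\deg_{\mathbf u} g_k)}\,g_k\in J$. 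This is the standard argument that ideals generated by homogeneous elements are homogeneous in any graded ring.

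With these in hand the two inclusions are immediate. For $J\subseteq\init_{\mathbf u}(J)$: by $(a)$ each generator satisfies $\init_{\mathbf u}(g_i)=g_i$, hence $g_i\in\init_{\mathbf u}(J)$ and so $J=\langle g_1,\dots,g_s\rangle\subseteq\init_{\mathbf u}(J)$. For $\init_{\mathbf u}(J)\subseteq J$: a generator of $\init_{\mathbf u}(J)$ has the form $\init_{\mathbf u}(f)$ with $f\in J$, and by $(a)$ it equals a $\mathbf u$-homogeneous component of $f$, which by $(b)$ lies in $J$; since $\init_{\mathbf u}(J)$ is generated by all such elements, $\init_{\mathbf u}(J)\subseteq J$. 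Combining gives $\init_{\mathbf u}(J)=J$, i.e.\ $\mathbf u\in\mathcal L(J)$ by the description of the lineality space in~\thref{def:Gfan}; specializing to $\mathbf u=\mathbf d$ yields $\mathbf d\in\mathcal L(J)$.

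There is no serious obstacle here; the only point that requires a little care is observation $(b)$. Indeed, $\init_{\mathbf u}(J)$ is generated by the full set $\{\init_{\mathbf u}(f):f\in J\}$ rather than by the initial forms of a finite generating set, so one genuinely needs the graded-ideal argument to conclude that every $\init_{\mathbf u}(f)$ lands back in $J$. Everything else is a direct unwinding of~\thref{def:d homog},~\thref{def:init} and~\thref{def:Gfan}.
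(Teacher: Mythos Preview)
Your proof is correct and follows essentially the same approach as the paper: both show $\init_{\mathbf u}(J)=J$ by observing that the initial form of any $f\in J$ is one of its $\mathbf u$-homogeneous components, which lies in $J$ because a homogeneous ideal contains the homogeneous components of its elements. The only differences are cosmetic: you prove observation~$(b)$ directly and make both inclusions explicit, whereas the paper cites \cite[Lemma~3.0.7]{Hosgood} for the decomposition and writes out the argument only for $\mathbf u=\mathbf d$.
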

\begin{proof}
Observe that for every ${\bf d}$-homogeneous polynomial $f\in \mathbb K_{\bf d}[x_1, \dots , x_n]$ the equality $\init_{\bf d}(f)=f$ holds.
Next, every $f \in J$ can be uniquely written as a finite sum $f=\sum_{i=0}^{d} f_i$ for some $d\in \mathbb Z_{\geq 0}$ and $f_i\in J$ of {\bf d}-degree $i$ (see~\cite[Lemma~3.0.7]{Hosgood}).
Then $\init_{\bf d}(f)=f_j$, where $j=\min\{ i \colon f_i \neq 0\} $.
In particular, for any $f \in J $ we have that $\init_{\bf d}(f) \in J$.
Therefore, $\init_{\bf d}(J )= J$.
\end{proof}

\begin{Lemma}\thlabel{lem:init ell}
Fix an arbitrary monomial term order $<$. Then for every $\ell \in \mathcal L(J)$ and every $g\in \mathcal G_<(J)$ we have $\init_\ell(g)=g$.
In particular, if $J$ is ${\bf d}$-homogeneous then so is every $g\in \mathcal G_<(J)$.
\end{Lemma}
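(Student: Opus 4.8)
The plan is to reduce everything to the following observation: writing $g=\sum_\alpha c_\alpha{\bf x}^\alpha\in\mathcal G_<(J)$ and ${\bf x}^\beta=\init_<(g)$, it suffices to show that $\ell\cdot\beta$ is simultaneously the minimal and the maximal value of $\ell\cdot\alpha$ over the monomials ${\bf x}^\alpha$ occurring in $g$. Indeed, once both equalities hold, every monomial of $g$ has $\ell$-weight $\ell\cdot\beta$, so $g$ is $\ell$-homogeneous and $\init_\ell(g)=g$.

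To establish the two weight equalities I would first note that since $\mathcal L(J)$ is a linear subspace we have $\ell,-\ell\in\mathcal L(J)$, and that $\mathcal L(J)$, being the lineality space of $\GF(J)$, is contained in the closed Gr\"obner cone $C$ of $<$. (If one prefers to argue this by hand: picking $w$ in the relative interior of the Gr\"obner cone of $<$ and using the standard identity $\init_w(\init_\ell(J))=\init_{\ell+\varepsilon w}(J)$ for small $\varepsilon>0$ together with $\init_\ell(J)=J$, one gets $w+s\ell$ in the open cone for all $s\gg0$, and letting $s\to\infty$ puts $\ell\in C$; the same works for $-\ell$.) Now I would invoke Remark~\ref{rem:Eisenbud}: from $\ell\in C$ it gives $\init_<(g)=\init_<(\init_\ell(g))$, and from $-\ell\in C$ it gives $\init_<(g)=\init_<(\init_{-\ell}(g))$, for every $g\in\mathcal G_<(J)$.

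It then remains to unpack these. The monomials of $\init_\ell(g)$ form the subset of the monomials of $g$ on which $\ell\cdot\alpha$ is minimal; if ${\bf x}^\beta$ were not among them, then $\init_<(\init_\ell(g))$, being the $<$-largest monomial of a proper subset of the monomials of $g$, would be strictly smaller than ${\bf x}^\beta=\init_<(g)$, a contradiction. Hence ${\bf x}^\beta$ lies in that subset, i.e.\ $\ell\cdot\beta=\min\{\ell\cdot\alpha\colon c_\alpha\ne0\}$. Symmetrically, $\init_{-\ell}(g)$ collects the monomials of $g$ on which $\ell\cdot\alpha$ is maximal, and the second identity forces $\ell\cdot\beta=\max\{\ell\cdot\alpha\colon c_\alpha\ne0\}$. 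This is exactly the reduction point above, so $\init_\ell(g)=g$, proving the first assertion.

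For the final sentence I would just combine this with Lemma~\thref{lem:wt homog}: if $J$ is ${\bf d}$-homogeneous then ${\bf d}\in\mathcal L(J)$, so the case $\ell={\bf d}$ of what we proved gives $\init_{\bf d}(g)=g$, i.e.\ every $g\in\mathcal G_<(J)$ is ${\bf d}$-homogeneous. The only genuinely delicate step is the claim that $\ell$ and $-\ell$ both lie in the closed Gr\"obner cone $C$, so that Remark~\ref{rem:Eisenbud} may be applied in both directions; everything after that is routine monomial-order bookkeeping.
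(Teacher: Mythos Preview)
Your proof is correct and takes a genuinely different route from the paper's. The paper argues constructively: starting from an $\ell$-homogeneous generating set of $J$ (available because $\init_\ell(J)=J$), it runs Buchberger's algorithm with respect to $<$ and checks that $\ell$-homogeneity is preserved under forming $S$-pairs, under the division algorithm, and under passing to the reduced Gr\"obner basis. Your argument instead exploits the symmetry of the lineality space: since both $\ell$ and $-\ell$ lie in the closed Gr\"obner cone of $<$, Remark~\ref{rem:Eisenbud} forces the leading monomial ${\bf x}^\beta$ of each $g\in\mathcal G_<(J)$ to realise both the minimum and the maximum of $\alpha\mapsto\ell\cdot\alpha$ over the support of $g$, whence $g$ is $\ell$-homogeneous.

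Your approach is slicker and avoids tracking homogeneity through Buchberger's steps; the only nontrivial input is that $\mathcal L(J)$ sits inside every closed Gr\"obner cone, which you justify via the standard refinement identity $\init_{\ell+\varepsilon w}(J)=\init_w(\init_\ell(J))$. (A minor streamlining of your parenthetical: once you have $\ell+\varepsilon w\in C^\circ$ for all small $\varepsilon>0$, just let $\varepsilon\to 0$ to get $\ell\in C$ directly; the detour through $w+s\ell$ with $s\to\infty$ is unnecessary.) The paper's approach, by contrast, is more self-contained---it needs no facts about the Gr\"obner fan beyond the definitions---and illustrates a general principle (Buchberger respects gradings) that is useful elsewhere. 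Interestingly, your $\pm\ell$ trick is essentially the same idea the paper uses later in the proof of Lemma~\ref{lem:strictly_convex}.
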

\begin{proof}
Consider a Gr\"obner basis $F:=\{f_1,\dots,f_t\}$ of $J$ with respect to a fixed element \mbox{$\ell\in\mathcal L(J)$}.
Then $J=\init_{\ell}(J)$ is generated by $\init(F):=\{\init_{\ell}(f_1),\dots,\init_\ell(f_t)\}$.
In particular, we have $\init_{\ell}(\init_{\ell}(f))=\init_\ell(f)$ for all $f\in F$.
From the set $F$ we can construct the reduced Gr\"obner basis $\mathcal G_<(J)$ by doing the following three steps:
First, extend $F$ to a Gr\"obner basis $H$ with res\-pect to $<$ by adding $S$-pairs using Buchberger's criterion.
Then if necessary eliminate elements from $H$ until the outcome is a minimal Gr\"obner basis $G$ (see~\cite[Corollary 1.8.3]{loustaunau1994introduction}).
Finally, reduce all elements in $G$ to obtains $\mathcal G_<(J)$ (see~\cite[Corollary 1.8.6]{loustaunau1994introduction}).
We invite the reader to verify that the property $\init_\ell(g)=g$ holds for all $g\in H$, hence for all $g\in G$ and finally for all $g\in \mathcal G_<(J)$.
\end{proof}

Recall from~\cite[Corollary 5.7]{mora1988grobner} that for a ${\bf d}$-homogeneous ideal $J\subseteq \mathbb K_{\bf d}[x_1, \dots , x_n]$ the support of its Gr\"obner fan, i.e.,~$\text{GR}(J)$, is $\mathbb R^n$. In other words, there exists a compatible monomial term order for every maximal cone $C\in \GF(J)$.

\begin{Lemma}\thlabel{lem:facet}
Let $J\subseteq \mathbb K_{\bf d}[x_1, \dots , x_n]$ be a ${\bf d}$-homogeneous ideal, $C\in\GF(J)$ a maximal cone, and $<$ a compatible monomial term order of $C$.
Consider $v\in C\setminus C^\circ$ and $u\in\mathbb{R}^n$ such that $w:=u+v\in C^\circ$. Then for every $g\in\mathcal{G}_<(J)$, we have $\init_<(g)=\init_{w}(g)=\init_u(\init_v(g))$.
\end{Lemma}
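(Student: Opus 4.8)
The plan is to establish the chain of equalities $\init_<(g)=\init_w(g)=\init_u(\init_v(g))$ one link at a time. The first equality should follow immediately from the Gr\"obner fan structure together with the compatibility criterion of Remark~\ref{rem:Eisenbud}; the second is the familiar ``two-step degeneration'' identity for iterated initial forms, and the real work is to use the hypotheses $v\in C$ and $w=u+v\in C^\circ$ to pin down which monomial survives each step.

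For the first equality I would argue as follows. Since $<$ is a compatible monomial term order for the maximal cone $C$ and $w$ lies in the relative interior $C^\circ$, we have $\init_w(J)=\init_C(J)=\init_<(J)$, which is a monomial ideal; hence $w$ is compatible with $<$ in the sense of Definition~\ref{def:Gfan}, and Remark~\ref{rem:Eisenbud} yields $\init_w(g)=\init_<(g)$ for every $g\in\mathcal G_<(J)$. At the same time, because $v\in C$, Remark~\ref{rem:Eisenbud} also gives $\init_<(\init_v(g))=\init_<(g)$ for every $g\in\mathcal G_<(J)$; since $\mathcal G_<(J)$ is reduced, $\init_<(g)$ is a single monic monomial, say ${\bf x}^\beta$, and this last identity forces ${\bf x}^\beta$ to occur as one of the terms of $\init_v(g)$. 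These two observations are what feed into the second step.

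For the second equality, write $g=\sum_\alpha c_\alpha{\bf x}^\alpha$ and set $a:=\min\{v\cdot\alpha\colon c_\alpha\neq 0\}$, so that $\init_v(g)=\sum_{v\cdot\alpha=a}c_\alpha{\bf x}^\alpha$. By the observation above ${\bf x}^\beta$ is among these terms, so $v\cdot\beta=a$ and $c_\beta=1$. To compute $\init_u(\init_v(g))$ I must identify the exponents $\alpha$ with $v\cdot\alpha=a$ that minimize $u\cdot\alpha$. For any such $\alpha\neq\beta$ one has $v\cdot\alpha=a=v\cdot\beta$, while the fact that $\init_w(g)={\bf x}^\beta$ is a single monomial gives $w\cdot\beta<w\cdot\alpha$; subtracting the common value $a$ and using $w=u+v$ gives $u\cdot\beta<u\cdot\alpha$. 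Thus $\beta$ is the unique minimizer among the exponents of $\init_v(g)$, so $\init_u(\init_v(g))=c_\beta{\bf x}^\beta={\bf x}^\beta=\init_<(g)$, which closes the chain.

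The only delicate point, and the step I would be most careful about, is the claim that ${\bf x}^\beta$ genuinely appears in $\init_v(g)$, i.e.\ that $v\cdot\beta=a$: this is precisely where $v\in C$ is used, via Remark~\ref{rem:Eisenbud}, and without that hypothesis $\init_v(g)$ need not contain $\init_<(g)$ at all and the identity $\init_w(g)=\init_u(\init_v(g))$ can fail outright. Everything else reduces to comparing the linear functionals $u$, $v$, $w$ on the finitely many exponent vectors occurring in $g$, so I do not anticipate any serious obstacle.
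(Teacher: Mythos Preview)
Your proof is correct and follows essentially the same approach as the paper's: both use Remark~\ref{rem:Eisenbud} once with $w\in C^\circ$ to get $\init_w(g)=\init_<(g)$ and once with $v\in C$ to see that $\init_<(g)$ is a term of $\init_v(g)$, and then argue that the remaining terms of $\init_v(g)$ are killed by $\init_u$. The only difference is cosmetic: the paper leaves the last step as the one-line remark that the extra terms ``will disappear after taking its initial form with respect to $u$,'' whereas you make the linear-algebra computation $w\cdot\beta<w\cdot\alpha$ and $v\cdot\beta=v\cdot\alpha\Rightarrow u\cdot\beta<u\cdot\alpha$ explicit.
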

\begin{proof}
Consider an element $g$ in $\mathcal{G}_<(J)$. Since $w\in C^\circ$ we have $\init_<(g)=\init_{w}(g)$. On the other hand, since $v\in C$, by Remark~\ref{rem:Eisenbud} we have that $\init_<(g)=\init_{<}(\init_{v}(g))$.
This implies that $\init_{w}(g)$ is a refinement of $\init_{v}(g)$.
In other words, $\init_{v}(g)$ contains the monomial $\init_{w}(g)$ with possibly some extra terms which will disappear after taking its initial form with respect to $u$.
\end{proof}

We note that~\thref{lem:facet} does not hold for arbitrary elements of $J$. See~\thref{exp:lem facet}.

\begin{Lemma}\thlabel{lem:strictly_convex}
For every cone $C\in\GF(J) \subseteq \mathbb R^n$, the cone $\overline C :=C / \mathcal{L}(J)\subseteq \mathbb R^n / \mathcal{L}(J)$ is strongly convex, that is $\overline C\cap (- \overline C)=\{0\}$.
\end{Lemma}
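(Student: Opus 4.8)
The plan is to reduce the statement to the case of a maximal cone $C$ and there to exploit the monomial-term-order description of Gr\"obner cones from Remark~\ref{rem:Eisenbud}. Fix a maximal cone $C\in\GF(J)$, a monomial term order $<$ compatible with it, and write $q\colon\mathbb R^n\to\mathbb R^n/\mathcal L(J)$ for the quotient map, so $\overline C=q(C)$. I would first observe that $\mathcal L(J)\subseteq C$: for $\ell\in\mathcal L(J)$, \thref{lem:init ell} gives $\init_\ell(g)=g$ for every $g\in\mathcal G_<(J)$, so the criterion of Remark~\ref{rem:Eisenbud} (that $w\in C$ if and only if $\init_<(\init_w(g))=\init_<(g)$ for all $g\in\mathcal G_<(J)$) holds trivially for $w=\ell$, whence $\ell\in C$. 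Consequently $\mathcal L(J)$ is contained in every cone of the finite fan $\GF(J)$, since a face of a polyhedral cone containing a linear subspace still contains it.

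The heart of the argument is to show $C\cap(-C)\subseteq\mathcal L(J)$. Take $v$ with $v\in C$ and $-v\in C$, fix $g=\sum_\alpha c_\alpha{\bf x}^\alpha\in\mathcal G_<(J)$, and let ${\bf x}^\beta=\init_<(g)$. Applying Remark~\ref{rem:Eisenbud} to $v\in C$ shows that the monomial ${\bf x}^\beta$ survives in $\init_v(g)$, that is $v\cdot\beta=\min\{v\cdot\alpha\colon c_\alpha\ne 0\}$. Applying it to $-v\in C$ and recalling the paper's $\min$-convention (so that $\init_{-v}$ retains the monomials of \emph{maximal} $v$-value) shows likewise that $v\cdot\beta=\max\{v\cdot\alpha\colon c_\alpha\ne 0\}$. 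Hence $v\cdot\alpha$ is constant on the support of $g$, i.e.\ $g$ is $v$-homogeneous; as $\mathcal G_<(J)$ generates $J$, the ideal $J$ is $v$-homogeneous, and therefore $v\in\mathcal L(J)$ by the ``more generally'' clause of \thref{lem:wt homog}.

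To conclude, since $\mathcal L(J)\subseteq C$ and $C$ is a convex cone we have $C+\mathcal L(J)=C$, hence $q^{-1}(\overline C)=C$ and $q^{-1}(-\overline C)=-C$; therefore $\overline C\cap(-\overline C)=q\big(C\cap(-C)\big)\subseteq q\big(\mathcal L(J)\big)=\{0\}$, which proves the claim for maximal cones. An arbitrary cone of $\GF(J)$ is a face of some maximal cone $C'$, so $\overline C\subseteq\overline{C'}$ and thus $\overline C\cap(-\overline C)\subseteq\overline{C'}\cap(-\overline{C'})=\{0\}$. The step I expect to require the most care is the middle paragraph: one must apply Remark~\ref{rem:Eisenbud} to $-v$ with the correct sign convention, so that ${\bf x}^\beta$ is forced to have \emph{maximal} (not minimal) $v$-value, which is precisely what collapses the $v$-grading on $g$; everything else is routine bookkeeping with convex cones and with faces of maximal cones.
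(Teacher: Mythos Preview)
Your proof is correct and follows essentially the same approach as the paper's: both reduce to maximal cones and use that $\init_<(g)$ must appear in both $\init_v(g)$ and $\init_{-v}(g)$, forcing $v\cdot\alpha$ to be constant on the support of each $g\in\mathcal G_<(J)$. The only cosmetic difference is that you invoke Remark~\ref{rem:Eisenbud} directly, whereas the paper routes the same fact through \thref{lem:facet}; your write-up is also a bit more explicit about the passage to the quotient and about invoking \thref{lem:wt homog} to conclude $v\in\mathcal L(J)$.
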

\begin{proof}
As we have chosen $J$ to be ${\bf d}$-homogeneous every cone in $\GF(J)$ is a face of a full-dimensional cone.
Hence, we may assume without loss of generality that $C$ is a cone of dimension~$n$.
Let $<$ be the corresponding monomial term order and $g\in\mathcal{G}_<(J)$.
Let $v$ and $-v$ be in~$C\setminus C^\circ$ and choose $u,u' \in \mathbb{R}^n$ such that $u+v,u'-v \in C^{\circ}$. By~\thref{lem:facet} we have that
$\init_{<}(g)=\init_u(\init_v(g))=\init_{u'}(\init_{-v}(g))$. However, $\init_{<}(g)$ cannot simultaneously be a monomial in~$\init_v(g)$ and $\init_{-v}(g)$ unless $v\in \mathcal L(J)$.
\end{proof}

\begin{Definition}\thlabel{def:trop}
For a ${\bf d}$-homogeneous ideal $J\subseteq \mathbb K_{\bf d}[x_1, \dots , x_n]$ we define its {\it tropicalization}
\begin{displaymath}
\trop(J):=\big\{w\in \mathbb R^n \colon \init_w(J) \text{ does not contain any monomial}\big\}.
\end{displaymath}
In fact, $\trop(J)$ is a $d$-dimensional subfan of $\GF(J)$, where $d$ is the Krull-dimension of $\mathbb K_{\bf d}[x_1,\dots,\linebreak x_n]/J$.
\end{Definition}

\begin{Definition}\thlabel{def:trop+}
An ideal $J\subseteq \mathbb R[x_1,\dots,x_n]$ is called {\it totally positive} if it does not contain any non-zero element of $\mathbb R_{\ge 0}[x_1,\dots,x_n]$.
For a ${\bf d}$-homogeneous ideal $J\subseteq \mathbb R[x_1,\dots,x_n]$ the {\it totally positive part} of its tropicalization is defined as
\begin{displaymath}
\trop^+(J):=\{w\in \trop(J)\colon \init_w(J) \text{ is totally positive}\}.
\end{displaymath}
Due to~\cite{SW05} $\trop^+(J)$ is a closed subfan of $\trop(J)$ (that may be empty).
\end{Definition}

\section{Families of Gr\"obner degenerations}\label{sec:flat families}

In this section, we introduce the main construction of the paper.
Let $J \subseteq \mathbb K_{\bf d}[x_1, \dots , x_n]$ be a~{\bf d}-homogeneous ideal, $C $ a maximal cone in $\GF(J)$ and $A$ the quotient $\mathbb K_{\bf d}[x_1, \dots , x_n]/J$.
Our aim is to construct a flat family of degenerations of ${\rm Proj}(A)$ that contains as fibers the varieties corresponding to the various initial ideals associated to the interior of the faces of $C$.
To achieve this we generalize the following classical construction:
take $w\in C^\circ$ and consider the ideal
\begin{equation}\label{eq:flat}
\hat J_w:=\bigg\langle \sum_{\alpha \in \mathbb Z^n_{\ge 0}} c_\alpha {\bf x}^\alpha t^{w\cdot \alpha -\min\{w\cdot \beta \colon c_\beta \not =0\}} \colon \sum_{\alpha \in \mathbb Z^n_{\ge 0}} c_\alpha {\bf x}^\alpha\in J \bigg\rangle \subseteq \mathbb K[t][x_1,\dots,x_n].
\end{equation}

\begin{Remark}\label{rem:flat}
The ideal $\hat J_w$ induces a flat family $\operatorname{Spec} \big(\mathbb K[t][x_1,\dots,x_n]/\hat J_w\big) \to \operatorname{Spec}(\mathbb{K}[t])$
whose fiber over the closed point $(t)$ is isomorphic to $\operatorname{Spec}(\mathbb K[x_1,\dots,x_n]/\init_w(J))$ and the fiber over any non-zero closed point $ (t-a)$ is isomorphic to $\operatorname{Spec}(A)$, see~\cite[Theorem~15.17]{eisenbud2013commutative}.
In fact,~\eqref{eq:flat} and~\cite[Theorem~15.17]{eisenbud2013commutative} hold more generally for arbitrary cones in $\GF(J)$.
However, for the following generalization we focus on maximal cones for simplicity.
\end{Remark}

To generalize the construction~\eqref{eq:flat} we fix vectors $r_1, \dots , r_m \in C$ such that $\{ \overline{r}_1,\dots, \overline{r}_m \}$ is the set of primitive ray generators for $\overline{C}$, which is possible due to~\thref{lem:strictly_convex}.
Using~\thref{lem:wt homog} we may assume if necessary that $r_1,\dots, r_m $ are non-negative or positive vectors.
We denote by~${\bf r}$ the $(m\times n)$-matrix whose rows are $r_1,\dots, r_m$.
Additionally, we write $<$ for a monomial term order compatible with $C$ and denote by $\mathcal G$ the associated reduced Gr\"obner basis.

\begin{Definition}\thlabel{def:lift}
For $f=\sum_{\alpha \in \mathbb Z^n_{\ge 0}} c_\alpha {\bf x}^{\alpha} \in J$ set ${\mu}_{\bf r}(f):=\big(\min_{c_\alpha\not =0}\{r_i\cdot \alpha\}\big)_{i=1,\dots,m}\in \mathbb Z^{m\times 1}$, that is we think of $\mu_{\bf r}(f)$ as a column vector with $m$ entries.
We define the {\it lift of $f$} to be the polynomial $ \tilde f_{\bf r} \in \mathbb K[t_1,\dots,t_m][x_1,\dots,x_n]$ given by the following formula
\begin{displaymath}
 \tilde f_{\bf r} := \tilde f_{\bf r} ({\bf t},{\bf x})
 := f({\bf t}^{{\bf r}\cdot e_1}x_1,\dots, {\bf t}^{{\bf r}\cdot e_n}x_n) {\bf t}^{-\mu_{\bf r}(f)}
 =\sum_{\alpha \in \mathbb Z^n_{\ge 0}} c_\alpha {\bf x}^{\alpha} {\bf t}^{{\bf r}\cdot \alpha -\mu_{\bf r}(f)}.
\end{displaymath}
Similarly, we define the {\it lifted ideal} as
\begin{displaymath} 
\tilde J_{\bf r}:=\big\langle \tilde f_{\bf r} \colon f \in J \big\rangle \subseteq \mathbb K[t_1,\dots,t_m][x_1,\dots,x_n],
\end{displaymath}
and the {\it lifted algebra} as the quotient
\begin{equation}\label{eq:lifted algebra}
\tilde A_{\bf r}:=\mathbb K[t_1,\dots,t_m][x_1,\dots,x_n]/\tilde J_{\bf r}.
\end{equation}
\end{Definition}
We proceed by showing that the lifted algebra $\tilde A_{\bf r}$ is independent of the choice of vectors $r_1, \dots , r_m \in C$ that represent the primitive ray generators $\overline{r}_1,\dots , \overline{r}_m$ of $\overline{C}$. For this, we identify generators of the ideal $\tilde J_{\bf r}$ which is a crucial matter for applications.

\begin{Proposition}\thlabel{prop:change rays}
Suppose $r'_1, \dots , r'_m \in C$ are such that $\overline{r}'_i= \overline{r}_i$ in $ \mathbb R^n / \mathcal{L}(J)$.
Let ${\bf r'}$ be the $(m\times n)$-matrix whose rows are $r'_1,\dots, r'_m$. Then for $g\in \mathcal{G}$ we have that $\tilde{g}_{\bf r}=\tilde{g}_{\bf r'}$.
\end{Proposition}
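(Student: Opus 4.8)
The plan is to deduce $\tilde g_{\bf r}=\tilde g_{\bf r'}$ from a single structural fact already at our disposal, namely \thref{lem:init ell}: every element of the reduced Gr\"obner basis $\mathcal G=\mathcal G_<(J)$ is homogeneous with respect to every direction in the lineality space $\mathcal L(J)$. Granting this, I expect the verification to be a short term-by-term comparison of the two lifts: replacing the rows of ${\bf r}$ by lineality-equivalent vectors shifts both ${\bf r}\cdot\alpha$ and the normalizing vector $\mu_{\bf r}(g)$ by exactly the same amount, and only their difference enters the definition of the lift.

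In detail, I would first fix $g=\sum_{\alpha\in\mathbb Z^n_{\ge 0}}c_\alpha{\bf x}^\alpha\in\mathcal G$ and set $\ell_i:=r'_i-r_i$ for $i=1,\dots,m$. Since $\overline{r}'_i=\overline{r}_i$ in $\mathbb R^n/\mathcal L(J)$, each $\ell_i$ lies in $\mathcal L(J)$, so \thref{lem:init ell} gives $\init_{\ell_i}(g)=g$. By \thref{def:init} this says precisely that the linear form $\alpha\mapsto\ell_i\cdot\alpha$ takes one and the same value $\lambda_i\in\mathbb R$ on the entire support $\{\alpha\colon c_\alpha\ne 0\}$ of $g$. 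Hence for every $\alpha$ and $\beta$ in the support of $g$ we have $r'_i\cdot\alpha=r_i\cdot\alpha+\lambda_i$ and $r'_i\cdot\beta=r_i\cdot\beta+\lambda_i$, so that
\[
r'_i\cdot\alpha-\min_{c_\beta\ne 0}\{r'_i\cdot\beta\}
=(r_i\cdot\alpha+\lambda_i)-\bigl(\min_{c_\beta\ne 0}\{r_i\cdot\beta\}+\lambda_i\bigr)
=r_i\cdot\alpha-\min_{c_\beta\ne 0}\{r_i\cdot\beta\}.
\]
The left-hand side is the $i$-th coordinate of the exponent ${\bf r'}\cdot\alpha-\mu_{\bf r'}(g)$ of ${\bf t}$ attached to the monomial ${\bf x}^\alpha$ in $\tilde g_{\bf r'}$, and the right-hand side is the corresponding coordinate for $\tilde g_{\bf r}$. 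As this holds for all $i=1,\dots,m$ and all $\alpha$ in the support of $g$, while the coefficients $c_\alpha$ are the same in both polynomials, I conclude $\tilde g_{\bf r}=\tilde g_{\bf r'}$.

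I do not anticipate a genuine obstacle here: the whole weight of the statement rests on \thref{lem:init ell}, i.e., on the fact that reduced Gr\"obner basis elements are automatically $\ell$-homogeneous for $\ell\in\mathcal L(J)$, and everything else is the bookkeeping displayed above. The one point worth flagging is why the hypothesis $g\in\mathcal G$ cannot be dropped: a general element $f\in J$ need not be $\ell_i$-homogeneous (compare the remark after \thref{lem:facet}), so for such $f$ the value ${\bf r}\cdot\alpha$ and the minimum $\mu_{\bf r}(f)$ get shifted by \emph{different} amounts on different monomials, and the lift $\tilde f_{\bf r}$ genuinely depends on the representatives $r_i$. This is exactly why the later independence statements are first reduced to lifts of Gr\"obner basis elements via \thref{prop:lifted generators}.
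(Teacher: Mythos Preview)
Your proof is correct and follows essentially the same approach as the paper: both set $\ell_i=r'_i-r_i\in\mathcal L(J)$, invoke \thref{lem:init ell} to obtain that $\ell_i\cdot\alpha$ is constant on the support of $g$, and then observe that this constant cancels in the difference ${\bf r}\cdot\alpha-\mu_{\bf r}(g)$ appearing in the lift. Your added remark on why the hypothesis $g\in\mathcal G$ is essential is a nice complement.
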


\begin{proof}
We have that $r'_i=r_i+l_i$ for some $l_i \in \mathcal{L}(J)$. Let $L$ be the $(m \times n)$-matrix whose rows are $l_1, \dots , l_m$. In particular, ${\bf r'}={\bf r}+L$.
Write $g=\sum_{\alpha \in \mathbb Z^n_{\geq 0}}c_{\alpha} {\bf x}^{\alpha}$.
Since $g \in \mathcal{G}$ we have by~\thref{lem:init ell} that the value $l_i \cdot \alpha$ is the same for all $\alpha $ with $c_{\alpha}\neq 0$. Let $a_i$ be this common value.
Observe that $\min_{c_\alpha \neq 0}\{r'_i\cdot \alpha\}= \min_{c_\alpha \neq 0}\{r_i\cdot \alpha\}+a_i$. In particular, we have that the column vector $ (a_1, \dots , a_m)$ is equal to $ L \cdot \alpha$ for all $\alpha$ with $c_{\alpha}\neq 0$ and therefore $\mu_{\bf r'}(f)= \mu_{\bf r}(f)+L\cdot \alpha$.
Finally, we compute
\begin{displaymath}
\tilde{g}_{\bf r'}=\sum_{\alpha \in \mathbb Z^n_{\geq 0}}c_{\alpha} {\bf x}^{\alpha}{\bf t}^{{\bf r}'\cdot \alpha-\mu_{\bf r'}(g)}=
\sum_{\alpha \in \mathbb Z^n_{\geq 0}}c_{\alpha} {\bf x}^{\alpha}{\bf t}^{{\bf r}\cdot \alpha+ L\cdot \alpha -(\mu_{\bf r}(g)+L\cdot \alpha)}=
\tilde{g}_{\bf r}.\tag*{\qed}
\end{displaymath}
\renewcommand{\qed}{}
\end{proof}

In the following, when there is no risk of confusion, we write $\tilde J$ for $\tilde J_{\bf r},\tilde f$ for $\tilde f_{\bf r}$ and $\tilde A$ for $\tilde A_{\bf r}$.

{\samepage\begin{Notation}\thlabel{notation}
For ${\bf a}=(a_1, \dots , a_m)\in \mathbb K^m$ denote by $({\bf t}-{\bf a})$ the maximal ideal $\langle t_1-a_1,\dots$, \mbox{$t_m-a_m\rangle$} of $\mathbb K[t_1,\dots,t_m]$.
Then we denote by $\tilde A\vert_{{\bf t}={\bf a}}$ the tensor product $\tilde A\otimes_{\mathbb K[t_1,\dots,t_m]} \mathbb K[t_1,\dots,t_m]$ $/({\bf t}-{\bf a})$.
Additionally, for $f\in \mathbb K[t_1,\dots,t_m][x_1,\dots,x_n]$ let $f\vert_{{\bf t}={\bf a}} \in \mathbb K[x_1,\dots,x_n]$ be the evaluation of $f$ at $t_i = a_i$ for $i \in [m]$.
Similarly, denote by $\tilde J\vert_{{\bf t}={\bf a}}$ the ideal of $\mathbb K[x_1,\dots,x_n]$ generated by the set $\lbrace \tilde{f}\vert_{{\bf t}={\bf a}} \colon f\in J \rbrace$.
For a set $B\subset \mathbb K[t_1,\dots,t_m][x_1,\dots,x_n]$ let $B\vert_{{\bf t}={\bf a}}:=\{b\vert_{{\bf t}={\bf a}}\colon b\in B\}$.
Recall the isomorphism $\mathbb K[t_1,\dots,t_m]/({\bf t}-{\bf a})\to \mathbb K$ that sends an element $\bar f$ to $f\vert_{{\bf t}={\bf a}}$.
It extends to an isomorphism $\tilde A\vert_{{\bf t}={\bf a}}\cong \mathbb K[x_1,\dots,x_n]/\tilde J\vert_{{\bf t}={\bf a}}$, which explains our choice of notation.
\end{Notation}

Denote ${\bf 1}:=(1,\dots,1)\in \mathbb Z^m$.

}

\begin{Lemma}\thlabel{lem:w'homogeneous}
Let $w=(w_1,\dots,w_n)=r_1+\cdots+r_m$, and $w' = (-1,\ldots,-1,w_1,\ldots,w_n)\in \mathbb{Z}^{m+n}$, where $w_i$ is the weight of $x_i$ and $-1$ is the weight of $t_j$ for $i=1,\ldots,n$ and $j=1,\ldots,m$.
Then for every $f\in J$ its lift $\tilde f$ is $w'$-homogeneous.
In particular, the ideal $\tilde{J}$ is ${w'}$-homogeneous.
\end{Lemma}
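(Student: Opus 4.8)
The plan is to compute the $w'$-weight of each monomial appearing in $\tilde f$ and verify it is the same for all of them. Recall that for $f=\sum_\alpha c_\alpha {\bf x}^\alpha\in J$ we have $\tilde f = \sum_{\alpha}c_\alpha {\bf x}^\alpha {\bf t}^{{\bf r}\cdot\alpha-\mu_{\bf r}(f)}$, so the monomials of $\tilde f$ are of the form ${\bf x}^\alpha{\bf t}^{{\bf r}\cdot\alpha-\mu_{\bf r}(f)}$ for those $\alpha$ with $c_\alpha\neq 0$.

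First I would unwind the definition of the $w'$-weight. By definition the weight of $x_i$ is $w_i=(r_1+\cdots+r_m)\cdot e_i = \sum_{j=1}^m (r_j\cdot e_i)$, which is exactly the sum of the entries of the column ${\bf r}\cdot e_i$; equivalently, the ${\bf x}$-part contributes $\sum_j (r_j\cdot\alpha)$ to the weight of ${\bf x}^\alpha$, i.e.\ ${\bf 1}\cdot({\bf r}\cdot\alpha)$ where ${\bf 1}=(1,\dots,1)\in\mathbb Z^m$. Each $t_j$ has weight $-1$, so the ${\bf t}$-part ${\bf t}^{{\bf r}\cdot\alpha-\mu_{\bf r}(f)}$ contributes $-{\bf 1}\cdot({\bf r}\cdot\alpha-\mu_{\bf r}(f)) = -{\bf 1}\cdot({\bf r}\cdot\alpha) + {\bf 1}\cdot\mu_{\bf r}(f)$. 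Adding the two contributions, the terms ${\bf 1}\cdot({\bf r}\cdot\alpha)$ cancel and the total $w'$-weight of the monomial ${\bf x}^\alpha{\bf t}^{{\bf r}\cdot\alpha-\mu_{\bf r}(f)}$ equals ${\bf 1}\cdot\mu_{\bf r}(f)=\sum_{i=1}^m \big(\min_{c_\beta\neq 0}\{r_i\cdot\beta\}\big)$, which is manifestly independent of $\alpha$. Hence $\tilde f$ is $w'$-homogeneous.

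Finally, since $\tilde J$ is by definition generated by $\{\tilde f : f\in J\}$ and each generator is $w'$-homogeneous, the ideal $\tilde J$ is $w'$-homogeneous by \thref{def:d homog}. (Alternatively one may invoke \thref{prop:lifted generators} to note that $\tilde J$ is generated by the finitely many lifts $\tilde g$ for $g\in\mathcal G$, each of which is $w'$-homogeneous by the same computation, but this refinement is not needed here.) I do not anticipate a genuine obstacle: the only thing to be careful about is bookkeeping the sign convention on the weight of the $t_j$ and making sure the cancellation ${\bf 1}\cdot({\bf r}\cdot\alpha)$ against $-{\bf 1}\cdot({\bf r}\cdot\alpha)$ is set up correctly, so that the surviving constant is precisely ${\bf 1}\cdot\mu_{\bf r}(f)$.
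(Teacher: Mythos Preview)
Your proof is correct and follows essentially the same approach as the paper: both compute the $w'$-weight of a general monomial ${\bf x}^\alpha{\bf t}^{{\bf r}\cdot\alpha-\mu_{\bf r}(f)}$ in $\tilde f$, observe that the contribution $w\cdot\alpha={\bf 1}\cdot({\bf r}\cdot\alpha)$ from the ${\bf x}$-part cancels against the $-{\bf 1}\cdot({\bf r}\cdot\alpha)$ from the ${\bf t}$-part, leaving the $\alpha$-independent constant ${\bf 1}\cdot\mu_{\bf r}(f)$. One small caution: your parenthetical invocation of \thref{prop:lifted generators} as an alternative is harmless here since you do not rely on it, but note that in the paper that proposition is proved \emph{after} this lemma (and in fact uses it), so citing it would be circular.
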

\begin{proof}
Let $f=\sum_{\alpha\in \mathbb Z_{\ge 0}^n} c_\alpha{\bf x}^{\alpha}$.
Then the lift of $f$ is by definition $\tilde f=\sum c_\alpha{\bf x}^\alpha{\bf t}^{{\bf r}\cdot \alpha-\mu(f)}$.
Notice that the $w'$-weight of a monomial $c_\alpha{\bf x}^\alpha{\bf t}^{{\bf r}\cdot \alpha-\mu(f)}$ in $\tilde f$ is
\begin{displaymath}
w\cdot \alpha -{\bf 1}\cdot ({\bf r}\cdot \alpha-\mu(f)) = \sum_{i=1}^m r_i\cdot \alpha - \sum_{i=1}^m r_i\cdot \alpha + \sum_{i=1}^m \min_{c_\beta\not=0}(r_i\cdot \beta) = \sum_{i=1}^m \min_{c_\beta\not=0}(r_i\cdot \beta).
\end{displaymath}
This implies the first claim. The second claim is immediate as $\tilde J=\langle \tilde f\colon f\in J\rangle$.
\end{proof}

\begin{Remark}\thlabel{rmk:homogeneous weights}
In~\thref{lem:w'homogeneous} more generally we can choose
$v=c_1r_1+\dots+c_mr_m\in C^\circ$ with $c_i\in \mathbb R_{\ge 0}$ and $v'=(-c_1,\dots,-c_m,v_1,\dots,v_n).$
Then $\tilde J$ is $v'$-homogeneous.
\end{Remark}

\begin{Lemma}\thlabel{lem:homogeneous}
Let $h\in \tilde J$ be $w'$-homogeneous with $w'$ as in~\thref{lem:w'homogeneous}.
Then $h={\bf t}^{\bf u}\tilde{f}$ for some $f\in J$ and ${\bf u}\in \mathbb{Z}_{\geq 0}^m$.
\end{Lemma}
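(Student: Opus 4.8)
The plan is to reduce everything to the monomial substitution $x_j\mapsto{\bf t}^{{\bf r}\cdot e_j}x_j$ that underlies the lift. First I would introduce the $\mathbb K$-algebra automorphism $\Phi$ of the Laurent polynomial ring $\mathbb K\big[t_1^{\pm1},\dots,t_m^{\pm1}\big][x_1,\dots,x_n]$ with $x_j\mapsto{\bf t}^{{\bf r}\cdot e_j}x_j$ and $t_i\mapsto t_i$; its inverse sends $x_j\mapsto{\bf t}^{-{\bf r}\cdot e_j}x_j$, $t_i\mapsto t_i$. For $f=\sum_\alpha c_\alpha{\bf x}^\alpha\in J$ one has $\Phi(f)=\sum_\alpha c_\alpha{\bf x}^\alpha{\bf t}^{{\bf r}\cdot\alpha}={\bf t}^{\mu_{\bf r}(f)}\tilde f$, so $\Phi$ carries the extended ideal $J\cdot\mathbb K\big[t_i^{\pm1}\big][x_j]$ onto $\tilde J\cdot\mathbb K\big[t_i^{\pm1}\big][x_j]$. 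I would also record that the evaluation ${\bf t}={\bf 1}$ sends each lift $\tilde g$ back to $g$ (\thref{notation}), hence sends $\tilde J$ into $J$.

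Now let $h\in\tilde J$ be $w'$-homogeneous of degree $N$, put $h^\circ:=\Phi^{-1}(h)\in J\cdot\mathbb K\big[t_i^{\pm1}\big][x_j]$, and $f:=h\vert_{{\bf t}={\bf 1}}\in J$; note $f=h^\circ\vert_{{\bf t}={\bf 1}}$. A monomial $c{\bf x}^\alpha{\bf t}^\beta$ of $h$ has $w'$-weight $w\cdot\alpha-(\beta_1+\cdots+\beta_m)$ with $w=r_1+\cdots+r_m$, and $\Phi^{-1}$ carries it to $c{\bf x}^\alpha{\bf t}^{\beta-{\bf r}\cdot\alpha}$; since $\sum_i(\beta_i-r_i\cdot\alpha)=(\beta_1+\cdots+\beta_m)-w\cdot\alpha=-N$, the polynomial $h^\circ$ is homogeneous of degree $-N$ for the $t$-total-degree grading ($\deg t_i=1$, $\deg x_j=0$). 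As $J\cdot\mathbb K\big[t_i^{\pm1}\big][x_j]=\bigoplus_{\gamma\in\mathbb Z^m}J\,{\bf t}^\gamma$, writing $h^\circ=\sum_\gamma g_\gamma{\bf t}^\gamma$ forces $g_\gamma\in J$ for every $\gamma$, with $g_\gamma=0$ unless $\gamma_1+\cdots+\gamma_m=-N$, and $\sum_\gamma g_\gamma=f$.

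The heart of the matter is to upgrade this to a single term $h^\circ=g\,{\bf t}^{\gamma_0}$ with $g\in J$. For this I would pass to the finer $\mathbb Z^m$-grading on $\mathbb K[{\bf t}][{\bf x}]$ in which ${\bf x}^\alpha{\bf t}^\beta$ has $i$-th degree $r_i\cdot\alpha-\beta_i$: the computation in the proof of \thref{lem:w'homogeneous} shows that each lift $\tilde f$ is homogeneous for it (its $i$-th degree being the $i$-th entry of $\mu_{\bf r}(f)$), hence $\tilde J$ is $\mathbb Z^m$-graded; decomposing $h$ into its $\mathbb Z^m$-graded pieces, each piece again lies in $\tilde J$, is $w'$-homogeneous, and is carried by $\Phi^{-1}$ into a single $g_\gamma{\bf t}^\gamma$. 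The remaining task is to show that only one piece is nonzero, and this is the step I expect to be the genuine obstacle: it is where one must exploit that $C$ is a \emph{maximal} cone (so that $\bar r_1,\dots,\bar r_m$ actually generate $\overline C$ and, by \thref{lem:strictly_convex}, $\overline C$ is strongly convex) together with the description of the reduced Gr\"obner basis $\mathcal G$ and the behaviour of initial forms on faces of $C$ (\thref{lem:facet}). Once a single $\gamma_0$ has been isolated the conclusion follows: $h=\Phi(g\,{\bf t}^{\gamma_0})={\bf t}^{\gamma_0}\Phi(g)={\bf t}^{\gamma_0+\mu_{\bf r}(g)}\tilde g$, and because $h$ is an honest polynomial in the $t_i$ while monomials of ${\bf t}^{\gamma_0+\mu_{\bf r}(g)}\tilde g$ arising from distinct $\alpha$ have distinct ${\bf x}$-parts and hence cannot cancel, the vector ${\bf u}:=\gamma_0+\mu_{\bf r}(g)$ must have nonnegative entries; taking $f:=g$ finishes the argument.
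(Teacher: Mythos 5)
Your reduction of the problem to a single $\mathbb Z^m$-graded piece is exactly where the argument must stall, and the honest answer is that this step cannot be completed: the lemma as stated is false whenever $m\geq 2$. Indeed, take any nonzero $g\in\mathcal G$ and set $h:=(t_1-t_2)\tilde g$. Since every $t_i$ has $w'$-weight $-1$ and $\tilde g$ is $w'$-homogeneous, $h$ is $w'$-homogeneous and lies in $\tilde J$; but if $h={\bf t}^{\bf u}\tilde f$, then specializing ${\bf t}={\bf 1}$ forces $f=h\vert_{{\bf t}={\bf 1}}=0$, hence $h=0$, a contradiction. (With $(t_1+t_2)\tilde g$ one reaches the same conclusion using that $\mathbb K[t_1,\dots,t_m][x_1,\dots,x_n]$ is a domain.) The paper's own proof makes precisely the jump you flag: it asserts $h={\bf t}^{\bf u}\widetilde{h\vert_{{\bf t}={\bf 1}}}$ directly from $w'$-homogeneity, which is only valid for $m=1$, where the exponent of ${\bf t}$ in each monomial of a $w'$-homogeneous polynomial is determined by the exponent of ${\bf x}$.

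Your $\mathbb Z^m$-grading, with $\deg x_j={\bf r}\cdot e_j$ and $\deg t_i=-e_i$ so that each lift $\tilde f$ is homogeneous of degree $\mu_{\bf r}(f)$, is the right repair. One should either strengthen the hypothesis to ``$h$ is $\mathbb Z^m$-homogeneous'' (then your $\Phi^{-1}$ computation produces $h^\circ=g\,{\bf t}^{\gamma_0}$ for a single $\gamma_0$ with no further argument, and your final paragraph, including the nonnegativity of ${\bf u}=\gamma_0+\mu_{\bf r}(g)$, closes the proof) or weaken the conclusion to ``$h$ is a sum of terms ${\bf t}^{{\bf u}_k}\tilde f_k$,'' one per $\mathbb Z^m$-graded piece. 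Either variant suffices for the only place the lemma is invoked, the proof of~\thref{prop:lifted generators}: decompose the $w'$-homogeneous $h$ into its $\mathbb Z^m$-homogeneous pieces $h_k$; these have pairwise disjoint monomial supports, so $\init_{\ll}(h)=\init_{\ll}(h_k)$ for one $k$, and applying the single-term version to that $h_k$ yields the needed divisibility.
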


\begin{proof}
Let $h=\sum_{i=1}^s{p_i}\tilde{f}_i$ with $f_i\in J$ and $p_i\in\mathbb{K}[t_1,\ldots,t_m][x_1,\ldots,x_n]$ $w'$-homogeneous.
Then
$h\vert_{{\bf t}={\bf 1}}=\sum_{i=1}^s p_i\vert_{{\bf t}={\bf 1}}f_i\in J$. Furthermore, since $h$ is $w'$-homogeneous, $h={\bf t}^{\bf u}\widetilde{h\vert_{{\bf t}={\bf 1}}}$ for some ${\bf u}\in \mathbb{Z}_{\geq 0}^m$. Hence, we can take $h\vert_{{\bf t}={\bf 1}}$ as $f$.
\end{proof}

\begin{Lemma}\thlabel{lem:lifts}
Consider $f=\sum_{\alpha \in \mathbb Z^n_{\ge 0}} c_\alpha{\bf x}^\alpha\in J$ with a unique monomial $c_\beta{\bf x}^\beta\in \init_<(J)$. That is, for every $\alpha\neq\beta$ with $c_\alpha\neq 0$, there exists no $g'\in\mathcal G$ such that ${\bf x}^\alpha$ is divisible by $\init_<(g')$.
Then
\begin{equation}\label{eq:lifted element}
\tilde f = \sum_{\alpha \in \mathbb Z^n_{\ge 0}} c_\alpha {\bf x}^{\alpha} {\bf t}^{{\bf r}\cdot(\alpha -\beta)} = c_\beta {\bf x}^{\beta} + \sum_{\alpha\not=\beta} c_\alpha{\bf x}^{\alpha}{\bf t}^{{\bf r}\cdot (\alpha-\beta)},
\end{equation}
contains a unique monomial with coefficient in $\mathbb K$.
In particular, this is the case for elements of the reduced Gr\"obner basis $\mathcal G$.
\end{Lemma}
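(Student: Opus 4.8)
The plan is to first establish the key identity $\mu_{\bf r}(f)={\bf r}\cdot\beta$, from which the displayed formula for $\tilde f$ and the uniqueness assertion both follow by direct substitution. Fix a monomial term order $<$ compatible with $C$, so that $\init_<(J)$ is a monomial ideal and, since $C$ is maximal (hence full-dimensional), $\init_w(J)=\init_<(J)$ for every $w$ in the relative interior $C^\circ$. The first step is to pin down $\init_w(f)$ for such $w$: every monomial of $\init_w(f)$ lies in $\init_w(J)=\init_<(J)$, and some monomial of $f$ realizing $\min\{w\cdot\alpha:c_\alpha\neq 0\}$ occurs in $\init_w(f)$; since by hypothesis ${\bf x}^\beta$ is the unique monomial of $f$ lying in $\init_<(J)$, that minimizing monomial must be ${\bf x}^\beta$, and no $\alpha\neq\beta$ with $c_\alpha\neq 0$ can satisfy $w\cdot\alpha=w\cdot\beta$, for otherwise ${\bf x}^\alpha$ would also appear in $\init_w(f)\subseteq\init_<(J)$. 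Hence $\init_w(f)=c_\beta{\bf x}^\beta$, and therefore
\begin{equation*}
w\cdot\beta<w\cdot\alpha\qquad\text{for all } w\in C^\circ\text{ and all }\alpha\neq\beta\text{ with }c_\alpha\neq 0.
\end{equation*}

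The second step is to pass from $C^\circ$ to the ray generators. The inequality just obtained places $C^\circ$ inside each open halfspace $\{w:w\cdot(\alpha-\beta)>0\}$; taking closures gives $C=\overline{C^\circ}\subseteq\{w:w\cdot(\alpha-\beta)\ge 0\}$, and since $r_1,\dots,r_m\in C$ we conclude $r_i\cdot\beta\le r_i\cdot\alpha$ for every $i$ and every such $\alpha$. Thus $\mu_{\bf r}(f)=\big(\min_{c_\alpha\neq 0}\{r_i\cdot\alpha\}\big)_{i=1,\dots,m}={\bf r}\cdot\beta$, and substituting this into \thref{def:lift} gives $\tilde f=\sum_\alpha c_\alpha{\bf x}^\alpha{\bf t}^{{\bf r}\cdot(\alpha-\beta)}$ with every exponent vector ${\bf r}\cdot(\alpha-\beta)$ nonnegative; this is \eqref{eq:lifted element}. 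For the uniqueness of the $\mathbb K$-coefficient monomial, suppose ${\bf r}\cdot(\alpha-\beta)=0$ for some $\alpha\neq\beta$ with $c_\alpha\neq 0$; applying the strict inequality of the first step to $w=r_1+\dots+r_m\in C^\circ$ then yields $0<w\cdot(\alpha-\beta)=\sum_i r_i\cdot(\alpha-\beta)=0$, a contradiction. Hence $c_\beta{\bf x}^\beta$ is the only term of $\tilde f$ whose coefficient lies in $\mathbb K$.

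For the closing remark I would verify that every $g\in\mathcal G$ satisfies the hypothesis with ${\bf x}^\beta=\init_<(g)$: this monomial lies in $\init_<(J)$, while no other monomial of $g$ does because $\mathcal G$ is reduced (divisibility of a non-leading monomial of $g$ by $\init_<(g')$ with $g'\neq g$ is excluded by the definition of a reduced Gr\"obner basis, and divisibility by $\init_<(g)$ itself is impossible since $\init_<(g)$ is the $<$-largest monomial of $g$). The step I expect to require the most care is the passage from the open cone $C^\circ$ to the individual rays $r_i$: a ray generator may lie on the boundary of $C$, where $\init_{r_i}(J)$ need not be monomial and $\init_{r_i}(f)$ may carry several terms, so one cannot reason with $\init_{r_i}(f)$ directly — this is exactly what the closure argument in the second step circumvents.
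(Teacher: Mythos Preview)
Your proof is correct and, for the main implication, takes a slightly different route than the paper. The paper argues ray by ray: it shows that $c_\beta{\bf x}^\beta$ must appear in $\init_{r_i}(f)$ for each $i$ by a contradiction using the perturbation $\epsilon w'+r_i\in C^\circ$ together with an external lemma (\cite[Lemma~2.4.5]{M-S}) on iterated initial forms, $\init_{\epsilon w'+r_i}(f)=\init_{w'}(\init_{r_i}(f))$. You instead establish the strict inequality $w\cdot\beta<w\cdot\alpha$ on all of $C^\circ$ in one stroke (using only that a polynomial lies in a monomial ideal iff each of its monomials does) and then pass to the closure to get $r_i\cdot\beta\le r_i\cdot\alpha$. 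This is cleaner and avoids the external reference; it also justifies nicely your own remark that one cannot argue with $\init_{r_i}(f)$ directly on the boundary. The uniqueness step via $w=\sum_i r_i\in C^\circ$ is the same in both proofs. For the ``in particular'' clause, the paper excludes divisibility of a non-leading monomial of $g$ by $\init_<(g)$ using ${\bf d}$-homogeneity of $g$ (\thref{lem:init ell}), whereas you use the term-order axiom ${\bf x}^\gamma\mid{\bf x}^\alpha\Rightarrow{\bf x}^\gamma\le{\bf x}^\alpha$; both are valid and equally short.
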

\begin{proof}
The assumption that only the monomial $c_\beta{\bf x}^\beta$ of $f$ is contained in $\init_<(J)$ ensures that~$c_\beta{\bf x}^\beta$ is also a monomial appearing in $\init_{r_i}(f)$ for all $i$. To see this, assume there is a ray $r_i$ such that $c_\beta{\bf x}^\beta$ is not a monomial in $\init_{r_i}(f)$ and let $w'=\sum_{j\not=i} r_j$. By~\cite[Lemma~2.4.5]{M-S} we can find an $\epsilon$ so that $\init_{\epsilon w'+r_i}(f)=\init_{w'}(\init_{r_i}(f))$. As we assumed $c_\beta{\bf x}^\beta$ is not a monomial in $\init_{r_i}(f)$, it is also not a monomial in $\init_{\epsilon w'+r_i}(f)$. But $\epsilon w'+r_i\in C^\circ$ and so $\init_{\epsilon w'+r_i}(f)\in \init_{\epsilon w'+r_i}(J)=\init_<(J)$. This however is a contradiction as we assumed that $c_\beta{\bf x}^\beta$ is the only monomial in $f$ that is contained in $\init_<(J)$.
Therefore, $r_i\cdot\beta=\min_{c_\alpha\not =0}\{r_i\cdot \alpha\}$ for all $i$ and ${\bf r}\cdot \beta=\mu_{\bf r}(f)$.
Assume on the contrary that there exists another monomial $c_\gamma{\bf x}^\gamma$ in $f$ with ${\bf r}\cdot\gamma={\bf r}\cdot\beta$.
Then $c_\gamma{\bf x}^\gamma$ is also a~monomial in $\init_{r_i}(f)$ for all $i$.
Moreover, for $w=r_1+\dots+r_m\in C^\circ$ the initial form $\init_w(f)$ contains both monomials $c_\beta{\bf x}^\beta$ and $c_\gamma{\bf x}^\gamma$.
But as $\init_w(J)=\init_<(J)$ this implies that $c_\gamma{\bf x}^\gamma\in \init_<(J)$, a contradiction.
Hence, $c_\beta{\bf x}^\beta$ is the unique monomial in $f$ with $\mu_{\bf r}(f)={\bf r}\cdot \beta$ and we obtain~\eqref{eq:lifted element} as ${\bf r}\cdot(\alpha-\beta)\not=0$ for all $\alpha\not=\beta$. This completes the proof of the first claim.

To prove the second part, assume that $g\in\mathcal{G}$ has a monomial term $c_\gamma{\bf x}^{\gamma}\in \init_<(J)$ with $c_\gamma{\bf x}^{\gamma}\neq \init_<(g)$.
Then by the definition of the reduced Gr\"obner basis, $c_\gamma{\bf x}^{\gamma}$ is not divisible by~$\init_<(g')$ for any $g'\in \mathcal G\setminus \{g\}$.
Hence, it should be divisible by~$\init_<(g)$, but this cannot happen as $g$ is ${\bf d}$-homogeneous by~\thref{lem:init ell}.
\end{proof}

We extend the monomial term order $<$ on $\mathbb K[x_1,\dots,x_n]$ to a monomial term order $\ll$ on the polynomial ring $\mathbb K[t_1,\dots,t_m,x_1,\dots,x_n]$
in such a way that $t_i\ll x_j$ for all $1\le i\le m$ and $1\le j\le n$, and
\begin{gather}\label{eq:<<}
{\bf x}^\alpha{\bf t}^{\lambda}\ll{\bf x}^\beta{\bf t}^{\mu}\quad\text{if and only if}\quad
(i)\quad {\bf x}^\alpha<{\bf x}^\beta \quad \text{or}\quad
(ii)\quad {\bf x}^\alpha={\bf x}^\beta\quad \text{and}\quad {\bf t}^{\lambda}<_{\rm lex}{\bf t}^{\mu}.
\end{gather}

\begin{Proposition}\thlabel{prop:lifted generators}
The set $\tilde{\mathcal{G}} =\{\tilde g\colon g\in \mathcal G\}$ is a Gr\"obner basis for $ \tilde{J}$ with respect to $\ll$. In~par\-ticular, $\tilde{\mathcal{G}} $ is a generating set for $\tilde{J}$.
\end{Proposition}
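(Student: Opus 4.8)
The plan is to verify Buchberger's criterion for $\tilde{\mathcal{G}}$ with respect to $\ll$, leveraging the fact that $\mathcal G$ is already a reduced Gröbner basis for $J$ with respect to $<$. First I would pin down the initial monomial of a lifted element: by \thref{lem:lifts}, each $\tilde g = c_\beta {\bf x}^\beta + \sum_{\alpha\neq\beta} c_\alpha {\bf x}^\alpha {\bf t}^{{\bf r}\cdot(\alpha-\beta)}$ contains exactly one monomial with coefficient in $\mathbb K$ (i.e.\ no $t$-factor), namely $c_\beta {\bf x}^\beta$ where $c_\beta {\bf x}^\beta = \init_<(g)$. By the construction of $\ll$ in \eqref{eq:<<}, any monomial carrying a positive power of some $t_i$ is $\ll$-smaller than ${\bf x}^\beta$ when it has the same ${\bf x}$-part, and monomials with a strictly smaller ${\bf x}$-part are $\ll$-smaller regardless of their $t$-part; hence $\init_\ll(\tilde g) = \init_<(g)$ (viewed in the larger ring, with no $t$'s). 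So the leading monomials of $\tilde{\mathcal G}$ are exactly the leading monomials of $\mathcal G$, now living in $\mathbb K[t_1,\dots,t_m,x_1,\dots,x_n]$ and involving no $t$-variables.

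Next I would compute $S$-polynomials. For $g_i, g_j \in \mathcal G$, since $\init_\ll(\tilde g_i) = \init_<(g_i)$ and $\init_\ll(\tilde g_j) = \init_<(g_j)$ involve only ${\bf x}$-variables, the $\ll$-$S$-polynomial $S_\ll(\tilde g_i, \tilde g_j)$ has the same monomial multipliers (in the ${\bf x}$-variables only) as the $<$-$S$-polynomial $S_<(g_i, g_j)$. Because $\mathcal G$ is a Gröbner basis for $J$ with respect to $<$, Buchberger's criterion gives a standard reduction $S_<(g_i, g_j) \to 0$ via the division algorithm (\thref{division alg}) with respect to $\mathcal G$, say $S_<(g_i,g_j) = \sum_k f_k g_k$ with $\init_<(S_<(g_i,g_j)) \ge \init_<(f_k g_k)$. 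I would then argue that $S_\ll(\tilde g_i, \tilde g_j)$ lies in $\tilde J$ (it is a $\mathbb K[{\bf t}]$-combination of elements of $\tilde{\mathcal G} \subseteq \tilde J$), so by \thref{lem:w'homogeneous} and \thref{lem:homogeneous} — after isolating a $w'$-homogeneous component, or more simply by re-lifting — it reduces to zero with respect to $\tilde{\mathcal G}$. Concretely: $S_\ll(\tilde g_i,\tilde g_j)$ is $w'$-homogeneous (each $\tilde g_k$ is, and $S$-polynomials of homogeneous elements are homogeneous), hence equals ${\bf t}^{\bf u}\,\widetilde{h}$ for $h := S_\ll(\tilde g_i,\tilde g_j)\vert_{{\bf t}={\bf 1}} \in J$; since $h\in J$ it reduces to zero via $\mathcal G$, and I would lift that reduction term by term — using \thref{lem:init ell} so that $\mu_{\bf r}$ behaves additively on the homogeneous pieces — to obtain a standard $\ll$-reduction of $\tilde h$, hence of ${\bf t}^{\bf u}\tilde h$, to zero with respect to $\tilde{\mathcal G}$.

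The main obstacle I anticipate is the bookkeeping in this last step: showing that a standard reduction in $\mathbb K[x_1,\dots,x_n]$ lifts to a \emph{standard} reduction in $\mathbb K[t_1,\dots,t_m,x_1,\dots,x_n]$ with respect to $\ll$, i.e.\ that the degree inequalities $\init_\ll(\cdot) \ge \init_\ll(\text{quotient}\cdot\text{divisor})$ are preserved under lifting. This is where the special shape of $\ll$ matters: because $t$-variables are $\ll$-dominated by $x$-variables, the $\ll$-leading behaviour is governed entirely by the ${\bf x}$-parts, which are exactly the objects controlled by the original $<$-reduction; the $t$-exponents are then forced by $w'$-homogeneity (\thref{lem:w'homogeneous}, \thref{rmk:homogeneous weights}) and cannot spoil the inequalities. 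Once Buchberger's criterion is confirmed, $\tilde{\mathcal G}$ is a Gröbner basis for the ideal it generates, and since $\tilde{\mathcal G} \subseteq \tilde J$ with $\tilde J = \langle \tilde f : f \in J\rangle$, it remains to observe that every generator $\tilde f$ of $\tilde J$ reduces to zero modulo $\langle \tilde{\mathcal G}\rangle$ — which follows by the same lift-the-reduction argument applied to $f \in J$ reducing to zero via $\mathcal G$ — so $\langle \tilde{\mathcal G}\rangle = \tilde J$, proving the proposition.
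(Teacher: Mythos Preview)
Your approach is correct but takes a longer route than the paper. The paper bypasses Buchberger entirely and argues directly from the definition of a Gr\"obner basis: since $\tilde J$ is $w'$-homogeneous, it suffices to show that every nonzero $w'$-homogeneous $h\in\tilde J$ has $\init_\ll(h)$ divisible by some $\init_\ll(\tilde g)$. By \thref{lem:homogeneous} such $h$ equals ${\bf t}^{\bf u}\tilde f$ with $f\in J$; then $\init_\ll(h)={\bf t}^{{\bf u}+{\bf v}}\init_<(f)$ for some ${\bf v}\in\mathbb Z_{\ge 0}^m$, and since $\mathcal G$ is a Gr\"obner basis for $J$, $\init_<(f)$ is divisible by $\init_<(g)=\init_\ll(\tilde g)$ for some $g\in\mathcal G$. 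That is the entire argument.

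Your Buchberger route works, but notice that both of your steps --- verifying that $S$-polynomials reduce to zero, and showing $\langle\tilde{\mathcal G}\rangle=\tilde J$ --- ultimately rest on this same divisibility observation. In your ``lift the reduction term by term'' step, what makes the argument go is not really a lifted reduction of $h$, but rather running the division algorithm directly on ${\bf t}^{\bf u}\tilde h$ and observing that at each stage the current leading monomial has ${\bf x}$-part in $\init_<(J)$, hence divisible by some $\init_<(g)=\init_\ll(\tilde g)$; $w'$-homogeneity keeps each intermediate remainder in $\tilde J$ so the argument iterates. (Your appeal to \thref{lem:init ell} for ``additivity of $\mu_{\bf r}$'' is not the right hook here; what you need is exactly \thref{lem:homogeneous} applied to the successive remainders.) Once you see this, you have already established the divisibility criterion for every $w'$-homogeneous element of $\tilde J$, which is the full Gr\"obner basis statement --- so the Buchberger framing and the separate generation argument become redundant scaffolding. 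The paper's proof simply extracts this core step and presents it directly.
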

\begin{proof}
Since $\tilde{J}$ is $w'$-homogeneous by Lemma~\ref{lem:w'homogeneous}, it is enough to show that for every $w'$-homogeneous polynomial $h\in \tilde{J}$ there exists some $\tilde g_i\in\tilde{\mathcal{G}} $ whose initial term divides
$\init_\ll(h)$.
Let~$h\in \tilde{J}$ be $w'$-homogeneous.
By Lemma~\ref{lem:homogeneous} there exist $f\in J$ and ${\bf u}\in \mathbb{Z}_{\geq 0}^m$ such that $h={\bf t}^{\bf u}\tilde{f}$. We compute
\begin{gather*}
\init_{\ll}(h) = {\bf t}^{\bf u}\init_{\ll}(\tilde f) \overset{\text{Def. }\ll}{=} {\bf t}^{{\bf u}+{\bf v}}\init_<(f)
\overset{\exists \ g\in \mathcal G}{=} {\bf t}^{{\bf u}+{\bf v}}c_\alpha{\bf x}^{\alpha}\init_{<}(g) \overset{\text{\thref{lem:lifts}}}{=} {\bf t}^{{\bf u}+{\bf v}}c_\alpha{\bf x}^{\alpha}\init_{\ll}(\tilde g),
\end{gather*}
where $g$ is a suitable element of the Gr\"obner basis $\mathcal G$ for $J$ and ${\bf v}\in \mathbb Z_{\ge 0}^m$.
As $\tilde g\in \tilde{\mathcal{G}}$, this shows that $\tilde{\mathcal{G}}$ is a Gr\"obner basis for $\tilde{J}$.
Hence, by~\cite[Theorem~2.1.8]{herzog2011monomial} $\tilde{\mathcal{G}}$ is a generating set for $\tilde{J}$.
\end{proof}

As a direct consequence of~\thref{prop:change rays} and~\thref{prop:lifted generators} we obtain that the lifted algebra is independent of the choice of vectors $r_1, \dots ,r_m$.
\begin{Corollary}
For ${\bf r}$ and ${\bf r}'$ as in~\thref{prop:change rays} we have that $\tilde{A}_{\bf r} = \tilde{A}_{\bf r '}$.
\end{Corollary}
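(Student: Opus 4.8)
The plan is to reduce everything to the two propositions just established. Since both $\tilde A_{\bf r}$ and $\tilde A_{\bf r'}$ are, by definition in~\eqref{eq:lifted algebra}, quotients of the \emph{same} polynomial ring $\mathbb K[t_1,\dots,t_m][x_1,\dots,x_n]$, it suffices to prove the equality of ideals $\tilde J_{\bf r}=\tilde J_{\bf r'}$.

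First I would invoke~\thref{prop:lifted generators}, once for ${\bf r}$ and once for ${\bf r}'$, to conclude that $\tilde J_{\bf r}=\langle \tilde g_{\bf r}\colon g\in\mathcal G\rangle$ and $\tilde J_{\bf r'}=\langle \tilde g_{\bf r'}\colon g\in\mathcal G\rangle$. The key point is that $\mathcal G=\mathcal G_<(J)$ is the reduced Gr\"obner basis of $J$ attached to the compatible monomial term order $<$ of the cone $C$, and hence depends only on $C$ (and the choice of $<$), not on the chosen integral ray representatives $r_1,\dots,r_m$ of $\overline C$. Next I would apply~\thref{prop:change rays} to each $g\in\mathcal G$: since $\overline r'_i=\overline r_i$ in $\mathbb R^n/\mathcal L(J)$, that proposition gives $\tilde g_{\bf r}=\tilde g_{\bf r'}$ as elements of $\mathbb K[t_1,\dots,t_m][x_1,\dots,x_n]$. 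Therefore the two finite generating sets coincide term by term, so $\tilde J_{\bf r}=\tilde J_{\bf r'}$ and consequently $\tilde A_{\bf r}=\tilde A_{\bf r'}$.

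There is no real obstacle here: the statement is an immediate bookkeeping consequence of the preceding two propositions, which is exactly why it is phrased as a corollary. The only subtlety worth making explicit is the one already highlighted — that the reduced Gr\"obner basis $\mathcal G$ entering~\thref{prop:lifted generators} is the same set in both cases, being intrinsic to $C$ and $<$; without this remark the argument ``the generators agree'' would not even make sense, since $\tilde g_{\bf r}$ and $\tilde g_{\bf r'}$ would a priori run over unrelated index sets.
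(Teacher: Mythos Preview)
Your proposal is correct and is exactly the argument the paper intends: the corollary is stated immediately after~\thref{prop:change rays} and~\thref{prop:lifted generators} as their direct consequence, with no further proof given. Your only addition is the (sensible) explicit remark that $\mathcal G$ depends only on $C$ and $<$, which is implicit in the paper's setup.
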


The algebras $\tilde A\vert_{{\bf t}={\bf a}}$ constitute the fibers of a flat family introduced below.
The following result leads to fibers over different points being isomorphic if they have zero entries in the same positions.

\begin{Proposition}\thlabel{prop:generic fibers}
Consider ${\bf a}=(a_1,\dots,a_m)$ and ${\bf b}=(b_1,\dots,b_m)$ in $\mathbb K^m$ with the property that $a_i=0$ if and only if $b_i=0$.
Then the algebras $\tilde A\vert_{{\bf t}={\bf a}}$ and $\tilde A\vert_{{\bf t}={\bf b}}$ are isomorphic.
\end{Proposition}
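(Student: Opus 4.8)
The plan is to construct an explicit isomorphism of $\mathbb{K}[t_1,\dots,t_m]$-algebras (or rather, of the relevant quotient rings) by rescaling the variables $t_i$. First I would observe that the hypothesis $a_i = 0 \iff b_i = 0$ partitions the index set $[m]$ into $Z = \{i : a_i = b_i = 0\}$ and its complement $[m]\setminus Z$, on which both $a_i$ and $b_i$ are units in $\mathbb{K}$. For $i \notin Z$ set $\lambda_i := b_i/a_i \in \mathbb{K}^\ast$. The idea is to define a ring homomorphism $\varphi \colon \mathbb{K}[x_1,\dots,x_n] \to \mathbb{K}[x_1,\dots,x_n]$ that rescales each Plücker-type variable $x_j$ by the monomial $\prod_{i \notin Z}\lambda_i^{r_{ij}}$ (where $r_{ij}$ is the $(i,j)$-entry of ${\bf r}$), and to check that $\varphi$ carries $\tilde J\vert_{{\bf t}={\bf a}}$ onto $\tilde J\vert_{{\bf t}={\bf b}}$, whence it descends to the desired isomorphism $\tilde A\vert_{{\bf t}={\bf a}} \cong \tilde A\vert_{{\bf t}={\bf b}}$ via \thref{notation}.

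The key computation is to track what happens to a lifted generator under the two specializations. Since $\tilde J$ is generated by $\tilde{\mathcal G} = \{\tilde g : g \in \mathcal G\}$ by \thref{prop:lifted generators}, it suffices to work with these. By \thref{lem:lifts}, for $g = \sum_\alpha c_\alpha {\bf x}^\alpha \in \mathcal G$ with distinguished exponent $\beta$ we have $\tilde g = c_\beta {\bf x}^\beta + \sum_{\alpha \neq \beta} c_\alpha {\bf x}^\alpha {\bf t}^{{\bf r}\cdot(\alpha-\beta)}$. Specializing at ${\bf t} = {\bf a}$ kills precisely those terms $\alpha$ for which ${\bf r}\cdot(\alpha-\beta)$ has a strictly positive entry in some slot $i \in Z$ (since then $a_i = 0$ appears to a positive power); note the $i$-th entry of ${\bf r}\cdot(\alpha-\beta)$ is $r_i\cdot(\alpha-\beta) = r_i\cdot\alpha - \mu_{\bf r}(g)_i \geq 0$ by the definition of $\mu_{\bf r}$, so no negative powers occur and the surviving terms are exactly those with $r_i\cdot(\alpha-\beta) = 0$ for all $i \in Z$. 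Crucially, this surviving set of $\alpha$'s is the \emph{same} for ${\bf t}={\bf a}$ and ${\bf t}={\bf b}$ because it depends only on $Z$, which by hypothesis is determined equally by ${\bf a}$ and ${\bf b}$. On the surviving terms, $\tilde g\vert_{{\bf t}={\bf b}} = \sum c_\alpha {\bf x}^\alpha \prod_{i\notin Z} b_i^{r_i\cdot(\alpha-\beta)}$, and applying $\varphi$ to $\tilde g\vert_{{\bf t}={\bf a}}$ multiplies the $\alpha$-term by $\prod_{i\notin Z}\lambda_i^{r_i\cdot(\alpha-\beta)} = \prod_{i\notin Z}(b_i/a_i)^{r_i\cdot(\alpha-\beta)}$, which matches. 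Hence $\varphi(\tilde g\vert_{{\bf t}={\bf a}}) = \tilde g\vert_{{\bf t}={\bf b}}$ up to the overall nonzero scalar from the $\beta$-term, so $\varphi(\tilde J\vert_{{\bf t}={\bf a}}) \subseteq \tilde J\vert_{{\bf t}={\bf b}}$; the reverse inclusion follows by the symmetric construction using $\lambda_i^{-1}$, and $\varphi$ is visibly an automorphism of $\mathbb{K}[x_1,\dots,x_n]$ since it is a diagonal rescaling by units.

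The main obstacle — really the only subtle point — is verifying that the set of monomials of $\tilde g$ that survive specialization at ${\bf t}={\bf a}$ is independent of the particular values of the nonzero coordinates, depending only on the zero-pattern $Z$. This rests on the sign statement $r_i\cdot(\alpha-\beta) \geq 0$, which comes directly from $\mu_{\bf r}(g)_i = \min_{c_\alpha \neq 0}\{r_i\cdot\alpha\}$: there are no monomials of $\tilde g$ with negative $t$-exponents, so setting a coordinate to zero can only delete terms, never create a pole, and which terms get deleted is a purely combinatorial condition on the exponent vectors. Once this is in hand, the rest is the routine diagonal-automorphism bookkeeping sketched above, and the isomorphism $\tilde A\vert_{{\bf t}={\bf a}} \cong \tilde A\vert_{{\bf t}={\bf b}}$ is obtained by passing $\varphi$ to the quotients, which is legitimate because $\varphi$ maps the relevant ideals onto each other.
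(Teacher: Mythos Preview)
Your proof is correct, but it takes a different route from the paper's. The paper gives a two-line argument that rescales the $t_i$'s rather than the $x_j$'s: it defines the $\mathbb K$-algebra automorphism $\varphi$ of $\mathbb K[t_1,\dots,t_m]$ by $\varphi(t_i)=\frac{a_i}{b_i}t_i$ when $b_i\neq 0$ and $\varphi(t_i)=t_i$ otherwise, observes that $\varphi(({\bf t}-{\bf a}))=({\bf t}-{\bf b})$, and then appeals directly to the tensor-product definition in \thref{notation}. You instead work after specialization, producing a diagonal automorphism of $\mathbb K[x_1,\dots,x_n]$ and checking on the generators $\tilde g\vert_{{\bf t}={\bf a}}$ (via the explicit shape from \thref{lem:lifts}) that it carries $\tilde J\vert_{{\bf t}={\bf a}}$ onto $\tilde J\vert_{{\bf t}={\bf b}}$. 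Both approaches are really manifestations of the same $(\mathbb K^\ast)^m$-equivariance (cf.\ the homogeneity in \thref{lem:w'homogeneous} and \thref{rmk:homogeneous weights}); the paper's version is conceptually cleaner but leaves implicit the compatibility of the $t$-rescaling with $\tilde J$, while yours spells out precisely that compatibility at the level of the specialized ideals.

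One small computational remark: when you say applying $\varphi$ ``multiplies the $\alpha$-term by $\prod_{i\notin Z}\lambda_i^{r_i\cdot(\alpha-\beta)}$'', the actual factor picked up by ${\bf x}^\alpha$ is $\prod_{i\notin Z}\lambda_i^{r_i\cdot\alpha}$. The discrepancy is exactly the global nonzero scalar $\prod_{i\notin Z}\lambda_i^{r_i\cdot\beta}$ that you flag in the next sentence, so your conclusion $\varphi(\tilde g\vert_{{\bf t}={\bf a}})=(\text{unit})\cdot\tilde g\vert_{{\bf t}={\bf b}}$ stands.
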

\begin{proof}
Let $\varphi$ be the $\mathbb K$-algebra automorphism of $\mathbb K[t_1,\dots,t_m]$ defined by $\varphi(t_i)=\frac{a_i}{b_i}t_i$ if $b_i\not =0$ and $\varphi(t_i)=t_i$ if $b_i=0$.
Then $\varphi({\bf t}-{\bf a})=({\bf t}-{\bf b})$, so the claim follows by~\thref{notation}.
\end{proof}

\begin{Remark}\thlabel{rmk:non primitive rays}
For computational reasons it might be desirable to work with ray generators for $C$ that are not representatives of primitive ray generators for $\overline{C}$.
Let ${\bf r}$ and ${\bf r'}$ be two choices of ray matrices whose rows satisfy $\overline{r'}_j=q_j\overline{r}_j$ with $q_j\in \mathbb Q$ for all $j$.
Then we still have an isomorphism between $\tilde A_{\bf r}\vert_{{\bf t}={\bf a}}$ and $\tilde A_{\bf r'}\vert_{{\bf t}={\bf a}}$ for all ${\bf a}\in \mathbb K^m$.
For the proof it is neces\-sary to extend the polynomial ring $\mathbb K[t_1,\dots,t_m]$ to a ring $\mathbb K\big[t_1^{\mathbb Q},\dots,t_m^{\mathbb Q}\big]$, where the $t_i$'s are allowed to have rational exponents.
The automorphism of $\mathbb K\big[t_1^{\mathbb Q},\dots,t_m^{\mathbb Q}\big]$ defined by $h(t_i)=t_i^{q_i}$ extends to an~auto\-morphism of $\mathbb K\big[t_1^{\mathbb Q},\dots,t_m^{\mathbb Q}\big][x_1,\dots,x_n]$ with the property $h\big(\tilde f_{\bf r}\big)=\tilde f_{\bf r'}$ for all ${\bf d}$-homogeneous polynomials $f\in J$.
The rest follows from~\thref{prop:generic fibers}.
\end{Remark}

Before presenting our main result we explain how the ideal $\tilde J_{\bf r}$ is related to the ideal $\hat J_w$ in~\eqref{eq:flat}.

\begin{Proposition}\thlabel{prop:special fibers}
Consider a face $\tau$ of $C$ spanned by a subset $\{r_{i_1},\dots,r_{i_s}\}$ of $\{r_1,\dots,r_m\}$ and the lineality space $\mathcal L(J)$.
We define ${\bf t}_\tau\in \mathbb K[t]^{m}$ by
\begin{displaymath}
({\bf t}_\tau)_k:= \begin{cases}t& \text{if}\quad k\in\{i_1,\ldots,i_s\},
\\
1 &\text{otherwise,}
\end{cases}
\qquad
\text{for}
\quad k=1,\ldots,m.
\end{displaymath}
Then for $w_\tau=r_{i_1}+\dots +r_{i_s}$ we have $\tilde J\vert_{{\bf t}={\bf t}_\tau}=\hat J_{w_\tau}$.
\end{Proposition}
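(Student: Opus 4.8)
The plan is to compare the two ideals by comparing their generators, using that both sides are generated by explicit lifts of polynomials in $J$. First I would unwind the definition of $\hat J_{w_\tau}$ from~\eqref{eq:flat}: it is generated by the elements
$\sum_{\alpha} c_\alpha {\bf x}^\alpha t^{w_\tau\cdot\alpha - \min\{w_\tau\cdot\beta: c_\beta\neq 0\}}$
as $f=\sum_\alpha c_\alpha{\bf x}^\alpha$ ranges over $J$. On the other side, by~\thref{notation} the ideal $\tilde J\vert_{{\bf t}={\bf t}_\tau}$ is generated by $\{\tilde f\vert_{{\bf t}={\bf t}_\tau}: f\in J\}$. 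For a fixed $f$, substituting $t_k = t$ for $k\in\{i_1,\dots,i_s\}$ and $t_k=1$ otherwise into $\tilde f = \sum_\alpha c_\alpha {\bf x}^\alpha {\bf t}^{{\bf r}\cdot\alpha - \mu_{\bf r}(f)}$ sends the monomial ${\bf t}^{{\bf r}\cdot\alpha - \mu_{\bf r}(f)}$ to $t^{\sum_{j=1}^s (r_{i_j}\cdot\alpha - (\mu_{\bf r}(f))_{i_j})}$. Now $\sum_{j=1}^s (\mu_{\bf r}(f))_{i_j} = \sum_{j=1}^s \min_{c_\gamma\neq 0}\{r_{i_j}\cdot\gamma\}$, while the exponent appearing in the generator of $\hat J_{w_\tau}$ has the term $\min\{w_\tau\cdot\beta: c_\beta\neq 0\} = \min_{c_\beta\neq 0}\{\sum_{j=1}^s r_{i_j}\cdot\beta\}$. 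These are in general \emph{not} equal: a sum of minima need not be the minimum of the sum. So the naive term-by-term comparison gives $\tilde f\vert_{{\bf t}={\bf t}_\tau} = t^{c_f}\cdot\big(\text{generator of }\hat J_{w_\tau}\text{ for }f\big)$ for a suitable nonnegative integer constant $c_f = \min_{c_\beta\neq 0}\{w_\tau\cdot\beta\} - \sum_{j=1}^s \min_{c_\gamma\neq 0}\{r_{i_j}\cdot\gamma\} \ge 0$.

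The next step is therefore to argue that this extra power of $t$ does not change the generated ideal. For the inclusion $\tilde J\vert_{{\bf t}={\bf t}_\tau}\subseteq \hat J_{w_\tau}$ this is immediate since $t^{c_f}$ times a generator of $\hat J_{w_\tau}$ still lies in $\hat J_{w_\tau}$. For the reverse inclusion $\hat J_{w_\tau}\subseteq \tilde J\vert_{{\bf t}={\bf t}_\tau}$ I need, for each $f\in J$, to produce the bare generator $\sum_\alpha c_\alpha {\bf x}^\alpha t^{w_\tau\cdot\alpha - \min\{w_\tau\cdot\beta\}}$ inside $\tilde J\vert_{{\bf t}={\bf t}_\tau}$. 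The clean way is to use \thref{prop:lifted generators}: $\tilde J$ is generated by $\{\tilde g : g\in\mathcal G\}$, hence $\tilde J\vert_{{\bf t}={\bf t}_\tau}$ is generated by $\{\tilde g\vert_{{\bf t}={\bf t}_\tau}: g\in\mathcal G\}$, and similarly $\hat J_{w_\tau}$ is generated by the corresponding elements attached to a Gröbner basis of $J$ with respect to $w_\tau$ (a Gröbner basis of $J$ with respect to any $w\in C$ is afforded by $\mathcal G$, since $C$ is in the Gröbner fan and $w_\tau\in C$). For $g\in\mathcal G$, \thref{lem:lifts} tells us that $\tilde g$ (and hence $\tilde g\vert_{{\bf t}={\bf t}_\tau}$) contains a unique monomial $c_\beta{\bf x}^\beta=\init_<(g)$ with coefficient in $\mathbb K$, because ${\bf r}\cdot\beta = \mu_{\bf r}(g)$ — and by the argument in the proof of \thref{lem:lifts}, $r_i\cdot\beta = \min_{c_\alpha\neq 0}\{r_i\cdot\alpha\}$ for \emph{every} $i$, so a fortiori $w_\tau\cdot\beta = \sum_j r_{i_j}\cdot\beta = \min_{c_\alpha\neq 0}\{w_\tau\cdot\alpha\}$ and the constant $c_g$ above is $0$. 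Thus on reduced Gröbner basis elements the two constructions agree exactly: $\tilde g\vert_{{\bf t}={\bf t}_\tau}$ is precisely the $\hat J_{w_\tau}$-generator attached to $g$. Since these elements generate both ideals, equality follows.

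The main obstacle — and the reason one must route through $\mathcal G$ rather than argue naively over all of $J$ — is exactly the discrepancy $c_f\ge 0$ that appears for general $f\in J$: the substitution ${\bf t}={\bf t}_\tau$ collapses the fine $\mathbb Z^m$-grading recorded by $\mu_{\bf r}$ into the coarse $\mathbb Z$-grading recorded by $\min\{w_\tau\cdot\beta\}$, and these need not be compatible monomial-by-monomial for arbitrary elements. Once one knows (via \thref{lem:lifts}) that the obstruction vanishes on the reduced Gröbner basis $\mathcal G$, and (via \thref{prop:lifted generators}) that $\mathcal G$'s lifts generate $\tilde J$, the proposition is a short comparison of generating sets. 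I would also remark that $w_\tau\in C$ (it is a nonnegative combination of rays of $C$) so that $\mathcal G$ is indeed a Gröbner basis of $J$ with respect to $w_\tau$, legitimizing the use of $\mathcal G$ on the $\hat J_{w_\tau}$ side as well.
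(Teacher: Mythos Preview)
Your proposal is correct and follows essentially the same approach as the paper: both arguments reduce to showing that for $g\in\mathcal G$ the ``sum of minima equals minimum of the sum'' identity $\sum_{j}\min_{c_\alpha\neq 0}\{r_{i_j}\cdot\alpha\}=\min_{c_\alpha\neq 0}\{w_\tau\cdot\alpha\}$ holds, and then conclude by comparing generating sets. The only difference is packaging: you invoke \thref{lem:lifts} (the existence of a common minimizer $\beta$ for all $r_i$) to get this identity in one stroke, whereas the paper re-derives it inductively via Remark~\ref{rem:Eisenbud}; your framing with the discrepancy $c_f\ge 0$ for general $f$ is a helpful addition that makes explicit why the argument must pass through $\mathcal G$.
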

\begin{proof}
Let $<$ be the monomial term order compatible with $C$. Consider an element $g=\sum_{\alpha \in \mathbb Z^n_{\ge 0}}c_{\alpha}{\bf x}^{\alpha}$ in $\mathcal{G}$ with $\init_<(g)=c_\beta {\bf x}^\beta$. Since the vectors $w_\tau$, $w:=w_\tau-r_{i_s}$ and $r_{i_s}$ are all in $C$, by Remark~\ref{rem:Eisenbud}:
\begin{equation*}
\init_<(\init_{w_\tau}(g))=\init_<(\init_{w}(g))=\init_{<}(\init_{r_{i_s}}(g))=\init_<(g)=c_\beta {\bf x}^\beta.
\end{equation*}
This implies that the initial forms of $\init_{w_\tau}(g)$, $\init_{w}(g)$ and $\init_{r_{i_s}}(g)$ contain 
$c_\beta {\bf x}^\beta$. In other words,
\begin{displaymath}
\min_{c_{\alpha}\not =0}\{w_\tau\cdot \alpha\}=w_\tau\cdot\beta=(w+r_{i_s})\cdot \beta, \qquad \min_{c_{\alpha}\not =0}\{w\cdot \alpha\}=w\cdot\beta,\qquad \min_{c_{\alpha}\not =0}\{r_{i_s}\cdot \alpha\}=r_{i_s}\cdot\beta.
\end{displaymath}
Therefore,
\begin{displaymath}
(w+r_{i_s})\cdot\beta=\min_{c_{\alpha}\not =0}\{(w+r_{i_s})\cdot \alpha\} =\min_{ c_{\alpha}\not =0}\{w\cdot \alpha\} + \min_{c_{\alpha}\not =0} \{r_{i_s}\cdot \alpha\}.
\end{displaymath}
Using the same argument multiple times we obtain that
\begin{displaymath}
\min_{c_{\alpha}\not =0}\Bigg\{ \bigg(\sum_{j=1}^s r_{i_j}\bigg)\cdot \alpha \Bigg\} = \sum_{j=1}^s \min_{c_{\alpha}\not =0}\{r_{i_j}\cdot \alpha\}.
\end{displaymath}
Now the claim follows by comparing the generating sets of $\hat J_{w_\tau}$ and $\tilde J\vert_{{\bf t}={\bf t}_{\tau}}$.
\end{proof}

We are now prepared to present our main theorem:

\begin{Theorem}\thlabel{thm:family}
Let $J$ be a ${\bf d}$-homogeneous ideal in $\mathbb K_{\bf d}[x_1,\dots,x_n]$, $A=\mathbb K_{\bf d}[x_1,\ldots,x_n]/J$, $C$ a~maximal cone in $\GF(J)$ with compatible monomial term order $<$ and ${\bf r}$ an $(m\times n)$-matrix whose rows are representatives of the primitive ray generators of $\overline{C}\subset \mathbb R^n/\mathcal L(J)$. Then:
\begin{itemize}\itemsep=0pt
 \item[$(i)$] The algebra $\tilde A_{\bf r}$ is a free $\mathbb K[t_1,\dots,t_m]$-module with basis $\mathbb B_<$, the standard monomial basis of $A$ with respect to $\init_<(J)$.
 In particular, we have a flat family
 \begin{displaymath}
 \begin{tikzcd}[ampersand replacement=\&,cramped]
 {\rm Proj}(\tilde A) \ar[r,hook]\ar[d,"\pi"] \& \mathbb P({\bf d})\times \mathbb A^m \ar[ld,twoheadrightarrow]\\
 \mathbb A^m. \&
 \end{tikzcd}
 \end{displaymath}
\item[$(ii)$] For every face $\tau$ of $C$ there exists ${\bf a}_\tau\in \mathbb A^m$ such that
 $\pi^{-1}({\bf a}_\tau) = {\rm Proj}(\mathbb K_{\bf d}[x_1,\dots,x_n]/$ $\init_\tau(J))$.
 In particular, generic fibers are isomorphic to ${\rm Proj}(A)$ and there exist special fibers for every proper face $\tau\subset C$.
\end{itemize}
\end{Theorem}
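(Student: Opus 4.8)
The plan is to derive both statements from the structural results already in place, above all \thref{prop:lifted generators} and \thref{lem:lifts}. For part $(i)$ the essential step is to compute the initial ideal $\init_\ll(\tilde J)$ inside $\mathbb K[t_1,\dots,t_m,x_1,\dots,x_n]$. By \thref{prop:lifted generators} the set $\tilde{\mathcal G}=\{\tilde g\colon g\in\mathcal G\}$ is a Gr\"obner basis for $\tilde J$ with respect to $\ll$, so $\init_\ll(\tilde J)=\langle\init_\ll(\tilde g)\colon g\in\mathcal G\rangle$. If $\init_<(g)=c_\beta{\bf x}^\beta$, then \thref{lem:lifts} gives $\tilde g=c_\beta{\bf x}^\beta+\sum_{\alpha\ne\beta}c_\alpha{\bf x}^\alpha{\bf t}^{{\bf r}\cdot(\alpha-\beta)}$; since ${\bf x}^\alpha<{\bf x}^\beta$ for every $\alpha\ne\beta$ with $c_\alpha\ne0$, the defining property~\eqref{eq:<<} of $\ll$ shows $\init_\ll(\tilde g)=c_\beta{\bf x}^\beta=\init_<(g)$, and crucially this leading monomial involves no $t$-variable. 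Hence $\init_\ll(\tilde J)=\init_<(J)\cdot\mathbb K[t_1,\dots,t_m,x_1,\dots,x_n]$, and a monomial ${\bf t}^\lambda{\bf x}^\alpha$ lies outside $\init_\ll(\tilde J)$ precisely when ${\bf x}^\alpha\notin\init_<(J)$. By the standard monomial basis property applied to $\ll$ (\thref{def:SMT}), the residues of $\{{\bf t}^\lambda b\colon\lambda\in\mathbb Z_{\ge 0}^m,\ b\in\mathbb B_<\}$ form a $\mathbb K$-basis of $\tilde A_{\bf r}$; regrouping by $\lambda$ and using $\mathbb K$-linear independence, $\mathbb B_<$ is a $\mathbb K[t_1,\dots,t_m]$-basis of $\tilde A_{\bf r}$, so $\tilde A_{\bf r}$ is a free, hence flat, $\mathbb K[t_1,\dots,t_m]$-module. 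Since each $g\in\mathcal G$ is ${\bf d}$-homogeneous (\thref{lem:init ell}) and multiplying its monomials by powers of ${\bf t}$ does not change their $x$-degree, every $\tilde g$ is homogeneous for the ${\bf d}$-grading on the $x_i$ (with the $t_j$ in degree $0$); thus $\tilde J$ is ${\bf d}$-homogeneous, ${\rm Proj}(\tilde A_{\bf r})$ embeds in $\mathbb P({\bf d})\times\mathbb A^m$ with projection $\pi$ to $\mathbb A^m$, and since $\mathbb B_<$ consists of ${\bf d}$-homogeneous elements every graded piece of $\tilde A_{\bf r}$ is $\mathbb K[t_1,\dots,t_m]$-free, whence $\pi$ is flat.

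For part $(ii)$ I would first invoke the standard fact that every face $\tau$ of the polyhedral cone $C$ is spanned by a subset $\{r_{i_1},\dots,r_{i_s}\}$ of its ray generators together with $\mathcal L(J)$ (here \thref{lem:strictly_convex} makes $\overline C$ strongly convex with the $\overline r_i$ as rays, and faces of $C$ correspond to faces of $\overline C$), and that $w_\tau:=r_{i_1}+\dots+r_{i_s}$ lies in the relative interior of $\tau$, so $\init_\tau(J)=\init_{w_\tau}(J)$. Let ${\bf a}_\tau\in\mathbb A^m$ have entry $0$ in the positions $i_1,\dots,i_s$ and $1$ elsewhere. Evaluating the identity $\tilde J\vert_{{\bf t}={\bf t}_\tau}=\hat J_{w_\tau}$ of \thref{prop:special fibers} at $t=0$, and using that the fiber of the family~\eqref{eq:flat} over $(t)$ is cut out by $\init_{w_\tau}(J)$ (Remark~\ref{rem:flat}), we obtain $\tilde J\vert_{{\bf t}={\bf a}_\tau}=\init_{w_\tau}(J)=\init_\tau(J)$. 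As ${\rm Proj}$ commutes with base change and $\tilde A_{\bf r}\vert_{{\bf t}={\bf a}_\tau}\cong\mathbb K_{\bf d}[x_1,\dots,x_n]/\init_\tau(J)$ by \thref{notation}, this gives $\pi^{-1}({\bf a}_\tau)={\rm Proj}(\mathbb K_{\bf d}[x_1,\dots,x_n]/\init_\tau(J))$. Finally $\tilde f\vert_{{\bf t}={\bf 1}}=f$ for all $f\in J$, so the fiber over ${\bf 1}$ is ${\rm Proj}(A)$, and by \thref{prop:generic fibers} every fiber over a point with no zero entry is isomorphic to it; applying the first assertion of $(ii)$ to each proper face produces the claimed special fibers.

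Given the preparatory lemmas there is no single hard obstacle: the two points that require care are (a) checking that $\init_\ll(\tilde g)$ carries no power of ${\bf t}$ --- this is precisely what \thref{lem:lifts} is for, and it is what lets the standard monomials of $\tilde J$ factor as ${\bf t}$-monomials times standard monomials of $J$ --- and (b) the combinatorial bookkeeping in $(ii)$, namely matching faces of $C$ with points of $\mathbb A^m$ through \thref{prop:special fibers} and commuting ${\rm Proj}$ with the base change to the residue field $\mathbb K$ at ${\bf a}_\tau$.
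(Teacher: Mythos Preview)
Your proposal is correct and follows the same route as the paper: for $(i)$ you use \thref{prop:lifted generators} and \thref{lem:lifts} to identify $\init_\ll(\tilde J)$ with the extension of $\init_<(J)$ and deduce that $\mathbb B_<$ is a $\mathbb K[t_1,\dots,t_m]$-basis, and for $(ii)$ you define ${\bf a}_\tau$ exactly as the paper does and invoke \thref{prop:special fibers} together with Remark~\ref{rem:flat}. Your presentation of $(i)$ is in fact slightly more direct than the paper's (which passes through an auxiliary set $\mathbb B'_{\ll}$ and a short contradiction), but the content is the same.
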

\begin{proof}
$(i)$ We extend $<$ to a monomial term order $\ll$ on $\mathbb K[t_1,\dots,t_m,x_1,\dots,x_n]$ as in~\eqref{eq:<<}.
 Let $\mathbb B_{\ll}$ be the standard monomial basis for $\tilde A_{\bf r}$ induced by $\init_{\ll}\big(\tilde J_{\bf r}\big)$.
 Then $\mathbb B_{\ll}$ is a basis for $\tilde A_{\bf r}$ as a $\mathbb{K}$-vector space and a generating set for $\tilde A_{\bf r}$ as a $\mathbb K[t_1,\dots,t_m]$-module.
 By~\thref{prop:lifted generators} and~\cite[Proposition 1.1.5]{herzog2011monomial} we have
 \begin{gather*}
 \mathbb B_{\ll} = \big\{\bar {\bf t}^\beta\bar {\bf x}^\alpha \colon \init_{\ll}(\tilde g)\not\vert \ {\bf t}^\beta{\bf x}^\alpha \text{ for all }g\in \mathcal G\big\},\qquad
 \mathbb B_{<} = \big\{\bar {\bf x}^\alpha \colon \init_{<}(g)\not\vert \ {\bf x}^\alpha \text{ for all }g\in \mathcal G\big\}.
 \end{gather*}
 Observe that $\mathbb B_{\ll}$ can be reduced to a $\mathbb K[t_1,\dots,t_m]$-basis of $\tilde A_{\bf r}$ by defining
 \begin{displaymath}
 \mathbb B'_{\ll}:= \mathbb B_{\ll} \setminus \big\{\bar{\bf t}^\beta\bar{\bf x}^\alpha \colon \exists \ \bar{\bf t}^{\gamma}\bar{\bf x}^\alpha\in \mathbb B_{\ll} \text{ with } \gamma < \beta\big\},
 \end{displaymath}
 where $\gamma < \beta$ means that $\gamma_i\le \beta_i$ for all $i$ and $\gamma_j<\beta_j$ for some $j$. Note that $\mathbb B_{<}\subseteq \mathbb B'_{\ll}$.
 Now assume there is a monomial $\bar{\bf t}^\beta\bar{\bf x}^\alpha \in \mathbb B'_{\ll}\setminus \mathbb B_<$.
 By~\thref{prop:lifted generators} we have that $\init_{\ll}\big(\tilde J\big)\vert_{{\bf t}={\bf 1}}=\init_<(J)$.
 Hence, $\mathbb B'_{\ll}\vert_{{\bf t}={\bf 1}}=\mathbb B_{\ll}\vert_{{\bf t}={\bf 1}}=\mathbb B_<$.
 In particular, this implies that $\bar{\bf x}^\alpha\in \mathbb B_<$. But as $\mathbb B_<\subset \mathbb B_{\ll}$ this is a contradiction to the definition of $\mathbb B'_{\ll}$.

 For the second claim, let $\mathbb K[t_1,\dots,t_m]_{\bf d}[x_1,\dots,x_n]$ denote the polynomial ring in $x_1,\dots,x_n$ with coefficients in $\mathbb K[t_1,\dots,t_m]$ and grading induced by ${\bf d}$.
 Then by~\thref{lem:lifts} and~\thref{prop:lifted generators} $\tilde J\subseteq \mathbb K[t_1,\dots,t_m]_{\bf d}[x_1,\dots,x_n]$ is ${\bf d}$-homogeneous.
 This yields the embedding ${\rm Proj}\big(\tilde A\big)\hookrightarrow \mathbb P({\bf d})\times \mathbb A^m$.
 The projection $\mathbb P({\bf d})\times \mathbb A^m\twoheadrightarrow \mathbb A^m$ induces the flat morphism $\pi\colon {\rm Proj}(\tilde A)\to \mathbb A^m$ as $\tilde A$ has a $\mathbb K[t_1,\dots,t_m]$-basis by the first claim.

$(ii)$ Every face $\tau$ of $C$ induces a face $\overline{\tau}$ of $\overline{C}$ with primitive ray generators $\overline{r}_{i_1},\dots, \overline{r}_{i_{s}}$ for some $i_1,\dots,i_s\in [m]$.
 We define
 \begin{equation*}
 ({\bf a}_{\tau})_k:=
 \begin{cases}
 0 & \text{if}\quad k\in\{i_1,\dots,i_{s}\},\\
 1 & \text{otherwise},
 \end{cases}
 \qquad
\text{for}
\quad k=1,\ldots,m.
 \end{equation*}
 Then by~\thref{prop:special fibers} we have $\tilde J\vert_{{\bf t}={\bf a}_\tau}=\hat J_{w_\tau}\vert_{t=0}$ which is equal to $\init_{\tau}(J)$ by Remark~\ref{rem:flat}.
\end{proof}

\subsection{Torus equivariant families}\label{sec:toric families}

We explain how the above results are related to Kaveh--Manon's recent work~\cite{KM-toricbundles} on the classification of torus equivariant families.

Consider the lattice $N=\mathbb Z^n$, its dual lattice $M=N^*$ and a fan $\Sigma\subset N\otimes_{\mathbb Z}\mathbb R$.
We write~$X_\Sigma$ for the toric variety associated to $\Sigma$.
Furthermore, we define $\mathcal O_N$ to be the semifield of piecewise linear functions on $N$ and $\mathcal O_{\Sigma}$ the semifield of piecewise linear functions on $\vert\Sigma\vert\cap N$.
For $a,b\in\mathcal O_{\Sigma}$ we have $a\otimes b:=a+b$ and $a\oplus b:=\min\{a,b\}$, where the minimum is taken pointwise.
In particular, $\mathcal O_{\Sigma}$ is partially ordered: $a\ge b$ if $a\oplus b=b$.
A {\it PL-quasivaluation} on an algebra $A$ is a map $\val\colon A\to \mathcal O_{\Sigma}$ that satisfies: $(i)$ $\val(fg)\ge \val(f)+\val(g)$ and $(ii)$ $\val(f+g)\ge \min\{\val(f),\val(g)\}$, where the minimum is taken pointwise in $\mathcal O_{\Sigma}$.
If $(i)$ is an equality, $\val$ is called a {\it PL-valuation}.
If $A$ is a graded algebra $A=\bigoplus_{n\in \mathbb Z} A_n$ then $\val$ is called {\it homogeneous} if it is compatible with the grading, i.e.,~for $f=\sum_{n\in \mathbb Z} c_n g_n$ with $g_n\in A_n$ and coefficients $c_n$ we have $\val(f)=\val(g_k)$ for the smallest $k$ with $c_k\not =0$.

Given a sheaf of algebras $\mathcal A$ on $X_\Sigma$ its {\it relative spectrum} denoted by $\mathbf{Spec}(\mathcal A)$ is the scheme obtained from gluing affine schemes $\operatorname{Spec}(\mathcal A(U_i))$, where $\bigcup_i U_i$ is an open cover of $X_\Sigma$ and $\mathcal A(U_i)$ is the corresponding section of $\mathcal A$.

\begin{Theorem}[{\cite[Theorem~1.1]{KM-toricbundles}}]
Let $A=\bigoplus_{n\ge 0}A_n$ be a positively graded algebra and a~domain over an algebraically closed field of characteristic zero and $\Sigma$ a fan in $N\otimes_{\mathbb Z}\mathbb R$.
Then the following are equivalent information:
\begin{itemize}\itemsep=0pt
 \item[$(i)$] a $T_N$-equivariant flat sheaf $\mathcal A$ of positively graded algebras of finite type over $X_\Sigma$ such that $\mathbf{Spec}(\mathcal A)$ has reduced and irreducible fibers and its generic fibers are isomorphic to $\operatorname{Spec}(A);$
\item[$(ii)$] a homogeneous PL-valuation $\val\colon A\to \mathcal O_{\Sigma}$ with finite Khovanskii basis. $($Given $\val\colon A\to \mathcal O_{\Sigma}$ for every ray $\rho\in \Sigma$ one constructs a quasivaluation $\val_\rho\colon A\to \mathbb Z\cup \{\infty\}$ with associated graded algebra ${\rm gr}_{\rho}(A)$, see~{\rm \cite[Section~4.1]{KM-toricbundles}}. A {\it Khovanskii basis} for $\val$ is a subset $\mathcal B\subset A$ such that for every ray $\rho\in \Sigma$ the image of $\mathcal B$ in ${\rm gr}_{\rho}(A)$ is a set of algebra generators.$)$
\end{itemize}
\end{Theorem}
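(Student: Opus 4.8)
The final statement is Theorem~1.1 of~\cite{KM-toricbundles}; below the plan is to sketch the strategy of its proof rather than to reprove it. One sets up two constructions, from $(\mathrm{ii})$ to $(\mathrm{i})$ and back, and checks that they are mutually inverse. Throughout, fix a maximal cone $\sigma\in\Sigma$ with affine chart $U_\sigma\subseteq X_\Sigma$, so that restricting a $T_N$-equivariant quasi-coherent sheaf of algebras to $U_\sigma$ yields an algebra graded by $\sigma^\vee\cap M$ over the semigroup ring $\mathbb K[\sigma^\vee\cap M]$.

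\emph{From a PL-valuation to a sheaf.} Given a homogeneous PL-valuation $\val\colon A\to\mathcal O_\Sigma$ one builds a sheaf of Rees algebras. On each chart one sets
\[
\mathcal R_\sigma(A)\ :=\ \bigoplus_{\mathbf m\in\sigma^\vee\cap M}\ \mathcal F^\sigma_{\mathbf m}\cdot\chi^{\mathbf m}\ \subseteq\ A\otimes_{\mathbb K}\mathbb K[M],\qquad \mathcal F^\sigma_{\mathbf m}:=\{a\in A\colon \val(a)\ge-\mathbf m\ \text{on}\ \sigma\}.
\]
The assignment $\sigma\mapsto\mathcal R_\sigma(A)$ is compatible with passing to faces, so the $\mathcal R_\sigma(A)$ glue to a $T_N$-equivariant sheaf $\mathcal A$ of positively graded algebras on $X_\Sigma$ with structure morphism $\pi\colon\mathbf{Spec}(\mathcal A)\to X_\Sigma$; inverting all characters recovers $A\otimes_{\mathbb K}\mathbb K[M]$, so the generic fiber of $\pi$ is $\operatorname{Spec}(A)$. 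Three nontrivial points remain. \emph{Finite type} is where the finite Khovanskii basis $\mathcal B$ enters: its image generates each ${\rm gr}_\rho(A)$, and a lifting argument upgrades this to generation of $\mathcal R_\sigma(A)$ over $\mathbb K[\sigma^\vee\cap M]$ by monomials in $\mathcal B$. \emph{Flatness} is obtained by exhibiting a $\mathbb K[\sigma^\vee\cap M]$-module basis of $\mathcal R_\sigma(A)$ adapted to $\val$ --- a SAGBI/standard-monomial type basis --- so that $\mathcal R_\sigma(A)$ is free and $\pi$ is flat. \emph{Reducedness and irreducibility of the fibers}: the fiber over the torus-fixed point of $U_\sigma$ is the associated graded of $A$ for the $\mathbb Z^{k}$-filtration induced by $\val$, with $k$ the number of rays of $\sigma$; since $\val$ is a PL-\emph{valuation} --- equality in axiom~(i) --- and $A$ is a domain, this associated graded is again a domain, hence irreducible and reduced.

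\emph{From a sheaf to a PL-valuation.} Conversely, given $\mathcal A$ as in $(\mathrm i)$, restrict to $U_\sigma$ and use $T_N$-equivariance to get the $(\sigma^\vee\cap M)$-grading. Identifying the generic fiber with $\operatorname{Spec}(A)$, each $a\in A$ determines the set of $\mathbf m$ with $a\,\chi^{\mathbf m}\in\mathcal A(U_\sigma)$; this set is an up-set whose lower boundary is governed by the orders of vanishing of $a$ along the toric boundary divisors of $U_\sigma$ and assembles into a piecewise linear function on $\sigma$. Because $\mathcal A$ is a sheaf these functions agree on overlaps and glue to a map $\val\colon A\to\mathcal O_\Sigma$; it is homogeneous because $A$ is graded, it admits a finite Khovanskii basis because $\mathcal A$ is of finite type, and the identity $\val(fg)=\val(f)+\val(g)$ --- rather than merely ``$\ge$'' --- is forced by the special fibers being reduced and irreducible, since those fibers are exactly the associated graded algebras of the filtration and hence must be domains. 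A formal verification then shows the two constructions are mutually inverse.

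\emph{Expected main obstacle.} The hard part is the direction $(\mathrm{ii})\to(\mathrm i)$: proving simultaneously that each Rees algebra $\mathcal R_\sigma(A)$ is of finite type, is flat over $\mathbb K[\sigma^\vee\cap M]$, and has reduced and irreducible special fiber. All three rest on having a Khovanskii basis that behaves uniformly along every ray of $\Sigma$ and on a careful lifting of a standard-monomial basis from the associated graded back to the Rees algebra; once this is in place, gluing over intersections of cones and checking $T_N$-equivariance are comparatively routine.
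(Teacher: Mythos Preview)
The paper does not prove this theorem: it is quoted verbatim as \cite[Theorem~1.1]{KM-toricbundles} and used purely as background for relating the authors' lifted algebra $\tilde A$ to Kaveh--Manon's Rees construction. There is therefore no ``paper's own proof'' to compare your proposal against.

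That said, your sketch is a faithful outline of the Kaveh--Manon argument: the Rees-algebra-as-sheaf construction from a PL-valuation, with finite type coming from the Khovanskii basis, flatness from an adapted (standard-monomial) basis making each $\mathcal R_\sigma(A)$ free, and the domain property of the associated graded forcing reduced and irreducible special fibers; and conversely reading off the valuation from the $M$-grading on the charts. Your identification of the main obstacle --- uniform control of the Khovanskii basis across all rays and the lifting of a standard-monomial basis to the Rees algebra --- is accurate. One small caution: you write the filtration as $\mathcal F^\sigma_{\mathbf m}=\{a:\val(a)\ge -\mathbf m\text{ on }\sigma\}$, which matches the paper's convention in \thref{def:Rees}, but be careful that this sign convention interacts with whether one uses $\min$ or $\max$ for initial forms; the paper is explicit about its $\min$-convention and the resulting $S_C=-C^\vee\cap\mathbb Z^n$.
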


Consider a presentation of $A$, i.e.,~a surjection ${\rm pr}\colon \mathbb K[x_1,\dots,x_n]\to A$ whose kernel is a~weig\-hted homogeneous prime ideal $J$ and so it induces an isomorphism of graded algebras $A\cong \mathbb K[x_1,\dots,x_n]/J$.
Take a cone $\sigma$ in $\GF(J)$.
Then $\sigma$ defines PL-quasivaluation $\val_{\sigma}\colon A\to \mathcal O_\sigma$ as follows:
let $C$ be a maximal cone in $\GF(J)$ with face $\sigma$ and denote by $\mathbb B_<$ the standard monomial basis associated to $C$ (here $<$ is the compatible monomial term order).
If $\init_\sigma(J)$ is~prime, then~$\val_{\sigma}$ is a PL-valuation.
Then for $\bar{\bf x}^\alpha\in \mathbb B_<$ we have $\val_{\sigma}(\bar{\bf x}^\alpha):= -\cdot \alpha\colon \sigma\to \mathbb Z$, where~$-\cdot -$ is the pairing between the lattices $N$ and $M$, as above.
For an arbitrary element $f\in A$ write $f=\sum_{\bar{\bf x}^\alpha\in \mathbb B_<} c_\alpha \bar{\bf x}^\alpha$.
Then $\val_\sigma(f):=\min_{c_\alpha\not =0}\{\val_\sigma(\bar{\bf x}^\alpha)\}$.

\begin{Definition}[\cite{KM-toricbundles}]\thlabel{def:Rees}
The PL-quasivaluation $\val_\sigma\colon A\to\mathcal O_\sigma$ defines a {\it filtration} on $A$ with filtered pieces
$F_m(\val_\sigma):=\{f\in A\colon \val_\sigma(f)\ge -\cdot m \}$ for $m\in M$.
The {\it Rees algebra} of the PL-va\-lu\-ation $\val_\sigma$ is $\mathcal R_\sigma(A):=\bigoplus_{m\in M}F_m(\val_\sigma)t^m$, where we think of $t^m$ as a character of the torus $T_N=N\otimes_{\mathbb Z}\mathbb C^*$.
\end{Definition}

More generally, a PL-quasivaluation can be obtained from a collection of equidimensional cones $\sigma_1,\dots,\sigma_k$ in $\trop(J)$ which are faces of the same maximal cone $C$ in $\GF(J)$.
Let $\Sigma\subset \trop(J)$ be the subfan whose maximal cones are $\sigma_1,\dots,\sigma_k$ and $\val_{\Sigma}\colon A \to \mathcal O_N$ be the corresponding PL-valuation.
By Kaveh--Manon's classification of toric families this yields a flat family
\begin{equation}\label{eq:KM family}
\psi\colon\ \mathbf{Spec}(\mathcal R_\Sigma(A))\to X_\Sigma.
\end{equation}
Here $\mathcal R_\Sigma(A)$ is the $T_N$-equivariant flat sheaf of Rees algebras on $X_\Sigma$.
The scheme ${\bf Spec}(\mathcal{R}_{\Sigma}(A))$ is glued from $\operatorname{Spec}(\mathcal{R}_{\sigma})$ for $\sigma \in \Sigma$.
For $p\in T_N\subset X_{\Sigma}$ we have $\psi^{-1}(p)\cong \operatorname{Spec}(A)$.
Moreover, $\psi_\Sigma$ has a special fiber over every torus fixed point of $X$. More precisely, let $p_{\sigma_i}$ be the torus fixed point corresponding to the (maximal) cone $\sigma_i\in\Sigma$. Then
\begin{equation}\label{eq:fibers psi}
\psi^{-1}(p_{\sigma_i})\cong \operatorname{Spec}\big(\mathbb K[x_1,\dots,x_n]/\init_{\sigma_i}(J)\big).
\end{equation}

Note that we can apply the construction of~\thref{def:lift} to the ideal $J$ and the maximal cone $C\in \GF(J)$.
In what follows we explore the relation between the flat families from~\eqref{eq:KM family} and~\thref{thm:family}.
Before stating our results (\thref{thm:KM max cone} and~\thref{thm:KM vs grobner}), we recall necessary background from~\cite{KM-toricbundles}.
We fix a maximal cone $C$ in $\GF(J)$, denote by $\mathcal G$ the associated reduced Gr\"obner basis and let $\mathbb B_<$ be the standard monomial basis for the compatible monomial term order $<$.

\begin{Definition}[\cite{KM-toricbundles}]
The PL-quasivaluation $\mathfrak{w}_C\colon \mathbb K[x_1,\dots,x_n]\to \mathcal O_C$ associated to $C$ is~defi\-ned by $\mathfrak{w}_C({\bf x}^\alpha)= -\cdot \alpha\colon C\to \mathbb Z$.
For a polynomial $f=\sum c_\alpha {\bf x}^\alpha$ we have $\mathfrak{w}_C(f):=\min_{c_\alpha\not =0}\{ -\cdot \alpha\}$.
The associated Rees algebra is $\mathcal R_C(\mathbb K[x_1,\dots,x_n])=\bigoplus_{m\in M} F_m(\mathfrak{w}_C)t^m$, where
$F_m(\mathfrak{w}_C)$ is defined analogous to $F_m(\val_\sigma)$ in~\thref{def:Rees}.
\end{Definition}

The PL-quasivaluation $\val_C\colon A\to \mathcal O_C$ is obtained as a pushforward of the PL-quasivaluation $\mathfrak{w}_C\colon \mathbb K[x_1,\dots,x_n]\to \mathcal O_C$ along the morphism ${\rm pr}\colon \mathbb K[x_1,\dots,x_n]\to A$.
An {\it adapted basis} for~$\val_C$ is a vector space basis $\mathbb B$ for $A$ such that $\mathbb B\cap F_m(\val_\sigma)$ is a vector space basis of $F_m(\val_\sigma)$ for every $m\in M$.
By~\cite{KM-toricbundles} the standard monomial basis $\mathbb B_<$ is an adapted basis for $\val_C$.
Similarly, the monomial basis of $\mathbb K[x_1,\dots,x_n]$ is adapted to $\mathfrak{w}_C$.
Hence, $\mathcal R_C(A)$ is a free $\mathbb K[S_C]$-algebra with basis $\mathbb B_<$ and $\mathcal R_C(\mathbb K[x_1,\dots,x_n])$ is a free $\mathbb K[S_C]$-algebra whose basis is the monomial basis, where $S_C:=-C^\vee \cap \mathbb Z^n$ due to our $\min$-convention.
In particular: 
\begin{equation}\label{eq:Rees poly}
\mathcal R_C\big(\mathbb K[x_1,\dots,x_n]\big)\cong \mathbb K[S_C][x_1,\dots,x_n].
\end{equation}

Manon explained the proof of the following proposition to us. It follows from results in~\cite{KM-toricbundles}.

\begin{Proposition}\thlabel{prop:Rees quotient}
For any maximal cone $C\in\GF(J)$, the Rees algebra $\mathcal R_C(A)$ has an explicit presentation of form $\mathbb K[S_C][x_1,\dots,x_n]/\hat J$.
More precisely, the ideal $\hat J$ is generated by homogenizations of elements in the reduced Gr\"obner basis $\mathcal G$:
let $g={\bf x}^\gamma+\sum c_\alpha{\bf x}^\alpha\in \mathcal G$ and ${\bf x}^\gamma=\init_C(g)$, then the {\it homogenization} of $g$ is
\begin{equation}\label{eq:homog}
\hat g= {\bf x}^\gamma+\sum c_\alpha {\bf x}^\alpha t^\beta\qquad \text{with}\quad \gamma-\alpha=\beta\in S_C.
\end{equation}
\end{Proposition}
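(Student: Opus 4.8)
The plan is to produce an explicit surjective $\mathbb K[S_C]$-algebra homomorphism $\Phi\colon \mathbb K[S_C][x_1,\dots,x_n]\to\mathcal R_C(A)$ and to identify its kernel with $\hat J$. Using the identification~\eqref{eq:Rees poly}, under which the monomial ${\bf x}^\alpha$ corresponds to the Rees element ${\bf x}^\alpha t^\alpha$ living in graded degree $\alpha$, I would take $\Phi$ to be the homomorphism of Rees algebras induced by ${\rm pr}\colon\mathbb K[x_1,\dots,x_n]\to A$, namely $\Phi(f\,t^m)={\rm pr}(f)\,t^m$. Since $\val_C$ is the pushforward of $\mathfrak{w}_C$ along ${\rm pr}$ we have $\val_C({\rm pr}(f))\ge\mathfrak{w}_C(f)$, so ${\rm pr}(F_m(\mathfrak{w}_C))\subseteq F_m(\val_C)$ and $\Phi$ is a well-defined graded $\mathbb K[S_C]$-algebra homomorphism with $\Phi({\bf x}^\alpha)=\bar{\bf x}^\alpha t^\alpha$. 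As $\mathcal R_C(A)$ is free over $\mathbb K[S_C]$ with basis $\mathbb B_<$ — realised by the elements $\bar{\bf x}^\alpha t^\alpha$, $\bar{\bf x}^\alpha\in\mathbb B_<$, once one records that $\val_C(\bar{\bf x}^\alpha)=\mathfrak{w}_C({\bf x}^\alpha)$ for standard monomials (immediate from ${\bf x}^\alpha\notin\init_<(J)$, or extracted from the adaptedness results of~\cite{KM-toricbundles}) — the map $\Phi$ hits every basis element and is therefore surjective.

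Next I would check that $\hat J\subseteq\ker\Phi$. Let $g={\bf x}^\gamma+\sum c_\alpha{\bf x}^\alpha\in\mathcal G$ with ${\bf x}^\gamma=\init_C(g)$ (which equals $\init_<(g)$ since $<$ is compatible with $C$). Compatibility and Remark~\ref{rem:Eisenbud} give $v\cdot\gamma\le v\cdot\alpha$ for all $v\in C$ and all $\alpha$ with $c_\alpha\neq0$, hence $\gamma-\alpha\in -C^\vee\cap\mathbb Z^n=S_C$; this is exactly why $\hat g={\bf x}^\gamma+\sum c_\alpha{\bf x}^\alpha t^{\gamma-\alpha}$ of~\eqref{eq:homog} lies in $\mathbb K[S_C][x_1,\dots,x_n]$. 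Under the identification above, $\hat g$ corresponds to the Rees element $g\,t^\gamma$, so $\Phi(\hat g)={\rm pr}(g)\,t^\gamma=0$ because $g\in J$. Therefore $\Phi$ descends to a surjective $\mathbb K[S_C]$-algebra homomorphism $\bar\Phi\colon\mathbb K[S_C][x_1,\dots,x_n]/\hat J\to\mathcal R_C(A)$.

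It remains to prove $\bar\Phi$ injective, and for this I would show that $\mathbb K[S_C][x_1,\dots,x_n]/\hat J$ is spanned, as a $\mathbb K[S_C]$-module, by the classes of the standard monomials $\{{\bf x}^\alpha\colon{\bf x}^\alpha\notin\init_<(J)\}$. This is a division-algorithm reduction over the coefficient ring $\mathbb K[S_C]$: if ${\bf x}^\delta\in\init_<(J)$ pick $g\in\mathcal G$ with $\init_<(g)={\bf x}^\gamma\mid{\bf x}^\delta$, write $\delta=\gamma+\eta$, and note that ${\bf x}^\eta\hat g={\bf x}^\delta+\sum c_\alpha{\bf x}^{\alpha+\eta}t^{\gamma-\alpha}$ has leading $x$-monomial ${\bf x}^\delta$ with coefficient $1$, so modulo $\hat J$ one rewrites ${\bf x}^\delta$ as a $\mathbb K[S_C]$-combination of the strictly $<$-smaller monomials ${\bf x}^{\alpha+\eta}$; since $<$ is a well order this terminates (this is essentially \thref{division alg}, and is in the spirit of the proof of~\thref{prop:lifted generators}). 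Given this, $\bar\Phi$ sends a $\mathbb K[S_C]$-spanning set of $\mathbb K[S_C][x_1,\dots,x_n]/\hat J$ onto the $\mathbb K[S_C]$-basis $\{\bar{\bf x}^\alpha t^\alpha\colon\bar{\bf x}^\alpha\in\mathbb B_<\}$ of $\mathcal R_C(A)$; linear independence of the images forces the standard monomials to be $\mathbb K[S_C]$-independent in the quotient and $\bar\Phi$ to be injective. Hence $\ker\Phi=\hat J$ and $\mathcal R_C(A)\cong\mathbb K[S_C][x_1,\dots,x_n]/\hat J$.

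I expect the main obstacle to be purely the bookkeeping forced by the two gradings and the $\min$-convention: verifying that~\eqref{eq:Rees poly} pairs ${\bf x}^\alpha$ with the degree-$\alpha$ element ${\bf x}^\alpha t^\alpha$, and that for a standard monomial the lowest-degree occurrence of $\bar{\bf x}^\alpha$ in $\mathcal R_C(A)$ is in degree $\alpha$ (equivalently $\val_C(\bar{\bf x}^\alpha)=\mathfrak{w}_C({\bf x}^\alpha)$), so that $\Phi$ genuinely carries monomials to the candidate basis elements. Once the leading-term structure of the $\hat g$ is pinned down, the reduction step and the subsequent basis comparison are routine.
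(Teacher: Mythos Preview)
Your proposal is correct and takes essentially the same approach as the paper: both construct the same surjective graded map (called $\widehat{\text{pr}}$ in the paper), verify that each $\hat g$ lies in its kernel, and finish via the division algorithm with respect to the $\hat g$'s. The only difference is cosmetic packaging---the paper reduces kernel elements to $0$ directly inside a fixed graded piece $F_m(\mathfrak w_C)$, whereas you reduce arbitrary elements to $\mathbb K[S_C]$-combinations of standard monomials and then compare with the known $\mathbb K[S_C]$-basis of $\mathcal R_C(A)$.
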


\begin{proof}
Consider $f\in \mathbb K[x_1,\dots,x_n]$.
It can be written as $f=\sum_{{\bf x}^\alpha\in \mathbb B_<} c_\alpha {\bf x}^\alpha+\sum_{{\bf x}^\beta\not\in \mathbb B_<} c_\beta {\bf x}^\beta$.
In~par\-ti\-cular, if $\mathfrak w_C(f)\ge -\cdot m$ in $\mathcal O_C$, then $\val_C(\text{pr}(f))\ge -\cdot m$.
So the morphism $\text{pr}\colon \mathbb K[x_1,\dots,x_n]$ $\to A$ induces a map $\text{pr}_m\colon F_m(\mathfrak w_C)\to F_m(\val_C)$ for all $m$.
Further, every element in $\mathbb B_<\cap F_m(\val_C)$ is the image of a monomial in $F_m(\mathfrak w_C)$.
Hence, $\text{pr}_m$ is surjective and we obtain:
\begin{displaymath}
\widehat{\text{pr}}:=\bigoplus_{m\in (\mathbb Z^n)^*} \text{pr}_m\colon\
\mathcal R_C(\mathbb K[x_1,\dots,x_n])\to \mathcal R_C(A).
\end{displaymath}
By~\eqref{eq:Rees poly} this implies the first claim.
Now let $\hat J$ be the kernel of $\widehat{\text{pr}}$.
Then $\hat J=\bigoplus_{m\in M} \hat J_m $, where $\hat J_m=\ker(\text{pr}_m)\subset F_m(\mathfrak w_C)$.
Now consider an element $g\in \mathcal G$ of form ${\bf x}^\gamma+\sum c_\alpha{\bf x}^\alpha$ with ${\bf x}^\gamma=\init_C(g)$.
We want to show that the element $\hat g$ defined in~\eqref{eq:homog} lies in $\hat J$ for all $g\in \mathcal G$.
Recall that by the proof of~\thref{prop:special fibers}, $\gamma$ is such that $w\cdot \gamma \le w\cdot \alpha $ for all $w\in C$ and all $\alpha$ with $c_\alpha\not =0$ in $g$.
Hence, $ w\cdot (\gamma-\alpha) \le 0$ and so $\beta=\gamma-\alpha\in -C^\vee$.
In particular, this implies that $\hat g\in \hat J_\gamma\subset F_\gamma(\mathfrak w_\sigma)$.

Lastly, we need to show that $\hat J$ is generated by $\{\hat g\colon g\in \mathcal G\}$.
Consider $h\in \hat J$.
As $\hat J=\bigoplus_{m\in M} \hat J_m$, we may assume that $h\in \hat J_m$.
Further, as $\hat J_m\subset F_m(\mathfrak w_C)\subset \mathbb K[x_1,\dots,x_n]$ we can think of $h$ as an element of $\mathbb K[x_1,\dots,x_n]$ as well as an element of $\mathcal R_C(\mathbb K[x_1,\dots,x_n])$.
Then $\init_<(h)={\bf x}^\mu \init_<(g)$ for some $g\in \mathcal G$ of form $g={\bf x}^\gamma+\sum c_\alpha{\bf x}^\alpha$ with ${\bf x}^\gamma=\init_<(g)$.
In particular, similar to the above the exponent of $\init_<({\bf x}^\mu g)={\bf x}^{\mu+\gamma}$ has the property that $w\cdot (\mu+\gamma) \le w\cdot (\mu+\alpha)$ for all $w\in C$ and all $\alpha$ with $c_\alpha\not =0$ in $g$.
So, $h-{\bf x}^{\mu} g\in F_m(\mathfrak w_C)$. 
In particular, the division algorithm with respect to $\mathcal G$ takes place inside $F_m(\mathfrak w_C)$ and yields an expression of $h$ in terms of $\{\hat g\colon g\in \mathcal G \}$.
\end{proof}

We now recall the Cox construction~\cite{Cox95} for toric varieties and refer to~\cite[Section~5]{CLS} for more details.
Let $\Sigma= C\cap \trop(J)$ and
$l$ denote the dimension of $\mathcal L(J)$.
Fix an $(m\times n)$-mat\-rix~${\bf r}$ whose rows $r_1,\dots,r_{m}$ are representatives of the primitive ray generators of $\overline{C}\subset \mathbb R^n/\mathcal L(J)$.
The rays of $\Sigma$, denoted by $\Sigma(1)$, form a subset of $\{{r}_1,\dots,{r}_{m}\}$.
Recall the definition of the lifted algebra $\tilde A_{\bf r}$ from~\eqref{eq:lifted algebra}.
We can apply the {\it Cox construction} in two ways, to $X_C$ and to $X_\Sigma$:
First, let $X_{C}$ be the affine toric variety associated to $C$.
Recall that a {\it quasitorus} is a product of a torus with a finite abelian group.
Then $X_{C}$ is isomorphic to the almost geometric quotient $\mathbb A^m/\!/G$ (see, e.g.,~\cite[Theorem~5.1.11]{CLS}), where the group $G$ by~\cite[Lemma~5.1.1(b)]{CLS} is the quasitorus
\begin{equation}\label{eq:quasitorus}
G=\{(t_1,\dots,t_m)\in(\mathbb C^*)^m\colon t_1^{r_1\cdot e_i}\cdots t_m^{r_m\cdot e_i}=1\, \forall \, i\in [n]\}.
\end{equation}
One can easily check that if $p,q\in \mathbb A^m$ lie in the same $G$-orbit, then $\pi^{-1}(p)\cong \pi^{-1}(q)$ by~\thref{prop:generic fibers}.
 Hence, the flat family $\pi\colon \operatorname{Spec}(\tilde A_{\bf r})\to \mathbb A^m$ induces the commutative diagram
 \begin{equation}\label{eq:Cox C}
 \begin{tikzcd}[ampersand replacement=\&,cramped]
 \& \operatorname{Spec}(\tilde A_{\bf r})\ar[d,"\pi"] \ar[dl,"p_{C}\circ \pi"'] \\
 X_{C} \& \mathbb A^m. \ar[l,"p_{C}"]
 \end{tikzcd}
 \end{equation}
 If $p,q\in X_{C}$ lie in the same torus orbit, then $(p_{C}\circ \pi)^{-1}(p)\cong (p_{C}\circ \pi)^{-1}(q)$ by~\thref{prop:generic fibers}.
 Note that $p_C\colon \mathbb A^m\to X_C$ is indeed a morphism. Namely, it is the {\it universal torsor} for $X_C$.

Similarly, we can apply the Cox construction to the toric variety $X_\Sigma$. For simplicity we assume that $\Sigma(1)=\{{r}_1,\dots,{r}_{m}\}$.
 In this case the construction has two steps: first we remove the locus of $\mathbb A^m$ that does not correspond to torus orbits in $X_{\Sigma}$, denoted by $Z(\Sigma)$.
 Recall that $\mathcal C\subset \{{r}_1,\dots,{r}_m\}$ is a {\it primitive collection} for $\Sigma$, if $(i)$ $\mathcal C\not\subset \sigma(1)$ for all $\sigma\in \Sigma$, and $(ii)$ for every $\mathcal C'\subsetneq \mathcal C$ there exists $\sigma\in \Sigma$ with $\mathcal C'\subset \sigma(1)$.
 Then by~\cite[Proposition 5.1.6]{CLS}
 \begin{equation}\label{eq:Z(Sigma)}
 Z(\Sigma)=\bigcup_{\mathcal C \text{ primitive collection }} V(t_i\colon r_i\in \mathcal C) \subset \mathbb A^m.
 \end{equation}
 Now $X_{\Sigma}$ is isomorphic to the almost geometric quotient $(\mathbb A^m-Z(\Sigma))/\!/G$, where $G$ is as in~\eqref{eq:quasitorus} (see, e.g.,~\cite[Theorem~5.1.11]{CLS}).
 In contrast to the morphism $p_C\colon \mathbb A^m\to X_C$ from~\eqref{eq:Cox C}, we~obtain a rational map $p_\Sigma\colon \mathbb A^m\dashrightarrow X_\Sigma$ as $p_\Sigma$ is only defined on $\mathcal T_\Sigma:=\mathbb A^m-Z(\Sigma)$.
 Moreover, $p_\Sigma\colon \mathcal T_\Sigma\to X_\Sigma$ is the universal torsor of $X_\Sigma$.
 Similarly to the first case, this gives the diagram:
 \begin{equation}\label{eq:Cox Sigma}
 \begin{tikzcd}[ampersand replacement=\&,cramped]
 \& \pi^{-1}(\mathcal T_\Sigma)\ar[d,"\pi\vert_{\pi^{-1}(\mathcal T_\Sigma)}"] \ar[dl,"p_\Sigma\circ \pi"']\ar[r,hook] \& \operatorname{Spec}(\tilde A_{\bf r})\ar[d,"\pi"] \\
 X_{\Sigma} \& \mathcal T_\Sigma \ar[l,"p_\Sigma"]\ar[r,hook]\& \mathbb A^m
 \end{tikzcd}
 \end{equation}
 which is commutative. The fibers of the family defined by $p_\Sigma\circ \pi\vert_{\pi^{-1}(\mathcal T_\Sigma)}$ coincide with the fibers of $\psi$ defined in~\eqref{eq:KM family}: $(p_\Sigma\circ \pi)^{-1}(x)\cong \psi^{-1}(x)$ for every $x\in X_\Sigma$, see~\eqref{eq:fibers psi}.

In the following, we explain how the flat family of~\thref{thm:family} is related to Kaveh--Manon's flat family associated to a maximal cone in the Gr\"obner fan.

\begin{Theorem}\thlabel{thm:KM max cone}
Let $J\subseteq\mathbb K_{\bf d}[{x_1,\dots,x_n}]$ be a {\bf d}-homogeneous ideal and $C$ a maximal cone in~$\GF(J)$.
Let ${\bf r}$ be an $(m\times n)$-matrix whose rows are representatives of primitive ray generators for $\overline C$. 
Then the morphism $\pi\colon \operatorname{Spec}\big(\tilde A_{\bf r}\big)\to \mathbb A^m$ fits into a pull-back diagram
 \begin{equation*}
 \begin{tikzcd}[ampersand replacement=\&,cramped]
 \operatorname{Spec}(\mathcal R_C)\ar[d,"\psi"'] \&
 \operatorname{Spec}\big(\tilde A_{\bf r}\big)\ar[d,"\pi"] \ar[l]\\
 X_{C}\& \mathbb A^m. \ar[l,"p_C"]
 \end{tikzcd}
 \end{equation*}
Here $p_C\colon \mathbb A^m\to X_C$ is the universal torsor of $X_C$ obtained from the Cox construction as in~\eqref{eq:Cox C}.
\end{Theorem}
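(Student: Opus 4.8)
The plan is to reduce the statement to an isomorphism of rings. All four schemes in the square are affine, so it suffices to exhibit an isomorphism of $\mathbb K[t_1,\dots,t_m]$-algebras
\begin{displaymath}
\tilde A_{\bf r}\;\cong\;\mathcal R_C(A)\otimes_{\mathbb K[S_C]}\mathbb K[t_1,\dots,t_m]
\end{displaymath}
that intertwines $\pi$ with the projection of the fibre product onto $\mathbb A^m$ and $\psi$ with $\operatorname{Spec}$ of the structure inclusion $\mathbb K[S_C]\hookrightarrow\mathcal R_C(A)$; here $\mathbb K[t_1,\dots,t_m]$ is regarded as a $\mathbb K[S_C]$-algebra via the comorphism $p_C^*$ of the universal torsor $p_C\colon\mathbb A^m\to X_C$ from Section~\ref{sec:toric families}.

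First I would make $p_C^*$ explicit. By the Cox construction $X_C=\mathbb A^m/\!/G$ with $G$ the quasitorus of~\eqref{eq:quasitorus}, so $\mathcal O(X_C)=\mathbb K[t_1,\dots,t_m]^G$ is the span of the $G$-invariant monomials ${\bf t}^{\bf a}$; under the identification $\mathbb K[t_1,\dots,t_m]^G=\mathbb K[S_C]$ underlying~\eqref{eq:Rees poly} the character $\chi^\beta$ with $\beta\in S_C=-C^\vee\cap M$ corresponds to ${\bf t}^{-{\bf r}\cdot\beta}$, which is an honest monomial precisely because $r_i\cdot\beta\le0$ for all $i$. Thus $p_C^*\colon\mathbb K[S_C]\to\mathbb K[t_1,\dots,t_m]$ is $\chi^\beta\mapsto{\bf t}^{-{\bf r}\cdot\beta}$. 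Using the presentation $\mathcal R_C(A)\cong\mathbb K[S_C][x_1,\dots,x_n]/\hat J$ of~\thref{prop:Rees quotient} and right exactness of $\otimes$ we get
\begin{displaymath}
\mathcal R_C(A)\otimes_{\mathbb K[S_C]}\mathbb K[t_1,\dots,t_m]\;\cong\;\mathbb K[t_1,\dots,t_m][x_1,\dots,x_n]\big/\big\langle p_C^*(\hat g)\colon g\in\mathcal G\big\rangle .
\end{displaymath}
The key point is the identity $p_C^*(\hat g)=\tilde g_{\bf r}$ for every $g\in\mathcal G$: writing $g={\bf x}^\gamma+\sum_{\alpha\ne\gamma}c_\alpha{\bf x}^\alpha$ with ${\bf x}^\gamma=\init_C(g)=\init_<(g)$, formula~\eqref{eq:homog} reads $\hat g={\bf x}^\gamma+\sum_{\alpha\ne\gamma}c_\alpha{\bf x}^\alpha t^{\gamma-\alpha}$ with $\gamma-\alpha\in S_C$, so applying $p_C^*$ to the coefficients yields ${\bf x}^\gamma+\sum_{\alpha\ne\gamma}c_\alpha{\bf x}^\alpha{\bf t}^{{\bf r}\cdot(\alpha-\gamma)}$, which is exactly the expression for the lift $\tilde g_{\bf r}$ recorded in~\thref{lem:lifts} (with the distinguished exponent $\beta$ there equal to $\gamma$). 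Since the $\tilde g_{\bf r}$, $g\in\mathcal G$, generate $\tilde J_{\bf r}$ by~\thref{prop:lifted generators}, the right-hand side above equals $\mathbb K[t_1,\dots,t_m][x_1,\dots,x_n]/\tilde J_{\bf r}=\tilde A_{\bf r}$, giving the isomorphism. Compatibility with the two projections is then formal: the arrow to $\mathbb A^m$ is $\operatorname{Spec}$ of the inclusion $\mathbb K[t_1,\dots,t_m]\hookrightarrow\tilde A_{\bf r}$, that is $\pi$; the arrow to $\operatorname{Spec}(\mathcal R_C(A))$ is $\operatorname{Spec}$ of the base-change map $\mathcal R_C(A)\to\tilde A_{\bf r}$; and the outer square commutes because $\mathbb K[S_C]\to\mathcal R_C(A)\to\tilde A_{\bf r}$ and $\mathbb K[S_C]\xrightarrow{p_C^*}\mathbb K[t_1,\dots,t_m]\to\tilde A_{\bf r}$ both carry $\chi^\beta$ to the class of ${\bf t}^{-{\bf r}\cdot\beta}$ by construction of the tensor product. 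Hence the square is cartesian.

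I expect the only genuinely delicate point to be pinning down the comorphism $p_C^*$, that is the sign and convention bookkeeping: the $\min$-convention forces $S_C=-C^\vee\cap M$, and when $\mathcal L(J)\ne0$ the matrix ${\bf r}$ has rank $n-\dim\mathcal L(J)$, so ``$X_C$'' is really the toric variety of $\overline C\subset\mathbb R^n/\mathcal L(J)$ and one must check that $\mathbb K[t_1,\dots,t_m]^G$ is identified with $\mathbb K[S_C]$ exactly as in~\eqref{eq:Rees poly} — this is the step where the choice of representatives ${\bf r}$ of the ray generators actually enters the construction of $p_C$. Once that identification is fixed, the equality $p_C^*(\hat g)=\tilde g_{\bf r}$ is an immediate comparison of~\eqref{eq:homog} with~\thref{lem:lifts}, and the rest is a routine verification about fibre products of affine schemes.
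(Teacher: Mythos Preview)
Your proposal is correct and follows essentially the same route as the paper: both reduce to the algebraic push-out $\mathcal R_C(A)\otimes_{\mathbb K[S_C]}\mathbb K[t_1,\dots,t_m]\cong\tilde A_{\bf r}$, invoke the presentation $\mathcal R_C(A)\cong\mathbb K[S_C][x_1,\dots,x_n]/\hat J$ from \thref{prop:Rees quotient}, and check that $p_C^\#(\hat g)=\tilde g_{\bf r}$ for $g\in\mathcal G$ by comparing \eqref{eq:homog} with \thref{lem:lifts}, then conclude via \thref{prop:lifted generators}. The only cosmetic difference is that the paper pins down $p_C^\#$ using the dual basis $r_1^*,\dots,r_m^*$ of $(\mathcal L(J)\cap N)^\perp$ and appeals to \thref{lem:init ell} to see that $\alpha-\gamma$ lies in this sublattice, whereas you write $p_C^*(\chi^\beta)={\bf t}^{-{\bf r}\cdot\beta}$ directly; these are equivalent descriptions, and your closing paragraph correctly flags exactly this identification as the one place where the lineality space and the choice of ${\bf r}$ matter.
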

\begin{proof}
We prove equivalently that the corresponding diagram between the algebras is a push-out.
Hence, it suffices to show that
\begin{equation*}
\mathcal R_C\otimes_{\mathbb K[S_C]} \mathbb K[t_1,\dots,t_m]\cong \tilde A.
\end{equation*}
Note that the map $p_C\colon \mathbb A^m \to X_C$ corresponds to $p_C^\#\colon \mathbb K[S_C] {\hookrightarrow} \mathbb K[t_1,\dots,t_m]$.
In particular, for a character $t^{-r_i^*}\in \mathbb K[S_C]$ with $r_i^*\in C^\vee\subset M$ the dual of $r_i\in C\subset N$ we have $p_C^\#(t^{-r_i^*})$ $=t_i\in \mathbb K[t_1,\dots,t_m]$.
Then under the extension of scalars (i.e.,~applying the functor $-\otimes_{\mathbb K[S_C]}\mathbb K[t_1,\dots,t_m]$) the homogenization $\hat g$ of an element $g\in \mathcal G$ is sent to the lift $\tilde g$.
This can be seen as follows:
by~\thref{lem:lifts} we have
$\tilde g={\bf x}^\gamma + \sum c_\alpha{\bf x}^\alpha{\bf t}^{{\bf r}\cdot(\alpha-\gamma)}$.
By~\thref{lem:init ell} for every $\ell\in \mathcal L(J)$ we have $\init_{\ell}(g)=g$.
In particular, $\ell\cdot \alpha=\ell\cdot \gamma$ and so $\alpha-\gamma\in (\mathcal L(J)\cap N)^\perp$.
Let $\{\ell_1,\dots,\ell_{k}\}$ be a basis for $\mathcal L(J)\cap N$.
Then $\{r_1,\dots,r_m,\ell_1,\dots,\ell_{k}\}$ spans $N$ and $\{r_1^*,\dots,r_m^*\}$ spans $(\mathcal L(J)\cap N)^\perp\subset M$.
So every $\alpha-\gamma$ has an expression of form $\alpha-\gamma=\sum c_ir_i^*$.
We compute
\begin{equation}\label{eq:t's}
t^{-(\alpha-\gamma)}=t^{-c_1r_1^*}\cdots t^{-c_mr_m^*} \overset{p_C^\#}{\longmapsto} t_1^{c_1}\cdots t_m^{c_m}={\bf t}^{{\bf r}\cdot(\alpha-\gamma)}.
\end{equation}
Note that $\gamma$ has the property that $r_i\cdot\gamma
=\min_{c_\alpha \not=0}\{r_i\cdot\alpha\}$.
So, $r_i\cdot(\alpha-\gamma)\ge 0$ and therefore $\alpha-\gamma\in C^\vee$.
Hence, we have
\begin{gather*}
\mathcal R_C\otimes_{\mathbb K[S_C]} \mathbb K[t_1,\dots,t_m]
\overset{\text{{Proposition}~\ref{prop:Rees quotient}}}{\cong}
\big(\mathbb K[S_C][x_1,\dots,x_n]/\hat J \big)\otimes_{\mathbb K[S_C]} \mathbb K[t_1,\dots,t_m]
\\ \hphantom{\mathcal R_C\otimes_{\mathbb K[S_C]} \mathbb K[t_1,\dots,t_m]Pro}
{}\overset{\eqref{eq:t's}}{\cong}
\mathbb K[t_1,\dots,t_m][x_1,\dots,x_n]/\tilde J=\tilde A.
\tag*{\qed}
\end{gather*}
\renewcommand{\qed}{}
\end{proof}

\begin{Corollary}\thlabel{thm:KM vs grobner}
Let $J\subseteq\mathbb K[{x_1,\dots,x_n}]$ be a weighted homogeneous ideal as in Definition~$\ref{def:d homog}$ and $C$ a maximal cone in $\GF(J)$.
Let ${\bf r}$ be the matrix whose rows are representatives of primitive ray generators for $\overline C$ 
and denote $\tilde A_{\bf r}$ by $\tilde A$.
Let $\Sigma$ be the intersection of $C$ with $\trop(J)$.
Then the restriction of morphism $\pi\colon \operatorname{Spec}\big(\tilde A\big)\to \mathbb A^m$ to $\pi^{-1}(\mathcal T_\Sigma)=\operatorname{Spec}\big(\tilde A\big)-\pi^{-1}(Z(\Sigma))$ fits into a~pull-back diagram
 \begin{equation*}
 \begin{tikzcd}[ampersand replacement=\&,cramped]
 \mathbf{Spec}(\mathcal R_\Sigma)\ar[d,"\psi"'] \&
 \pi^{-1}(\mathcal T_\Sigma)\ar[d,"\pi\vert_{\pi^{-1}(\mathcal T_\Sigma)}"] \ar[l]\\
 X_{\Sigma}\& \mathcal T_\Sigma. \ar[l,"p_\Sigma"]
 \end{tikzcd}
 \end{equation*}
Here $p_\Sigma\colon \mathcal T_\Sigma\to X_\Sigma$ is the universal torsor of $X_\Sigma$ obtained from the Cox construction as in~\eqref{eq:Cox Sigma}.
\end{Corollary}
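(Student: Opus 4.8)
The plan is to deduce this statement from \thref{thm:KM max cone} by base-changing the cartesian square there along the open immersion $X_\Sigma \hookrightarrow X_C$; equivalently, by restricting the family $\pi$ to the open locus $\mathcal{T}_\Sigma = \mathbb{A}^m \setminus Z(\Sigma)$. Since the base change of a cartesian square is cartesian, it only remains to identify the four corners of the base-changed square. Two of these are bookkeeping in the Cox construction recalled around \eqref{eq:Z(Sigma)} and \eqref{eq:Cox Sigma} (see also \cite[Section~5]{CLS}): because $\Sigma = C \cap \trop(J)$ is a subfan of $C$, the toric variety $X_\Sigma$ is an open subvariety of $X_C$, each chart $X_\sigma$ with $\sigma \in \Sigma$ being a principal affine open of $X_C = \operatorname{Spec}(\mathbb{K}[S_C])$; and the universal torsor $p_C \colon \mathbb{A}^m \to X_C$ satisfies $\mathbb{A}^m \times_{X_C} X_\Sigma = p_C^{-1}(X_\Sigma) = \mathcal{T}_\Sigma$ and restricts on $\mathcal{T}_\Sigma$ to $p_\Sigma$, which is exactly the description of the irrelevant locus $Z(\Sigma)$ as the complement of the orbits mapping into $X_\Sigma$. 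The remaining corner $\operatorname{Spec}(\tilde A) \times_{X_C} X_\Sigma = \operatorname{Spec}(\tilde A) \times_{\mathbb{A}^m} \mathcal{T}_\Sigma = \pi^{-1}(\mathcal{T}_\Sigma)$ is then automatic.

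The crucial identification is of the fourth corner, $\operatorname{Spec}(\mathcal{R}_C) \times_{X_C} X_\Sigma$, with Kaveh--Manon's relative spectrum $\mathbf{Spec}(\mathcal{R}_\Sigma)$. As $\mathbf{Spec}(\mathcal{R}_\Sigma)$ is glued from the affine pieces $\operatorname{Spec}(\mathcal{R}_\sigma)$ over the charts $X_\sigma = \operatorname{Spec}(\mathbb{K}[S_\sigma])$ for $\sigma \in \Sigma$, it suffices to check on each chart that $\mathcal{R}_\sigma \cong \mathcal{R}_C \otimes_{\mathbb{K}[S_C]} \mathbb{K}[S_\sigma]$. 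This is a direct-limit computation with the piecewise linear functions: $\mathbb{K}[S_\sigma]$ is the localization of $\mathbb{K}[S_C]$ obtained by inverting the characters corresponding to the face $\sigma$, and since the PL-quasivaluation $\val_\sigma$ is exactly the restriction of $\val_C$ from $C$ to the face $\sigma$, inverting these characters in $\mathcal{R}_C = \bigoplus_{m \in M} F_m(\val_C)\, t^m$ enlarges each graded piece $F_m(\val_C)$ precisely to $F_m(\val_\sigma)$. Alternatively, one may cite the fact from \cite{KM-toricbundles} that the sheaf of Rees algebras attached to a PL-quasivaluation is local on the fan, applied to the subfan $\Sigma \subseteq C$.

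Putting the four identifications together, the base-changed square is precisely the one in the statement of the corollary, hence a pull-back square; its commutativity, and the compatibility of the top morphism with the family $\psi$ of \eqref{eq:KM family}, are the ones already observed around \eqref{eq:Cox Sigma}. I expect the main obstacle to be the crucial identification $\mathcal{R}_\sigma \cong \mathcal{R}_C \otimes_{\mathbb{K}[S_C]} \mathbb{K}[S_\sigma]$ — making precise that Kaveh--Manon's construction applied to the subfan $\Sigma$ yields nothing but the restriction to $X_\Sigma \subseteq X_C$ of the single Rees algebra $\mathcal{R}_C$. The base-change formalism and the Cox bookkeeping for $Z(\Sigma)$ and $\mathcal{T}_\Sigma$ are routine; the filtration-enlargement claim needs either the explicit direct-limit argument above or a careful appeal to the locality statement in \cite{KM-toricbundles}.
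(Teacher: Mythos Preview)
Your proposal is correct and follows essentially the same route as the paper: both arguments work chart-by-chart over $X_\Sigma$, reduce to \thref{thm:KM max cone}, and hinge on the identification $\mathcal{R}_\sigma \cong \mathcal{R}_C \otimes_{\mathbb{K}[S_C]} \mathbb{K}[S_\sigma]$. The paper invokes this identification by citing \cite[Proposition~3.13]{KM-toricbundles} rather than sketching the filtration/direct-limit argument you outline, and it writes the computation directly as a chain of tensor-product isomorphisms on each affine chart rather than phrasing it as a global base change of the cartesian square along $X_\Sigma \hookrightarrow X_C$; but these are differences of presentation, not of substance.
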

\begin{proof}
The scheme $X_\Sigma$ and the sheaf $\mathcal R_\Sigma$ are defined locally for affine pieces $U_\sigma=\operatorname{Spec}(\mathbb K[S_\sigma])$ $\subset X_\Sigma$ for $\sigma\in \Sigma$.
Algebraically, for every $\sigma\in \Sigma$ the pull-back of $\psi$ and $p_\Sigma$ corresponds to the following push-out diagram:
 \begin{equation*}
 \begin{tikzcd}[ampersand replacement=\&,cramped]
 \mathcal R_\sigma\ar[r] \&
 \mathcal R_\sigma\otimes_{\mathbb K[S_\sigma]}\mathbb K[t_1,\dots,t_m]_{\langle t_i\colon r_i\not\in \sigma\rangle } \\
 \mathbb K[S_\sigma] \ar[r]\ar[u]\& \mathbb K[t_1,\dots,t_m]_{\langle t_i\colon r_i\not\in \sigma\rangle . } \ar[u]
 \end{tikzcd}
 \end{equation*}
Observe that by~\eqref{eq:Z(Sigma)} the localization $\mathbb K[t_1,\dots,t_m]_{\langle t_i\colon r_i\not\in \sigma\rangle }$ corresponds to an affine piece in~$\mathbb A^m$ that does not intersect $Z(\Sigma)$.
So it is in fact an affine piece of $\mathcal T_\Sigma$.
We have to show that
\begin{displaymath}
\mathcal R_\sigma\otimes_{\mathbb K[S_\sigma]}\mathbb K[t_1,\dots,t_m]_{\langle t_i\colon r_i\not\in \sigma\rangle } \cong \tilde A_{\langle t_i\colon r_i\not\in \sigma\rangle}.
\end{displaymath}
As $\sigma$ is a face of the maximal cone $C$ by~\cite[Proposition 3.13]{KM-toricbundles} we have $\mathcal R_\sigma\cong \mathcal R_C\otimes_{\mathbb K[S_C]}\mathbb K[S_\sigma]$.
Using~\thref{thm:KM max cone} we compute
\begin{gather*}
\mathcal R_\sigma\otimes_{\mathbb K[S_\sigma]}\mathbb K[t_1,\dots,t_m]_{\langle t_i\colon r_i\not\in \sigma\rangle}
\cong
\big(\mathcal R_C\otimes_{\mathbb K[S_C]}\mathbb K[S_\sigma]\big)\otimes_{\mathbb K[S_\sigma]}\mathbb K[t_1,\dots,t_m]_{\langle t_i\colon r_i\not\in \sigma\rangle}
\\ \hphantom{\mathcal R_\sigma\otimes_{\mathbb K[S_\sigma]}\mathbb K[t_1,\dots,t_m]_{\langle t_i\colon r_i\not\in \sigma\rangle}}
{}\cong\mathcal R_C\otimes_{\mathbb K[S_C]}\mathbb K[t_1,\dots,t_m]_{\langle t_i\colon r_i\not\in \sigma\rangle}\cong \tilde A_{\langle t_i\colon r_i\not\in \sigma\rangle}.\tag*{\qed}
\end{gather*}
\renewcommand{\qed}{}
\end{proof}

\section{Grassmannians and cluster algebras}\label{sec:Gr}

We now apply the machinery developed in Section~\ref{sec:flat families} to the Grassmannians $\Gr(2,\mathbb C^n)$ and $\Gr\big(3,\mathbb C^6\big)$.
Our aim is to make this paper as self-contained as possible and accessible to readers of broad mathematical background.
Therefore, we recall background on tropical Grassmannians below, on cluster algebras and the cluster structure of $\Gr(2,\mathbb C^n)$ in Section~\ref{sec:cluster} and then in Section~\ref{sec: toric deg} we translate between toric degenerations obtained from the tropicalizations and from the cluster structure.
In Section~\ref{sec:monomial_degeneration}, we turn to the application of the flat families introduced in Section~\ref{sec:flat families} and show how to recover the cluster algebra with universal coefficients in this case.
Similarly, in Section~\ref{sec:Gr(3,6)} we treat the case of $\Gr\big(3,\mathbb C^6\big)$.

We first fix our notation for $\Gr(2,\mathbb C^n)$.
Denote by $S$ the polynomial ring in Pl\"ucker variables $\mathbb C[p_{ij}\colon 1\leq i<j\le n]$.
Here, Pl\"ucker variables are associated with arcs in an $n$-gon whose vertices are labeled by $[n]$ in clockwise order.
Hence, we think of $[n]:=\{1,\ldots,n\}$ as a cyclically ordered set. In particular, $i<j<k<l$ indicates that in clockwise order starting at the vertex $i$ the other vertices appear in order $j$, $k$, $l$.
Note that in general we might not have $1\le i<j<k<l\le n$.
We set the convention $p_{ij}:=-p_{ji}$.
In $S$ we define for every $i<j<k<l$ the {\it Pl\"ucker relation}:
\begin{displaymath}
R_{ijkl}:=p_{ij}p_{kl}-p_{ik}p_{jl}+p_{il}p_{jk}\in S.
\end{displaymath}
The ideal $I_{2,n}\subset S$ generated by all Pl\"ucker relations $R_{ijkl}$ is called the {\it Pl\"ucker ideal}. The~quo\-tient $A_{2,n}:=S/I_{2,n}$ is the {\it Pl\"ucker algebra} which is the homogeneous coordinate ring of the Grassmannian $\Gr(2,\mathbb C^n)$ with respect to its Pl\"ucker embedding into $\mathbb P^{\binom{n}{2}-1}$.
The cosets of~Pl\"u\-cker variables in the quotient are denoted by $\bar p_{ij}\in A_{2,n}$ and are called {\it Pl\"ucker coordinates}.
Denote by $d:=2(n-2)$ the dimension of $\Gr(2,\mathbb C^n)$, so $d+1$ is the Krull-dimension of $A_{2,n}$.

In $S$, we fix the notation ${\bf p}^m$ with $m=(m_{ij})_{ij}\in \mathbb Z_{\ge 0}^{\binom{n}{2}}$ denoting the monomial $\prod_{1\le i<j\le n} p_{ij}^{m_{ij}}$.
The maximal cones in the Gr\"obner fan $\GF(I_{2,n})$ are those of full dimension in the ambient space~$\mathbb R^{\binom{n}{2}}$ with associated monomial initial ideals.
The lineality space $\mathcal L_{2,n}:=\mathcal L(I_{2,n})\subset \GF(I_{2,n})$ is $n$-dimensional and spanned by the elements $\ell_s$ for $s\in [n]$ which are defined by
\begin{displaymath}
(\ell_s)_{ij}:=\begin{cases}
1& \text{if}\quad s=i\quad \text{or}\quad s=j, \\ 0& \text{otherwise}.
\end{cases}
\end{displaymath}
Note that ${\bf 1}:=(1,\dots,1)\in \mathbb R^{\binom{n}{2}}$ lies in $\mathcal L_{2,n}$.
The Gr\"obner fan of the Pl\"ucker ideal has a subfan that is particularly interesting when studying toric degenerations of $\Gr(2,\mathbb C^n)$.

\begin{Definition}\thlabel{def:tropGr}
The {\it tropical Grassmannian} is a simplicial $(d+1)$-dimensional subfan of~$\GF(I_{2,n})$ defined as $\trop(I_{2,n})$.
\end{Definition}

Maximal cones in $\trop(I_{2,n})$ are called {\it tropical maximal cones}. They are in correspondence with trivalent trees with $n$ leaves as shown by Speyer and Sturmfels in~\cite{SS04}.
To a trivalent tree~$\Upsilon$ one associates a weight vector in the relative interior of the corresponding cone of~$\trop(I_{2,n})$ by 
\begin{gather}\label{eq:tree wt}
 \underline{w}^{\Upsilon}(p_{ij}):=-\#\{\text{edges on the unique path } i\to j \text{ in }\Upsilon\}.
\end{gather}
To simplify notation we denote $\init_{\Upsilon}(I):=\init_{\underline{w}^{\Upsilon}}(I)$.

\begin{Theorem}[\cite{SS04}]\thlabel{thm:SS}
For every trivalent tree $\Upsilon$ with $n$ leaves the ideal $\init_{{\Upsilon}}(I)$ is {\it toric}, i.e.,~bino\-mial and prime.
Moreover, it is generated by $\init_{{\Upsilon}}(R_{ijkl})$ for all $1\le i<j<k<l\le n$.
\end{Theorem}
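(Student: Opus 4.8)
The plan is to verify that the binomials $\init_{\Upsilon}(R_{ijkl})$ for all cyclically ordered quadruples $1 \le i<j<k<l \le n$ generate a prime binomial ideal and that they form a Gr\"obner basis for $\init_\Upsilon(I_{2,n})$. First I would make precise what $\init_\Upsilon(R_{ijkl})$ looks like: for a fixed trivalent tree $\Upsilon$, the weight of the monomial $p_{ij}p_{kl}$ under $\underline w^\Upsilon$ is $-(\text{len}(i\to j) + \text{len}(k\to l))$, and similarly for the other two terms. A standard fact about the four-point condition on trees is that, among the three pairings $\{ij,kl\}$, $\{ik,jl\}$, $\{il,jk\}$ of the four leaves $i,j,k,l$, exactly two of the three sums $\text{len}(i\to j)+\text{len}(k\to l)$, etc., are equal and the third is strictly smaller (this is the tropical Pl\"ucker / four-point condition). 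For a tree in which the cyclic order $i<j<k<l$ is inherited from a planar embedding, the ``crossing'' pairing $\{ik,jl\}$ is the one achieving the strict minimum, while $\{ij,kl\}$ and $\{il,jk\}$ tie. Hence $\init_\Upsilon(R_{ijkl}) = -p_{ik}p_{jl}$ or, if one insists on the convention where an initial form must have at least two terms when a tie occurs, one checks instead that the relevant initial form is the binomial $p_{ij}p_{kl} + p_{il}p_{jk}$ — the precise statement I would use is that the term $p_{ik}p_{jl}$ is \emph{not} of minimal weight, so $\init_\Upsilon(R_{ijkl})$ is the binomial supported on the two ``non-crossing'' pairings. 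I would state and prove this weight computation as the first lemma.

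Second, I would identify the toric ideal explicitly. Let $T_\Upsilon$ be the set of edges of $\Upsilon$; assign to each Pl\"ucker variable $p_{ij}$ the monomial $\prod_{e \in \text{path}(i\to j)} y_e$ in a polynomial ring $\mathbb{C}[y_e : e \in T_\Upsilon]$. This defines a monomial map $\phi_\Upsilon : S \to \mathbb{C}[y_e]$, and its kernel $\ker\phi_\Upsilon$ is a toric (binomial prime) ideal by construction. The key claim is that $\init_\Upsilon(I_{2,n}) = \ker\phi_\Upsilon$. The inclusion $\init_\Upsilon(I_{2,n}) \subseteq \ker\phi_\Upsilon$ is easy: each $\init_\Upsilon(R_{ijkl})$ lies in $\ker\phi_\Upsilon$ because the two surviving monomials $p_{ij}p_{kl}$ and $p_{il}p_{jk}$ are sent to the same monomial in the $y_e$ (their exponent vectors on edges agree — both count, for each edge, how many of the four leaves lie on each side, which is symmetric under the two non-crossing pairings). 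For the reverse inclusion I would show that the binomials $\init_\Upsilon(R_{ijkl})$ already generate $\ker\phi_\Upsilon$: since both ideals are homogeneous and we will show (next step) that $\{\init_\Upsilon(R_{ijkl})\}$ is a Gr\"obner basis of the ideal it generates with the right Hilbert function, dimension count forces equality. Concretely, $\ker\phi_\Upsilon$ has dimension equal to $\#T_\Upsilon = 2n-3$, matching $\dim A_{2,n} = d+1 = 2n-3$; and a prime ideal contained in another prime ideal of the same dimension, both in the same polynomial ring, must coincide.

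Third — and this I expect to be the main technical obstacle — I would prove that $\{\init_\Upsilon(R_{ijkl})\}$ is a Gr\"obner basis. The cleanest route is to invoke the general principle (used implicitly throughout Section~\ref{sec:flat families}, e.g.\ in \thref{prop:lifted generators} and \thref{lem:facet}): since the $R_{ijkl}$ generate $I_{2,n}$, the set $\{\init_\Upsilon(R_{ijkl})\}$ generates $\init_\Upsilon(I_{2,n})$ provided the $R_{ijkl}$ form a Gr\"obner basis for $I_{2,n}$ with respect to a term order refining $\underline w^\Upsilon$. So the real work is: fix a term order $<$ refining $\underline w^\Upsilon$, and check Buchberger's criterion (\thref{division alg} and the definition of $S$-polynomials following it) for the three-term Pl\"ucker relations. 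This is the classical computation that the Pl\"ucker relations for $\Gr(2,n)$ are a \emph{universal} Gr\"obner basis (true for \emph{any} term order, by the straightening law / Sturmfels' work on Grassmannians), so in particular for any order refining $\underline w^\Upsilon$; all $S$-pairs $S(R_{ijkl}, R_{i'j'k'l'})$ reduce to zero. I would either cite this classical fact or sketch the $S$-pair reduction for two quadruples sharing $0,1,2,$ or $3$ indices, noting the only nontrivial cases involve enough overlap to produce a common lead term and that reduction then follows from the straightening relations. Given that the $R_{ijkl}$ are a Gr\"obner basis of $I_{2,n}$ for the order refining $\underline w^\Upsilon$, it follows that $\init_\Upsilon(I_{2,n}) = \langle \init_\Upsilon(R_{ijkl}) \rangle$, and combined with step two this ideal equals the toric prime $\ker\phi_\Upsilon$, completing the proof that $\init_\Upsilon(I)$ is binomial and prime and generated by the $\init_\Upsilon(R_{ijkl})$.
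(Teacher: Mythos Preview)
The paper does not prove this theorem; it is quoted from Speyer--Sturmfels \cite{SS04} (and later invoked via \cite[Proof of Theorem~3.4]{SS04} and \cite[Corollary~4.4]{SS04}). Your overall architecture---compute the initial binomials via the four-point condition, realize the initial ideal as the kernel of the edge-monomial map $\phi_\Upsilon$, and use that the three-term Pl\"ucker relations form a universal Gr\"obner basis---is essentially the route taken in \cite{SS04}. But two concrete problems would derail the argument as written.

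First, your four-point computation is inverted. For a trivalent tree, among the three path-length sums the one attached to the \emph{cherry} pairing is the strictly smaller one; the other two tie at the maximum. Since the weight is $-\text{len}$ and the initial form collects terms of \emph{minimum} weight, the initial binomial is supported on the two \emph{non-cherry} pairings. In particular, for a planar tree with leaves cyclically ordered $i<j<k<l$, the crossing monomial $p_{ik}p_{jl}$ \emph{always} appears in $\init_\Upsilon(R_{ijkl})$ (this is exactly what the paper uses in \thref{lem:g-v on relations} and \thref{thm:trop+}); the claim that $p_{ik}p_{jl}$ is ``not of minimal weight'' and that the initial form is $p_{ij}p_{kl}+p_{il}p_{jk}$ is backwards. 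Moreover, \thref{thm:SS} is stated for \emph{every} trivalent tree, not only planar ones; for a tree whose induced quartet on $\{i,j,k,l\}$ has topology $((i,k),(j,l))$ the initial binomial is $p_{ij}p_{kl}+p_{il}p_{jk}$, so the description of $\init_\Upsilon(R_{ijkl})$ must be case-split by quartet topology, not fixed once and for all.

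Second, the dimension argument for $\init_\Upsilon(I_{2,n})=\ker\phi_\Upsilon$ has a gap. You correctly note that $\ker\phi_\Upsilon$ is prime of Krull dimension $2n-3$ and that $\init_\Upsilon(I_{2,n})$ has the same dimension (same Hilbert function as $I_{2,n}$). But ``prime contained in prime of the same dimension implies equal'' requires the \emph{smaller} ideal to be prime, which is precisely what you are trying to prove. From $\init_\Upsilon(I_{2,n})\subseteq\ker\phi_\Upsilon$ and equal dimension you only get that $\ker\phi_\Upsilon$ is a minimal prime of $\init_\Upsilon(I_{2,n})$; other components are not excluded. The fix, as in \cite{SS04}, is to compare Hilbert functions directly (show that the semigroup image of $\phi_\Upsilon$ has the same graded cardinalities as the standard monomial basis of $A_{2,n}$), or equivalently to count lattice points; the pure dimension count is not enough.
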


\begin{Definition}\thlabel{def:trop+Gr}
The {\it totally positive tropical} Grassmannian is $\trop^+(I_{2,n})$.
\end{Definition}

In~\cite[Section~5]{SW05}, it was shown that
maximal cones in $\trop^+(I_{2,n})$ are in bijection with {\it planar trivalent trees}, i.e.,~trivalent trees whose vertices are labeled $1,\dots,n$ in a clockwise manner.

\subsection{Preliminaries on cluster algebras}\label{sec:cluster}

The Pl\"ucker algebra $A_{2,n}$ is in fact a cluster algebra (see Definition~\ref{def:cluster algebra}) whose cluster structure is closely related to the combinatorics governing the tropical Grassmannian.
To exhibit this con\-nection, we recall some facts about skew-symmetric cluster algebras following Fomin and Zelevinsky~\cite{FZ02,FZ_clustersIV}.
Afterwards, we define finite type cluster algebras with frozen variables and~universal coefficients.

Let $m,f\in \mathbb N$ be positive integers and $\mathcal{F} =\mathbb{C}(u_1, \dots ,u_{m+f})$ be the field of rational functions on $m+f$ variables.
Here $m$ stands for mutable and $f$ for frozen.
A {\it seed} in $\mathcal{F}$ is a pair $\big(\widetilde{{\bf x}},\widetilde{B}\big)$, where $\widetilde{{\bf x}}=\{x_1,\dots, x_{m+f}\}$ is a free generating set of $\mathcal F$ and $\widetilde{B}=(b_{ij})$ is an $(m+f)\times m$ rectangular matrix with the following property: the top square submatrix of $\widetilde{B}$ (that is the $m\times m$ submatrix of $\widetilde{B}$ formed by its first $m$ rows) is skew-symmetric.
We call $\widetilde{B} $ an {\it extended exchange matrix} and refer to its top square submatrix $B$ as an {\it exchange matrix}.
We call ${\bf x}=\{x_1, \dots , x_m \}$ a {\it cluster}, $\{x_{m+1}, \dots , x_{m+f} \}$ the set of {\it frozen variables} and $\widetilde{{\bf x}}$ an {\it extended cluster}.

Given $k\in \{1, \dots , m \}$, the {\it mutation in direction $k$} of a seed $\big(\widetilde{{\bf x}}, \widetilde{B}\big)$ is the new seed $\mu_k\big(\widetilde{{\bf x}}, \widetilde{B}\big)=\big(\widetilde{{\bf x}}', \widetilde{B}'\big)$, defined as follows:
\begin{itemize}\itemsep=0pt
 \item The extended cluster $\widetilde{{\bf x}}'=\widetilde{{\bf x}}\setminus\{ x_k \} \cup \{ x'_k \} $, where
 \begin{equation}
 \label{eq:exchange_rel}
 x_k x'_k= \prod_{i\colon b_{ik}>0}x^{b_{ik}}_i + \prod_{i\colon b_{ik}<0}x^{-b_{ik}}_i.
 \end{equation}
 We call an expression of the form~\eqref{eq:exchange_rel} an {\it exchange relation} and the monomial $x_kx_k'$ an~{\it exchange monomial}.
 \item The extended exchange matrix $\widetilde{B}'=(b'_{ij})$ is defined by the following formula
 \begin{displaymath}
 b'_{ij}:= \begin{cases} -b_{ij} \qquad\text{if}\quad i=k\quad \text{or}\quad j=k,\\
 b_{ij} +\mathop{\rm sgn}(b_{ik})\mathop{\rm max}(b_{ik}b_{kj},0) \qquad\text{else},
 \end{cases}
 \end{displaymath}
 where
\[
\mathop{\rm sgn}(b_{ik}):= \begin{cases}
-1 &\text{if}\quad b_{ik}<0,\\
0 &\text{if}\quad b_{ik}=0,\\
1 &\text{if}\quad b_{ik}>0.\end{cases}
\]
\end{itemize}

\begin{Definition}\label{def:cluster algebra}
The {\it cluster algebra} $A\big(\widetilde{{\bf x}},\widetilde{B}\big)$ associated to a seed $\big(\widetilde{{\bf x}},\widetilde{B}\big) $ is the $\mathbb C$-subalgebra of~$\mathcal F$ generated by the elements of \emph{all} the extended clusters that can be obtained from the {\it initial seed} $\big(\widetilde{{\bf x}},\widetilde{B}\big)$ by a finite sequence of mutations.
The elements of the clusters constructed in this way are called the {\it cluster variables}. If $f\neq 0$ we say that the cluster algebra $A\big(\widetilde{{\bf x}},\widetilde{B}\big)$ has frozen variables, namely, $x_{m+1}, \dots , x_{m+f}$.
\end{Definition}

\begin{Remark}
It can be easily verified that, up to a canonical isomorphism, $A\big(\widetilde{{\bf x}},\widetilde{B}\big)$ is independent of $\widetilde{{\bf x}}$. Therefore, we write $A\big(\widetilde{B}\big)$ instead of $A\big(\widetilde{{\bf x}},\widetilde{B}\big)$.
\end{Remark}

\begin{Remark}
Fomin and Zelevinsky usually define cluster algebras over $\mathbb Q$. We recover the construction described above applying tensor product with $\mathbb C$ to Fomin and Zelevinsky's construction.
\end{Remark}

\begin{Theorem}[{\cite[Laurent phenomenon]{FZ_clustersIV}}]
\thlabel{thm:SLaurent}
The cluster algebra $A\big(\widetilde{B}\big)$ is a $\mathbb C[x_{m+1},\dots , x_{m+f}]$-subalgebra of $\mathbb C\big[x_1^{\pm 1}, \dots , x_m^{\pm 1},x_{m+1},\dots , x_{m+f}\big]$.
\end{Theorem}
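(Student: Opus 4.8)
The plan is to prove the apparently stronger \emph{Laurent phenomenon} in the precise form: expanded in the initial extended cluster $\widetilde{\bf x}=\{x_1,\dots,x_{m+f}\}$, every cluster variable of $A\big(\widetilde B\big)$ lies in the subring $\mathbb{C}[x_1^{\pm 1},\dots,x_m^{\pm 1}][x_{m+1},\dots,x_{m+f}]$ of $\mathcal F$. Since a frozen variable is never mutated, every seed may serve as the initial seed, so this statement, applied to each seed of $A\big(\widetilde B\big)$, yields the theorem. The first observation is elementary: a single mutation $\mu_k$ replaces $x_k$ by $x_k'=x_k^{-1}(M_1+M_2)$, where $M_1=\prod_{i\colon b_{ik}>0}x_i^{b_{ik}}$ and $M_2=\prod_{i\colon b_{ik}<0}x_i^{-b_{ik}}$ are \emph{coprime} monomials in $\widetilde{\bf x}\setminus\{x_k\}$ in which the frozen variables occur only with non-negative exponents; see \eqref{eq:exchange_rel}. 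Thus one mutation already produces a Laurent expression of the required shape with respect to the \emph{current} cluster, and the entire content of the theorem is that iterating mutations along an arbitrary path in the exchange graph never creates a pole other than a monomial in $x_1,\dots,x_m$.

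I would run an induction on the distance of a seed from the initial seed in the $m$-regular tree $\mathbb{T}_m$ of seeds. The naive induction breaks down because substituting one Laurent expansion into another can a priori produce spurious denominators when one mutates away from a direction and later returns to it along a different route. The standard way around this is the \emph{Caterpillar Lemma} of Fomin and Zelevinsky: inside any path $t_0-t_1-\cdots-t_\ell$ in $\mathbb{T}_m$ one isolates a short configuration of seeds obtained by mutating alternately in two fixed directions $i$ and $j$, and one reduces Laurentness of a cluster variable at $t_\ell$ with respect to $t_0$ to a rank-$2$ statement about this configuration. Concretely one shows that the handful of intermediate cluster variables produced along such a stretch are pairwise coprime as elements of $\mathbb{C}[x_1^{\pm 1},\dots,x_m^{\pm 1}][x_{m+1},\dots,x_{m+f}]$ --- here the coprimality of $M_1,M_2$ and the explicit form of the product $x_kx_k'$ are the key inputs --- and then, because this ring is a UFD, a gcd argument forces the denominator of the expansion at $t_\ell$ to be a monomial in $x_1,\dots,x_m$ alone.

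To carry this out I would: (1) set up the bookkeeping of extended exchange matrices along $\mathbb{T}_m$; (2) prove the rank-$2$ Laurent phenomenon by the explicit recursion satisfied by rank-$2$ cluster variables (finite and infinite type alike); (3) establish pairwise coprimality of adjacent, and of two-step-apart, cluster variables in the Laurent ring; and (4) feed (2)--(3) into the Caterpillar Lemma to complete the induction. I expect the genuine obstacle to be steps (3)--(4): controlling the accumulation of denominators under "mutate away and come back" is exactly what the short caterpillar configuration is engineered to handle, and it is what prevents a more flexible inductive argument. The coefficient part of the statement --- that the frozen variables appear only polynomially --- then requires no extra work: it holds for one exchange relation, it is visibly preserved under forming the monomials $M_1,M_2$ with non-negative exponents, and once everything is known to lie in $\mathbb{C}[x_1^{\pm 1},\dots,x_m^{\pm 1}][x_{m+1},\dots,x_{m+f}]$ the frozen variables are by construction not inverted.
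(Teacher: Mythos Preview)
The paper does not prove this theorem: it is stated as a background result and attributed to Fomin--Zelevinsky \cite{FZ_clustersIV}, with no argument given. So there is no ``paper's own proof'' to compare your proposal against.

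Your outline is the classical Fomin--Zelevinsky approach via the Caterpillar Lemma, and the overall architecture is correct. One point deserves tightening. In your final paragraph you treat the polynomiality in the frozen variables as an afterthought, but the way you phrase it is slightly circular: you cannot conclude ``the frozen variables are by construction not inverted'' \emph{after} asserting that everything lies in $\mathbb{C}[x_1^{\pm 1},\dots,x_m^{\pm 1}][x_{m+1},\dots,x_{m+f}]$, since that is exactly what is to be shown. The clean fix is to run the entire Caterpillar argument inside the UFD $L:=R[x_1^{\pm 1},\dots,x_m^{\pm 1}]$ with $R=\mathbb{C}[x_{m+1},\dots,x_{m+f}]$ from the start: the exchange relation \eqref{eq:exchange_rel} only ever divides by a \emph{mutable} variable, so each new cluster variable is a priori an element of the localization of $L$ at the previous mutable cluster variables, and the coprimality/gcd step (which uses only that $L$ is a UFD) then forces the denominator to be a monomial in $x_1,\dots,x_m$. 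With that adjustment your sketch matches the argument in the cited reference.
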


\begin{Definition}\thlabel{def:prin}
The cluster algebra with {\it principal coefficients} associated to an $ m \times m$ skew-symmetric matrix $B$ is the cluster algebra $A(B^{\rm{prin}})$, where $B^{\rm{prin}}$ is the $2m \times m$ matrix whose top $m\times m$ square submatrix is $B$ and whose lower $m\times m$ square submatrix is the identity matrix~$\mathbbm{1}_m$.
\end{Definition}

The notion of cluster algebras with principal coefficients is central in cluster theory. Its~rele\-vance in the framework of toric degenerations of cluster varieties was highlighted in~\cite{GHKK}. In~particular, in loc. cit. the authors work implicitly with cluster algebras with frozen variables and principal coefficients. This is precisely the notion we need here.

\begin{Definition}
Let $ \widetilde{B}$ be an extended exchange matrix. Let $\widehat{B}$ be the $(m+f)\times (m+f)$ skew-symmetric square matrix whose left $(m+f)\times m$ submatrix is $\widetilde{B}$ and whose bottom right~$f \times f $ square submatrix is the zero matrix (this information fully determines $ \widehat{B}$ since it is skew-symmetric).
Denote by $t_1,\dots , t_{m+f}$ the variables of $A\big(\widehat{B}^{\rm prin}\big) $ associated to the last $m+f$ directions, which in this case are the frozen directions.
Then the {\it cluster algebra with principal coefficients and frozen variables} $A\big(\widetilde{B}^{\rm prin}\big) $ associated to $\widetilde{B}$ is the $\mathbb{C}[t_1,\dots , t_{m+f}]$-subalgebra of~$A\big(\widehat{B}^{\rm prin}\big)$ spanned by the cluster variables that can be obtained from the initial extended cluster by iterated mutation in the first $m$ mutable directions.
\end{Definition}

\begin{Theorem}[{\cite[Theorem~1.7]{DWZ}}]
\thlabel{thm:g-vecs}
Let $X$ be a cluster variable associated to $A\big(\widetilde{B}^{\rm{prin}}\big)$.
By Theo\-rem~{\rm \ref{thm:SLaurent}} we know that $X\in \mathbb C [t_1,\dots ,t_{m+f}]\big[x_1^{\pm 1}, \dots , x_m^{\pm 1}, x_{m+1}, \dots , x_{m+f}\big]$.
Let $X|_{{\bf t}=0}$ be the Laurent polynomial in $\mathbb C \big[x_1^{\pm 1}, \dots , x_m^{\pm 1},x_{m+1}, \dots , x_{m+f}\big]$ obtained after the evaluation of $X$ at $t_1=0 ,\dots , t_{m+f}=0$. Then $X|_{{\bf t} = 0}$ is a non-constant Laurent monomial with coefficient~$1$. In~other words, for a non-zero vector ${\bf g}(X)=\big(g_1^X,\dots, g_{m+f}^X\big)\in \mathbb Z^{m+f}$ we have that
\begin{displaymath}
X|_{{\bf t}=0}=\prod_{i=1}^{m+f} x_i^{g^X_i}.
\end{displaymath}
Moreover, if $X$ and $X'$ are different cluster variables then ${\bf g }(X)\neq {\bf g}(X')$.
\end{Theorem}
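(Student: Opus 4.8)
The plan is to deduce the statement from two structural facts about cluster algebras with principal coefficients: a homogeneity property for cluster variables, and the Fomin--Zelevinsky product (``separation of additions'') formula together with the fact that $F$-polynomials have constant term~$1$.

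First I would grade the ambient ring. By Theorem~\ref{thm:SLaurent} the algebra $A\big(\widehat{B}^{\rm prin}\big)$ sits inside $R:=\mathbb C[t_1,\dots,t_{m+f}]\big[x_1^{\pm 1},\dots,x_m^{\pm 1},x_{m+1},\dots,x_{m+f}\big]$, and I equip $R$ with a $\mathbb Z^{m+f}$-grading by $\deg(x_i)=e_i$ for $i\in\{1,\dots,m+f\}$ and $\deg(t_j)=-\widehat{B}e_j$, minus the $j$-th column of $\widehat{B}$. A short computation shows that in each initial exchange relation~\eqref{eq:exchange_rel} in a mutable direction $k\in\{1,\dots,m\}$ (with the principal coefficient $t_k$ in the appropriate monomial) the two right-hand monomials share the common degree $\sum_{\widehat{b}_{ik}<0}|\widehat{b}_{ik}|\,e_i$, so $x_k'$ is homogeneous of degree $\sum_{\widehat{b}_{ik}<0}|\widehat{b}_{ik}|\,e_i-e_k$. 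By induction on the length of a mutation sequence in the first $m$ directions --- checking that the mutated extended exchange matrices keep every exchange relation grading-homogeneous --- one gets that every cluster variable $X$ of $A\big(\widetilde{B}^{\rm prin}\big)$, written in the initial extended cluster, is homogeneous; call its degree ${\bf g}(X)\in\mathbb Z^{m+f}$.

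Next I would extract $X|_{{\bf t}=0}$. Write $X=\sum_{{\bf a}\in\mathbb Z_{\ge 0}^{m+f}}c_{\bf a}({\bf x})\,{\bf t}^{\bf a}$ with $c_{\bf a}$ Laurent polynomials in the $x_i$; homogeneity forces ${\bf b}-\widehat{B}{\bf a}={\bf g}(X)$ for every monomial ${\bf x}^{\bf b}{\bf t}^{\bf a}$ of $X$, so $X|_{{\bf t}=0}=c_{\bf 0}({\bf x})$, which is ${\bf x}$-homogeneous of degree ${\bf g}(X)$. To see $c_{\bf 0}$ is a single monomial with coefficient~$1$ I would invoke the product formula $X={\bf x}^{{\bf g}(X)}\,F_X(\hat y_1,\dots,\hat y_{m+f})$ with $\hat y_j=t_j\prod_i x_i^{\widehat{b}_{ij}}$ and $F_X$ the $F$-polynomial of $X$; since $F_X$ has constant term~$1$ (a separate induction on mutations, also part of the Derksen--Weyman--Zelevinsky package), specializing ${\bf t}=0$ sends every $\hat y_j$ to $0$ and yields $X|_{{\bf t}=0}={\bf x}^{{\bf g}(X)}$. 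Non-triviality ${\bf g}(X)\ne 0$ then follows since a cluster variable cannot specialize to the constant~$1$; concretely this is read off from sign-coherence of $c$-vectors and the piecewise-linear mutation rule for ${\bf g}$-vectors, which show the vectors along a mutation path never return to the origin.

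The genuinely hard part is the injectivity claim: distinct cluster variables have distinct ${\bf g}$-vectors, which homogeneity alone does not give. Here I would use the quiver-with-potential representation theory of Derksen--Weyman--Zelevinsky: attach to each cluster variable an indecomposable rigid decorated representation $M$ of the Jacobian algebra modelling $\widetilde{B}$, recover $F_X$ from Euler characteristics of quiver Grassmannians of $M$ and ${\bf g}(X)$ from a minimal presentation of $M$, and prove that $M\mapsto({\bf g}(X),F_X)$ is injective, forcing ${\bf g}(X)\ne{\bf g}(X')$. An alternative closer to the geometry used elsewhere in this paper is to invoke the cluster scattering diagram of Gross--Hacking--Keel--Kontsevich: cluster monomials, in particular cluster variables, index theta functions supported in pairwise distinct chambers of the ${\bf g}$-fan, which separates their ${\bf g}$-vectors. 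Either route finishes the proof; everything else is the homogeneity-plus-separation argument above.
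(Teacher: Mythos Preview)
The paper does not give its own proof of this statement: it is quoted verbatim as a result from the literature, with the citation \cite[Theorem~1.7]{DWZ}, and the text moves straight on to Definition~\ref{def:g-vectors}. So there is no in-paper argument to compare against; your task was only to reproduce the content behind the citation.

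Your sketch is essentially the standard route one finds in Fomin--Zelevinsky~IV together with Derksen--Weyman--Zelevinsky. The $\mathbb Z^{m+f}$-grading making cluster variables homogeneous is precisely \cite[Proposition~6.1]{FZ_clustersIV}, and the separation formula $X={\bf x}^{{\bf g}(X)}F_X(\hat y)$ is \cite[Corollary~6.3]{FZ_clustersIV}; the point that $F_X$ has constant term~$1$ (so that $X|_{{\bf t}=0}={\bf x}^{{\bf g}(X)}$) was conjectural in \cite{FZ_clustersIV} and is exactly what \cite{DWZ} establishes via quivers with potential, along with sign-coherence and the injectivity of ${\bf g}$-vectors on cluster monomials. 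Your alternative via the scattering diagram of \cite{GHKK} is also valid and is in fact the reference the present paper leans on elsewhere (e.g.,\ in the proof of \thref{thm:A univ as quotient}). One small caveat: the claim ${\bf g}(X)\neq 0$ is most cleanly obtained not from ``never returning to the origin'' along a mutation path, but from the injectivity statement itself together with the observation that ${\bf g}(x_i)=e_i$ for initial variables, so only the trivial cluster monomial $1$ has ${\bf g}$-vector~$0$; alternatively, a non-initial cluster variable is a genuine non-monomial Laurent polynomial in the initial cluster, hence not the constant~$1$.
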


\begin{Definition}\thlabel{def:g-vectors}
Let $X$ be a cluster variable associated to $A\big(\widetilde{B}^{\rm{prin}}\big)$. The vector ${\bf g}(X)\in \mathbb{Z}^{m+f}$ introduced in~\thref{thm:g-vecs} is the {\bf g}-{\it vector} associated to~$X$.
We refer to the vector $\big(g_1^X,\dots , g_m^X\big)\in \mathbb{Z}^m$ as the {\it truncated} {\bf g}-{\it vector} of $X$. The set of (truncated) {\bf g}-vectors associated to $\widetilde{B}$ is the set of (truncated) {\bf g}-vectors of all the cluster variables in $A\big(\widetilde{B}^{\rm{prin}}\big)$.
\end{Definition}

\begin{Remark}
Let $X|_{{\bf t}=1}$ be the Laurent polynomial in $\mathbb C \big[x_1^{\pm 1}, \dots , x_m^{\pm 1},x_{m+1}, \dots , x_{m+f}\big]$ obtai\-ned after the evaluation of $X$ at $t_1=1 ,\dots , t_{m+f}=1$. Then $X|_{{\bf t}=1}$ is a cluster variable of~$A\big(\widetilde{B}\big)$. By a slight abuse of terminology we say ${\bf g}(X)$ is the ${\bf g}$-vector of the cluster variable~$X|_{{\bf t}=1}$.
\end{Remark}

\noindent
{\bf Universal coefficients.} We now turn to the definition of finite type cluster algebras \emph{with} frozen variables \emph{and} universal coefficients.
This notion is a natural (and slight) generalization of the usual notion of universal coefficients for cluster algebras \emph{without} frozen variables introduced by Fomin and Zelevinsky in~\cite[Section~12]{FZ_clustersIV}.
The difference between these notions arises from the distinction we make between coefficients and frozen variables, notions that are usually identified.
This distinction was first suggested in~\cite{BFMN} and is of particular importance in the study of toric degenerations of cluster varieties.
It was shown in~\cite{FZ_clustersIV} that finite type cluster algebras can be endowed with universal coefficients. This construction was categorified in~\cite{NC_univ} and the categorification was used to perform various computations in the following sections.
To further clarify the preceding discussion let us first recall the notion of finite type cluster algebras with universal coefficients.

\begin{Definition}\thlabel{ice quiver/matrix}
An {\it ice quiver} is a pair $(Q,F)$, where $Q$ is a finite quiver with the vertex set~$V(Q)$ and $F\subset V(Q)$ is a set of {\it frozen vertices}.
The vertices in $V(Q)\backslash F$ are called {\it mutable}.
If $(Q,F)$ is an ice quiver then the corresponding $|V(Q)|\times |V(Q)\setminus F|$ matrix is $B_{(Q,F)}:=(b_{ij})$, where
\begin{equation}
\label{eq:matrix_quiver}
b_{ij}:= \#\{ \text{arrows } i \to j \text{ in }Q\} - \#\{ \text{arrows } j \to i \text{ in }Q\}
\end{equation}
for $i\in V(Q)$ and $j\in V(Q)\setminus F$. Note that $(\ref{eq:matrix_quiver})$ can be similarly used to recover $(Q,F)$ from~$B_{(Q,F)}$. The {\it quiver mutation} is the operation induced by matrix mutation at the level of quivers.
If $F=\emptyset$ we set $B_{Q}:=B_{(Q,\emptyset)}$.
\end{Definition}
\begin{Theorem-Definition}[{\cite[Corollary 8.15]{Reading}}]
\thlabel{def:univ_coeff_R}
Let $Q$ be a quiver of finite cluster type, that is $Q$ is mutation equivalent to an orientation of a Dynkin diagram.
The {\it cluster algebra with universal coefficients} associated to $Q$ is $A\big(B_Q^{{\rm univ}}\big)$, where
\begin{displaymath}
B^{{\rm univ}}_Q=
\left(
\begin{array}{c}
 B_Q \\ \hdashline
 U_Q
\end{array}
\right)
\end{displaymath}
and $U_Q$ is the rectangular matrix whose rows are the {\bf g}-vectors of the opposite quiver $Q^{\rm op}$.
\end{Theorem-Definition}

Observe that we have described the rows of $U_Q$ but did not specify in which order they appear.
Any order we choose provides a realization of $A\big(B_Q^{{\rm univ}}\big)$ since a rearrangement of the rows of $U_Q$ amounts to reindexing the corresponding coefficients. So any choice gives rise to an isomorphic algebra.
It can be verified that the cluster algebra with universal coefficients associated to $Q$ is an invariant of the mutation class of $Q$. In other words, if $Q'$ is mutation equivalent to $Q$ then $A\big(B_Q^{{\rm univ}}\big)$ is canonically isomorphic to $A\big(B_{Q'}^{{\rm univ}}\big)$. The canonical isomorphism is completely determined by sending the elements of the initial cluster of $B_{Q}^{{\rm univ}}$ to those of $B_{Q'}^{{\rm univ}}$.

\begin{Remark}
\thlabel{rem:univ_property}
Cluster algebras with universal coefficients as treated by Fomin and Zelevinsky are defined by a universal property~\cite[Definition~12.1]{FZ_clustersIV}. We have chosen Reading's equivalent definition since we will not use the universal property here.
\end{Remark}

If $Q$ is a bipartite orientation of a Dynkin diagram $\Delta$ then the matrix $B^{\rm univ}_Q$ has a particularly nice and explicit description.
More precisely, let $\Phi^{\vee}$ be the root system dual to the root system associated to $\Delta$. Fix $\Phi^{\vee}_+\subset \Phi^{\vee}$ a subset of positive coroots with $\{ \alpha_1, \dots , \alpha_n \} \subset \Phi^{\vee}_+$ the set of simple coroots.
The almost positive coroots are the coroots in the set $\Phi^{\vee}_{\geq -1}:= \Phi^{\vee}_+ \cup \{ -\alpha_1, \dots , -\alpha_n \}$.
The cardinality of $\Phi^{\vee}_{\geq -1}$ coincides with the number of cluster variables in $A\big(B^{\rm univ}_{Q}\big)$ which is the number of coefficients for $A\big(B^{\rm univ}_{Q}\big)$.
Hence, we may consider $\big\{ y_{\alpha} \colon \alpha \in \Phi^{\vee}_{\geq -1} \big\} $ as the set of coefficients for $A\big(B^{\rm univ}_{Q}\big)$.
Next, for a coroot $\beta = \sum_{i=1}^n c_i \alpha_i$ we define $[\beta: \alpha_i]:=c_i$. With this notation we have the following description of $ B^{\rm univ}_Q $ which is a reformulation of~\cite[Theorem~12.4]{FZ_clustersIV}.

\begin{Proposition}
\thlabel{prop:bipartite_univ_Q}
The entry in $B_Q^{\rm univ}$ in the column $i $ and the row corresponding to $y_{\beta} $ is $[\beta : \alpha_i]$ if $i$ is a source and $-[\beta : \alpha_i]$ if $i$ is a sink.
\end{Proposition}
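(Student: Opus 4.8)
The plan is to obtain this as a direct transcription of the explicit description of universal coefficients for bipartite finite-type cluster algebras in \cite[Theorem~12.4]{FZ_clustersIV}, matched against the ${\bf g}$-vector realization of $A\big(B_Q^{\rm univ}\big)$ recorded in \thref{def:univ_coeff_R}. First I would recall what \cite[Theorem~12.4]{FZ_clustersIV} provides: for the bipartite exchange matrix attached to $\Delta$, it identifies the coefficient rows of the universal extended exchange matrix at the bipartite seed. These rows are indexed by the almost positive coroots $\beta\in\Phi^\vee_{\geq-1}$, and the entry in the row of $\beta$ and the column $i$ equals $\pm[\beta:\alpha_i]$, the sign being governed by the bipartite sign function $\varepsilon$ on the vertex set of $Q$, which takes one value on sources and the other on sinks. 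So the content of the proposition is exactly the identification of this matrix with $B_Q^{\rm univ}$, together with the determination of that sign.

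For the identification, by \thref{def:univ_coeff_R} the lower block $U_Q$ of $B_Q^{\rm univ}$ has rows equal to the ${\bf g}$-vectors of $Q^{\rm op}$. Here I would invoke two standard facts about cluster algebras of finite type with a bipartite seed: first, the cluster variables --- equivalently, the rows of $U_Q$ --- are in canonical bijection with the almost positive coroots $\Phi^\vee_{\geq-1}$ via the Fomin--Zelevinsky root parametrization, with the initial cluster variables corresponding to the negative simple coroots; second, under this bijection the ${\bf g}$-vector attached to $\beta$ is an explicit $\varepsilon$-twist of the coroot-coordinate vector $([\beta:\alpha_i])_i$, where $\varepsilon$ is the bipartite sign function (this is the bipartite case of the ${\bf g}$-vector description underlying \cite{FZ_clustersIV}, and can also be extracted by combining \thref{thm:g-vecs} with the Laurent expansions of the finite-type cluster variables). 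It then remains to evaluate this twist for $Q^{\rm op}$ --- whose sources and sinks are exactly the sinks and sources of $Q$, since reversing all arrows negates the exchange matrix --- and to compare with the sign conventions fixed in Section~\ref{sec:prep} and in \thref{thm:g-vecs}, where ${\bf g}(X)$ is read off from $X|_{{\bf t}=0}$. This produces precisely the stated formula: $[\beta:\alpha_i]$ when $i$ is a source of $Q$ and $-[\beta:\alpha_i]$ when $i$ is a sink.

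The main obstacle is purely bookkeeping of signs. One must reconcile Fomin--Zelevinsky's max-convention for initial monomials with the min-convention used throughout this paper, keep track of roots versus coroots in the non-simply-laced cases (the ${\bf g}$-vectors naturally carry coroot coordinates, which is why $\Phi^\vee$ rather than $\Phi$ enters the statement), and correctly account for the passage from $Q$ to $Q^{\rm op}$, which simultaneously negates the exchange matrix and interchanges the roles of sources and sinks. Once these conventions are pinned down, no genuine computation remains: the proposition is a literal reformulation of \cite[Theorem~12.4]{FZ_clustersIV} in the language of \thref{def:univ_coeff_R}.
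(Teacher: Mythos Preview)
Your proposal is correct and matches the paper's own treatment: the paper states this proposition without proof, introducing it simply as ``a reformulation of~\cite[Theorem~12.4]{FZ_clustersIV}'', and your plan unpacks exactly that reformulation via \thref{def:univ_coeff_R}. If anything, you supply more detail on the sign bookkeeping than the paper does.
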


\begin{Definition}\thlabel{def:univ frozen}
Let $(Q,F)$ be an ice quiver whose mutable part is a quiver $Q^{\rm mut}$ of finite cluster type. The {\it associated cluster algebra with universal coefficients} is $A\big(B_{(Q,F)}^{{\rm univ}}\big)$, where
\begin{displaymath}
B^{{\rm univ}}_{(Q,F)}=
\left(
\begin{array}{c}
 B_{(Q,F)} \\ \hdashline
 U_{Q^{\rm mut}}
\end{array}
\right)
\end{displaymath}
and $U_{Q^{\rm mut}}$ is the matrix whose rows are the truncated {\bf g}-vectors of the opposite quiver $(Q^{\rm mut})^{\rm op}$.
\end{Definition}

\begin{Remark}
The algebra $A\big(B_{(Q,F)}^{{\rm univ}}\big)$ satisfies a universal property which is a straight forward generalization of the universal property of a cluster algebra without frozen directions and with universal coefficients, see~\thref{rem:univ_property}.
\end{Remark}

\noindent
{\bf The Grassmannian cluster algebra $\boldsymbol{A_{2,n}}$}. We now turn to the cluster algebra that is of most interest to us, and use the combinatorics governing the cluster structure of $A_{2,n}$: triangulations of an $n$-gon. The vertices of the $n$-gon are labeled by $[n]$ in the clockwise order.
An arc connecting two vertices $i$ and $j$ is denoted by $\overline{ij}$ and we associate to it the Pl\"ucker coordinate $\bar p_{ij}\in A_{2,n}$.

\begin{Definition}\thlabel{def:triang}
A {\it triangulation} of the $n$-gon is a maximal collection of non-crossing arcs dividing the $n$-gon into $n-2$ triangles.
To a triangulation $T$ we associate a collection of Pl\"ucker coordinates ${\bf x}_T:=\big\{\bar p_{ij} \colon \overline{ij} \text{ is an arc in }T\big\}$ which is the associated {\it cluster}.
The elements of ${\bf x}_T$ are the {\it cluster variables}.
The monomials in Pl\"ucker coordinates which are all part of one cluster are the {\it cluster monomials}.
\end{Definition}

Note that every triangulation $T$ contains the boundary edges $\overline{12},\overline{23},\dots,\overline{n1}$.
The correspon\-ding cluster variables $\bar p_{12},\bar p_{23},\dots,\bar p_{1n}$ are {\it frozen} cluster variables.
All other variables $\bar p_{ij}$ with $i\not=j\pm 1$ are {\it mutable} cluster variables.
To every triangulation $T$ we associated an ice quiver.

\begin{Definition}\thlabel{def:quiver}
For a triangulation $T$, we define its {\it associated ice quiver $\big(Q_T,F_T\big)$}: the vertices $V(Q_T)$ are in correspondence with arcs and boundary edges $\overline{ij}\in T$; Two vertices $v_{\overline{ij}}$ and $v_{\overline{kl}}$ are connected by an arrow, if they correspond to arcs in one triangle. Inside every triangle, we orient the arrows clockwise.
The set $F_T$ consists of vertices corresponding to the boundary edges of the $n$-gon, hence $F$ does not depend on $T$.
We neglect arrows between vertices in $F$, see Figure~\ref{fig:quiver}.
\end{Definition}

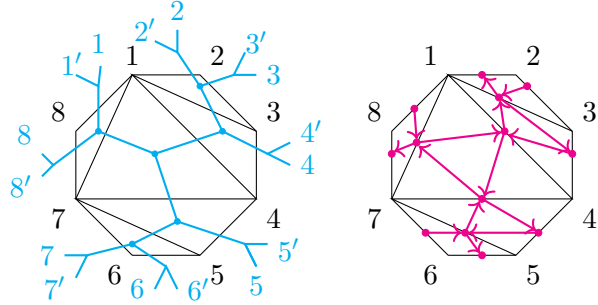
\begin{figure}[h!]
 \centering
 \begin{tikzpicture}[scale=.3]
 \draw (0,0) -- (3,0) -- (5.5,2.5) -- (5.5,5.5) -- (3,8) -- (0,8) -- (-2.5,5.5) -- (-2.5,2.5) -- (0,0);
 \draw (5.5,5.5) -- (0,8) -- (5.5,2.5) -- (-2.5,2.5) -- (3,0);
 \draw (0,8) -- (-2.5,2.5);
 \node[above] at (0,8) {1};
 \node[above right] at (3,8) {2};
 \node[above right] at (5.5,5.5) {3};
 \node[below right] at (5.5,2.5) {4};
 \node[below right] at (3,0) {5};
 \node[below left] at (0,0) {6};
 \node[below left] at (-2.5,2.5) {7};
 \node[above left] at (-2.5,5.5) {8};

 \draw[thick, cyan] (-2,0) -- (0,.5) -- (1.5,-.5);
 \draw[thick, cyan] (0,.5) -- (2,1.5) -- (5,.5) -- (5.5,-.5);
 \draw[thick, cyan] (2,1.5) -- (1,4.5) -- (-1.5,5.5) -- (-3.5,4);
 \draw[thick, cyan] (-1.5,5.5) -- (-1.5,7.5);
 \draw[thick, cyan] (1,4.5) -- (4,5.5) -- (6,4.5);
 \draw[thick, cyan] (4,5.5) -- (3,7.5) -- (2,9);
 \draw[thick, cyan] (3,7.5) -- (4.5,8);

 \draw[fill,cyan] (-1.5,5.5) circle (.125cm);
 \draw[fill,cyan] (1,4.5) circle (.125cm);
 \draw[fill,cyan] (2,1.5) circle (.125cm);
 \draw[fill,cyan] (4,5.5) circle (.125cm);
 \draw[fill,cyan] (0,.5) circle (.125cm);
 \draw[fill,cyan] (3,7.5) circle (.125cm);

 \draw[thick, cyan] (-3,0) -- (-2,0) -- (-2.5,-1);
 \draw[thick, cyan] (6,.5) -- (5,.5) -- (5.5,-.5);
 \draw[thick, cyan] (1,-1.5) -- (1.5,-.5) -- (2,-1.5);
 \draw[thick, cyan] (-4,3.5) -- (-3.5,4) -- (-4,4.5);
 \draw[thick, cyan] (-2.5,8) -- (-1.5,7.5) -- (-1.4,8.4);
 \draw[thick, cyan] (1,9.5) -- (2,9) -- (2,10);
 \draw[thick, cyan] (5.5,8) -- (4.5,8) -- (5,9);
 \draw[thick, cyan] (7,5) -- (6,4.5) -- (7,4);

 \node[above, cyan] at (-1.5,8.5) {1};
 \node[left, cyan] at (-1.75,8.5) {$1'$};
 \node[above, cyan] at (2,10) {2};
 \node[left, cyan] at (1.5,10) {$2'$};
 \node[right, cyan] at (5.5,8) {3};
 \node[above, cyan] at (5.5,8.5) {$3'$};
 \node[right, cyan] at (7,4) {4};
 \node[right, cyan] at (7,5.5) {$4'$};
 \node[below, cyan] at (5.5,-.5) {5};
 \node[right, cyan] at (6,0.25) {$5'$};
 \node[left, cyan] at (1,-1.5) {6};
 \node[right, cyan] at (2,-1.5) {$6'$};
 \node[left, cyan] at (-3,0) {7};
 \node[below left, cyan] at (-2.5,-.5) {$7'$};
 \node[above left, cyan] at (-4,4.5) {8};
 \node[below left, cyan] at (-4,4) {$8'$};

 \begin{scope}[xshift=14cm]
 \draw (0,0) -- (3,0) -- (5.5,2.5) -- (5.5,5.5) -- (3,8) -- (0,8) -- (-2.5,5.5) -- (-2.5,2.5) -- (0,0);
 \draw (5.5,5.5) -- (0,8) -- (5.5,2.5) -- (-2.5,2.5) -- (3,0);
 \draw (0,8) -- (-2.5,2.5);
 \node[above left] at (0,8) {1};
 \node[above right] at (3,8) {2};
 \node[above right] at (5.5,5.5) {3};
 \node[below right] at (5.5,2.5) {4};
 \node[below right] at (3,0) {5};
 \node[below left] at (0,0) {6};
 \node[below left] at (-2.5,2.5) {7};
 \node[above left] at (-2.5,5.5) {8};

\draw[fill,magenta] (1.5,8) circle (.15cm);
\draw[fill,magenta] (3.5,7.5) circle (.15cm);
\draw[fill,magenta] (2.25,7) circle (.15cm);
\draw[fill,magenta] (-1.5,6.5) circle (.15cm);
\draw[fill,magenta] (2.5,5.5) circle (.15cm);
\draw[fill,magenta] (-1.4,5) circle (.15cm);
\draw[fill,magenta] (-2.5,4.5) circle (.15cm);
\draw[fill,magenta] (5.5,4.5) circle (.15cm);
\draw[fill,magenta] (1.5,2.5) circle (.15cm);
\draw[fill,magenta] (-1,1) circle (.15cm);
\draw[fill,magenta] (.75,1) circle (.15cm);
\draw[fill,magenta] (1.5,0) circle (.15cm);
\draw[fill,magenta] (4,1) circle (.15cm);

\draw[thick, ->, magenta] (3.5,7.5) -- (2.35,7.1);
\draw[thick, ->, magenta] (2.25,7) -- (1.6,7.9);
\draw[thick, ->, magenta] (2.25,7) -- (5.4,4.6);
\draw[thick, ->, magenta] (5.5,4.5) -- (2.65,5.35);
\draw[thick, ->, magenta] (2.5,5.5) -- (2.3,6.85);
\draw[thick, ->, magenta] (2.5,5.5) -- (1.6,2.6);
\draw[thick, ->, magenta] (1.5,2.5) -- (-1.3,4.85);
\draw[thick, ->, magenta] (-1.4,5) -- (2.35,5.45);
\draw[thick, ->, magenta] (1.5,2.5) -- (3.9,1.1);
\draw[thick, ->, magenta] (4,1) -- (.9,1);
\draw[thick, ->, magenta] (.75,1) -- (1.4,2.4);
\draw[thick, ->, magenta] (.75,1) -- (1.4,.1);
\draw[thick, ->, magenta] (-1,1) -- (.6,1);
\draw[thick, ->, magenta] (-1.5,6.5) -- (-1.4,5.15);
\draw[thick, ->, magenta] (-1.4,5) -- (-2.4,4.6);
 \end{scope}
 \end{tikzpicture}
 \caption{On the right a triangulation $T$ of the $8$-gon and its corresponding quiver $Q_T$.
 On the left its corresponding extended tree $\widehat{D}_T$ dual to $T$ which is a trivalent tree with $16$ leaves.
 }
 \label{fig:quiver}
\end{figure}

We first recall some classic results due to Fomin--Zelevinsky and Fomin--Shapiro--Thurston.
\begin{Theorem}[\cite{FZ02}]
For all $n \geq 3$ the ring $A_{2,n}$ is a cluster algebra with frozen variables. More precisely, if $\big(Q_T,F_T\big)$ is the ice quiver associated to a triangulation $T$ of the $n$-gon then $A_{2,n}\cong A\big(B_{(Q_T,F_T)}\big)$ as $\mathbb C$-algebras. Moreover, the cluster variables associated to $B_{(Q_T,F_T)}$ are precisely the Pl\"ucker coordinates and the exchange relations are precisely the Pl\"ucker relations.
\end{Theorem}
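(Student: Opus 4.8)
The plan is to identify flips of triangulations of the $n$-gon with seed mutation and then to compare $A_{2,n}$ with the resulting cluster algebra through their common generators, the Pl\"ucker coordinates. Fix any triangulation $T_0$ and form the seed $\big({\bf x}_{T_0},B_{(Q_{T_0},F_{T_0})}\big)$ inside the fraction field $\mathcal F$ of $A_{2,n}$. I would first check that ${\bf x}_{T_0}=\{\bar p_{ij}\colon \overline{ij}\in T_0\}$ is a transcendence basis of $\mathcal F$ over $\mathbb C$: there are $n+(n-3)=2n-3$ of them, equal to the Krull dimension $d+1$ of $A_{2,n}$, and every other Pl\"ucker coordinate is algebraic over $\mathbb C[{\bf x}_{T_0}]$ (flipping a diagonal $\overline{ik}$ to $\overline{jl}$ expresses $\bar p_{jl}$ through the relation $\bar p_{ik}\bar p_{jl}=\bar p_{ij}\bar p_{kl}+\bar p_{il}\bar p_{jk}$; iterate along a path of flips), so $A_{2,n}$ is algebraic over the domain $\mathbb C[{\bf x}_{T_0}]$ and the dimension count is forced to be exact. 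Thus $A\big(B_{(Q_{T_0},F_{T_0})}\big)$ is a well-defined $\mathbb C$-subalgebra of $\mathcal F$.

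The core is a local computation. Let $\overline{ik}$ be a mutable diagonal of a triangulation $T$; it is a diagonal of the quadrilateral formed by the two triangles of $T$ adjacent to it, say $ijk$ and $ikl$ with $i,j,k,l$ in cyclic order, and the flip $T'$ of $T$ at $\overline{ik}$ replaces $\overline{ik}$ by $\overline{jl}$. Reading off the four arrows of $Q_T$ at the vertex $v_{\overline{ik}}$, each coming from one of these two triangles and oriented clockwise inside it, one finds that they pair up as $\{\overline{ij},\overline{kl}\}$ on one side and $\{\overline{il},\overline{jk}\}$ on the other, so that the exchange relation~\eqref{eq:exchange_rel} at $v_{\overline{ik}}$ reads $\bar p_{ik}\,x'=\bar p_{ij}\bar p_{kl}+\bar p_{il}\bar p_{jk}$ (up to the conventional sign $p_{ab}=-p_{ba}$). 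Since $R_{ijkl}\in I_{2,n}$ gives $\bar p_{ik}\bar p_{jl}=\bar p_{ij}\bar p_{kl}+\bar p_{il}\bar p_{jk}$ in the domain $A_{2,n}$ and $\bar p_{ik}\neq 0$, we conclude $x'=\bar p_{jl}$: the new cluster variable is the Pl\"ucker coordinate of the flipped diagonal. A second, purely combinatorial verification, again confined to the two triangles and their neighbours, shows that the mutated ice quiver $\mu_{v_{\overline{ik}}}\big(Q_T,F_T\big)$ coincides with $\big(Q_{T'},F_{T'}\big)$. I expect this step to be the main obstacle: one has to pin down the orientation and sign conventions so that the two exchange monomials are \emph{literally} the two terms on the right of $R_{ijkl}$, and one has to track the arrows that quiver mutation creates or cancels among the neighbouring arcs and verify that they match the change of triangle adjacencies under the flip, rather than producing a quiver merely mutation-equivalent to $Q_{T'}$.

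Granting the local computation, a simultaneous induction on the length of a flip sequence shows that every seed reachable from $\big({\bf x}_{T_0},B_{(Q_{T_0},F_{T_0})}\big)$ has the form $\big({\bf x}_T,B_{(Q_T,F_T)}\big)$ for a triangulation $T$, that mutation corresponds to a flip, and that every exchange relation so produced is a three-term Pl\"ucker relation. Because the flip graph on triangulations of the $n$-gon is connected (it is the $1$-skeleton of the associahedron), every diagonal $\overline{ij}$ occurs in some triangulation, so every $\bar p_{ij}$ arises as a cluster variable while, conversely, every cluster variable is some $\bar p_{ij}$; likewise every $R_{ijkl}$ arises as an exchange relation. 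Hence $A\big(B_{(Q_{T_0},F_{T_0})}\big)$ contains the full generating set $\{\bar p_{ij}\}$ of $A_{2,n}$, so it contains $A_{2,n}$; being generated over $\mathbb C$ by cluster variables that all lie in $A_{2,n}$, it is also contained in $A_{2,n}$. Therefore $A_{2,n}=A\big(B_{(Q_{T_0},F_{T_0})}\big)$, and mutation invariance of the cluster algebra upgrades this to $A_{2,n}\cong A\big(B_{(Q_T,F_T)}\big)$ for every triangulation $T$; the assertions about cluster variables and exchange relations have been proved along the way.
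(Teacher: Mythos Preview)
The paper does not supply a proof of this statement; it is quoted as a known theorem from \cite{FZ02} (with the surface/triangulation picture elaborated in \cite{FST08}). There is therefore no ``paper's own proof'' to compare against.

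Your outline is the standard argument and is essentially correct. The three ingredients you isolate --- (i) that the $2n-3$ Pl\"ucker coordinates of a triangulation freely generate the function field, (ii) the local identification of a diagonal flip with a seed mutation whose exchange relation is the Pl\"ucker relation $R_{ijkl}$, and (iii) connectivity of the flip graph --- are exactly the ones used in the literature. Your candid flag on step~(ii) is well placed: the clockwise orientation convention in \thref{def:quiver} is what forces the two exchange monomials at $v_{\overline{ik}}$ to be $\bar p_{ij}\bar p_{kl}$ and $\bar p_{il}\bar p_{jk}$, and the verification that quiver mutation of $Q_T$ at $v_{\overline{ik}}$ returns $Q_{T'}$ (not merely a mutation-equivalent quiver) is a finite case check inside the two triangles and their immediate neighbours. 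One small tightening: for the transcendence-basis claim it is cleaner to argue directly that $\operatorname{Frac}(A_{2,n})=\mathbb C({\bf x}_{T_0})$ by the iterated-flip expression you already describe, and then invoke the equality of transcendence degree with Krull dimension to conclude algebraic independence; your phrasing ``$A_{2,n}$ is algebraic over $\mathbb C[{\bf x}_{T_0}]$'' leaves the independence step slightly implicit.
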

\begin{Theorem}[\cite{FST08}]\thlabel{thm:cluster mono basis}
The set of all cluster monomials is a $\mathbb{C}$-vector space basis for the algebra~$A_{2,n}$ called the {\it basis of cluster monomials}.
\end{Theorem}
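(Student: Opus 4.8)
The plan is to identify the set of cluster monomials of $A_{2,n}$ with the standard monomial basis of $A_{2,n}$ coming from the classical straightening law on $\Gr(2,\mathbb C^n)$, and then to conclude by \thref{def:SMT}. The argument has a combinatorial half, describing cluster monomials, and a Gr\"obner half, producing the standard monomial basis; the two halves meet in the notion of a non-crossing monomial.

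For the combinatorial half, recall from \cite{FZ02} that every cluster of $A_{2,n}$ is of the form ${\bf x}_T=\{\bar p_{ij}\colon \overline{ij}\in T\}$ for a triangulation $T$ of the $n$-gon, and that the arcs of $T$ (diagonals together with the boundary edges, the latter lying in every triangulation) form a maximal collection of pairwise non-crossing arcs. Hence any cluster monomial $\prod_{\overline{ij}\in T}\bar p_{ij}^{a_{ij}}$ has support contained in $T$, so any two distinct Pl\"ucker coordinates dividing it correspond to non-crossing arcs; call a monomial in the $\bar p_{ij}$ with this property \emph{non-crossing}. Conversely, the support of a non-crossing monomial is a set of pairwise non-crossing arcs, which extends to a triangulation $T'$, so the monomial is a cluster monomial of the cluster ${\bf x}_{T'}$. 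Therefore the set of cluster monomials is exactly the set of non-crossing monomials in the Pl\"ucker coordinates.

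For the Gr\"obner half, I would fix a monomial term order $<$ on $S=\mathbb C[p_{ij}\colon 1\le i<j\le n]$ under which $\init_<(R_{ijkl})=p_{ik}p_{jl}$ for every cyclically ordered quadruple $i<j<k<l$; here $p_{ik}p_{jl}$ is the product of the unique crossing pair of arcs among the three occurring in $R_{ijkl}=p_{ij}p_{kl}-p_{ik}p_{jl}+p_{il}p_{jk}$, and such a (``diagonal'') term order exists. One then checks that the Pl\"ucker relations $\{R_{ijkl}\}$ already form a Gr\"obner basis of $I_{2,n}$ for $<$, so that $\init_<(I_{2,n})=\langle p_{ik}p_{jl}\colon i<j<k<l \text{ cyclically}\rangle$; the monomials outside this squarefree ideal are precisely the non-crossing monomials. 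By \thref{def:SMT} these form a $\mathbb C$-vector space basis of $A_{2,n}$. Combining with the combinatorial half, the cluster monomials equal the non-crossing monomials, which equal the standard monomial basis, and hence form a $\mathbb C$-vector space basis of $A_{2,n}$.

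The main obstacle is the Gr\"obner step: exhibiting the diagonal term order with the prescribed leading terms and verifying that the Pl\"ucker relations reduce all of their $S$-polynomials to zero modulo $\{R_{ijkl}\}$, equivalently that the Stanley--Reisner ring of the non-crossing complex of the $n$-gon is a flat degeneration of $A_{2,n}$. This is classical --- it is the Hodge straightening law for $\Gr(2,\mathbb C^n)$, see e.g.\ \cite{GB_Bernd} --- and once it is granted, the linear independence and the spanning of the cluster monomials follow immediately from the two containments established above. In the terminology of this paper, this is exactly the Gr\"obner degeneration of $A_{2,n}$ attached to the maximal cone $C$ whose initial ideal is the Stanley--Reisner ideal of the cluster complex.
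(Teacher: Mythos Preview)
Your argument is correct. Note, however, that in the paper this theorem is simply cited from \cite{FST08} without proof; what you have written is an independent proof, and in fact it is exactly the argument the paper itself carries out later, for a slightly different purpose. The Gr\"obner step you describe is \thref{lem: mono ideal}: there the diagonal term order is realized by the explicit weight vector $u$ of \thref{def:mono wt}, and Buchberger's criterion is checked for the Pl\"ucker relations to obtain $\init_u(I_{2,n})=\mathcal M_{2,n}$, the ideal of crossing monomials. The combinatorial half you describe is precisely \thref{cluster vs SM basis}, which shows that non-crossing monomials and cluster monomials coincide. So your proposal and the paper's development agree; the only organizational difference is that the paper first quotes the basis property from \cite{FST08} and then identifies that basis with the standard monomial basis, whereas you deduce the basis property directly from the standard monomial argument.
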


We endow $A_{2,n}$ with universal coefficients associated to its mutable part. This is an example of a cluster algebra with frozen directions and coefficients in the sense of~\cite{BFMN}.

\begin{Notation}
Let $\big(Q_T, F_T\big)$ be the quiver associated to a triangulation of the $n$-gon.
We~denote by $Q_T^{\rm{mut}}$ the full subquiver of $Q_T$ supported in the mutable vertices $V(Q_T)\setminus F_T$. In~particular, $Q_T^{\rm{mut}}$ is mutation equivalent to an orientation of a type $\mathtt A$ Dynkin diagram.
\end{Notation}

\begin{Definition}\thlabel{def:Plucker univ}
Let $T$ be a triangulation of the $n$-gon. The {\it Pl\"ucker algebra with universal coefficients} $A_{2,n}^{\rm{univ}}$ is the cluster algebra defined by the extended exchange matrix
\begin{displaymath}
B_{(Q_T,F_T)}^{\rm{univ}}=
\left(
\begin{array}{c}
 B_{(Q_T,F_T)} \\ \hdashline
 U_{Q^{\rm{mut}}_T}
\end{array}
\right)\!.
\end{displaymath}
Up to canonical isomorphism, $A_{2,n}^{\rm{univ}}=A\big(B_{(Q_T,F_T)}^{\rm{univ}}\big)$ is independent of $T$.
\end{Definition}

The quivers $Q_T^{\rm mut}$ and $\big(Q_T^{\rm mut}\big)^{\rm op}$ define canonically isomorphic cluster algebras whose cluster variables (denoted by $\bar p_{ij}$) are both in bijection with arcs $\overline{ij}$ of the $n$-gon for which $2\le i+1<j\le n$.
Hence, the rows of $U_{Q_T^{\rm mut}}$ are also in bijection with these arcs.
We write $y_{ij}$ for the coefficient variable of $A_{2,n}^{\rm univ}$ corresponding to the arc $\overline{ij}$.

\begin{Example}
Let $\big(Q_T,F_T\big)$ be the ice quiver associated to the triangulation of the pentagon depicted in Figure~\ref{fig:pentagon}. The corresponding rectangular matrix for universal coefficients can be found on the right side of Figure~\ref{fig:pentagon}.
Incidentally, in this case the rows corresponding to the universal coefficients coincide with the rows corresponding to the frozen part of $\big(Q_T,F_T\big)$. However, for polygons with more than 5 sides this will not be the case.
\end{Example}

\begin{figure}
 \centering
 \begin{tikzpicture}[scale=.35]
 \draw (0,5) -- (3,3) -- (2,0) -- (-2,0) -- (-3,3) -- (0,5);
 \draw (-2,0) -- (0,5) -- (2,0);
 \draw[fill,capri] (-2.5,1.5) circle (.175cm);
 \draw[fill,capri] (0,0) circle (.175cm);
 \draw[fill,capri] (2.5,1.5) circle (.175cm);
 \draw[fill,capri] (1.5,4) circle (.175cm);
 \draw[fill,capri] (-1.5,4) circle (.175cm);
 \draw[fill,capri] (1,2.5) circle (.175cm);
 \draw[fill,capri] (-1,2.5) circle (.175cm);
 \draw[capri,thick,->] (-1,2.5) -- (.8,2.5);
 \draw[capri,thick,->] (1,2.5) -- (1.4,3.8);
 \draw[capri,thick,->] (2.5,1.5) -- (1.2,2.4);
 \draw[capri,thick,->] (1,2.5) -- (.15,.15);
 \draw[capri,thick,->] (0,0) -- (-.9,2.4);
 \draw[capri,thick,->] (-1,2.5) -- (-2.35,1.65);
 \draw[capri,thick,->] (-1.5,4) -- (-1.15,2.65);

 \node[above] at (0,5) {1};
 \node[right] at (3,3) {2};
 \node[below right] at (2,0) {3};
 \node[below left] at (-2,0) {4};
 \node[left] at (-3,3) {5};

 \begin{scope}[xshift=15cm]

 \node at (0,2.5) {$
 B^{\rm univ}_{(Q_T,F_T)}=\left(\scriptsize{
\begin{array}{rr}
 0 & 1 \\
 -1 & 0 \\
 1 & 0 \\
 0 & -1 \\
 0 & 1 \\
 1 & -1 \\
 -1 & 0 \\ \hdashline
 1 & 0 \\
 0 & -1 \\
 0 & 1 \\
 1 & -1 \\
 -1 & 0
\end{array}}
\right)
=
\left(
\begin{array}{c}
 B_{(Q_T,F_T)} \\ \hdashline
 U_{Q_T^{\rm mut}}
\end{array}
\right)
 $};
 \end{scope}
 \end{tikzpicture}
 \caption{A triangulation of the $5$-gon and the associated matrix $B_{(Q_T,F_T)}^{\rm univ}$.}
 \label{fig:pentagon}
\end{figure}
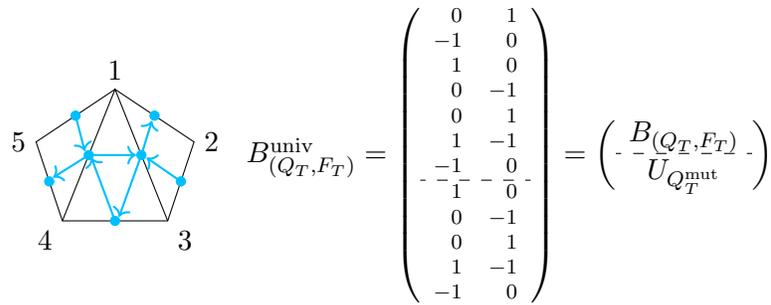

To get a better understanding of the cluster algebra $A_{2,n}^{\rm univ}$ we now turn to the combinatorial gadgets that govern its definition: {\bf g}-vectors of Pl\"ucker coordinates.
Their description involves two more combinatorial objects associated to a triangulation $T$, namely its dual graph $D_T$ and its extended dual graph $\widehat D_T$.
Recall that non-interior vertices of a tree graph are called {\it leaves}.
Moreover, two leaves that are connected to the same interior vertex form a so-called {\it cherry}.

\begin{Definition}\thlabel{def:trees}
Fix a triangulation $T$ of the $n$-gon. The trivalent tree $D_T$ is the {\it dual graph} or {\it ribbon graph} of $T$. It has $n$ leaves and its interior vertices correspond to triangles in $T$.
 Two interior vertices are connected by an edge if their corresponding triangles share an arc. Vertices corresponding to triangles involving a boundary edge $\overline{i-1,i}$ are connected to the leaf $i$.
 The trivalent tree $\widehat D_T$
 is obtained from $D_T$ by replacing every leaf $i$ by a cherry with leaves $i$ and $i'$, where (in the clockwise order) $i'$ labels the first leaf.
 We call $\widehat D_T$ the {\it extended tree dual to $T$}.
\end{Definition}
Note that the tree $D_T$ is by definition planar. See Figure~\ref{fig:quiver} for an example.
Moreover, the extended tree $\widehat D_T$ can alternatively be defined as the dual graph of a triangulation $\widehat T$ of the $2n$-gon with vertices $1',1,2',2,\dots,n',n$ in the clockwise order.
Here $\widehat T$ is obtained from $T$ by~replacing every boundary edge $\overline{i-1,i}$ by a triangle with a new vertex labeled by $i'$.
Then $\widehat{D}_T=D_{\widehat{T}}$.

We now focus on giving a combinatorial definition of the ${\bf g}$-vector associated to a Pl\"ucker coordinate with respect to a triangulation $T$.
The reader can find an equivalent definition above in~\thref{def:g-vectors}. The ${\bf g}$-vectors can be read from the extended tree $\widehat D_T$.
First, observe that the interior edges of $\widehat D_T$ correspond to the arcs and the boundary edges in $T$ and, therefore to the cluster variables of the associated seed.

\medskip

\noindent{\bf Sign conventions for ${\bf g}$-vectors.}
Consider a path in the tree $\widehat D_T$ with the end points in~$\{1,\dots,n\}$ (not in~$\{1',\dots,n'\}$).
Say the path goes from $i$ to $j$.
For simplicity, we fix an~ori\-en\-tation of the path and denote it by $i\rightsquigarrow j$ (what follows is independent of the choice of orientation).
As $\widehat D_T$ is trivalent, at every interior vertex the path can either turn \emph{right} or \emph{left}.
Denote by $e_{\overline{ab}}$ the edge in $\widehat{D}_T$ corresponding to an arc $\overline{ab}\in T$ and let $v_{abc}$ and $v_{abd}$ be the vertices in $\widehat{D}_T$ adjacent to $e_{\overline{ab}}$.
Given the path $i\rightsquigarrow j$ we associate signs $\sigma_{i\rightsquigarrow j}^{ab}\in \{-1,0,+1\}$ to every $\overline{ab}\in T$ as follows:
\begin{enumerate}\itemsep=0pt
 \item
 If the path $i\rightsquigarrow j$ either does not pass through $e_{\overline{ab}}$, or it passes through $e_{\overline{ab}}$ by turning \emph{right} at $v_{abc}$ and turning \emph{right} at $v_{abd}$, or it passes through $e_{\overline{ab}}$ by turning \emph{left} at $v_{abc}$ and turning \emph{left} at $v_{abd}$: then
 $\sigma_{i\rightsquigarrow j}^{{ab}}=0$.
 \item If the path $i\rightsquigarrow j$ passes through the $e_{\overline{ab}}$ by turning \emph{left} at $v_{abc}$ and turning \emph{right} at $v_{abd}$: then $\sigma_{i\rightsquigarrow j}^{{ab}}=+1$.
 \item If the path $i\rightsquigarrow j$ passes through the $e_{\overline{ab}}$ by turning \emph{right} at $v_{abc}$ and turning \emph{left} at $v_{abd}$: then $\sigma_{i\rightsquigarrow j}^{{ab}}=-1$.
\end{enumerate}

Cases 2 and 3 are depicted in Figure~\ref{fig:sign g-v}. We leave it to the reader to verify that $\sigma_{i\rightsquigarrow j}^{{ab}}=\sigma_{j\rightsquigarrow i}^{{ab}}$.
Later we replace the end points $i$ and $j$ by the interior vertices of $\widehat{D}_T$ and use the same symbol.

\begin{figure}[ht]
 \centering
\begin{tikzpicture}[scale=1.1]
\draw[thick] (-.5,-.5) -- (0,0) -- (1,0) -- (1.5,.5);
\draw[dashed] (-.5,.5) -- (0,0);
\draw[dashed] (1,0) -- (1.5,-.5);
\node[above] at (0.5,0) {$-$};
\begin{scope}[xshift=4cm]
\draw[thick] (-.5,.5) -- (0,0) -- (1,0) -- (1.5,-.5);
\draw[dashed] (-.5,-.5) -- (0,0);
\draw[dashed] (1,0) -- (1.5,.5);
\node[above] at (0.5,0) {$+$};
\end{scope}
\end{tikzpicture}
 \caption{Sign conventions to compute ${\bf g}$-vectors.}
 \label{fig:sign g-v}
\end{figure}
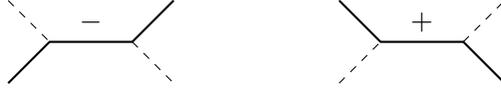

\begin{Definition}\thlabel{def:g}
Let $\{f_{{ab}}\}_{\overline{ab}\in T}$ be the standard basis of $\mathbb{Z}^{2n-3}$.
For the Pl\"ucker coordinate $\bar p_{ij}$ consider the path from $i$ to $j$ in $\widehat D_T$. The ${\bf g}$-{\it vector} of $\bar p_{ij}$ with respect to $T$ is defined as
\begin{displaymath}
{\bf g}^{\widehat T}_{ij}:=\sum_{\overline{ab}\in T} \sigma_{i\rightsquigarrow j}^{ab}f_{ab}.
\end{displaymath}
Similarly, we define the {\bf g}-vector of a cluster monomial as the sum of the {\bf g}-vectors of its factors.
\end{Definition}
Note that for $\overline{ij}\in T$ we have ${\bf g}_{ij}=f_{ij}$.
When there is no ambiguity, we write ${\bf g}_{ij}$ instead of~${\bf g}^{\widehat T}_{ij}$.
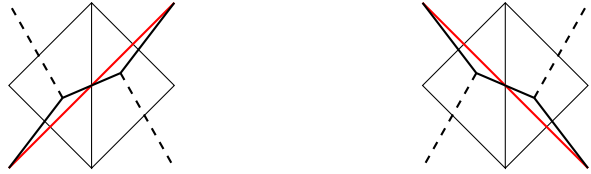
\begin{figure}[ht]
 \centering
\begin{tikzpicture}[scale=1.1]
\draw (0,0) -- (1,1) -- (0,2) -- (-1,1) -- (0,0) -- (0,2);
\draw[thick, red] (-1,0) -- (1,2);
\draw[thick] (1,2) -- (.35,1.15) -- (-.35,.85) -- (-1,0);
\draw[dashed,thick] (.35,1.15) -- (1,0);
\draw[dashed,thick] (-.35,.85) -- (-1,2);

\begin{scope}[xshift=5cm]
\draw (0,0) -- (1,1) -- (0,2) -- (-1,1) -- (0,0) -- (0,2);
\draw[thick, red] (-1,2) -- (1,0);
\draw[thick] (1,0) -- (.35,.85) -- (-.35,1.15) -- (-1,2);
\draw[dashed,thick] (.35,.85) -- (1,2);
\draw[dashed,thick] (-.35,1.15) -- (-1,0);
\end{scope}
\end{tikzpicture}
 \caption{Combinatorial translation between laminations on triangulations (as in~\cite{fomin2018cluster}) and paths on~trees (as in~\thref{def:g}) to compute truncated ${\bf g}$-vectors. The red lines corresponds to the simple lamination associated to $\overline{ij} $.}
 \label{fig:gv dictionary}
\end{figure}
\begin{Lemma}\thlabel{lem:g are gv}
Let $T$ be a triangulation of the $n$-gon. The ${\bf g}$-vector ${\bf g}^{\widehat T}_{ij}$ of a Pl\"ucker coordinate~$\overline{p}_{ij}$ with respect to $T$ introduced in~\thref{def:g} coincides with the ${\bf g}$-vector ${\bf g}(\bar p_{ij})$ $($from~Defi\-ni\-tion~$\ref{def:g-vectors})$ of the cluster variable $\bar p_{ij} $ of $ A\big(B_{(Q_T,F_T)}\big)$.
\end{Lemma}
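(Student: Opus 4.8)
The plan is to prove the equality by induction on the number $c(\overline{ij},T)$ of arcs of $T$ crossing the diagonal $\overline{ij}$, i.e.\ on the number of flips needed to reach from $T$ a triangulation containing $\overline{ij}$. Both ${\bf g}$-vectors transform in a controlled way under such a flip: on the cluster side by the piecewise-linear ${\bf g}$-vector mutation map of \cite{FZ_clustersIV}, and on the combinatorial side by the change of the signs $\sigma^{ab}_{i\rightsquigarrow j}$ caused by the corresponding local modification of the extended dual tree. So the heart of the argument is to match these two transformation rules; alternatively, one can bypass the induction by invoking the known description of ${\bf g}$-vectors of cluster algebras from surfaces (in particular of type $\mathtt A$) as shear coordinates of elementary laminations and then verify directly that \thref{def:g} computes exactly those shear coordinates, which is the content of the translation depicted in Figure~\ref{fig:gv dictionary}.

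For the base case $c(\overline{ij},T)=0$ we have $\overline{ij}\in T$, so $\overline p_{ij}$ lies in the initial cluster of $A\big(B_{(Q_T,F_T)}\big)$; by \thref{thm:g-vecs} its ${\bf g}$-vector is read off from $\overline p_{ij}|_{{\bf t}=0}=\overline p_{ij}$, hence ${\bf g}(\overline p_{ij})=f_{ij}$, which agrees with ${\bf g}^{\widehat T}_{ij}=f_{ij}$ as noted after \thref{def:g}. For the inductive step, assuming $\overline{ij}\notin T$ I would pick a mutable arc $\overline{kl}\in T$ crossing $\overline{ij}$ whose flip $T':=\mu_{\overline{kl}}(T)$ satisfies $c(\overline{ij},T')=c(\overline{ij},T)-1$ (such a flip exists by standard facts about triangulations of polygons, e.g.\ \cite{fomin2018cluster}). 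The cluster variable $\overline p_{ij}$ belongs to the clusters of both $A\big(B_{(Q_T,F_T)}\big)$ and $A\big(B_{(Q_{T'},F_{T'})}\big)$, and its ${\bf g}$-vectors relative to these two seeds are related by the ${\bf g}$-vector mutation map in direction $\overline{kl}$; this map depends only on the exchange matrix $B_{(Q_T,F_T)}$, which is encoded by $\widehat D_T$ through \thref{def:quiver}, together with the sign of the $\overline{kl}$-entry of the ${\bf g}$-vector at $T'$. By the induction hypothesis the latter equals ${\bf g}^{\widehat{T'}}_{ij}$, so it remains to show that ${\bf g}^{\widehat T}_{ij}$ is the image of ${\bf g}^{\widehat{T'}}_{ij}$ under the very same map.

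This last compatibility is where essentially all the work lies. The flip $T\rightsquigarrow T'$ amounts to replacing, inside $\widehat D_T$, the internal edge $e_{\overline{kl}}$ together with its two endpoints by the opposite trivalent configuration; only the part of the path $i\rightsquigarrow j$ passing through the quadrilateral of $T$ with diagonal $\overline{kl}$ is affected, so only the signs $\sigma^{ab}_{i\rightsquigarrow j}$ attached to $\overline{kl}$ and to the four sides of that quadrilateral can change. One then runs through the finitely many cases — whether the path enters the quadrilateral and, if so, through which pair of its sides — recording the left/right turns before and after the local modification, and checks that the resulting change of $\big(\sigma^{ab}_{i\rightsquigarrow j}\big)_{\overline{ab}\in T}$ is exactly the piecewise-linear ${\bf g}$-vector mutation in direction $\overline{kl}$. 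Concretely this is the dictionary of Figure~\ref{fig:gv dictionary}: realizing $\widehat D_T$ as the dual graph of a triangulation $\widehat T$ of the $2n$-gon (so that every arc and boundary edge of $T$ becomes an internal edge), the numbers $\sigma^{ab}_{i\rightsquigarrow j}$ are precisely the shear coordinates of the elementary lamination running alongside $\overline{ij}$, and the statement to prove becomes the compatibility of shear coordinates with flips. The routine points to keep an eye on are that the induction is indeed finite (the crossing number strictly decreases) and that ${\bf g}(\overline p_{ij})$ for $A\big(B_{(Q_T,F_T)}\big)$, whose coefficients are attached to all arcs of $T$, is unchanged if one mutates only in mutable directions — so that the comparison with the $2n$-gon picture is legitimate.
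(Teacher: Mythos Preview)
Your proposal is correct, and in fact you explicitly describe two routes: an induction on the crossing number $c(\overline{ij},T)$ using the ${\bf g}$-vector mutation map of \cite{FZ_clustersIV}, and the ``alternative'' of invoking the identification of ${\bf g}$-vectors with shear coordinates of elementary laminations from \cite{fomin2018cluster} and then checking that the sign rule of \thref{def:g} computes exactly these shear coordinates. The paper takes only the second route: it cites \cite[Proposition~17.3]{fomin2018cluster} for truncated ${\bf g}$-vectors as shear coordinates, observes (via Figure~\ref{fig:gv dictionary}) that the lamination rule and the left/right-turn rule on $\widehat D_T$ coincide, and handles the frozen directions by passing to the $2n$-gon triangulation $\widehat T$ so that every arc of $T$ becomes internal.

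Your inductive argument is a legitimate alternative and is more self-contained in that it does not require importing the full surface machinery; the price is the case analysis you describe for how the path $i\rightsquigarrow j$ traverses the flipped quadrilateral. Note, though, that in your write-up the inductive step ultimately appeals back to the shear-coordinate dictionary (``Concretely this is the dictionary of Figure~\ref{fig:gv dictionary}''), which somewhat defeats the purpose of the induction: once you grant that $\sigma^{ab}_{i\rightsquigarrow j}$ are the shear coordinates on $\widehat T$, the equality with ${\bf g}(\bar p_{ij})$ follows immediately from \cite{fomin2018cluster} with no induction needed. If you want the induction to stand on its own, you should carry out the local case check directly against the piecewise-linear mutation formula rather than routing through shear coordinates.
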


\begin{proof}
We use some of the terminology of cluster algebras associated to surfaces developed in~\cite{fomin2018cluster}.
The algebra $A_{2,n}$ is a cluster algebra arising from an orientable surface with marked points, in this case a disc with $n$ marked points in its boundary.
As such, the truncated ${\bf g}$-vectors of its cluster variables can be computed using laminations in the surface as explained in~\cite[Proposition 17.3]{fomin2018cluster}. To be more precise, the truncated {\bf g}-vector of $\bar p_{ij}$ is the vector containing the {\it shear coordinates} of the internal arcs of $T$ with respect to the simple lamination associated to the arc $\overline{ij} $.
The key observation is that the combinatorial rule defining shear coordinates translates to our combinatorial rule to compute ${\bf g}^{\widehat T}_{ij}$.
In Figure~\ref{fig:gv dictionary} we visualize the translation from our combinatorial rule to the one defining shear coordinates from~\cite[Definition~12.2, Figure~34]{fomin2018cluster}.
Shear coordinates are only associated to internal arcs and, therefore, can only compute truncated ${\bf g}$-vectors.
However, the extended exchange matrix $\widehat{B}_{(Q_T,F_T)}$ is the submatrix of $B_{Q_{\widehat T},F_{\widehat T}}$ associated to the internal arcs of the triangulation $\widehat T$ of the $2n$-gon.
As a conse\-qu\-ence~${\bf g}(\bar p_{ij})={\bf g}^{\widehat T}_{ij}$.
\end{proof}

\begin{Example}
Consider the triangulation $T$ depicted in Figure~\ref{fig:quiver} and its corresponding tree~$\widehat{D}_T$.
Then ${\bf g}_{ij}=f_{ij}$ for $\overline{ij}\in T$. We compute the ${\bf g}$-vectors of the remaining Pl\"ucker coordinates:
\begin{displaymath}
 \begin{split}
 {\bf g}_{15} & = f_{17}-f_{47}+f_{45},\\
 {\bf g}_{24} & = f_{12}-f_{13}+f_{34},\\
 {\bf g}_{25} & = f_{12}-f_{14}+f_{45},\\
 {\bf g}_{46} & =f_{47}-f_{57}+f_{56},\\
 {\bf g}_{58} & = f_{78} -f_{47} +f_{45},\\
 \end{split}
\quad\quad
 \begin{split}
 {\bf g}_{16} & = f_{17}-f_{57}+f_{56},\\
 {\bf g}_{27} & = f_{12}-f_{14}+f_{47},\\
 {\bf g}_{28} & = f_{12}-f_{17}+f_{78},\\
 {\bf g}_{48} & = f_{14}-f_{17}+f_{78},\\
 {\bf g}_{68} & = f_{56}-f_{57}+f_{78},\\
 \end{split}
 \quad\quad
 \begin{split}
{\bf g}_{37} & = f_{47}-f_{14},\\
{\bf g}_{35} & = f_{45}-f_{14},\\
{\bf g}_{38} & = f_{78}-f_{17},\\
{\bf g}_{26} & = f_{12}-f_{14}+f_{47}-f_{57}+f_{56},\\
{\bf g}_{36} & = -f_{14}+f_{47}-f_{57}+f_{56}.\\
\end{split}
\end{displaymath}
\end{Example}

\subsection{Toric degenerations}\label{sec: toric deg}
In this section, we use the ${\bf g}$-vectors from~\thref{def:g} to obtain a weight vector for every triangulation of the $n$-gon.
The initial ideals of $I_{2,n}$ arising this way are toric.
Throughout this section we fix a triangulation $T$.

\begin{Definition}\thlabel{def:monomial ideal}
Consider a quadrilateral with vertices $i$, $j$, $k$, $l$ inside the $n$-gon.
The arcs~$\overline{ik}$ and $\overline{jl}$ are called {\it crossing arcs}, if when drawn inside the $n$-gon the corresponding arcs cross each other.
Let $M_{2,n}$ be the set of {\it crossing monomials} $p_{ik}p_{jl}$, where in the quadrilateral with vertices~$i$, $j$, $k$, $l$ the arcs $\overline{ik}$ and $\overline{jl}$ are crossing.
Let $\mathcal M_{2,n}\subset S$ be the monomial ideal generated by~$M_{2,n}$.
\end{Definition}

\begin{Lemma}\thlabel{lem:g-v on relations}
Given a Pl\"ucker relation $p_{ij}p_{kl}-p_{ik}p_{jl}+p_{il}p_{jk}$ with $i<j<k<l$ the multidegree
induced by the ${\bf g}$-vectors for the triangulation $T$ on the monomials agrees for exactly two terms.
Moreover, one of those terms is the crossing monomial $p_{ik}p_{jl}$.
\end{Lemma}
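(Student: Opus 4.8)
The plan is to compute directly with the combinatorial description of the ${\bf g}$-vectors in~\ref{def:g}. Since ${\bf g}$-vectors are additive on monomials, the assertion is a comparison of the three vectors ${\bf g}_{ij}+{\bf g}_{kl}$, ${\bf g}_{ik}+{\bf g}_{jl}$, ${\bf g}_{il}+{\bf g}_{jk}$ in $\mathbb{Z}^{2n-3}$, and I would carry this out one coordinate at a time. Fix an arc $\overline{ab}\in T$, equivalently the corresponding interior edge $e$ of the extended dual tree $\widehat D_T$; the $\overline{ab}$-coordinates of the three vectors are $\sigma^{ab}_{i\rightsquigarrow k}+\sigma^{ab}_{j\rightsquigarrow l}$, $\sigma^{ab}_{i\rightsquigarrow j}+\sigma^{ab}_{k\rightsquigarrow l}$ and $\sigma^{ab}_{i\rightsquigarrow l}+\sigma^{ab}_{j\rightsquigarrow k}$, to be read off from the turning behaviour of the six relevant paths at the two endpoints of $e$.

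Next I would record the constraints forced by the fact that $\widehat D_T$ is planar with $i,j,k,l$ occurring in this cyclic order among its leaves. Deleting $e$ partitions the leaves into two blocks, each consisting of cyclically consecutive leaves; so it partitions $\{i,j,k,l\}$ either trivially (all six signs vanish and the three sums agree in that coordinate), or as a $3$--$1$ split, or as a $2$--$2$ split — and in the $2$--$2$ case the split is $\{i,j\}\mid\{k,l\}$ or $\{i,l\}\mid\{j,k\}$, never $\{i,k\}\mid\{j,l\}$. The same consecutive-block constraint holds at each interior endpoint of $e$ and dictates which of the six paths turn there, and in which direction. A short case analysis using the sign rule of~\ref{def:g} then yields: in every coordinate the entry of ${\bf g}_{ik}+{\bf g}_{jl}$ (the crossing term) agrees with the entry of at least one of the other two sums — the crossing term never disagrees with both — and on a coordinate where exactly one term is the outlier, that term is always the \emph{same} monomial, determined by the quartet topology of $\{i,j,k,l\}$ in $\widehat D_T$: it is $p_{ij}p_{kl}$ when the topology is $\{i,l\}\mid\{j,k\}$ and $p_{il}p_{jk}$ when it is $\{i,j\}\mid\{k,l\}$. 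Hence ${\bf g}_{ik}+{\bf g}_{jl}$ equals one of the other two vectors, say ${\bf g}_{ij}+{\bf g}_{kl}$; and the third vector ${\bf g}_{il}+{\bf g}_{jk}$ is genuinely different, because $\bar p_{ij}\bar p_{kl}$ and $\bar p_{il}\bar p_{jk}$ are distinct cluster monomials and the ${\bf g}$-vectors of cluster monomials of $A_{2,n}$ are pairwise distinct. This gives exactly two agreeing terms, one of which is $p_{ik}p_{jl}$.

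The delicate point — where I would spend most effort — is the \emph{global} consistency used above: that the outlier monomial does not switch between $p_{ij}p_{kl}$ and $p_{il}p_{jk}$ as $\overline{ab}$ ranges over $T$. One reduces this to the observation that a strict distinction can occur only along the ``middle path'' of the Steiner tree of $\{i,j,k,l\}$ in $\widehat D_T$ or on a branch edge incident to one of its two trivalent junction vertices, and at each such edge the leaf separated off at the relevant junction is forced by the topology, not by the choice of $T$. A cleaner route that avoids the case analysis: lift the Pl\"ucker relation to the cluster algebra with principal coefficients at $T$, obtaining an identity $\bar p_{ik}\bar p_{jl}=y^{\bf a}\,\bar p_{ij}\bar p_{kl}+y^{\bf b}\,\bar p_{il}\bar p_{jk}$ among the Pl\"ucker coordinates regarded there, with $y$-monomial coefficients. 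This identity is ${\bf g}$-homogeneous, so setting all coefficients to $0$ (via~\ref{thm:g-vecs}) gives a relation of Laurent monomials; the left-hand side is then a nonzero Laurent monomial, so at least one of ${\bf a},{\bf b}$ vanishes, while if both vanished ${\bf g}$-homogeneity would make this monomial equal to the sum of two Laurent monomials of its own multidegree, i.e.\ twice itself — impossible. Thus exactly one of ${\bf a},{\bf b}$ vanishes, and both conclusions follow at once; one only needs to check that the existence and ${\bf g}$-homogeneity of this lift rely solely on the general theory of cluster algebras with coefficients and not on later results of the paper.
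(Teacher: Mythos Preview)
Your proposal is correct, but it is organised quite differently from the paper's argument. The paper does \emph{not} work coordinate-by-coordinate and does not invoke principal coefficients or injectivity of ${\bf g}$-vectors on cluster monomials. Instead it passes immediately to the full subtree of $\widehat D_T$ on the leaves $i,j,k,l$, fixes notation for the two trivalent junction vertices $\Delta_1,\Delta_2$ and the five path-contributions $v_i,v_j,v_k,v_l,v$ (the last being the contribution of the middle path between $\Delta_1$ and $\Delta_2$), and writes down closed formulas for all six ${\bf g}$-vectors ${\bf g}_{ij},\dots,{\bf g}_{jk}$ in this notation. One then reads off ${\bf g}_{ij}+{\bf g}_{kl}={\bf g}_{ik}+{\bf g}_{jl}$ and sees the inequality with ${\bf g}_{il}+{\bf g}_{jk}$ directly from the formulas (e.g.\ the coefficient of $f_{a_1b_1}$ differs). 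The ``global consistency'' you flag as delicate simply does not arise, because the Steiner-tree decomposition packages all coordinates at once.

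Your first route reaches the same conclusion but trades one explicit computation for a case analysis plus an appeal to the distinctness of ${\bf g}$-vectors of cluster monomials; the paper's route is more self-contained and also yields the explicit expressions that are reused in the subsequent \thref{prop:wt map}. Your second route via principal coefficients and ${\bf g}$-homogeneity is elegant and genuinely different: it replaces all combinatorics by the general facts that exchange relations in $A\big(\widetilde B^{\rm prin}\big)$ are ${\bf g}$-homogeneous and that specialising the coefficients to~$0$ sends each cluster variable to a single Laurent monomial (\thref{thm:g-vecs}). This is legitimate here since \thref{lem:g are gv} already identifies your combinatorial ${\bf g}$-vectors with the cluster-theoretic ones, and nothing circular is invoked; the price is that you import more general theory, while the paper stays entirely within the explicit tree combinatorics it has set up.
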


\begin{figure}[t!]
 \centering
\begin{tikzpicture}[scale=.25]
\draw[thick] (3,0) -- (0,0) -- (-2,2);
\draw[thick,dashed] (-2,2) -- (-3,3);
\draw[thick] (-3,3) -- (-3.5,3.5);
\draw[thick] (-3.5,4) -- (-3.5,3.5) -- (-4,3.5);
\node[above] at (-3.5,4) {$i$};
\draw[line width = 5pt,capri, opacity=.5] (-2,2) -- (-3.5,3.5);
\node[above right,capri] at (-3,3) {\footnotesize $v_i$};

\draw[thick] (0,0) -- (-2,-2);
\draw[thick, dashed] (-2,-2) -- (-3,-3);
\draw[thick] (-3,-3) -- (-3.5,-3.5) -- (-3.5,-4);
\draw[thick] (-3.5,-3.5) -- (-4,-3.5);
\node[left] at (-4,-3.5) {$l$};
\draw[line width = 5pt,capri, opacity=.5] (-2,-2) -- (-3.5,-3.5);
\node[above left,capri] at (-3,-3) {\footnotesize $v_l$};

\draw[thick,magenta] (2,4) -- (2,-4) -- (-2,0) -- (2,4);
\node[above,magenta] at (2,4) {$a_1$};
\node[below,magenta] at (2,-4) {$b_1$};
\node[left,magenta] at (-2,0) {$c_1$};

\draw[thick,dashed] (3,0) -- (6,0);
\draw[thick] (6,0) -- (11,0); 
\draw [decorate,decoration={brace,amplitude=7pt}]
(3,0) -- (6,0) node [black,midway,yshift=.25cm,above]
{\footnotesize $v'$}; 

\draw[line width = 5pt,capri, opacity=.5] (3,0) -- (8,0);
\node[below,capri] at (5.5,0) {\footnotesize{$v$}}; 

\begin{scope} 
\draw[thick] (11,0) -- (13,2);
\draw[thick, dashed] (13,2) -- (14,3);
\draw[thick] (14,3) -- (14.5,3.5) -- (14.5,4);
\draw[thick] (14.5,3.5) -- (15,3.5);
\node[right] at (15,3.5) {$j$};
\draw[line width = 5pt,capri, opacity=.5] (13,2) -- (14.5,3.5);
\node[below right,capri] at (14,3) {\footnotesize $v_j$};

\draw[thick] (11,0) -- (13,-2);
\draw[thick,dashed] (13,-2) -- (14,-3);
\draw[thick] (14,-3) -- (14.5,-3.5) -- (14.5,-4);
\draw[thick] (14.5,-3.5) -- (15,-3.5);
\node[below] at (14.5,-4) {$k$};
\draw[line width = 5pt,capri, opacity=.5] (13,-2) -- (14.5,-3.5);
\node[below left,capri] at (14,-3) {\footnotesize $v_k$};

\draw[thick,magenta] (9,4) -- (9,-4) -- (13,0) -- (9,4);
\node[above,magenta] at (9,4) {$a_2$};
\node[right,magenta] at (13,0) {$b_2$};
\node[below,magenta] at (9,-4) {$c_2$};
\end{scope}


\draw[thick] (8,0) -- (7,5);
\draw[thick,dashed] (7,5) -- (7,6);
\draw[thick] (7,6) -- (7,6.5) -- (7.5,7);
\draw[thick] (7,6.5) -- (6.5,7);
\node[right] at (7.5,7) {$i'$};
\draw[line width=5pt,capri,opacity=.5] (7,5) -- (7,6.5);
\node[left,capri] at (7,5.75) {\footnotesize{$v_{i'}$}};

\draw[thick,caribbeangreen] (9,4) -- (5,4) -- (9,-4);
\node[above,caribbeangreen] at (5,4) {$d$};
\end{tikzpicture}
 \caption{The full subtree of $\widehat D_T$ with leaves $i$, $i'$, $j$, $k$, $l$ and labeling as in the proof of~\thref{lem:g-v on relations} (ignoring $i'$, $d$, $v'$) and~\thref{prop:wt map}. Blue marks refer to certain paths in the tree and the labels $v_i$, $v_j$, $v_k$, $v_l$, $v$, $v'$ to the corresponding elements of $\mathbb Z^{2n-3}$. We follow the convention in Figure~\ref{fig:sign g-v}.}
 \label{fig:lem compute g-vect}
\end{figure}
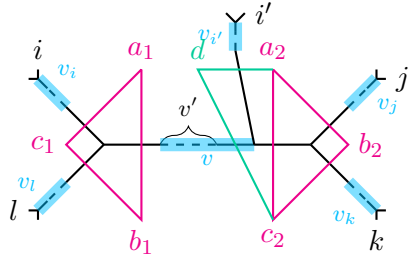

\begin{proof}
Consider the extended tree $\widehat D_T$ and restrict it to the full subtree with leaves $i$, $j$, $k$, $l$.
By definition, the subtree has four leaves and edges of valency three or less.
Without loss of generality we may assume the leaves are arranged as in Figure~\ref{fig:lem compute g-vect}.
Two vertices of the subtree are trivalent, namely the one where the paths $i\rightsquigarrow j$ and $l\rightsquigarrow j$ meet, respectively where the paths $j\rightsquigarrow i$ and $k\rightsquigarrow i$ meet.
Assume these vertices come from triangles in $T$ labeled by~$a_1$,~$b_1$,~$c_1$, respectively $a_2$, $b_2$, $c_2$.
Denote the vertex of $\widehat{D}_T$ corresponding to the triangle $a_1$,~$b_1$,~$c_1$ (respectively $a_2$, $b_2$, $c_2$) by~$\Delta_1$ (respectively $\Delta_2$).
We compute the ${\bf g}$-vectors for all Pl\"ucker coordinates whose indices are a subset of $\{i,j,k,l\}$.
To simplify the notation in the computation we use the following abbreviations
\begin{gather*}
v_i:= \sum_{\overline{pq}\in T} \sigma_{i\rightsquigarrow \Delta_1}^{pq}f_{pq}, \quad
v_j:= \sum_{\overline{pq}\in T} \sigma_{j\rightsquigarrow \Delta_2}^{pq}f_{pq}, \quad
v_l:= \sum_{\overline{pq}\in T} \sigma_{l\rightsquigarrow \Delta_1}^{pq}f_{pq}, \quad
v_k:= \sum_{\overline{pq}\in T} \sigma_{k\rightsquigarrow \Delta_2}^{pq}f_{pq}.
\end{gather*}
So $v_i$ is the weighted sum (with respect to the sign convention as in Figure~\ref{fig:sign g-v}) of the edges on the path from $i$ to the vertex $\Delta_1$ excluding $f_{a_1c_1}$, etc. Further, let $v:= \sum_{\overline{pq}\in T} \sigma_{\Delta_1\rightsquigarrow \Delta_2}^{pq}f_{pq}$. Then
\begin{gather*}
{\bf g}_{ij} = v_i +\sigma_{i\rightsquigarrow j}^{a_1c_1} f_{a_1c_1}+\sigma_{i\rightsquigarrow j}^{a_1b_1}f_{a_1b_1}+v + \sigma_{i\rightsquigarrow j}^{a_2c_2}f_{a_2c_2} + \sigma_{i\rightsquigarrow j}^{a_2b_2} f_{ab}+v_j,
\\
{\bf g}_{ik} = v_i + \sigma_{i\rightsquigarrow j}^{a_1c_1}f_{a_1c_1} +\sigma_{i\rightsquigarrow j}^{a_1b_1}f_{a_1b_1} + v + \big(1+ \sigma_{i\rightsquigarrow j}^{a_2c_2}\big)f_{a_2c_2} + \sigma_{i\rightsquigarrow k}^{b_2c_2}f_{b_2c_2} + v_k,
\\
{\bf g}_{jl} = v_l + \sigma_{j\rightsquigarrow l}^{b_1c_1}f_{b_1c_1} - \big(1- \sigma_{i\rightsquigarrow j}^{a_1b_1}\big)f_{a_1b_1} + v + \sigma_{i\rightsquigarrow j}^{a_2c_2}f_{a_2c_2} + \sigma_{i\rightsquigarrow j}^{a_2b_2}f_{a_2b_2} + v_j,
\\
{\bf g}_{kl} = v_l + \sigma_{j\rightsquigarrow l}^{b_1c_1}f_{b_1c_1} - \big(1 - \sigma_{i\rightsquigarrow j}^{a_1b_1}\big)f_{a_1b_1} + v + \big(1+\sigma_{i\rightsquigarrow j}^{a_2c_2}\big)f_{a_2c_2} + \sigma_{i\rightsquigarrow k}^{b_2c_2}f_{b_2c_2} + v_k,
\\
{\bf g}_{il} = v_i + \big(1+\sigma_{i\rightsquigarrow j}^{a_1c_1}\big)f_{a_1c_1} - \big(1-\sigma_{j\rightsquigarrow l}^{b_1c_1}\big)f_{b_1c_1} + v_l,
\\
{\bf g}_{jk} = v_j - \big(1+\sigma_{i\rightsquigarrow j}^{a_2b_2}
\big)f_{a_2b_2} + \big(1 +\sigma_{i\rightsquigarrow k}^{b_2c_2}\big)f_{b_2c_2} + v_k.
\end{gather*}
In particular, we have ${\bf g}_{ij}+{\bf g}_{kl}={\bf g}_{ik}+{\bf g}_{jl}\not = {\bf g}_{il}+{\bf g}_{jk}$.
\end{proof}

Recall from~\eqref{eq:tree wt} how to obtain a weight vector from a trivalent tree with $n$ leaves in $\text{Trop}(I_{2,n})$.
The following corollary is a direct consequence of the proof of~\thref{lem:g-v on relations}.
\begin{Corollary}\thlabel{cor:trop and g}
Without loss of generality by~\thref{lem:g-v on relations} assume that for $i<j<k<l$, ${\bf g}_{ij}+{\bf g}_{kl}={\bf g}_{ik}+{\bf g}_{jl}\not = {\bf g}_{il}+{\bf g}_{jk}$.
Then 
$\init_{D_{T}}(p_{ij}p_{kl}-p_{ik}p_{jl}+p_{il}p_{jk})=p_{ij}p_{kl}-p_{ik}p_{jl}$.
\end{Corollary}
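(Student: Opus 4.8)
The plan is to read everything off from the subtree structure already set up in the proof of \thref{lem:g-v on relations}. There one restricts the tree to the minimal subtree spanned by the four leaves $i,j,k,l$; since the ambient tree is trivalent, this subtree has exactly two interior vertices $\Delta_1,\Delta_2$ (it cannot be a star, which would require a degree-$4$ vertex), joined by an internal path of some length $\ell\ge 1$, and under the hypothesis ${\bf g}_{ij}+{\bf g}_{kl}={\bf g}_{ik}+{\bf g}_{jl}\neq {\bf g}_{il}+{\bf g}_{jk}$ the relevant arrangement is the one depicted in Figure~\ref{fig:lem compute g-vect}: the leaves $i$ and $l$ sit on the $\Delta_1$-side of the internal path and $j$ and $k$ on the $\Delta_2$-side. (The proof of \thref{lem:g-v on relations} works in $\widehat D_T$, but replacing the leaf-cherries by leaves changes the metric on $\{i,j,k,l\}$ only by a global additive constant, so it does not affect which terms are selected by an initial form; hence I may argue directly in $D_T$.)

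First I would compute the three monomial weights under $\underline w^{D_T}$, which by~\eqref{eq:tree wt} are $-1$ times path lengths in $D_T$. Writing $a_i,a_l$ for the number of edges on the paths from $i$ and $l$ to $\Delta_1$ and $a_j,a_k$ for the number of edges on the paths from $j$ and $k$ to $\Delta_2$, the paths realizing $d_{D_T}(i,l)=a_i+a_l$ and $d_{D_T}(j,k)=a_j+a_k$ avoid the internal path, whereas $d_{D_T}(i,j)=a_i+\ell+a_j$, $d_{D_T}(k,l)=a_k+\ell+a_l$, $d_{D_T}(i,k)=a_i+\ell+a_k$ and $d_{D_T}(j,l)=a_j+\ell+a_l$ each traverse it once. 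With $s:=a_i+a_j+a_k+a_l$ this gives
\begin{displaymath}
\underline w^{D_T}(p_{ij}p_{kl})=\underline w^{D_T}(p_{ik}p_{jl})=-s-2\ell<-s=\underline w^{D_T}(p_{il}p_{jk}),
\end{displaymath}
the strict inequality using $\ell\ge 1$.

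Then, because $\init_{\underline w^{D_T}}$ keeps exactly the terms of minimal weight (the $\min$-convention of \thref{def:init}), the monomials $p_{ij}p_{kl}$ and $p_{ik}p_{jl}$ survive with their signs from the Pl\"ucker relation while $p_{il}p_{jk}$ is dropped, so $\init_{D_T}(p_{ij}p_{kl}-p_{ik}p_{jl}+p_{il}p_{jk})=p_{ij}p_{kl}-p_{ik}p_{jl}$, as asserted. The one delicate point, and the closest thing to an obstacle, is the bookkeeping behind the \emph{without loss of generality}: I must confirm that the two monomials with equal ${\bf g}$-degree singled out in \thref{lem:g-v on relations} are exactly the two of smallest $\underline w^{D_T}$-value, and that the crossing monomial $p_{ik}p_{jl}$ is always one of them. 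This is immediate, since planarity of $D_T$ (the leaves $i<j<k<l$ lie in convex position) forbids the quartet topology $ik|jl$, so under the hypothesis the only possibility is $il|jk$, which is precisely the arrangement used above.
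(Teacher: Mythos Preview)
Your proof is correct and is essentially the approach the paper has in mind: it just states that the corollary is a direct consequence of the proof of \thref{lem:g-v on relations}, and you have spelled out that consequence by reading off the quartet topology $il\mid jk$ from Figure~\ref{fig:lem compute g-vect} and computing the three path-length weights explicitly. Your final paragraph on the planarity obstruction to the $ik\mid jl$ topology is a nice clarification of the ``without loss of generality'' clause.
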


\begin{Definition}\thlabel{def:wt map}
A {\it quadrilateral weight map} for a triangulation $T$ is a linear map $w_T: \mathbb R^{2n-3}$ $\to \mathbb R$, such that for every quadrilateral $\overline{abcd}$ with diagonal $\overline{ac}$ in $T$ we have:
\begin{equation}\label{eq:upper bound wt map}
w_T(f_{ad}+f_{bc})>w_T(f_{ab}+f_{cd}).
\end{equation}
\end{Definition}
Note that~\eqref{eq:upper bound wt map} is equivalent to saying that for every mutable vertex $\overline{ac}$ of the quiver $Q_T$ we require
$w_T\big(\sum_{\overline{pq}\to \overline{ac}} {\bf g}_{pq}\big)>w_T\big(\sum_{\overline{ac}\to \overline{pq}}{\bf g}_{pq}\big)$.
Before we show that quadrilateral weight maps do in fact exist (in~\thref{lem:ex quad}, using Algorithm~\ref{alg:weight})
we proceed by showing how they can be used to construct weight vectors in $\trop(I_{2,n})$ from ${\bf g}$-vectors.

\begin{Proposition}\thlabel{prop:wt map}
Consider any $i<j<k<l$ with ${\bf g}_{ij}+{\bf g}_{kl}={\bf g}_{ik}+{\bf g}_{jl}\not = {\bf g}_{il}+{\bf g}_{jk}$ and any quadrilateral linear map $w_T\colon \mathbb R^{2n-3}\to \mathbb R$. Then $w_T({\bf g}_{ij}+{\bf g}_{kl})=w_T({\bf g}_{ik}+{\bf g}_{jl}) < w_T({\bf g}_{il}+{\bf g}_{jk})$.
\end{Proposition}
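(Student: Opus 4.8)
The first equality $w_T({\bf g}_{ij}+{\bf g}_{kl})=w_T({\bf g}_{ik}+{\bf g}_{jl})$ is immediate, since $w_T$ is linear and the two vectors are literally equal by hypothesis. So the whole content of the statement is the strict inequality $w_T({\bf g}_{ik}+{\bf g}_{jl})<w_T({\bf g}_{il}+{\bf g}_{jk})$, equivalently $w_T(D)>0$ where $D:=({\bf g}_{il}+{\bf g}_{jk})-({\bf g}_{ik}+{\bf g}_{jl})$.

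The natural starting point is the explicit description of ${\bf g}_{ik},{\bf g}_{jl},{\bf g}_{il},{\bf g}_{jk}$ produced inside the proof of \thref{lem:g-v on relations}: restrict $\widehat D_T$ to the full subtree on the leaves $i,j,k,l$; it has exactly two trivalent vertices, coming from triangles $\Delta_1,\Delta_2$ of $T$ joined by a path $P$, and the four g-vectors are written in terms of the signed partial sums $v_i,v_j,v_k,v_l$ towards the leaves, the signed partial sum $v$ along $P$, and the six basis vectors attached to the edges of $\Delta_1$ and $\Delta_2$ (see Figure~\ref{fig:lem compute g-vect}, with the turn convention of Figure~\ref{fig:sign g-v}). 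Forming $D$, the terms $v_i,v_j,v_k,v_l$ cancel in pairs, so that $D$ is supported only on the edges of $P$ and on the four edges of $\widehat D_T$ pointing towards $i$, $l$, $j$ and $k$.

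The heart of the argument is to exhibit $D$ as a \emph{positive} combination of the vectors $v_Q:={\bf g}_{ad}+{\bf g}_{bc}-{\bf g}_{ab}-{\bf g}_{cd}$ attached to the $T$-quadrilaterals $Q=\overline{abcd}$ (with in-$T$ diagonal $\overline{ac}$) whose diagonal is one of the mutable arcs of $T$ met along $P$; once this is done, \thref{def:wt map} gives $w_T(v_Q)>0$ for each such $Q$, and summing yields $w_T(D)>0$. I would organize this by induction on the length of $P$. If $P$ is a single edge, then $\Delta_1\cup\Delta_2$ is itself a $T$-quadrilateral and, after unwinding the sign conventions, $D$ is (up to sign) exactly the defining inequality vector of that quadrilateral in \thref{def:wt map}. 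For the inductive step, flip the diagonal of the $T$-quadrilateral adjacent to $\Delta_1$ along $P$, obtaining a triangulation $T'$ whose subtree on $i,j,k,l$ has a strictly shorter path; relate the g-vectors for $T'$ to those for $T$ via the g-vector mutation rule for a flip $\overline{ac}\rightsquigarrow\overline{bd}$ (which replaces ${\bf g}_{ac}$ by $-{\bf g}_{ac}+{\bf g}_{ab}+{\bf g}_{cd}$ or by $-{\bf g}_{ac}+{\bf g}_{ad}+{\bf g}_{bc}$, the alternative being pinned down by the uniformly clockwise orientation of $Q_T$ from \thref{def:quiver}), and check that $D=v_Q+D'$ with $D'$ the analogous vector for $T'$. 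Alternatively one can bypass the induction and read off the decomposition of $D$ directly from the formulas in the proof of \thref{lem:g-v on relations}.

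The step I expect to be the real obstacle is precisely the sign bookkeeping of the previous paragraph: one must verify that every quadrilateral vector $v_Q$ enters $D$ with a positive coefficient rather than partially cancelling (i.e.\ that the telescoping is monotone), and, in the inductive step, that the mutation rule produces minus the vector one needs. This is exactly where the left/right-turn conventions of Figure~\ref{fig:sign g-v} and the clockwise orientation of $Q_T$ are used; the remaining pieces — the cancellation fixing the support of $D$, and the final summation of the inequalities $w_T(v_Q)>0$ — are routine.
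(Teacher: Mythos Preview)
Your overall strategy---write $D:=({\bf g}_{il}+{\bf g}_{jk})-({\bf g}_{ik}+{\bf g}_{jl})$ as a positive sum of quadrilateral vectors $v_Q$ and then sum the defining inequalities of \thref{def:wt map}---is the right one, and your base case matches the paper's. The inductive step you propose, however, has a genuine gap. When you flip a diagonal of $T$ to pass to a new triangulation $T'$, the map $w_T$ is \emph{not} a quadrilateral linear map for $T'$: for the very quadrilateral you flipped, the inequality required by \thref{def:wt map} in $T'$ is the reverse of the one satisfied in $T$ (and the quadrilaterals adjacent to the flipped arc also change). So the inductive hypothesis, which speaks of quadrilateral maps for the triangulation at hand, cannot be invoked with the pair $(T',w_T)$, and your decomposition $D=v_Q+D'$ gives no control over $w_T(D')$.

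The paper avoids this by keeping $T$ (and hence $w_T$) fixed and instead changing the \emph{quadruple of leaves}. If the path $P$ between $\Delta_1$ and $\Delta_2$ has length $p+1\ge 2$, there is an intermediate leaf $i'$ with $i<i'<j$ (or dually $k'$ with $k<k'<l$) whose branch meets $P$ strictly between the two trivalent vertices; the subtree on $\{i,i',j,l\}$ then has a path of length at most $p$, so induction (for the same $T$ and $w_T$) gives $w_T({\bf g}_{ij}+{\bf g}_{i'l})<w_T({\bf g}_{il}+{\bf g}_{i'j})$. A direct computation with the explicit ${\bf g}$-vector formulas from the proof of \thref{lem:g-v on relations} then yields
\[
{\bf g}_{jk}+{\bf g}_{i'l}-{\bf g}_{i'j}-{\bf g}_{kl}=f_{a_2d}+f_{b_2c_2}-f_{a_2b_2}-f_{c_2d},
\]
which is exactly the vector $v_Q$ of the $T$-quadrilateral with diagonal $\overline{a_2c_2}$ adjacent to $\Delta_2$, so $w_T$ of it is positive. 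Adding the two positive contributions gives the claim. Your alternative suggestion of a direct telescoping decomposition of $D$ into $v_Q$'s along $P$, all for quadrilaterals of $T$, amounts to this same argument unwound; what fails is replacing $T$ by a mutation of it while keeping $w_T$.
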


\begin{proof}
Without loss of generality assume that the full subtree of $\widehat D_T$ with leaves $i$, $j$, $k$, $l$ is of~form as in Figure~\ref{fig:lem compute g-vect}.
For $s=1,2$ let $\Delta_s$ be the vertex of $\widehat D_T$ corresponding to the tri\-angle~$a_s$,~$b_s$,~$c_s$ in~$T$.
We proceed by induction on the number of edges along the unique path from $\Delta_2$ to $\Delta_1$, denote it by $p$.

\medskip \noindent
$p=1$: In this case we have $a_1=d=a_2$ and $b_1={c_2}$ in Figure~\ref{fig:lem compute g-vect}.
 Note that every quadrilateral linear map $w_T$ satisfies $w_T({f}_{a_2c_1}+{f}_{b_2c_2})>w_T({f}_{a_2b_2}+{f}_{c_2c_1})$.
 Using~\thref{lem:g-v on relations} we compute
 \begin{displaymath}
 w_T({\bf g}_{il}+{\bf g}_{jk})=w_T({\bf g}_{ij}+{\bf g}_{kl} + f_{a_2c_1}+f_{b_2c_2} - f_{a_2b_2}-f_{c_2c_1}) > w_T({\bf g}_{ij}+{\bf g}_{kl}).
 \end{displaymath}

\medskip \noindent
$p\ge 1$: Assume that the result holds for $p$. When the number of edges along the path from~$v_{a_2b_2c_2}$ to $v_{a_1b_1c_1}$ is $p+1$ we know there exists at least one leaf of form either $i'$ with~$i<i'<j$ or $k'$ with~$k<k'<l$.
 We treat the first case, where $i'$ exists depicted in Figure~\ref{fig:lem compute g-vect}.
 The proof of the second case is similar.
 All expressions of ${\bf g}$-vectors for Pl\"ucker variables in the relation~$R_{ijkl}$ appear in the proof of~\thref{lem:g-v on relations}.
 The ${\bf g}$-vectors involving $i'$ can be computed similarly.
 By~induction we know that for the relation $R_{ii'jl}$ we have $w_T({\bf g}_{ij}+{\bf g}_{i'l})<w_T({\bf g}_{il}+{\bf g}_{i'j})$.
 Further, by assumption on $w_T$ we have $w_T({f}_{a_2d}+{f}_{b_2c_2})>w_T({f}_{a_2b_2}+{f}_{c_2d})$.
 One verifies by direct computation that
 \begin{align*}
 w_T({\bf g}_{il}+{\bf g}_{jk}) &= w_T({\bf g}_{il}+{\bf g}_{i'j}) + w_T({\bf g}_{jk}-{\bf g}_{i'j}) \\
 &> w_T({\bf g}_{ij}+{\bf g}_{kl}) + w_T({\bf g}_{jk}-{\bf g}_{i'j}+{\bf g}_{i'l} - {\bf g}_{kl})\\
 &= w_T({\bf g}_{ij}+{\bf g}_{kl}) + w_T({f}_{a_2d}+{f}_{b_2c_2}-{f}_{a_2b_2}-{f}_{c_2d}) > w_T({\bf g}_{ij}+{\bf g}_{kl}).\!\!\!\!\!\! \tag*{\qed}
\end{align*}
\renewcommand{\qed}{}
\end{proof}

\begin{Definition}\thlabel{def:wt of T}
Fix
a quadrilateral linear map $w_T\colon \mathbb R^{2n-3}\to \mathbb R$.
We define ${\bf w}_T\in\mathbb R^{\binom{n}{2}}$ the {\it weight vector associated to $T$ and $w_T$} as $({\bf w}_T)_{ij}:=w_T({\bf g}_{ij})$.
\end{Definition}

{\sloppy
\begin{Proposition}\thlabel{thm:toric wt ideals}
For each quadrilateral weight map $w_T\colon \mathbb R^{2n-3}\to \mathbb R$, the initial ideal $\init_{{\bf w}_T}(I_{2,n})$ is toric.
Moreover, every binomial in the minimal generating set of $\init_{{\bf w}_T}(I_{2,n})$ corresponds to a~quadrilateral and it contains the monomial associated to the crossing in that quadrilateral.
\end{Proposition}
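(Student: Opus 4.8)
The plan is to first pin down the ${\bf w}_T$-initial form of every Pl\"ucker relation, then bootstrap from these finitely many initial forms to the whole initial ideal by comparing Hilbert functions with the Speyer--Sturmfels toric ideal attached to the dual tree $D_T$. Fix $i<j<k<l$ in the cyclic order. Since $({\bf w}_T)_{ab}=w_T({\bf g}_{ab})$ and $w_T$ is linear, the ${\bf w}_T$-weight of a monomial $p_{ab}p_{cd}$ is $w_T({\bf g}_{ab}+{\bf g}_{cd})$. By \thref{lem:g-v on relations} exactly two of the three ${\bf g}$-degrees ${\bf g}_{ij}+{\bf g}_{kl}$, ${\bf g}_{ik}+{\bf g}_{jl}$, ${\bf g}_{il}+{\bf g}_{jk}$ of the monomials of $R_{ijkl}$ coincide, and one of these is the degree of the crossing monomial $p_{ik}p_{jl}$; after the relabelling of that lemma we may assume ${\bf g}_{ij}+{\bf g}_{kl}={\bf g}_{ik}+{\bf g}_{jl}\neq{\bf g}_{il}+{\bf g}_{jk}$. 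Applying $w_T$ and invoking \thref{prop:wt map} yields $w_T({\bf g}_{ij}+{\bf g}_{kl})=w_T({\bf g}_{ik}+{\bf g}_{jl})<w_T({\bf g}_{il}+{\bf g}_{jk})$, so
\[
\init_{{\bf w}_T}(R_{ijkl})=p_{ij}p_{kl}-p_{ik}p_{jl},
\]
a binomial containing the crossing monomial $p_{ik}p_{jl}$ of the quadrilateral on $i,j,k,l$. By \thref{cor:trop and g} this coincides with $\init_{D_T}(R_{ijkl})$, where $D_T$ is the trivalent $n$-leaf tree dual to $T$ (\thref{def:trees}) with tree weight $\underline{w}^{D_T}$ as in~\eqref{eq:tree wt}.

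Next I would pass from these generators to the full ideal. As the $R_{ijkl}$ generate $I_{2,n}$ and their ${\bf w}_T$- and $D_T$-initial forms agree, the binomial ideal $\langle\init_{{\bf w}_T}(R_{ijkl})\colon i<j<k<l\rangle$ equals $\langle\init_{D_T}(R_{ijkl})\colon i<j<k<l\rangle$, which by \thref{thm:SS} is the entire (prime, binomial) ideal $\init_{D_T}(I_{2,n})$; in particular $\init_{D_T}(I_{2,n})\subseteq\init_{{\bf w}_T}(I_{2,n})$. Now I use that taking the initial ideal with respect to a weight vector preserves the Hilbert function of a homogeneous ideal (flatness of the Gr\"obner degeneration), so $S/\init_{D_T}(I_{2,n})$ and $S/\init_{{\bf w}_T}(I_{2,n})$ have the same Hilbert function as $A_{2,n}$. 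Together with the inclusion above, the graded surjection $S/\init_{D_T}(I_{2,n})\twoheadrightarrow S/\init_{{\bf w}_T}(I_{2,n})$ must be an isomorphism in every degree, hence $\init_{{\bf w}_T}(I_{2,n})=\init_{D_T}(I_{2,n})$. In particular $\init_{{\bf w}_T}(I_{2,n})$ is toric and $\{R_{ijkl}\}$ is a Gr\"obner basis for $I_{2,n}$ with respect to ${\bf w}_T$.

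For the minimal generating set: since $\init_{{\bf w}_T}(I_{2,n})$ now equals $\init_{D_T}(I_{2,n})$, it is generated by the degree-$2$ binomials $b_{ijkl}:=p_{ij}p_{kl}-p_{ik}p_{jl}$, one for each quadrilateral $i<j<k<l$. Being generated in degree $2$, every minimal generating set obtained from $\{b_{ijkl}\}$ by deleting redundancies consists of degree-$2$ binomials, each of which is by construction a quadrilateral binomial containing the crossing monomial $p_{ik}p_{jl}$ of its quadrilateral, as claimed.

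The step I expect to be the main obstacle is the passage in the second paragraph: upgrading the explicit description of the $\binom{n}{4}$ initial forms $\init_{{\bf w}_T}(R_{ijkl})$ to a description of the \emph{whole} initial ideal $\init_{{\bf w}_T}(I_{2,n})$. A direct verification via $S$-polynomial reductions (Buchberger's criterion) would be cumbersome; the efficient route, which I take, is to squeeze $\init_{{\bf w}_T}(I_{2,n})$ against $\init_{D_T}(I_{2,n})$ using that both are Gr\"obner degenerations of $I_{2,n}$ with the same Hilbert function and that $\init_{D_T}(I_{2,n})$ is already known (via \thref{thm:SS}) to be prime and generated by exactly the binomials we produced. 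A minor additional point of care is that minimal binomial generating sets of a toric ideal are not literally unique, so the last assertion should be understood, and is proved, for the distinguished generating set coming from the $b_{ijkl}$.
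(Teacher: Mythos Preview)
Your proof is correct and follows essentially the same approach as the paper: establish $\init_{{\bf w}_T}(R_{ijkl})=\init_{D_T}(R_{ijkl})$ via \thref{lem:g-v on relations}, \thref{prop:wt map} and \thref{cor:trop and g}, then invoke the Speyer--Sturmfels results on $\init_{D_T}(I_{2,n})$. The only difference is that you make the passage from generators to the full initial ideal explicit via a Hilbert-function comparison, whereas the paper leaves this implicit after citing that $\{R_{ijkl}\}$ is a Gr\"obner basis for $\underline{w}^{D_T}$ (from \cite[Proof of Theorem~3.4]{SS04}); your version is a bit more self-contained on this point.
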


}

\begin{proof}
By~\thref{prop:wt map} and Corollary~\ref{cor:trop and g} we have $\init_{{D_T}}(R_{ijkl}) = \init_{{\bf w}_T}(R_{ijkl})$ for every Pl\"ucker relation $R_{ijkl}$.
By~\thref{lem:g-v on relations} these initial forms are binomials and one monomial corresponds to the crossing in the quadrilateral $\overline{ijkl}$.
By~\cite[Proof of Theorem~3.4]{SS04} we know that $\init_{{D_T}}(I_{2,n})$ is generated by $\init_{{D_T}}(R_{ijkl})$ for all $1\le i<j<k<l\le n$.
Moreover, by~\cite[Corollary~4.4]{SS04} the ideal $\init_{{D_T}}(I_{2,n})$ is binomial and prime.
\end{proof}

\begin{Remark}\thlabel{rmk:toric degen}
As shown in~\thref{thm:toric wt ideals}, each triangulation of the $n$-gon gives rise to a~Gr\"obner toric degeneration of $\Gr(2,\mathbb C^n)$.
For general Grassmannians, there are other examples of combinatorial objects leading to toric degenerations such as
plabic graphs~\cite{BFFHL,RW17} and matching fields~\cite{clarke2021combinatorial,clarke2019toric, MoSh}.
All such degenerations can be realized as Gr\"obner degenerations, nevertheless, this is not true in general; see, e.g.,
\cite{kateri2015family} for a family of toric degenerations arising from graphs that cannot be identified as Gr\"obner degenerations.
\end{Remark}

\noindent
{\bf Existence of quadrilateral weight maps:} For every triangulation $T$ we give a weight map obtained from a partition of the cluster ${\bf x}_T$, which is the output of Algorithm~\ref{alg:weight}.
We use it to define a linear map $v_T\colon \mathbb R^{2n-3}\to \mathbb R$ and show in~\thref{lem:ex quad} that it is quadrilateral for $T$.
\begin{algorithm}
\caption{Partitioning the cluster ${\bf x}_T$
based on its triangulation $T$ (union of triangles).
}
\label{alg:weight}

\KwIn{An initial cluster ${\bf x}_T$ with its corresponding triangulation $T$ and its quiver $Q_T$.
\\
}
\BlankLine
\KwOut{A partitioning of the initial seed indices.
}
\BlankLine
{\bf Initialization:}
$i\leftarrow 0$\;
$T\leftarrow$ the corresponding triangulation of ${\bf x}_T$\;
$Q\leftarrow Q_T$\;
\BlankLine
\While{$T$ is not empty}
{
\BlankLine
$V_i:=\{v \colon v\text{ is a frozen vertex in } Q \text{ and a sink}\}$\;
\ForEach{triangle $t$ in $T$}
{\If{$t$ has an edge whose corresponding vertex in $Q$ is in $V_i$}
{
$T\leftarrow T\setminus \{t\}$ (Remove the triangle $t$ from $T$)\;
$Q\leftarrow Q_T$\;
}
}
$i\leftarrow i+1$\;
}
$V_i\leftarrow {\bf x}_T\setminus (V_1\cup \cdots\cup V_{i-1})$\;
\Return{$(V_0,\ldots,V_i)$}\;
\end{algorithm}

\medskip

\noindent{\bf Description of Algorithm~\ref{alg:weight}.}
The input of the algorithm is a triangulation $T$ of the $n$-gon. The output is an ordered partition of ${\bf x}_T$.
We repeatedly shrink $T$ to obtain a quiver associated to smaller triangulations.
More precisely, in step $i$ we identify the frozen vertices of $Q_T$ that are sinks, i.e.,~they have no outgoing arrows.
Then we add the corresponding arcs to the set $V_i$ and remove the corresponding triangles from $T$.
Note that in this process, we might remove edges whose corresponding vertices are not in $V_i$, but they are part of a triangle with an edge in $V_i$.

Let $V_0,\dots,V_i$ be the partition of the initial seed indices obtained by applying Algorithm~\ref{alg:weight} to the triangulation $T$ associated to the initial seed ${\bf x}_T$.
Refining the output of the algorithm yields an ordering on the variables of our initial seed, or equivalently the arcs of our initial triangulation.

\begin{Example}
We continue with our running example and compute a partition of the initial seed indices as $V_0=\{56,12,78\}$, $V_1=\{13,45\}$, $V_{2}=\{17,14,47\}$ and $V_{3}=\{57,34,23,67,18\}$.
\end{Example}

\begin{Lemma}\thlabel{lem:ex quad}
Given a triangulation $T$ consider the partition of ${\bf x}_T=V_0\cup\dots\cup V_s$ obtained from Algorithm~$\ref{alg:weight}$.
Let $v_{T}\colon \mathbb R^{2n-3}\to \mathbb R$ be the linear map defined by sending basis elements ${f}_{{ij}}$ with $\overline{ij}\in T$ to $q$, where $\overline{ij}\in V_q$.
Then $v_T$ is a quadrilateral linear map for $T$.
\end{Lemma}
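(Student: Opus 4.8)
The goal is to show that the linear map $v_T$, defined by $v_T(f_{ij}) = q$ whenever $\overline{ij}\in V_q$, satisfies the quadrilateral inequality $v_T(f_{ad}+f_{bc}) > v_T(f_{ab}+f_{cd})$ for every quadrilateral $\overline{abcd}$ whose diagonal $\overline{ac}$ is an arc of $T$. Since $v_T$ is additive on basis elements, this is equivalent to
\begin{displaymath}
q_{ad} + q_{bc} > q_{ab} + q_{cd},
\end{displaymath}
where $q_{xy}$ denotes the index of the block $V_{q_{xy}}$ containing $\overline{xy}$. The four edges $\overline{ab},\overline{bc},\overline{cd},\overline{da}$ are precisely the sides of the two triangles $\overline{abc}$ and $\overline{acd}$ of $T$ meeting along $\overline{ac}$. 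So I would reduce everything to a careful analysis of when an edge gets removed by Algorithm~\ref{alg:weight}.

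The key structural fact to extract from the algorithm is this: an arc $\overline{xy}$ lands in $V_i$ exactly when, at stage $i$, it is incident (in the quiver $Q_T$ at that stage, i.e.\ as an edge of some remaining triangle) to a frozen vertex that is a sink. Concretely, the first step I would carry out is to understand sinks of $Q_T$: a vertex $v_{\overline{pq}}$ is a sink iff inside every triangle containing $\overline{pq}$, the clockwise orientation points \emph{into} $\overline{pq}$. For a boundary edge this depends only on the unique triangle containing it. The second step is to show that if $\overline{ac}$ is a diagonal of the quadrilateral $\overline{abcd}$, then in the two triangles $\overline{abc}$ and $\overline{acd}$, the clockwise orientations around $\overline{ac}$ are \emph{opposite} — one triangle sends an arrow $\overline{ac}\to$ (something) and the other receives one — whereas the four outer sides $\overline{ab},\overline{bc},\overline{cd},\overline{da}$ sit in a consistent rotational pattern. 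From this one reads off, for each outer side, which of the two triangles "controls" when it can be deleted relative to $\overline{ac}$.

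The heart of the argument (and the main obstacle) is a monotonicity/induction claim: I would prove that the block index $q_{xy}$ equals the "distance" from $\overline{xy}$ to the boundary measured through the fan of triangles, in the sense that $\overline{xy}\in V_i$ iff the shortest chain of triangles from a triangle containing $\overline{xy}$ out to a boundary edge, crossing only arcs, has a certain length determined by $i$. More precisely I expect to show that, within the local picture of the quadrilateral, the two outer sides that are "farther from the boundary side that killed $\overline{ac}$" get strictly larger indices, and that their indices exceed those of the other two by at least the right amount. The cleanest way is probably to argue by induction on $s$ (the number of rounds of the algorithm), peeling off $V_0$ (the triangles touching a frozen sink) and observing that removing these triangles either removes $\overline{ac}$ too (in which case the quadrilateral no longer exists in the smaller triangulation and the inequality was already visible at this stage because the sides on the "removed" triangle got index $0$ while the sides on the surviving triangle get index $\ge 1$), or leaves the quadrilateral intact with all four indices shifted by the inductive hypothesis. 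I would need to check the base case (a quadrilateral both of whose triangles survive to the very last round, so $\overline{ad},\overline{bc}\in V_s$ by the terminal assignment $V_i\leftarrow {\bf x}_T\setminus(V_0\cup\cdots\cup V_{i-1})$) directly against the running-example bookkeeping. The bookkeeping of which of $\overline{ab},\overline{bc},\overline{cd},\overline{da}$ pairs with which triangle — and hence the asymmetry that makes the inequality strict rather than an equality — is exactly where the clockwise-orientation convention of~\thref{def:quiver} enters, so that is the step I would write out most carefully.
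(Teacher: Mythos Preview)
Your plan is considerably more elaborate than what the paper actually does. The paper gives a direct few-line case analysis on which of the two triangles $\overline{abc}$, $\overline{acd}$ the algorithm cuts off first, with no induction on $s$, no ``distance to the boundary'' interpretation, and no monotonicity lemma. The only structural input is the orientation: in $\overline{abc}$ the arrows run $\overline{bc}\to\overline{ac}\to\overline{ab}$ and in $\overline{acd}$ they run $\overline{ad}\to\overline{ac}\to\overline{cd}$, so $\overline{bc}$ and $\overline{ad}$ each have an outgoing arrow to the mutable diagonal $\overline{ac}$ and therefore cannot be frozen sinks while both triangles are present. The paper records that the edge triggering removal of the first triangle is $\overline{ab}$ (placed in some $V_p$), that of the second triangle is $\overline{cd}$ (placed in some $V_q$ with $q\ge p$), while $\overline{bc}$ and $\overline{ad}$ are pushed to a strictly later block; summing gives the inequality.

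Your inductive scheme has a genuine gap in the case you single out as immediate. When exactly one triangle, say $\overline{abc}$, is removed at step $0$, you assert that ``the inequality was already visible at this stage because the sides on the removed triangle got index $0$ while the sides on the surviving triangle get index $\ge 1$.'' But only \emph{one} side of the removed triangle lands in $V_0$, namely the sink $\overline{ab}$; the other side $\overline{bc}$ has its arrow pointing into the still-mutable $\overline{ac}$, is not a sink at step $0$, and goes to a later block. Even granting your bookkeeping, the information $q_{ab}=0$ and $q_{bc},q_{cd},q_{ad}\ge 1$ yields only $q_{ad}+q_{bc}\ge 2$ against $q_{ab}+q_{cd}=q_{cd}$, with no upper bound on $q_{cd}$. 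In the paper's running octagon example the quadrilateral on $1,2,3,4$ with diagonal $\overline{13}$ has $(q_{12},q_{23},q_{34},q_{14})=(0,3,3,2)$: the required inequality $2+3>0+3$ does not follow from ``$0$ versus $\ge 1$''. Closing this case requires exactly the arrow observation the paper exploits directly, so once you try to repair the induction you end up reproducing the paper's argument and the inductive wrapper is doing no work.
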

\begin{proof}
Consider a quadrilateral $i<j<k<l$ in $T$ with $\overline{ij}$, $\overline{jk}$, $\overline{kl}$, $\overline{li}$ and $\overline{ik}\in T$.
As the algorithm ends when there is only one triangle or one arc left, there are two possibilities: either one of the triangles $\overline{ijk}$, $\overline{kli}$ is cut off first, or both triangles are cut off at the same time.
In the first case we may assume the triangle $\overline{ijk}$ is removed first.
Then $\overline{ij}\in V_p$, $\overline{lk}\in V_q$ and $\overline{jk}, \overline{il}\in V_s$ for some $p\le q\le s$ with $p<s$.
In the first case $p$ and $q$ are different, while in the second they are equal.
So in either case,
we have $v_T({f}_{ij}{+f}_{kl})=p+q<s+s=v_T({f}_{jk}+{f}_{il})$.
\end{proof}

\subsection{A distinguished maximal Gr\"obner cone}
\label{sec:monomial_degeneration}
In order to apply the construction from Section~\ref{sec:flat families} in this section we identify a particular maximal cone $C$ in $\GF(I_{2,n})$.
We analyze the cone $C$ and apply~\thref{thm:family}.
In the following section we~will show how this construction is related to adding universal coefficients to the cluster algebra~$A_{2,n}$.

\begin{Definition}\thlabel{def:mono wt}
Let $u\in \mathbb Q^{\binom{n}{2}}$ be the weight vector such that the weight of $p_{ij}$ is:
 \begin{displaymath}
 u(p_{ij}):= -\left(j-i-\frac{n}{2}\right)^2\qquad \text{for}\quad 1\le i<j\le n.
 \end{displaymath}
\end{Definition}

\begin{Example}\thlabel{exp:mono wt}
For $n=8$, the Pl\"ucker variables associated to boundary arcs receive the lowest weight, which is $-9$. Mutable Pl\"ucker variables have the following weights:
\begin{displaymath}
u(p_{ij})=\begin{cases}
0, &\text{if}\quad \min\{\vert j-i\vert, \vert i-j\vert\}=4,\\
-1, &\text{if}\quad \min\{\vert j-i\vert, \vert i-j\vert\}=3, \\
-4, &\text{if}\quad \min\{\vert j-i\vert, \vert i-j\vert\}=2.
\end{cases}
\end{displaymath}

\end{Example}

\begin{Lemma}\thlabel{lem: mono ideal}
With $u$ as in~\thref{def:mono wt} we have $\init_u(I_{2,n})=\mathcal M_{2,n}$
from~\thref{def:monomial ideal}.
\end{Lemma}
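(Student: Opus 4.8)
The plan is to compute $\init_u(R_{ijkl})$ for each Plücker relation $R_{ijkl} = p_{ij}p_{kl} - p_{ik}p_{jl} + p_{il}p_{jk}$ with $i<j<k<l$ (cyclically), show that it is the single crossing monomial $p_{ik}p_{jl}$, and then deduce that $\init_u(I_{2,n}) = \mathcal{M}_{2,n}$ from the Speyer--Sturmfels result. First I would set $\delta_{ab} := $ the cyclic distance appearing in $u$, so that $u(p_{ab}) = -(\,|b-a| - \tfrac n2\,)^2$ where the difference is taken cyclically; the key elementary fact is that $x \mapsto -(x-\tfrac n2)^2$ is strictly concave, hence $u(p_{ij}) + u(p_{kl})$, $u(p_{ik}) + u(p_{jl})$, $u(p_{il}) + u(p_{jk})$ are governed by how the four chord-lengths pair up. For the crossing pair $\overline{ik}, \overline{jl}$ the two arc-lengths are ``interleaved'' between those of the two non-crossing pairs, and strict concavity of $-(x-\tfrac n2)^2$ forces the crossing term to have the \emph{strictly largest} value of $u$, i.e.\ the strictly smallest $-u$, so it is the unique initial monomial under our $\min$-convention. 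I would make this precise by writing, for the cyclic quadrilateral with vertices $i<j<k<l$, the four boundary-arc lengths $a = j-i$, $b = k-j$, $c = l-k$, $d = n-(l-i)$ of the inscribed quadrilateral, so that the three products correspond to the length-pairs $\{a+b+? \,\}$ — more cleanly: the chord $\overline{ij}$ has length $a$, $\overline{kl}$ has length $c$, $\overline{ik}$ has length $a+b$, $\overline{jl}$ has length $b+c$, $\overline{il}$ has length $a+b+c$, $\overline{jk}$ has length $b$. Then one checks directly that among the three sums of the form $\phi(p)+\phi(q)$ with $\phi(x) = -(x-\tfrac n2)^2$, the pair $(a+b,\,b+c)$ dominates, using $a+b+c+d = n$ and strict concavity; the inequality reduces to a short computation with squares.

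Once $\init_u(R_{ijkl}) = p_{ik}p_{jl}$ for every crossing quadrilateral, the $p_{ik}p_{jl}$ are exactly the generators of $M_{2,n}$, so $\mathcal{M}_{2,n} = \langle \init_u(R_{ijkl}) : 1\le i<j<k<l\le n\rangle \subseteq \init_u(I_{2,n})$. For the reverse inclusion I would invoke \thref{thm:SS}: every tropical maximal cone's initial ideal is generated by the $\init_\Upsilon(R_{ijkl})$, and more relevantly the Plücker relations form a Gröbner basis of $I_{2,n}$ with respect to any tree weight. The cleanest route is to observe that $u$ lies in the Gröbner region and to check that the reduced Gröbner basis for the cone containing $u$ consists of the $R_{ijkl}$ by verifying Buchberger's criterion / using that the $S$-pairs of Plücker relations reduce to zero — this is classical (the three-term relations are a universal Gröbner basis for $I_{2,n}$, cf.\ Sturmfels). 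Hence $\init_u(I_{2,n}) = \langle \init_u(R_{ijkl})\rangle = \mathcal{M}_{2,n}$.

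The main obstacle is the concavity computation: one must be careful with the \emph{cyclic} labeling, since in the statement ``$i<j<k<l$'' is cyclic and we may not have $1\le i<j<k<l\le n$, so the arc-lengths must be read as cyclic distances and $|b-a|$ versus $|a-b|$ in \thref{exp:mono wt} really refers to $\min$ of the two complementary arcs. Checking that $\phi(x) = -(x-\tfrac n2)^2$ still produces a strict inequality when some chord-length exceeds $\tfrac n2$ (so that the ``distance'' used is $n$ minus the length) requires noting that $\phi(\ell) = \phi(n-\ell)$, i.e.\ $\phi$ only depends on $\min(\ell, n-\ell)$, which is exactly why $u$ is well-defined on unordered pairs; with this symmetry in hand the strict-concavity argument goes through uniformly. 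The secondary point to be careful about is that $u\in\mathbb{Q}^{\binom n2}$ rather than $\mathbb{Z}^{\binom n2}$, but since initial ideals are invariant under positive scaling of the weight this is harmless, and I would either clear denominators or simply work with $\mathbb{Q}$-weights throughout.
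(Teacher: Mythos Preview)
Your outline coincides with the paper's proof. For part (a) the paper simply records the identities
\[
u(p_{ik}p_{jl})-u(p_{il}p_{jk})=2(j-i)(l-k),\qquad u(p_{ik}p_{jl})-u(p_{ij}p_{kl})=2(k-j)(n+i-l),
\]
which is exactly your ``short computation with squares''. Note that your concavity heuristic handles only the first of these: the pairs $\{a{+}b,\,b{+}c\}$ and $\{a{+}b{+}c,\,b\}$ have the common sum $a+2b+c$, so strict concavity of $\phi$ applies, but the pair $\{a,c\}$ has sum $a+c$, so for the second inequality one genuinely needs the relation $a+b+c+d=n$ and the direct computation---which is what you fall back to anyway. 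For part (b) the paper verifies Buchberger's criterion by explicitly reducing the $S$-pairs $S(R_{ijls},R_{iklr})$ whose leading terms share a factor, whereas you invoke the classical fact that the three-term Pl\"ucker relations form a universal Gr\"obner basis for $I_{2,n}$; both routes are valid and amount to the same thing. Your discussion of the cyclic labelling is unnecessary: one takes $1\le i<j<k<l\le n$ without loss of generality, as the paper does, and the symmetry $\phi(\ell)=\phi(n-\ell)$ is never used.
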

\begin{proof}
Consider the quadrilateral with vertices $i<j<k<l$ and its corresponding Pl\"ucker relation $R_{ijkl}=p_{ik}p_{jl}-p_{ij}p_{kl}-p_{il}p_{jk}$.
The arcs $\overline{ik}$ and $\overline{jl}$ are crossing, hence we have to show that ($a$) $\init_u(R_{ijkl})=p_{ik}p_{jl}$ and ($b$) the crossing monomials generate $\init_u(I_{2,n})$.
To prove ($a$) we verify that
\begin{gather*}
u(p_{ik}p_{jl}) = u(p_{il}p_{jk}) +2(j-i)(l-k) > u(p_{il}p_{jk}),
\\
u(p_{ik}p_{jl}) = u(p_{ij}p_{kl})+2(k-j)(n+i-l)>u(p_{ij}p_{kl}).
\end{gather*}

To establish ($b$), we apply Buchberger's criterion and show that all the $S$-pairs $S(R_{ijkl},R_{abcd})$ reduce to $0$ modulo $\{R_{ijkl}\}_{ 1\le i<j<k<l\le n}$.
If $\init_u(R_{ijkl})$ and $\init_u(R_{abcd})$ have no common factor, then $S(R_{ijkl},R_{abcd})$ reduces to zero (see, for example,~\cite[Lemma~2.3.1]{herzog2011monomial}).
Assuming that $\init_u(R_{ijkl})$ and $\init_u(R_{abcd})$ have a common factor we distinguish two cases, in each we underline the crossing monomials.
Consider $R_{ijls}$ and $R_{iklr}$, where without loss of generality $i<j<k<l<r<s$:
\begin{gather*}\begin{split}
 & S(R_{ijls},R_{iklr})
 = \underline{p_{kr}p_{ls}}p_{ij} + \underline{p_{kr}p_{jl}}p_{is} - \underline{p_{js}p_{ik}}p_{lr} - \underline{p_{js}p_{ir}}p_{kl}
 \\
& \hphantom{S(R_{ijls},R_{iklr})}{} = -p_{ij} R_{klrs}- p_{is} R_{jklr}+ p_{lr} R_{ijks}+ p_{kl}R_{ijrs}.
\end{split}
 \end{gather*}
The second case for $R_{ijkl}$ and $R_{jklm}$ with $i<j<k<l<m$ is similar.
\end{proof}

Let $C\in \GF(I_{2,n})$ be the maximal cone corresponding to the monomial initial ideal $\mathcal M_{2,n}$ and let $<$ denote the compatible monomial term order on $S$. 
By definition the standard monomial basis $\mathbb B_<$ (see~\thref{def:SMT}) is the set of monomials that are not contained in the ideal generated by \emph{crossing monomials}.
So we call it the {\it basis of non-crossing monomials}.

\begin{Proposition}\thlabel{cluster vs SM basis}
The basis of non-crossing monomials of $A_{2,n}$ coincides with the basis of cluster monomials $($see~\thref{thm:cluster mono basis}$)$.
\end{Proposition}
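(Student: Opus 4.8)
The plan is to show a set-theoretic equality of two subsets of monomials in the Plücker coordinates $\bar p_{ij}$: the non-crossing monomials (those whose support, viewed as a multiset of arcs of the $n$-gon, uses only pairwise non-crossing arcs) and the cluster monomials. First I would record the elementary combinatorial fact that a multiset of arcs of the $n$-gon consists of pairwise non-crossing arcs if and only if the underlying set of arcs appearing in it is contained in some triangulation $T$ of the $n$-gon; indeed, any collection of pairwise non-crossing arcs (with or without the boundary edges) can be completed to a triangulation, and conversely all arcs in a fixed triangulation are pairwise non-crossing. Under the dictionary of Definition \ref{def:triang}, a monomial supported on arcs lying in a common triangulation $T$ is precisely a monomial in the cluster $\mathbf x_T$, i.e.\ a cluster monomial; so the two sets of monomials in $S$ (before passing to the quotient) literally coincide.

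The remaining point is that this bijection of monomials descends to a bijection of their images in $A_{2,n}$, i.e.\ that distinct non-crossing monomials remain distinct (and linearly independent) in the quotient. For this I would invoke the two facts already available in the excerpt. On the one hand, by Definition \ref{def:SMT} the set $\mathbb B_<$ of standard monomials associated with the maximal cone $C$ — which by Lemma \ref{lem: mono ideal} has $\init_C(I_{2,n}) = \mathcal M_{2,n}$, the ideal generated by crossing monomials — is a $\mathbb C$-vector space basis of $A_{2,n}$; and $\mathbb B_<$ is exactly the set of cosets of monomials \emph{not} lying in $\mathcal M_{2,n}$, that is, the non-crossing monomials. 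On the other hand, Theorem \ref{thm:cluster mono basis} (Fomin–Shapiro–Thurston) states that the set of all cluster monomials is a $\mathbb C$-vector space basis of $A_{2,n}$. Since the combinatorial identification of the previous paragraph matches the monomials index-for-index, the two bases are literally the same subset of $A_{2,n}$, and the proposition follows.

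I do not expect a serious obstacle here: the statement is essentially a matching of two combinatorial descriptions, and both ``basis'' assertions are imported as black boxes. The one place that needs a little care is the bookkeeping around frozen variables and boundary edges — the cluster $\mathbf x_T$ always includes the frozen Plücker coordinates $\bar p_{i,i+1}$, and correspondingly the boundary edges are not among the generators of $\mathcal M_{2,n}$, so a monomial may freely carry arbitrary powers of frozen variables on either side of the correspondence; one should simply note that boundary edges cross nothing, so including them changes neither ``non-crossing'' nor ``lies in a common triangulation.'' Once that is observed, the argument is: non-crossing monomial $\iff$ support in a common triangulation $\iff$ cluster monomial, and both families being bases of $A_{2,n}$ forces the identification of cosets.
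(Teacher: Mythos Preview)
Your proposal is correct and follows essentially the same approach as the paper: both arguments rest on the elementary combinatorial fact that a collection of pairwise non-crossing arcs can be completed to a triangulation, so that non-crossing monomials and cluster monomials are literally the same set. The paper's proof is terser (it does not explicitly invoke the two basis theorems to justify the descent to $A_{2,n}$, simply noting the two-way inclusion), but your extra care about frozen variables and the passage to the quotient is harmless and arguably more thorough.
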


\begin{proof}
For every standard monomial $\bar{\bf p}^m$ with $m\in \mathbb Z_{\ge 0}^{\binom{n}{2}}$ we draw all arcs $\overline{ij}$ in the $n$-gon, for which $m_{ij}\not =0$.
By definition there is no pair of crossing arcs.
Hence, the set of arcs can be completed to a triangulation and $\bar{\bf p}^m$ is a cluster monomial for the corresponding seed. On the other hand, every cluster monomial is contained in $\mathbb B_{<}$ which completes the proof.
\end{proof}

\begin{Proposition}\thlabel{prop:unique mono ideal}
The monomial ideal $\mathcal M_{2,n}$ is the unique common monomial degeneration of the toric ideals $\init_{T}(I_{2,n})$ associated to triangulations $T$ constructed in~\thref{thm:toric wt ideals}.
\end{Proposition}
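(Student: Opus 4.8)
The plan is to show that $\mathcal M_{2,n}$ is the initial ideal $\init_{D_T}(\init_{{\bf w}_T}(I_{2,n}))$ for every triangulation $T$, and that no other monomial ideal has this property for all $T$ simultaneously. For the first half I would fix a triangulation $T$ with associated quadrilateral weight map $w_T\colon \mathbb R^{2n-3}\to \mathbb R$ and the weight vector ${\bf w}_T$ from \thref{def:wt of T}. By \thref{thm:toric wt ideals} the ideal $\init_{{\bf w}_T}(I_{2,n})$ is toric and generated by the binomials $\init_{{\bf w}_T}(R_{ijkl})$, each of which by \thref{lem:g-v on relations} and \thref{prop:wt map} has the form $p_{ik}p_{jl}-(\text{one other monomial})$ with $p_{ik}p_{jl}$ the crossing monomial of the quadrilateral $\overline{ijkl}$. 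Choosing the weight $u$ from \thref{def:mono wt}, one checks that $u$ degenerates each such binomial to its crossing monomial, exactly as in the proof of \thref{lem: mono ideal}: the inequalities $u(p_{ik}p_{jl})>u(p_{il}p_{jk})$ and $u(p_{ik}p_{jl})>u(p_{ij}p_{kl})$ imply that $\init_u(\init_{{\bf w}_T}(R_{ijkl}))=p_{ik}p_{jl}$. Since Gröbner degenerations compose (choosing $u$ generic enough relative to ${\bf w}_T$, or invoking \cite[Lemma 2.4.5]{M-S} / the composition-of-initial-forms lemma), we get $\init_{u}(\init_{{\bf w}_T}(I_{2,n}))=\mathcal M_{2,n}$. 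Thus the common monomial degeneration $\mathcal M_{2,n}$ exists.

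For uniqueness, suppose $\mathcal M'$ is another monomial ideal that is a monomial degeneration of $\init_T(I_{2,n})$ for every triangulation $T$. Degenerating a binomial toric ideal to a monomial ideal: for each Plücker relation, the binomial $\init_T(R_{ijkl})$ has exactly two monomials, so any monomial degeneration must pick one of the two; hence $\mathcal M'$ is generated (at least) by a choice of one monomial from each pair $\{p_{ik}p_{jl},\ m'\}$, where $m'$ is the non-crossing partner. The key point is that the crossing monomial $p_{ik}p_{jl}$ must be the one chosen: I would argue that if for some quadrilateral $\overline{ijkl}$ the non-crossing monomial $p_{il}p_{jk}$ (say) were selected, one can find a triangulation $T'$ in which $\overline{il}$ and $\overline{jk}$ lie together in a triangle (so $\overline{il},\overline{jk}$ are non-crossing arcs compatible with $T'$), while $\overline{ik},\overline{jl}$ are the crossing pair for that quadrilateral in $T'$; then in $\init_{T'}(R_{ijkl})$ the crossing monomial $p_{ik}p_{jl}$ appears with the two monomials still being $p_{ik}p_{jl}$ and one of the others. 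Comparing the behaviour of such binomials over all triangulations forces the crossing monomial to be the distinguished one in every relation, because for each quadrilateral there is a triangulation realising that quadrilateral as a "quad" in the triangulation and in which the crossing monomial is precisely the one that survives the $\init_{D_T}$-degeneration (Corollary~\ref{cor:trop and g}). Hence $\mathcal M'\supseteq \mathcal M_{2,n}$, and since both are monomial ideals with the same Hilbert function (they are degenerations of the same $A_{2,n}$), we get $\mathcal M'=\mathcal M_{2,n}$.

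The main obstacle I anticipate is making the uniqueness argument airtight: a priori a monomial degeneration of a binomial ideal need not pick, generator-by-generator, one monomial from each binomial of a given generating set — the initial ideal could in principle be larger and need not be "generated by monomials of the generators." So I would first need to argue that $\init_T(I_{2,n})$, being toric and with a nice quadratic Gröbner basis (\thref{thm:SS}), has the property that any monomial initial ideal of it is generated by initial monomials of these quadratic generators; this follows because the quadratic binomials form a Gröbner basis for $\init_T(I_{2,n})$ with respect to any term order refining $D_T$ (by \cite[Corollary 4.4]{SS04} the ideal is prime and the relations form a Gröbner basis), so Buchberger's criterion is inherited. Once that structural fact is in place, the combinatorial step — that for every diagonal/quadrilateral there is a triangulation forcing the crossing monomial — is routine: any quadrilateral $\overline{ijkl}$ extends to a triangulation containing the diagonal $\overline{ik}$ (or $\overline{jl}$), and \thref{lem:g-v on relations} together with Corollary~\ref{cor:trop and g} pins down which monomial survives. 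Assembling these observations gives $\mathcal M'=\mathcal M_{2,n}$, completing the proof.
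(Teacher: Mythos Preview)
Your existence argument is fine and is essentially what the paper does: the weight $u$ from \thref{def:mono wt} satisfies $\init_u(\init_{{\bf w}_T}(I_{2,n}))=\init_{{\bf w}_T}(\init_u(I_{2,n}))=\init_{{\bf w}_T}(\mathcal M_{2,n})=\mathcal M_{2,n}$, so $\mathcal M_{2,n}$ is indeed a common monomial degeneration.

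Your uniqueness argument has a gap. You try to rule out a non-crossing choice by producing a \emph{single} triangulation $T'$ ``realising the quadrilateral'' and appealing to Corollary~\ref{cor:trop and g}. But one triangulation cannot force anything: for any fixed $T'$, the binomial $\init_{T'}(R_{ijkl})$ has two monomials, and a monomial degeneration of $\init_{T'}(I_{2,n})$ is free to select either. (Your sentence about $\overline{il}$ and $\overline{jk}$ ``lying together in a triangle'' is also geometrically off; they are non-crossing arcs but never edges of a common triangle.) The paper's device is to use \emph{two} triangulations related by a flip: choose $T$ containing $\overline{ij},\overline{jk},\overline{kl},\overline{li},\overline{ik}$, so $\init_{{\bf w}_T}(R_{ijkl})=p_{ij}p_{kl}-p_{ik}p_{jl}$, and let $T'$ be $T$ with $\overline{ik}$ flipped to $\overline{jl}$, so $\init_{{\bf w}_{T'}}(R_{ijkl})=-p_{ik}p_{jl}+p_{il}p_{jk}$. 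The only monomial appearing in both binomials is $p_{ik}p_{jl}$, so any monomial ideal that is simultaneously an initial ideal of $\init_{T}(I_{2,n})$ and of $\init_{T'}(I_{2,n})$ must contain $p_{ik}p_{jl}$. Ranging over all quadrilaterals yields $\mathcal M_{2,n}\subseteq \mathcal M'$, and then your Hilbert-function remark finishes.

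This flip argument also dissolves the structural worry you raise in your last paragraph. You do not need to know in advance that every monomial initial ideal of $\init_T(I_{2,n})$ is generated by leading monomials of the quadratic binomials. What you need is only that for the Pl\"ucker relation $R_{ijkl}$ (which is in the reduced Gr\"obner basis), the leading monomial with respect to any term order compatible with a maximal cone $C'$ having both $\sigma_T$ and $\sigma_{T'}$ as faces must lie in $\init_{{\bf w}_T}(R_{ijkl})\cap\init_{{\bf w}_{T'}}(R_{ijkl})$; this is exactly Remark~\ref{rem:Eisenbud} applied twice.
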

\begin{proof}
For every weight vector ${\bf w}_T$ associated with a triangulation $T$ and a quadrilateral linear map $w_T$ we have $\init_u\big(\init_{{\bf w}_T}(I_{2,n})\big)=\init_{{\bf w}_T}\big(\init_u(I_{2,n})\big)$ for $u$ as in~\thref{def:mono wt}.
Given $i<j<k<l$, we choose a triangulation $T$ containing the arcs $\overline{ij}$, $\overline{jk}$, $\overline{kl}$, $\overline{li}$ and $\overline{ik}$.
Then $\init_{{\bf w}_T}(R_{ijkl})=p_{ij}p_{kl}-p_{ik}p_{jl}$ by~\thref{prop:wt map} and~\thref{cor:trop and g}.
Now consider $T'$ obtained from $T$ by~flipping the arc $\overline{ik}$, so $T'$ contains $\overline{jl}$.
Then $\init_{{\bf w}_{T'}}(R_{ijkl})=-p_{ik}p_{jl}+p_{il}p_{jk}$.
So $-p_{ik}p_{jl}$ is the only monomial that simultaneously is an initial of $\init_{{\bf w}_{T}}(R_{ijkl})$ and $\init_{{\bf w}_{T'}}(R_{ijkl})$ for all $i<j<k<l$.
\end{proof}

As mentioned in~\thref{rmk:toric degen} there are various ways to associate a toric degeneration to a~triangulation or more generally a seed.
The most general construction is the principal coefficient construction introduced by Gross--Hacking--Keel--Kontsevich in~\cite[Section~8]{GHKK}.
We can now relate their construction to the Gr\"obner toric degenerations for $\Gr(2,\mathbb C^n)$ in~\thref{thm:toric wt ideals}.

\begin{Corollary}\thlabel{cor:central fibre}
The quotient $S/\init_{T}(I_{2,n})$ is isomorphic to the algebra of the semigroup gene\-rated by $\{{\bf g}_{ij}\colon 1\le i<j\le n\}\subset \mathbb Z^{2n-3}$.
In particular, the central fiber of the toric Gr\"obner degeneration induced by $T$ is isomorphic to that of the degeneration induced by principal coefficients at $T$.
\end{Corollary}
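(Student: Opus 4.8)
The plan is to identify the semigroup algebra $\mathbb{C}[S_{\mathbf{g}}]$ of the semigroup $S_{\mathbf{g}}$ generated by $\{\mathbf{g}_{ij} : 1 \le i < j \le n\} \subset \mathbb{Z}^{2n-3}$ with the toric central fiber $S/\init_T(I_{2,n})$, and then to recognize both as the GHKK principal-coefficient central fiber. First I would recall from~\thref{thm:SS} and~\thref{cor:trop and g} that $\init_T(I_{2,n})$ is a prime binomial (hence toric) ideal, generated by the binomials $\init_{D_T}(R_{ijkl}) = p_{ij}p_{kl} - p_{ik}p_{jl}$ (with signs fixed as in~\thref{cor:trop and g}). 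By the standard theory of toric ideals, $S/\init_T(I_{2,n})$ is the semigroup algebra of the semigroup $S' = \{\alpha \in \mathbb{Z}_{\ge 0}^{\binom{n}{2}}\} / \!\sim$, where $\sim$ is generated by the lattice relations $e_{ij} + e_{kl} \sim e_{ik} + e_{jl}$ coming from the generators of the binomial ideal; concretely $S'$ is the image of the monoid map $\mathbb{Z}_{\ge 0}^{\binom{n}{2}} \to \mathbb{Z}^{\binom{n}{2}}/L$, where $L$ is the sublattice spanned by those relation vectors.

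The core computation is to show that this quotient lattice map coincides with the $\mathbf{g}$-vector map $e_{ij} \mapsto \mathbf{g}_{ij} \in \mathbb{Z}^{2n-3}$. For this I would argue in two directions. One the one hand, by~\thref{lem:g-v on relations} the assignment $e_{ij} \mapsto \mathbf{g}_{ij}$ kills exactly the relations $e_{ij} + e_{kl} - e_{ik} - e_{jl}$ (since $\mathbf{g}_{ij} + \mathbf{g}_{kl} = \mathbf{g}_{ik} + \mathbf{g}_{jl}$), so the $\mathbf{g}$-vector map factors through $\mathbb{Z}^{\binom{n}{2}}/L$ and thus induces a surjection $S' \twoheadrightarrow S_{\mathbf{g}}$. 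On the other hand, for injectivity I would use a rank/basis count: the quotient $\mathbb{Z}^{\binom{n}{2}}/L$ has rank equal to the Krull dimension $d+1 = 2n-3$ of $A_{2,n}$ (which matches the target $\mathbb{Z}^{2n-3}$), because $\init_T(I_{2,n})$ is prime of the same dimension as $I_{2,n}$. Then it suffices to exhibit a basis of the quotient that maps to a basis of the lattice spanned by $S_{\mathbf{g}}$: the classes of the $2n-3$ arcs $\overline{ij} \in T$ form a spanning set of $\mathbb{Z}^{\binom{n}{2}}/L$ (every off-triangulation Plücker variable can be rewritten via the binomial relations in terms of arcs of $T$), and by the remark after~\thref{def:g} we have $\mathbf{g}_{ij} = f_{ij}$ for $\overline{ij} \in T$, so these map precisely to the standard basis $\{f_{ab}\}_{\overline{ab} \in T}$ of $\mathbb{Z}^{2n-3}$. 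A rank argument then forces $S' \xrightarrow{\sim} S_{\mathbf{g}}$, giving the first isomorphism $S/\init_T(I_{2,n}) \cong \mathbb{C}[S_{\mathbf{g}}]$.

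For the second statement, I would invoke the description of the GHKK principal-coefficient toric degeneration: by~\cite[Section~8]{GHKK} (cf.\ the discussion around~\thref{def:prin} and~\thref{thm:g-vecs}), the central fiber of the toric degeneration obtained by endowing $A_{2,n}$ with principal coefficients at the seed determined by $T$ is the toric variety whose cone is spanned by the $\mathbf{g}$-vectors of the cluster variables, i.e.\ its coordinate ring is the semigroup algebra of the semigroup generated by $\{\mathbf{g}(\bar p_{ij})\}$. By~\thref{lem:g are gv} these GHKK $\mathbf{g}$-vectors agree with the combinatorial $\mathbf{g}$-vectors $\mathbf{g}_{ij}$ of~\thref{def:g}, so this semigroup is exactly $S_{\mathbf{g}}$. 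Combining with the first isomorphism yields the claim.

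The main obstacle I expect is the injectivity/rank step — making precise that the arc classes $\{[e_{ij}] : \overline{ij} \in T\}$ actually span $\mathbb{Z}^{\binom{n}{2}}/L$ and that no further collapsing occurs, i.e.\ that the rank of $\mathbb{Z}^{\binom{n}{2}}/L$ is exactly $2n-3$. Both follow from primality of $\init_T(I_{2,n})$ (\thref{thm:SS}) together with the fact that a triangulation has exactly $2n-3$ arcs, but one must be careful that the lattice $L$ is saturated (equivalently that $S'$ is the integral points of the cone it generates, which again uses primality of the toric ideal). Once these structural facts are in place the identification is essentially bookkeeping via~\thref{lem:g-v on relations} and the normalization $\mathbf{g}_{ij} = f_{ij}$ for $\overline{ij} \in T$.
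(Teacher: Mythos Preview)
Your approach is correct and genuinely different from the paper's. You work directly with toric ideal theory: since $\init_T(I_{2,n})$ is prime binomial, the quotient is a semigroup algebra, and you identify the semigroup via the ${\bf g}$-vector map together with a rank/saturation argument. The paper instead routes through the maximal Gr\"obner cone $C$: it uses that the standard monomial basis for $C$ (which is also a basis of $S/\init_T(I_{2,n})$ since $\sigma_T$ is a face of $C$) coincides with the cluster monomials by \thref{cluster vs SM basis}, that distinct cluster monomials have distinct ${\bf g}$-vectors, and that the binomial relations respect the ${\bf g}$-grading, so each graded piece is one-dimensional. Your argument is more self-contained (it never touches the cone $C$ or the cluster-monomial identification) and would work for any toric initial ideal with a known lattice parametrization; the paper's argument, on the other hand, embeds the statement into the running theme of the section and makes the link to the cluster basis explicit, which is what they ultimately care about.

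One point to tighten: you define $L$ as the lattice spanned by the exponent differences of the \emph{generating} binomials and then assert that $S'$ is the image of $\mathbb Z_{\ge 0}^{\binom{n}{2}}$ in $\mathbb Z^{\binom{n}{2}}/L$. A priori this $L$ could be strictly smaller than the full lattice $L_{\mathrm{full}}$ of the toric ideal, so the factorization of the ${\bf g}$-map through $\mathbb Z^{\binom{n}{2}}/L_{\mathrm{full}}$ is not literally what \thref{lem:g-v on relations} gives you. The cleanest fix is to bypass the lattice description entirely: the assignment $p_{ij}\mapsto t^{{\bf g}_{ij}}$ defines a surjection $S\twoheadrightarrow \mathbb C[S_{\bf g}]$ whose kernel is prime, contains the generators of $\init_T(I_{2,n})$ (by \thref{lem:g-v on relations}), and has the same Krull dimension $2n-3$ (since the arcs of $T$ map to the standard basis of $\mathbb Z^{2n-3}$); primality of $\init_T(I_{2,n})$ then forces equality. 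This is what your rank argument is really doing, and it makes the saturation issue you flag disappear. Your ``rewriting off-triangulation variables in terms of arcs of $T$'' is a consequence of the isomorphism rather than an independent input, so you should not use it as part of the justification.
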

\begin{proof}
The quotient $S/\init_{T}(I_{2,n})$ has a vector space basis consisting of standard monomials with respect to $C$. By~\thref{cluster vs SM basis} this basis is in bijection with cluster monomials which are further in bijection with their ${\bf g}$-vectors.
In particular, $S/\init_{T}(I_{2,n})$ is a direct sum of cluster monomials with multiplication induced by the addition of their ${\bf g}$-vectors by~\thref{prop:wt map}.
Hence, it is isomorphic to the algebra of the semigroup generated by $\{{\bf g}_{ij}\colon 1\le i<j\le n\}$ $\subset \mathbb Z^{2n-3}$.

Moreover, by~\cite[Theorem~8.30]{GHKK} the central fiber of the toric degeneration induced by adding principal coefficients to the seed corresponding to $T$ is defined by the algebra of the semigroup generated by $\{{\bf g}_{ij}\colon 1\le i<j\le n\}\subset \mathbb Z^{2n-3}$.
\end{proof}

\begin{Theorem}\thlabel{thm:trop+}
In $\mathbb R^{\binom{n}{2}}$ we have $C\cap \trop(I_{2,n})=\trop^+(I_{2,n})$.
In particular, every $(d+1)$-dimensional face of $C$ corresponds to a planar trivalent tree and $\trop^+(I_{2,n})$ is the closed subfan whose support is the set of $(d+1)$-dimensional faces of the cone $C$.
\end{Theorem}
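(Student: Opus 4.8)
The plan is to reduce the equality to a single local condition on the three-term Pl\"ucker relations, which I read off the explicit reduced Gr\"obner basis of $I_{2,n}$ with respect to $C$, and then match against the Speyer--Williams description of the positive tropical Grassmannian. First I would pin down $\mathcal G_<(I_{2,n})$: the proof of \thref{lem: mono ideal} checks Buchberger's criterion for the family $\{R_{ijkl}\}_{i<j<k<l}$ against the weight $u$ of \thref{def:mono wt}, and $\init_u(I_{2,n})=\mathcal M_{2,n}=\init_C(I_{2,n})$, so $\{R_{ijkl}\}$ is a Gr\"obner basis of $I_{2,n}$ for $C$ with $\init_<(R_{ijkl})$ the crossing monomial $p_{ik}p_{jl}$; the two other monomials $p_{ij}p_{kl}$, $p_{il}p_{jk}$ of $R_{ijkl}$ come from non-crossing pairs of arcs, hence are not in $\mathcal M_{2,n}$, so this Gr\"obner basis is reduced, i.e.\ $\mathcal G_<(I_{2,n})=\{R_{ijkl}\}$. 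Applying Remark~\ref{rem:Eisenbud}, for any $w\in\mathbb R^{\binom{n}{2}}$ we have $w\in C$ if and only if $\init_<(\init_w(R_{ijkl}))=\init_<(R_{ijkl})$ for every $i<j<k<l$; since $\init_<(R_{ijkl})$ is the single monomial $p_{ik}p_{jl}$, this is equivalent to the combinatorial condition $(\star)$: for every quadrilateral $\overline{ijkl}$, the crossing monomial $p_{ik}p_{jl}$ occurs as a term of $\init_w(R_{ijkl})$.

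Next I would show that $(\star)$, restricted to $\trop(I_{2,n})$, cuts out exactly $\trop^+(I_{2,n})$. For $w\in\trop(I_{2,n})$ no $\init_w(R_{ijkl})$ is a monomial, so it keeps two or all three terms of $R_{ijkl}=p_{ij}p_{kl}-p_{ik}p_{jl}+p_{il}p_{jk}$; by Speyer--Sturmfels~\cite{SS04} the $R_{ijkl}$ are a Gr\"obner basis of $I_{2,n}$ with respect to every weight in $\trop(I_{2,n})$ (cf.\ \thref{thm:SS} for the maximal cones), so $\init_w(I_{2,n})=\langle\init_w(R_{ijkl})\rangle$. This ideal fails to be totally positive precisely when some $\init_w(R_{ijkl})$ equals the nonnegative combination $p_{ij}p_{kl}+p_{il}p_{jk}$, that is, when the coefficient $-1$ term $p_{ik}p_{jl}$ has been dropped; by the combinatorial description of the positive tree space in \cite[Section~5]{SW05} this is the only obstruction, so $w\in\trop^+(I_{2,n})$ if and only if $(\star)$ holds. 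Combining the two steps, for $w\in\trop(I_{2,n})$ the conditions $w\in C$, $(\star)$, `$\init_w(I_{2,n})$ totally positive', and $w\in\trop^+(I_{2,n})$ are mutually equivalent, which yields $C\cap\trop(I_{2,n})=\trop^+(I_{2,n})$.

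Finally, the ``in particular'' statements follow by feeding this equality into the Speyer--Williams classification~\cite{SW05}: the maximal cones of $\trop^+(I_{2,n})$ biject with planar trivalent trees with $n$ leaves (equivalently, with triangulations of the $n$-gon) and all have dimension $d+1$; by the equality these are exactly the $(d+1)$-dimensional cones of the subfan $C\cap\trop(I_{2,n})$, each of them a face of $C$, and purity of $\trop^+(I_{2,n})$ then identifies $\trop^+(I_{2,n})$ with the closed subfan supported on these faces. I expect the main obstacle to be the step passing from the local survival condition $(\star)$ on each individual Pl\"ucker binomial to total positivity of the whole ideal $\init_w(I_{2,n})$: this is where the structural input is essential, namely that the three-term relations form a Gr\"obner (tropical) basis of $I_{2,n}$ \cite{SS04} and that the positive part of the tree space is governed precisely by which pairing of each quadrilateral survives \cite{SW05}. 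The Gr\"obner-theoretic step via Remark~\ref{rem:Eisenbud} and the identification $\mathcal G_<(I_{2,n})=\{R_{ijkl}\}$ are then routine.
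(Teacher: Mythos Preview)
Your argument is correct and in fact somewhat more direct than the paper's. You and the paper share the same backward direction: a weight $w$ lies outside $C$ precisely when some $\init_w(R_{ijkl})$ drops the crossing term $-p_{ik}p_{jl}$, and on $\trop(I_{2,n})$ this forces the positive binomial $p_{ij}p_{kl}+p_{il}p_{jk}$, hence $w\notin\trop^+(I_{2,n})$; the paper phrases this as its ``Claim'' about non-planar trees. The difference is in the forward direction $\trop^+(I_{2,n})\subseteq C$. The paper establishes it constructively via the ${\bf g}$-vector machinery of Sections~\ref{sec:cluster}--\ref{sec: toric deg}: for each triangulation $T$ it builds a weight ${\bf w}_T$ from ${\bf g}$-vectors (\thref{def:wt of T}), shows $\init_{{\bf w}_T}(I_{2,n})$ is toric (\thref{thm:toric wt ideals}), and then uses \thref{prop:unique mono ideal} to see that the corresponding cone $\sigma_T\subset\trop^+(I_{2,n})$ is a face of $C$. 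You instead read off the reduced Gr\"obner basis $\mathcal G_<(I_{2,n})=\{R_{ijkl}\}$ directly from \thref{lem: mono ideal}, apply Remark~\ref{rem:Eisenbud} to get the clean characterization $w\in C\Leftrightarrow(\star)$, and close the loop by citing~\cite{SW05}. Your route buys a self-contained proof of this theorem that does not rely on the quadrilateral-weight-map construction; the paper's route buys the explicit link between $\trop^+(I_{2,n})$ and the ${\bf g}$-vectors of Pl\"ucker coordinates, which is what it needs downstream for \thref{cor:central fibre} and \thref{thm:A univ as quotient}. One small remark: your aside that the $R_{ijkl}$ form a Gr\"obner basis for every weight in $\trop(I_{2,n})$ is true but not actually needed---the implication ``$(\star)$ fails $\Rightarrow$ not totally positive'' only uses $\init_w(R_{ijkl})\in\init_w(I_{2,n})$, and the converse you already delegate to~\cite{SW05}.
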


\begin{proof}
Every planar trivalent tree with $n$ leaves arises as the dual graph of a triangulation $T$ of the $n$-gon.
By~\thref{thm:toric wt ideals}, every weight vector ${\bf w}_T$ gives a toric initial ideal. Hence, ${\bf w}_T$ lies in the relative interior of a maximal (i.e.,~$(d+1)$-dimensional) cone $\sigma_T$ in $\trop^+(I_{2,n})$.
Furthermore, $\sigma_T$ is a face of $C$ by~\thref{prop:unique mono ideal}.
The opposite direction follows from the following claim.

\medskip\noindent
{\it Claim:} Let $\Upsilon$ be a trivalent tree with $n$ leaves. If $\init_{{\Upsilon}}(R_{ijkl})$ contains the monomial $p_{ik}p_{jl}$ for
 all $i<j<k<l $ then $\Upsilon$ is planar.

\medskip

\noindent
Let $\Upsilon$ be a non-planar trivalent tree.
Without loss of generality we assume that $1\le i<j<k<l\le n$ are labels of $\Upsilon$ appearing either in clockwise order: $i$, $k$, $j$, $l$ or in anticlockwise order: $i$,~$k$,~$j$,~$l$.
We prove the claim for the clockwise case, the anticlockwise case is analogous.
We~consider the full subtree with leaves $i$, $j$, $k$, $l$ (similar to the picture in Figure~\ref{fig:lem compute g-vect}, but ignoring the doubled leaves and exchanging $k$ and $j$).
Then $\init_{{\Upsilon}}(R_{ijkl})=p_{ij}p_{kl}+p_{il}p_{jk}$, a~con\-tra\-diction.
\end{proof}

\begin{samepage}
\begin{Corollary}\thlabel{cor:C simplicial}
The cone $C$ is defined by the lineality space $\mathcal L_{2,n}$ and additional $\binom{n}{2}-n$ rays $r_{ij}\in \mathbb R^{\binom{n}{2}}$ with $2\le i+1<j\le n$ defined by
\begin{displaymath}
(r_{ij})_{kl}=\begin{cases}
-1 & \text{if}\quad k\in [i+1,j]\not\ni l\quad \text{or}\quad k\not\in [i+1,j]\ni l,
\\
0 & \text{otherwise}.
\end{cases}
\end{displaymath}
Moreover, $\overline{C}$ 
is a rational simplicial cone whose faces correspond to collections of arcs in the $n$-gon.
\end{Corollary}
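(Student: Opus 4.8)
The plan is to reduce everything to two linear‑algebra statements: $(\star)$ the $\binom{n}{2}$ vectors $\{\ell_s:s\in[n]\}\cup\{r_{ij}:2\le i+1<j\le n\}$ form a basis of $\mathbb{R}^{\binom{n}{2}}$; and $(\star\star)$ $C=\mathcal{L}_{2,n}+\operatorname{cone}\bigl(r_{ij}:2\le i+1<j\le n\bigr)$. Once these are in hand the corollary is formal: since the $\ell_s$ span $\mathcal{L}_{2,n}$, $(\star)$ makes the images $\overline{r}_{ij}$ a basis of $\mathbb{R}^{\binom{n}{2}}/\mathcal{L}_{2,n}$, so by $(\star\star)$ the cone $\overline{C}=\operatorname{cone}(\overline{r}_{ij})$ is rational and simplicial, its extreme rays are exactly the $\overline{r}_{ij}$, and its faces are the subcones spanned by subsets of these rays. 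As $C$ is a maximal cone of the complete fan $\GF(I_{2,n})$ one has $\dim C=\binom{n}{2}$ and $\dim\mathcal{L}_{2,n}=n$, so $\dim\overline{C}=\binom{n}{2}-n$, which is the number of diagonals of the $n$-gon; the advertised bijections (rays $\leftrightarrow$ diagonals, faces $\leftrightarrow$ collections of arcs) then drop out.

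For $(\star)$ I would introduce, for $S\subseteq[n]$, the cut vector $\delta_S\in\mathbb{R}^{\binom{n}{2}}$ with $(\delta_S)_{kl}=1$ when exactly one of $k,l$ lies in $S$ and $0$ otherwise. Then $\ell_s=\delta_{\{s\}}$ and, reading off the displayed formula, $r_{ij}=-\delta_{[i+1,j]}$, where $[i+1,j]=\{i+1,\dots,j\}$ is the cyclic interval; up to sign these are exactly the $\binom{n}{2}$ pairwise distinct cut vectors of cyclic intervals of $[n]$ (sizes $1$ and $n-1$ give the $\ell_s$, sizes $2$ through $n-2$ the $r_{ij}$, complementary intervals give the same cut, and no $\ell_s$ equals $\pm r_{ij}$). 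A telescoping (``mixed second difference'') computation with these cut vectors — for instance $2\,\mathbf{e}_{ij}=\delta_{[i,j-1]}+\delta_{[i+1,j]}-\delta_{[i,j]}-\delta_{[i+1,j-1]}$ for a diagonal, and $2\,\mathbf{e}_{s,s+1}=\delta_{\{s\}}+\delta_{\{s+1\}}-\delta_{\{s,s+1\}}$ on the boundary — exhibits every standard coordinate vector in their span; since there are exactly $\binom{n}{2}$ of them, spanning forces a basis.

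For $(\star\star)$, the inclusion $\mathcal{L}_{2,n}+\operatorname{cone}(r_{ij})\subseteq C$ is the easy one: $\mathcal{L}_{2,n}\subseteq C$, and each $r_{ij}=-\delta_{[i+1,j]}$ lies in $C$ because it spans, modulo $\mathcal{L}_{2,n}$, a ray of a face $\overline{\sigma_T}$ of $\overline{C}$ for any triangulation $T$ containing $\overline{ij}$ — this face lies in $\trop^+(I_{2,n})\subseteq C$ by \thref{thm:trop+}, and the ray is read off from the tree‑weight formula \eqref{eq:tree wt} applied to the dual tree of $T$. The real work is the reverse inclusion $C\subseteq\mathcal{L}_{2,n}+\operatorname{cone}(r_{ij})$. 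Given $w=(w_{kl})\in C$, expand $w=\sum_s a_s\ell_s+\sum c_{ij}r_{ij}$ using $(\star)$; I would verify that the $r_{ij}$‑coordinate equals $c_{ij}=c_{ij}(w):=\tfrac12\bigl(w_{i+1,j}+w_{i,j+1}-w_{ij}-w_{i+1,j+1}\bigr)$, by checking directly that the functional $w\mapsto c_{ij}(w)$ annihilates every $\ell_s$ and sends $r_{kl}$ to $1$ if $\overline{kl}=\overline{ij}$ and to $0$ otherwise. Since $\overline{ij}$ is a diagonal, $i,i+1,j,j+1$ are four distinct vertices in cyclic order, and $2\,c_{ij}(w)$ is precisely the value on $w$ of the Pl\"ucker functional $w_{i,j+1}+w_{i+1,j}-w_{ij}-w_{i+1,j+1}$ of the quadrilateral $(i,i+1,j,j+1)$. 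By the proof of \thref{lem: mono ideal} the Pl\"ucker relations $R_{ijkl}$ form the reduced Gr\"obner basis of $I_{2,n}$ for $C$ (their leading terms are the crossing monomials generating $\mathcal{M}_{2,n}$, and the remaining, non‑crossing, monomials are not divisible by any of these), so Remark~\ref{rem:Eisenbud} forces every $w\in C$ to satisfy all Pl\"ucker inequalities; the one attached to $(i,i+1,j,j+1)$ reads exactly $c_{ij}(w)\ge0$. Hence $w\in\mathcal{L}_{2,n}+\operatorname{cone}(r_{ij})$, completing $(\star\star)$.

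The hard part is this last inclusion, and within it the identification of the $r_{ij}$‑coordinate functional with one half of the Pl\"ucker functional of the ``unit'' quadrilateral $(i,i+1,j,j+1)$: that is what converts the defining inequalities of $C$ into nonnegativity of the coordinates $c_{ij}$. Both this identification and the spanning claim $(\star)$ come down to finite, if slightly fiddly, computations with cut vectors of cyclic intervals; everything after $(\star)$ and $(\star\star)$ — simpliciality, rationality, and the face/arc dictionary — is routine.
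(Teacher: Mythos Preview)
Your argument is correct and takes a genuinely different route from the paper's. The paper does not compute $C$ directly; instead it leans on \thref{thm:trop+} to identify all $(d+1)$-dimensional faces of $C$ with the maximal cones of $\trop^+(I_{2,n})$, and then invokes the Speyer--Sturmfels combinatorial description of $\trop(I_{2,n})$ to conclude that the rays of $C$ are exactly the ``split'' rays coming from partitions of $[n]$ into two cyclic intervals, with explicit generators read off from the tree-weight formula~\eqref{eq:tree wt}. Simpliciality is then a counting argument: $\overline{C}$ has dimension $\binom{n}{2}-n$ and at most that many rays. Your approach, by contrast, is a self-contained linear-algebra computation: you exhibit the cut vectors of cyclic intervals as a basis of $\mathbb{R}^{\binom{n}{2}}$, identify the dual coordinate functionals with (half of) the Pl\"ucker functionals of the ``unit'' quadrilaterals $(i,i+1,j,j+1)$, and use Remark~\ref{rem:Eisenbud} together with the reduced Gr\"obner basis from \thref{lem: mono ideal} to turn the defining inequalities of $C$ into the nonnegativity constraints $c_{ij}\ge 0$. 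What you gain is independence from the external description of $\trop(I_{2,n})$ in~\cite{SS04} and a cleaner explanation of \emph{why} $\overline{C}$ is simplicial (it is spanned by a basis); what the paper's argument gains is brevity and a tighter narrative link to the tropical picture already developed in \thref{thm:trop+}. One small remark: for your easy inclusion $r_{ij}\in C$ you could bypass \thref{thm:trop+} entirely and simply check, via Remark~\ref{rem:Eisenbud}, that $\init_<(\init_{r_{ij}}(R_{abcd}))$ is the crossing monomial for every quadrilateral; this would make your proof fully independent of the tropical results.
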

\end{samepage}

\begin{proof}
As we have identified a fan consisting of all $(d+1)$-dimensional (closed) faces of $C$, all the rays of $C$ (i.e.,~$(n+1)$-dimensional faces, where $n=\dim\mathcal L_{2,n}$) are also rays of $\trop^+(I_{2,n})$.
By the combinatorial description of $\trop(I_{2,n})$ 
from~\cite{SS04}, we know that the rays correspond to trees with only one interior edge (corresponding to a partition of $[n]$ into two sets).
The rays of the cones corresponding to planar trivalent trees are therefore in correspondence with partitions of $[n]$ into two cyclic intervals.
To a (non-trivalent) planar tree we associate a weight vector as in~\eqref{eq:tree wt}.
\end{proof}

Let ${\bf r}$ denote the matrix whose rows are $\frac{1}{2}r_{ij}$ for all $2\le i+1<j\le n$ and recall the lifting of elements from~\thref{def:lift} (and~\thref{rmk:non primitive rays}).
In the following we denote $\tilde f_{\bf r}$ by $\tilde f$ for all~$f\in S$.

\begin{Proposition}\thlabel{prop:univ and groebner}
For all $i<j<k<l$ we have that $\tilde R_{ijkl}=R_{ijkl}({\bf t})$, where
\begin{displaymath}
 R_{ijkl}({\bf t}):= -p_{ik}p_{jl} + p_{il}p_{jk}\prod_{a\in[i,j-1],\,b\in[k,l-1]} t_{ab} + p_{ij}p_{kl}\prod_{a\in[j,k-1],\,b\in[l,i-1]}t_{ab}.
\end{displaymath}
\end{Proposition}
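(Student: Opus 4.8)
The strategy is to compute $\tilde R_{ijkl}$ directly from the definition of the lift (\thref{def:lift}), using the simplified formula from \thref{lem:lifts}. First I would note that $R_{ijkl}$ is an element of the reduced Gr\"obner basis $\mathcal G$ for $I_{2,n}$ with respect to the term order $<$ compatible with $C$ (by \thref{thm:toric wt ideals} and the discussion preceding it, $\init_C(R_{ijkl}) = p_{ik}p_{jl}$, the crossing monomial). Hence \thref{lem:lifts} applies with $\beta$ the exponent of $p_{ik}p_{jl}$, and we get
\begin{displaymath}
\tilde R_{ijkl} = -p_{ik}p_{jl} + p_{il}p_{jk}\,{\bf t}^{{\bf r}\cdot(\gamma_1-\beta)} + p_{ij}p_{kl}\,{\bf t}^{{\bf r}\cdot(\gamma_2-\beta)},
\end{displaymath}
where $\gamma_1$ is the exponent of $p_{il}p_{jk}$ and $\gamma_2$ that of $p_{ij}p_{kl}$ (signs absorbed into the coefficients, consistent with $R_{ijkl} = p_{ik}p_{jl} - p_{ij}p_{kl} - p_{il}p_{jk}$ as used in the proof of \thref{lem: mono ideal}). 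So the whole problem reduces to computing the two integer vectors ${\bf r}\cdot(\gamma_1-\beta)$ and ${\bf r}\cdot(\gamma_2-\beta)$, i.e.\ for each diagonal $\overline{ab}$ of the $n$-gon (with $2 \le a+1 < b \le n$) evaluating $\tfrac12 r_{ab}\cdot(\gamma_1-\beta)$ and $\tfrac12 r_{ab}\cdot(\gamma_2-\beta)$.

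Next I would carry out this evaluation using the explicit formula for $r_{ab}$ from \thref{cor:C simplicial}: $(r_{ab})_{kl}$ is $-1$ when exactly one of $k,l$ lies in the cyclic interval $[a+1,b]$, and $0$ otherwise. The vector $\gamma_1 - \beta$ is $e_{il} + e_{jk} - e_{ik} - e_{jl}$ (using $e_{pq}$ for the standard basis vector of $\mathbb R^{\binom n2}$ indexed by the arc $\overline{pq}$), and similarly $\gamma_2 - \beta = e_{ij} + e_{kl} - e_{ik} - e_{jl}$. Pairing with $r_{ab}$ amounts to counting, with signs, how many of the four arcs $\overline{il},\overline{jk},\overline{ik},\overline{jl}$ (resp.\ $\overline{ij},\overline{kl},\overline{ik},\overline{jl}$) are ``separated'' by the diagonal $\overline{ab}$. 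A clean way to organize this: an arc $\overline{pq}$ contributes $-1$ to $r_{ab}\cdot e_{pq}$ precisely when $\overline{ab}$ and $\overline{pq}$ cross (as chords of the $n$-gon), and $0$ otherwise. So $r_{ab}\cdot(\gamma_1-\beta)$ counts (crossings of $\overline{ab}$ with $\overline{ik}$ or $\overline{jl}$) minus (crossings with $\overline{il}$ or $\overline{jk}$), and likewise for $\gamma_2 - \beta$. A short case analysis on the position of the pair $\{a,b\}$ relative to the four cyclic ``gaps'' $[i,j-1]$, $[j,k-1]$, $[k,l-1]$, $[l,i-1]$ determined by $i<j<k<l$ then shows: $\tfrac12 r_{ab}\cdot(\gamma_1-\beta) = 1$ iff $a \in [i,j-1]$ and $b \in [k,l-1]$ (and $0$ otherwise), and $\tfrac12 r_{ab}\cdot(\gamma_2-\beta) = 1$ iff $a\in[j,k-1]$ and $b\in[l,i-1]$. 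Substituting these exponents back yields exactly $R_{ijkl}({\bf t})$ as claimed.

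The main obstacle is the bookkeeping in this last case analysis: one must be careful about the cyclic ordering (we do not have $i<j<k<l$ literally in $\{1,\dots,n\}$ but only cyclically), about the half-integer scaling built into ${\bf r}$ (the rows are $\tfrac12 r_{ij}$, which is why \thref{rmk:non primitive rays} is invoked and why the products of $t_{ab}$ come out with exponent exactly $1$), and about keeping the signs of the three terms of $R_{ijkl}$ consistent with the coefficient conventions. It is also worth double-checking that $\tfrac12 r_{ab}\cdot(\alpha - \beta) \ge 0$ for all monomials $\alpha$ of $R_{ijkl}$, which is guaranteed abstractly by \thref{lem:lifts} (since $\beta$ realizes $\mu_{\bf r}(R_{ijkl})$) but should be visible in the explicit count as well — indeed each of the two nonzero exponent vectors has all entries in $\{0,1\}$. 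Once the four-gap case analysis is set up carefully, the verification is routine, so I would present it compactly: state the reduction via \thref{lem:lifts}, give the pairing formula in terms of chord crossings, and tabulate the resulting exponents.
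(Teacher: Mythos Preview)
Your proposal is correct and follows essentially the same route as the paper's proof: both reduce to a case analysis computing $\tfrac12 r_{ab}\cdot\alpha$ for each monomial $\alpha$ of $R_{ijkl}$ according to which of $i,j,k,l$ lie in the cyclic interval $[a+1,b]$. Your organization via \thref{lem:lifts} and the crossing interpretation of $(r_{ab})_{pq}$ is a mild repackaging of the same computation the paper carries out directly on the three exponent vectors.
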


\begin{proof}
We show that every variable $t_{ab}$ occurs with the same exponent in $\tilde R_{ijkl}$ and $R_{ijkl}({\bf t})$.
For simplicity, we adopt the notation $R_{ijkl}=-{\bf p}^{e_{ik}+e_{jl}}+{\bf p}^{e_{il}+e_{jk}} + {\bf p}^{e_{ij}+e_{kl}}$.
To compute the exponents of a variable $t_{ab}$ in $\tilde R_{ijkl}$ we have to distinguish several cases. We give the proof for only two of them as all others are similar.
\medskip

\noindent{\it Case 1.} Assume that $i\in [a+1,b]\not\ni j,k,l$. Note that due to symmetries this is equivalent to assuming $i\not \in [a+1,b]\ni j,k,l$. Then
$\frac{1}{2}r_{ab}\cdot (e_{ik}+e_{jl})=\frac{1}{2}r_{ab}\cdot (e_{il}+e_{jk})=\frac{1}{2}r_{ab}\cdot (e_{ij}+e_{kl})=-\frac{1}{2}$.
Hence, in $\tilde R_{ijkl}$ the variable $t_{ab}$ does not appear. One can see that $t_{ab}$ also does not appear in $R_{ijkl}({\bf t})$, since neither $b\in[l,i-1]$ nor $b\in[k,l-1]$.
\medskip

\noindent{\it Case 2.} Assume that $i,j\in [a+1,b]\not\ni k,l$. Note that this is equivalent to assuming $i,j\not \in [a+1,b]\ni k,l$. Then $\frac{1}{2}r_{ab}\cdot (e_{ik}+e_{jl})=\frac{1}{2}r_{ab}\cdot (e_{il}+e_{jk})=-1 \ \ \text{ and }\ \ \frac{1}{2}r_{ab}\cdot (e_{ij}+e_{kl})=0$.
Hence, in~$\tilde R_{ijkl}$ the variable $t_{ab}$ does only appear with exponent 1 in the coefficients of the monomial $p_{ij}p_{kl}$.
As~we assumed $i,j\in [a+1,b]\not\ni k,l$ it follows that $a\in [l,i-1]$ and $b\in [j,k-1]$. So in~$R_{ijkl}({\bf t})$ the variable $t_{ab}$ appears also with exponent $1$ in the coefficient of the monomial~$p_{ij}p_{kl}$.
\end{proof}

\begin{Corollary}\thlabel{2n SMB}
The algebra $\tilde A_{2,n}=S[t_{ij}\colon 2\le i+1<j\le n]/\tilde I_{2,n}$ is a free $\mathbb C[t_{ij}\colon 2\le i+1<j\le n]$-module with basis given by the cluster monomials.
Moreover, fibers of the flat family $\pi\colon {\rm Proj}(\tilde A_{2,n})\to \mathbb A^{\frac{n(n-3)}{2}}$ are in correspondence with collections of arcs in the $n$-gon.
\end{Corollary}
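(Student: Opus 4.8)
The plan is to show that this is a direct application of the general machinery developed in Section~\ref{sec:flat families}, combined with the specific identifications established earlier in Section~\ref{sec:monomial_degeneration}. The two claims are: (a) $\tilde A_{2,n}$ is a free $\mathbb C[t_{ij}]$-module with basis the cluster monomials, and (b) the fibers of $\pi$ correspond to collections of arcs in the $n$-gon.

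For part (a), I would first invoke \thref{lem: mono ideal} to identify $\init_u(I_{2,n}) = \mathcal M_{2,n}$, so that the cone $C$ defined just before \thref{cluster vs SM basis} is a maximal cone in $\GF(I_{2,n})$ with monomial initial ideal $\mathcal M_{2,n}$. By \thref{cor:C simplicial}, the quotient cone $\overline C$ is a rational simplicial cone whose rays are (up to scaling) the vectors $r_{ij}$ for $2 \le i+1 < j \le n$; note there are exactly $\binom{n}{2} - n = \tfrac{n(n-3)}{2}$ of these. The matrix $\bf r$ with rows $\tfrac12 r_{ij}$ is then a valid choice of ray matrix in the sense of \thref{thm:family} (allowing the non-primitive rescaling permitted by \thref{rmk:non primitive rays}). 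Applying \thref{thm:family}(i) directly yields that $\tilde A_{2,n} = \tilde A_{\bf r}$ is a free $\mathbb C[t_{ij}\colon 2\le i+1<j\le n]$-module with basis $\mathbb B_<$, the standard monomial basis of $A_{2,n}$ with respect to $\init_<(I_{2,n}) = \mathcal M_{2,n}$. Finally, \thref{cluster vs SM basis} identifies $\mathbb B_<$ — the basis of non-crossing monomials — with the basis of cluster monomials. This also gives the flat family $\pi\colon {\rm Proj}(\tilde A_{2,n}) \to \mathbb A^m$ with $m = \tfrac{n(n-3)}{2}$, and one should observe $\tilde I_{2,n}$ is generated by the lifts $\tilde R_{ijkl}$ by \thref{prop:lifted generators} (equivalently, the explicit forms $R_{ijkl}({\bf t})$ from \thref{prop:univ and groebner}), so the presentation $S[t_{ij}]/\tilde I_{2,n}$ is the correct one.

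For part (b), I would apply \thref{thm:family}(ii): for every face $\tau$ of $C$ there is a point ${\bf a}_\tau \in \mathbb A^m$ with fiber isomorphic to ${\rm Proj}$ of the quotient by $\init_\tau(I_{2,n})$, and by \thref{prop:generic fibers} the fiber over any point depends only on which coordinates vanish. Under the identification of the rays of $\overline C$ with arcs $\overline{ij}$ (as in \thref{cor:C simplicial}, which states the faces of $\overline C$ correspond to collections of arcs), a point ${\bf a} \in \mathbb A^m$ with zero set of coordinates indexed by a subset $S$ of arcs sits over the torus orbit of $X_C$ corresponding to the face spanned by the rays in $S$; since $\overline C$ is simplicial, every subset of rays spans a face, so every collection of arcs (consisting only of arcs $\overline{ij}$ with $2 \le i+1 < j \le n$, i.e.\ internal arcs) arises. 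Thus the isomorphism classes of fibers of $\pi$ are in bijection with subsets of the ray set, i.e.\ collections of internal arcs in the $n$-gon.

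The main obstacle, such as it is, is purely bookkeeping: one must be careful that the count of rays is $\tfrac{n(n-3)}{2}$ (matching the exponent of $\mathbb A^m$ in the statement), that the rescaling by $\tfrac12$ is harmless (covered by \thref{rmk:non primitive rays}), and that the combinatorial indexing of faces of $\overline C$ by collections of arcs — which is asserted in \thref{cor:C simplicial} and underlies \thref{cluster vs SM basis} — is applied consistently. No new mathematical input is needed beyond what is already in place; the corollary is essentially a specialization of \thref{thm:family} once the cone $C$ has been pinned down. I would keep the written proof to two or three sentences, citing \thref{thm:family}, \thref{cluster vs SM basis}, \thref{cor:C simplicial}, and \thref{prop:generic fibers}.
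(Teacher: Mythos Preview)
Your proposal is correct and follows essentially the same approach as the paper: the paper's proof cites \thref{prop:univ and groebner}, \thref{thm:family}(i), and \thref{cluster vs SM basis} for the first claim, and \thref{thm:family}(ii) together with \thref{cor:C simplicial} for the second. Your additional references to \thref{lem: mono ideal}, \thref{rmk:non primitive rays}, and \thref{prop:generic fibers} are reasonable elaborations of the same argument but not strictly needed beyond what the paper cites.
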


\begin{proof}
The first part is a direct corollary of~\thref{prop:univ and groebner},~\thref{thm:family}$(i)$ and
\thref{cluster vs SM basis}.
The second part follows from~\thref{thm:family}$(ii)$ and~\thref{cor:C simplicial}.
\end{proof}

We are now prepared to state and prove our main result for $\Gr(2,\mathbb C^n)$.

\begin{Theorem}\thlabel{thm:A univ as quotient}
The Pl\"ucker algebra with universal coefficients $A_{2,n}^{\rm univ}$ is canonically isomorphic to the quotient $\tilde A_{2,n}=S[t_{ij}\colon 2\le i+1<j\le n]/\tilde I_{2,n}$.
\end{Theorem}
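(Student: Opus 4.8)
The plan is to exhibit an explicit isomorphism by matching generators, exchange relations and universal coefficients on both sides. On the one hand, $\tilde A_{2,n}$ is generated over $\mathbb C[t_{ij}]$ by the classes of the Pl\"ucker variables $p_{ij}$ subject to the lifted Pl\"ucker relations $\tilde R_{ijkl}=R_{ijkl}(\mathbf t)$ computed in \thref{prop:univ and groebner}. On the other hand, by \thref{def:Plucker univ} the algebra $A_{2,n}^{\rm univ}=A\big(B_{(Q_T,F_T)}^{\rm univ}\big)$ is, by the Laurent phenomenon and the fact that $A_{2,n}$ is a finite-type cluster algebra, generated over $\mathbb C[y_{ij}\colon 2\le i+1<j\le n]$ by its cluster variables, one for each arc $\overline{ij}$ of the $n$-gon, subject to the exchange relations at each mutable arc. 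The first step is therefore to fix a triangulation $T$ of the $n$-gon, use it to present $A_{2,n}^{\rm univ}$, and define a $\mathbb C$-algebra map $\varphi\colon \tilde A_{2,n}\to A_{2,n}^{\rm univ}$ by sending $t_{ij}\mapsto y_{ij}$ and each $\bar p_{ij}$ to the cluster variable of $A_{2,n}^{\rm univ}$ labelled by $\overline{ij}$.

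\textbf{Key steps.} First I would check that $\varphi$ is well defined, i.e.\ that each lifted relation $R_{ijkl}(\mathbf t)$ maps to $0$ in $A_{2,n}^{\rm univ}$. This is the heart of the argument: the exchange relation at a mutable arc $\overline{ik}$ sitting in a quadrilateral $\overline{ijkl}$ reads $x_{ik}x_{ik}'=(\text{coefficient monomial})\cdot x_{ij}x_{kl}+(\text{coefficient monomial})\cdot x_{il}x_{jk}$, where $x_{ik}'=x_{jl}$ is the flipped cluster variable and the coefficient monomials are products of the $y$-variables determined by the $\mathbf g$-vectors. One must verify that the exponents of the $y$-variables appearing in these coefficient monomials are exactly $\prod_{a\in[j,k-1],\,b\in[l,i-1]}y_{ab}$ and $\prod_{a\in[i,j-1],\,b\in[k,l-1]}y_{ab}$, matching the $t$-monomials of \thref{prop:univ and groebner}. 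This is where I expect the main obstacle: one needs to trace through Reading's description of $B_{(Q_T,F_T)}^{\rm univ}$ (Theorem-Definition \thref{def:univ_coeff_R} and \thref{prop:bipartite_univ_Q}, together with the $\mathbf g$-vector combinatorics of \thref{def:g} and \thref{lem:g are gv}) and show the coefficient monomial in the exchange relation for $\overline{ik}$ is indexed precisely by those arcs $\overline{ab}$ for which the relevant $\mathbf g$-vector sum has the prescribed sign, independently of the auxiliary triangulation $T$. The computation in the proof of \thref{lem:g-v on relations} already gives ${\bf g}_{ij}+{\bf g}_{kl}={\bf g}_{ik}+{\bf g}_{jl}$ and identifies the crossing monomial, and this should be leveraged directly.

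\textbf{Finishing the argument.} Once $\varphi$ is well defined, I would construct an inverse $\psi\colon A_{2,n}^{\rm univ}\to \tilde A_{2,n}$ sending $y_{ij}\mapsto t_{ij}$ and the initial cluster variables (those $\bar p_{ij}$ with $\overline{ij}\in T$) to the corresponding classes in $\tilde A_{2,n}$; by the universal property of the cluster algebra with universal coefficients (\thref{rem:univ_property}, the analogue of \thref{rem:univ_property} for the frozen setting), it suffices that the images of the initial extended cluster in $\tilde A_{2,n}$ satisfy the defining exchange relations, which follows from the same computation of $\tilde R_{ijkl}$ as above together with $\tilde A_{2,n}\otimes_{\mathbb C[t_{ij}]}\mathbb C[y_{ij}^{\pm1}]$ being a localization in which the Pl\"ucker coordinates become invertible (using \thref{2n SMB} and the Laurent phenomenon). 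Finally, $\varphi$ and $\psi$ are mutually inverse because they agree on the generating sets $\{t_{ij}\}\cup\{\bar p_{ij}\colon \overline{ij}\in T\}$ of $\tilde A_{2,n}$ and on $\{y_{ij}\}\cup\{$initial cluster variables$\}$ of $A_{2,n}^{\rm univ}$, and both algebras are free $\mathbb C[t_{ij}]$- (resp.\ $\mathbb C[y_{ij}]$-) modules with bases given by cluster monomials (\thref{2n SMB} and the basis of cluster monomials of \thref{thm:cluster mono basis} adapted to universal coefficients), so an algebra map matching these bases is an isomorphism; canonicity follows since the isomorphism is determined by its effect on the initial seed, exactly as in \thref{def:univ_coeff_R}.
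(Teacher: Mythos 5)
Your architecture matches the paper's own proof almost exactly: define a map on generators by $p_{ij}\mapsto \bar p_{ij}$, $t_{ij}\mapsto y_{ij}$, verify that each lifted Pl\"ucker relation $\tilde R_{ijkl}$ is sent to the corresponding exchange relation of $A_{2,n}^{\rm univ}$, and conclude that the induced map on quotients is an isomorphism because both sides are free $\mathbb C[t_{ij}]$-modules on the cluster monomial basis (\thref{2n SMB} on one side, \cite[Theorem~0.3]{GHKK} on the other). You also correctly identify where the real work lies.

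The gap is that you flag the key verification --- that the exponents of the $y$-variables in the exchange relation at $\overline{ik}$ are precisely $\prod_{a\in[j,k-1],\,b\in[l,i-1]}y_{ab}$ and $\prod_{a\in[i,j-1],\,b\in[k,l-1]}y_{ab}$, matching $\tilde R_{ijkl}$ from \thref{prop:univ and groebner} --- as ``the main obstacle'' but do not resolve it, and without it $\varphi$ is not well defined. The paper closes this gap by a concrete trick: the rows of $U_{Q_T^{\rm mut}}$ are by \thref{def:univ frozen} the truncated $\mathbf g$-vectors with respect to $\big(Q_T^{\rm mut}\big)^{\rm op}$, and this opposite quiver equals $Q_{T^\vee}^{\rm mut}$ for $T^\vee$ the mirror image of $T$ across a line. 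Reading off truncated $\mathbf g$-vectors from $T^\vee$ using the combinatorics of \thref{def:g} and \thref{lem:g are gv} then gives the explicit sign formula $({\bf g}^\vee_{ab})_{ik}=\pm 1$ or $0$ according to whether $(a,b)$ lies in the two interval products above, which is exactly what is needed. Simply ``tracing through Reading's description'' without this geometric reinterpretation would be considerably messier, so this step is not a formality.

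A secondary remark: your proposed detour of constructing an inverse $\psi$ via the universal property of $A_{2,n}^{\rm univ}$ is unnecessary and somewhat circular --- to invoke the universal property one would already need to know that $\tilde A_{2,n}$ is a cluster algebra of type $\mathtt A_{n-3}$ with coefficients in a free tropical semifield, which again rests on the deferred exchange-relation matching. The basis-matching argument you give at the end suffices on its own, and is precisely what the paper does: surjectivity is clear from the generators, and injectivity follows because $\bar\Psi$ carries the $\mathbb C[t_{ij}]$-basis of standard (= non-crossing = cluster) monomials of $\tilde A_{2,n}$ bijectively onto the $\mathbb C[y_{ij}]$-basis of cluster monomials of $A_{2,n}^{\rm univ}$.
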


\begin{proof}
Recall that $A_{2,n}^{\rm univ}$ has the same set of (frozen and mutable) cluster variables as $A_{2,n}$, namely $\{\bar p_{ij}\colon 1\le i<j\le n\}$, and additionally coefficients $\{y_{ij}\colon 2\le i+1<j\le n\}$.
We define
\begin{displaymath}\begin{split}
\Psi\colon\quad S[t_{ij}\colon 2\le i+1<j\le n] &\to A_{2,n}^{\rm univ},\\
 p_{ij} &\mapsto \bar p_{ij} \qquad \text{for}\quad 1\le i<j\le n, \\
t_{ij} &\mapsto y_{ij}\qquad \text{for}\quad 2\le i+1<j\le n.
\end{split}
\end{displaymath}
This morphism of $\mathbb C$-algebras induces the desired isomorphism between $A_{2,n}^{\rm univ}$ and $\tilde A_{2,n}$.
By~Pro\-po\-si\-tion~\ref{prop:lifted generators} the lifts of Pl\"ucker relations $\tilde R_{ijkl}$ generate the lifted ideal $\tilde I_{2,n}$.
We proceed by showing that $\Psi\big(\tilde R_{ijkl}\big)$ is the corresponding exchange relations in $A^{\rm univ}_{2,n}$.
Since the mutable parts of $B^{\rm univ}_{(Q_T,F_T)}$ and $B_{(Q_T,F_T)}$ coincide for every triangulation $T$ there is a natural bijection between cluster monomials of $A_{2,n}$ and $A_{2,n}^{\rm univ}$.
It is the only bijection that sends the initial cluster variables of $B_{(Q_T,F_T)}$ to those of $B^{\rm univ}_{(Q_T,F_T)}$ and commutes with mutation.
Further, it induces a bijection between the exchange relations associated to $B_{(Q_T,F_T)}$ and $B^{\rm univ}_{(Q_T,F_T)}$, which yields bijections between the sets:
\begin{displaymath}
 \left\{\begin{matrix}
 \tilde R_{ijkl}\in \tilde I_{2,n} \text{ with} \\
 i,j,k,l\in [n],\\
 i<j<k<l
 \end{matrix}\right\}
\longleftrightarrow
 \left\{\begin{matrix}
 \text{quadrilaterals with vertices } \\
 i<j<k<l \text{ in the $n$-gon}
 \end{matrix}\right\}
\longleftrightarrow
 \left\{\begin{matrix}
 \text{exchange relations}\\
 \text{of } A_{2,n}^{\rm univ}
 \end{matrix}\right\}\!.
\end{displaymath}
Consider a quadrilateral with vertices $i<j<k<l$ in the $n$-gon and fix a triangulation $T$ in which this quadrilateral occurs.
So without loss of generality we have $\overline{ij},\overline{ik}, \overline{il}, \overline{jk}, \overline{kl}\in T$ (see, e.g.,~left side of Figure~\ref{fig:ex rel Auniv}).
The exchange relation of $A_{2,n}^{\rm univ}$ associated with the quadrilateral $i<j<k<l$ is of form:\vspace{-1ex}
\begin{displaymath}
 E^{\rm univ}_{ijkl}:= - \bar p_{ik}\bar p_{jl} + \bar p_{ij}\bar p_{kl} \prod_{({\bf g}^\vee_{ab})_{ik}=-1} y_{ab} + \bar p_{il}\bar p_{jk} \prod_{({\bf g}^\vee_{ab})_{ik}=+1} y_{ab},
\end{displaymath}

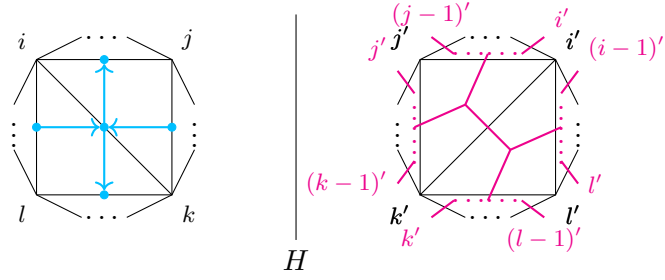
\begin{figure}
\centering
\begin{tikzpicture}[scale=.3]
\draw (0,0) -- (0,6) -- (6,6) -- (6,0) -- (0,0);
\draw (0,6) -- (6,0);
\draw (-1,4) -- (0,6) -- (2,7);
\draw (4,7) -- (6,6) -- (7,4);
\draw (4,-1) -- (6,0) -- (7,2);
\draw (-1,2) -- (0,0) -- (2,-1);

\draw[fill,capri] (3,3) circle (.175cm);
\draw[fill,capri] (3,6) circle (.175cm);
\draw[fill,capri] (6,3) circle (.175cm);
\draw[fill,capri] (3,0) circle (.175cm);
\draw[fill,capri] (0,3) circle (.175cm);
\draw[thick,capri,->] (3,3) -- (3,5.8);
\draw[thick,capri,->] (3,3) -- (3,.2);
\draw[thick,capri,->] (0,3) -- (2.8,3);
\draw[thick,capri,->] (6,3) -- (3.2,3);

\node at (-1,3) {$\vdots$};
\node at (3,7) {$\dots$};
\node at (7,3) {$\vdots$};
\node at (3,-1) {$\dots$};
\node[above left] at (0,6) {\footnotesize{$i$}};
\node[above right] at (6,6) {\footnotesize{$j$}};
\node[below right] at (6,0) {\footnotesize{$k$}};
\node[below left] at (0,0) {\footnotesize{$l$}};

\draw (11.5, 8) -- (11.5,-2);
\node[below] at (11.5,-2) {$H$};

\begin{scope}[xshift=17cm]
\draw (0,0) -- (0,6) -- (6,6) -- (6,0) -- (0,0) -- (6,6);
\draw (-1,4) -- (0,6) -- (2,7);
\draw (4,7) -- (6,6) -- (7,4);
\draw (4,-1) -- (6,0) -- (7,2);
\draw (-1,2) -- (0,0) -- (2,-1);
\node at (-1,3) {$\vdots$};
\node at (3,7) {$\dots$};
\node at (7,3) {$\vdots$};
\node at (3,-1) {$\dots$};
\node[above left] at (0,6) {\footnotesize{$j'$}};
\node[above right] at (6,6) {\footnotesize{$i'$}};
\node[below right] at (6,0) {\footnotesize{$l'$}};
\node[below left] at (0,0) {\footnotesize{$k'$}};

\draw[thick,magenta] (.5,7) -- (1.5,6.25);
\node[above,magenta] at (.5,7) {\footnotesize$(j-1)'$};
\draw[thick,magenta] (4.5,6.25) -- (5.5,7);
\node[above right,magenta] at (5.5,7) {\footnotesize$i'$};
\draw[thick,magenta] (6.25,4.5) -- (7,5.5);
\node[above right,magenta] at (7,5.5) {\footnotesize $(i-1)'$};
\draw[thick,magenta] (6.25,1.5) -- (7,.5);
\node[right,magenta] at (7,.5) {\footnotesize $l'$};
\draw[thick,magenta] (4.5,-.25) -- (5.5,-1);
\node[below,magenta] at (5.5,-1) {\footnotesize $(l-1)'$};
\draw[thick,magenta] (1.5,-.25) -- (.5,-1);
\node[below left,magenta] at (.5,-1) {\footnotesize $k'$};
\draw[thick,magenta] (-.25,1.5) -- (-1,.5);
\node[left,magenta] at (-1,.5) {\footnotesize $(k-1)'$};
\draw[thick,magenta] (-.25,4.5) -- (-1,5.5);
\node[above left,magenta] at (-1,5.5) {\footnotesize $j'$};

\draw[thick,magenta] (-.25,3) -- (2,4) -- (3,6.25);
\draw[thick,magenta] (3,-.25) -- (4,2) -- (6.25,3);
\draw[thick,magenta] (2,4) -- (4,2);
\node[magenta] at (3.75,6.25) {$\dots$};
\node[magenta] at (2.25,6.25) {$\dots$};
\node[magenta] at (6.25,3.75) {$\vdots$};
\node[magenta] at (6.25,2.25) {$\vdots$};
\node[magenta] at (2.25,-.25) {$\dots$};
\node[magenta] at (3.75,-.25) {$\dots$};
\node[magenta] at (-.25,2.25) {$\vdots$};
\node[magenta] at (-.25,3.75) {$\vdots$};

\node[above left] at (0,6) {\footnotesize{$j'$}};
\node[above right] at (6,6) {\footnotesize{$i'$}};
\node[below right] at (6,0) {\footnotesize{$l'$}};
\node[below left] at (0,0) {\footnotesize{$k'$}};
\end{scope}
\end{tikzpicture}
 \caption{A quadrilateral $i<j<k<l$ in a triangulation $T$ and the reflected triangulation $T^\vee$ from which truncated ${\bf g}$-vectors with respect to $\big(Q_T^{\rm mut}\big)^{\rm op}$ (i.e.,~columns of $U_{Q_T^{\rm mut}}$) can be read off.}
 \label{fig:ex rel Auniv}
\end{figure}\noindent
where ${\bf g}^\vee_{ab}$ is the $ab^{\text{th}}$ row of $U_{Q_T^{\rm mut}}$ and $\big({\bf g}^\vee_{ab}\big)_{ik}$ is its entry in the column of $B_{(Q_T,F_T)}^{\rm univ}$ corresponding to the mutable variable $\bar p_{ik}$.
Hence, we need to compute those rows of $U_{Q_T^{\rm mut}}$ with non-zero entries in the $ik^{\text{th}}$ column.
To do so, we embed $T$ into $\mathbb R^2$ and consider a hyperplane $H\subset \mathbb R^2$ which does not intersect $T$.
Let $T^\vee$ be the image of $T$ under the reflection $s_H$ with respect to $H$ and denote $m':=s_H(m)$ for all $m\in [n]$.
Naturally, we have $Q_{T^\vee}=(Q_T)^{\rm op}$, so $Q^{\rm mut}_{T^\vee}=\big(Q_T^{\rm mut}\big)^{\rm op}$.
Using the right side of Figure~\ref{fig:ex rel Auniv} we compute the $ik^{\text{th}}$ entry of truncated ${\bf g}$-vectors with respect to $Q^{\rm mut}_{T^{\vee}}$:\vspace{-1ex}
\begin{equation}\label{eq:signs gv}
({\bf g}^\vee_{ab})_{ik} =
\begin{cases}
 +1 & \text{if}\quad a\in[i,j-1]\quad \text{and}\quad b\in [k,l-1]\quad \text{(or vice versa)},\\
 -1 & \text{if}\quad a\in[j,k-1]\quad \text{and}\quad b\in [l,i-1]\quad \text{(or vice versa)},\\
 0 & \text{otherwise}.
\end{cases}
\end{equation}
We compute:\vspace{-1ex}
\begin{displaymath}
\Psi\big(\tilde R_{ijkl}\big)
= - \bar p_{ik}\bar p_{jl} + \bar p_{il}\bar p_{jk} \prod_{a\in[i,j-1],\,b\in[k,l-1]} y_{ab} + \bar p_{ij}\bar p_{kl} \prod_{a\in[j,k-1],\,b\in[l,i-1]} y_{ab} \overset{\eqref{eq:signs gv}}{=} E_{ijkl}^{\rm univ}.
\end{displaymath}
In particular, $\Psi$ induces a surjective map $\bar \Psi\colon \tilde A_{2,n} \to A_{2,n}^{\rm univ}$.
By~\thref{2n SMB}, $\tilde A_{2,n}$ is a free $\mathbb C[t_{ij}]$-module whose basis are the cluster monomials.
Similarly, after identifying $t_{ij}$, $2\le i+1<j\le n$, with $\Psi(t_{ij})=y_{ij}$, $A_{2,n}^{\rm univ}$ is a free $\mathbb C[t_{ij}]$-module with basis given by the cluster monomials by~\cite[Theorem~0.3 and p.~502]{GHKK}.
Hence, $\bar \Psi$ is also injective and the claim follows.
\end{proof}

As $A_{2,n}^{\rm univ}$ is by definition a domain the following is now a direct consequence.

\begin{Corollary}\thlabel{prop:I2n prime}
The ideal $\tilde I_{2,n}\subset S[{\bf t}]$ is prime.
\end{Corollary}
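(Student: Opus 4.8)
The plan is to deduce primeness of $\tilde I_{2,n}$ directly from the identification of the lifted algebra with a cluster algebra established in~\thref{thm:A univ as quotient}. Concretely, I would first recall that by that theorem there is a canonical isomorphism of $\mathbb C$-algebras
\begin{displaymath}
\tilde A_{2,n}= S[t_{ij}\colon 2\le i+1<j\le n]/\tilde I_{2,n}\ \cong\ A_{2,n}^{\rm univ}.
\end{displaymath}

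Next I would observe that $A_{2,n}^{\rm univ}$ is an integral domain. This is immediate from its construction: by~\thref{def:Plucker univ} it is the cluster algebra $A\big(B_{(Q_T,F_T)}^{\rm univ}\big)$, and by~\thref{def:cluster algebra} any cluster algebra is by definition a $\mathbb C$-subalgebra of a field of rational functions $\mathcal F=\mathbb C(u_1,\dots,u_{m+f})$, hence has no zero divisors. (Equivalently, one may invoke the Laurent phenomenon, \thref{thm:SLaurent}, which exhibits $A_{2,n}^{\rm univ}$ as a subalgebra of a Laurent polynomial ring.)

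Combining the two points, $S[{\bf t}]/\tilde I_{2,n}\cong A_{2,n}^{\rm univ}$ is a domain, and therefore the ideal $\tilde I_{2,n}\subset S[{\bf t}]$ is prime, which is exactly the assertion. I do not expect any genuine obstacle here: all the substantive work — constructing the maximal cone $C$, proving that $\tilde I_{2,n}$ is generated by the lifted Pl\"ucker relations (\thref{prop:lifted generators}, \thref{prop:univ and groebner}), and matching them with the universal exchange relations — has already been carried out in~\thref{thm:A univ as quotient}, so this corollary is a one-line consequence once that isomorphism is in hand.
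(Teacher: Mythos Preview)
Your proposal is correct and matches the paper's own argument exactly: the paper states just before this corollary that ``$A_{2,n}^{\rm univ}$ is by definition a domain'' and deduces primeness of $\tilde I_{2,n}$ as an immediate consequence of the isomorphism in~\thref{thm:A univ as quotient}. Your write-up simply spells out this one-line deduction in slightly more detail.
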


\begin{Example}
We list the lifted Pl\"ucker relations, respectively the exchange relations of $A_{2,5}^{\rm univ}$, associated to our running example. These relations also constitute a Gr\"obner basis for $\tilde I_{2,5}$, the crossing monomial of each relation is the first one. As~\thref{lem:lifts} predicts, this is the only term with coefficient in $\mathbb C$. Pl\"ucker variables of frozen cluster variables are marked in \textcolor{blue}{blue}:
\begin{displaymath}
\begin{matrix*}[l]
\tilde R_{1234} & = & p_{13}\,p_{24} & - & \textcolor{blue}{p_{12}\,p_{34}}\, t_{24}\,t_{25} & - & p_{14}\,\textcolor{blue}{p_{23}}\,t_{13}, \\
\tilde R_{1235} & = & p_{13}\,p_{25} & - & \textcolor{blue}{p_{15}\,p_{23}} \,t_{13}\, t_{14} & - & \textcolor{blue}{p_{12}}\,p_{35}\,t_{25}, \\
\tilde R_{1245} & = & p_{14}\,p_{25} & - & \textcolor{blue}{p_{12}\,p_{45}}\, t_{25}\, t_{35} & - & \textcolor{blue}{p_{15}}\,p_{24} \,t_{14}, \\
\tilde R_{1345} & = & p_{14}\,p_{35} & - & \textcolor{blue}{p_{15}\,p_{34}}\, t_{14}t_{24} & - & p_{13}\,\textcolor{blue}{p_{45}}\, t_{35}, \\
\tilde R_{2345} & = & p_{24}\,p_{35} & - & \textcolor{blue}{p_{23}\,p_{45}} \,t_{13}\,t_{35} & - & p_{25}\,\textcolor{blue}{p_{34}}\, t_{24}.
\end{matrix*}
\end{displaymath}
\end{Example}

\subsection[The Grassmannian of 3-planes in 6-space]
{The Grassmannian $\boldsymbol{\Gr\big(3,\mathbb C^6\big)}$}\label{sec:Gr(3,6)}

We now turn to the case of $\Gr\big(3,\mathbb C^6\big)$ and prove an analogue of~\thref{thm:A univ as quotient}. To highlight various important differences between this case and the case of $\Gr(2,\mathbb C^n)$, we focus more on~explicit computations.
We believe that the explicit computations help to understand the difficulties that may arise when studying other compactifications of finite type cluster varieties such as $\Gr(3,\mathbb C^7)$, $\Gr(3,\mathbb C^8)$ or (skew-)Schubert varieties inside Grassmannians as in~\cite{SSBW}.

Denote by $A_{3,6}$ the homogeneous coordinate ring of the Grassmannian $\Gr\big(3,\mathbb C^6\big)$ with res\-pect to its Pl\"ucker embedding.
We use the cluster structure on $A_{3,6}$ to consider $\Gr\big(3,\mathbb C^6\big) $ as a~weighted projective variety
as follows.
The cluster algebra $A_{3,6}$ has 22 cluster variables, out of which 20 are the Pl\"ucker coordinates and the 2 additional elements are $\bar X=\bar p_{145}\bar p_{236}-\textcolor{blue}{\bar p_{123}}\textcolor{blue}{\bar p_{456}}$ and $\bar Y=\bar p_{125}\bar p_{346}-\textcolor{blue}{\bar p_{126}}\textcolor{blue}{\bar p_{345}}$, see~\cite[Theorem~6]{Sco06}.
We systematically write the frozen Pl\"ucker coordinates in {\color{blue}blue}.
We list the 22 cluster variables of $A_{3,6}$ in the following order and fix this order for later use
\begin{gather}\nonumber
\textcolor{blue}{\bar p_{123}},\;\bar p_{124},\;\bar p_{125},\; \textcolor{blue}{\bar p_{126}},\;\bar p_{134},\;\bar p_{135},\;\bar p_{136},\;\bar p_{145},\;\bar p_{146},\;\textcolor{blue}{\bar p_{156}},\;
\textcolor{blue}{\bar p_{234}},\;\bar p_{235},\;\bar p_{236},\;
\bar p_{245},
\\ \qquad
\bar p_{246},\;\bar p_{256},\; \textcolor{blue}{\bar p_{345}},\;\bar p_{346},\;\bar p_{356},\;\textcolor{blue}{\bar p_{456}},\;\bar X,\;\bar Y.
\label{eq:order variables}
\end{gather}
Every seed $s$ of $A_{3,6}$ consists of the six frozen variables $\textcolor{blue}{\bar p_{123}}$, $\textcolor{blue}{\bar p_{234}}$, $\textcolor{blue}{\bar p_{345}}$, $\textcolor{blue}{\bar p_{456}}$, $\textcolor{blue}{\bar p_{156}}$, $\textcolor{blue}{\bar p_{126}}$
and four additional mutable cluster variables.
Consider the following bijection that sends the Pl\"ucker variable $p_{ijk}$ to the Pl\"ucker coordinate $\bar p_{ijk}\in A_{3,6}$ and $X$ to $\bar X\in A_{3,6}$ as well as $Y$ to $\bar Y\in A_{3,6}$:
\begin{equation}\label{eq:identification}
 \{p_{123}, \dots , p_{456}, X,Y \} \longleftrightarrow \{ \text{cluster variables of }A_{3,6}\}.
\end{equation}

Denote by ${\bf d}\in \mathbb{Z}^{22}$ the vector $(1, \dots , 1, 2,2)$. That is, the first $20$ entries of ${\bf d}$ are $1$ and the last two are $2$.
The bijection (\ref{eq:identification}) induces a surjective map
\begin{equation}\label{eq:psi36}
\psi\colon\ \mathbb{C}_{\bf d}[p_{123}, \dots , p_{456}, X,Y ] \twoheadrightarrow A_{3,6}.
\end{equation}
Let $I^{\rm ex}:=\ker(\psi)$, so we obtain a ring isomorphism $A:=\mathbb C_{\bf d}[p_{123}, \dots , p_{456}, X,Y]/I^{\rm ex}\cong A_{3,6} $.
Moreover, the weig\-h\-ted projective variety $V(I^{\rm ex})\subset \mathbb P({\bf d})$ is isomorphic to $\Gr\big(3,\mathbb C^6\big)$ as a~weig\-h\-ted projective variety. Indeed, ${\bf d}$ was chosen so that every exchange relation in the cluster structure of $A_{3,6}$ is identified with a {\bf d}-homogeneous element of $\mathbb{C}_{\bf d}[p_{123}, \dots , p_{456}, X,Y]$ and $I^{\rm ex }$ is prime (hence, radical) since $A_{3,6} $ is a domain.
Therefore, $I(V(I^{\rm ex}))= I^{\rm ex}$ and $\psi$ induces an isomorphism of $\mathbb Z$-graded rings $S(V(I^{\rm ex}))\to A_{3,6}$.
One can verify (e.g.,~using \texttt{Macaulay2}~\cite{M2}) that the ideal $I^{\rm ex}$ is generated by all three-term Pl\"ucker relations and the following seven additional relations:
\begin{gather}
p_{145}p_{236} - \textcolor{blue}{p_{123}}\textcolor{blue}{p_{456}} - X,\qquad
p_{136}p_{245} - \textcolor{blue}{p_{126}}\textcolor{blue}{p_{345}} - X,\qquad
p_{146}p_{235} - \textcolor{blue}{p_{156}}\textcolor{blue}{p_{234}} - X,\nonumber
\\
p_{124}p_{356} - \textcolor{blue}{p_{123}}\textcolor{blue}{p_{456}} - Y,\qquad
p_{125}p_{346} - \textcolor{blue}{p_{126}}\textcolor{blue}{p_{345}} - Y,\qquad
p_{134}p_{256} - \textcolor{blue}{p_{156}}\textcolor{blue}{p_{234}} - Y,\nonumber
\\
\label{eq:4terms}
 p_{135}p_{246} - p_{134}p_{256} - \textcolor{blue}{p_{126}}\textcolor{blue}{p_{345}} - p_{145}p_{236}.
\end{gather}
Note that with exception of the last relation all of them correspond to exchange relations in~$A_{3,6}$.
We study the Gr\"obner fan of $I^{\rm ex}$. It contains the lineality space $\mathcal L(I^{\rm ex})$ generated by
\begin{gather*}
\ell_1 =( 1 , 1 , 1 , 1 , 1 , 1 , 1 , 1 , 1 , 1 , 0 , 0 , 0 , 0 , 0 , 0 , 0 , 0 , 0 , 0 , 1 , 1 ),\\
\ell_2 =( 1 , 1 , 1 , 1 , 0 , 0 , 0 , 0 , 0 , 0 , 1 , 1 , 1 , 1 , 1 , 1 , 0 , 0 , 0 , 0 , 1 , 1 ),\\
\ell_3 =( 1 , 0 , 0 , 0 , 1 , 1 , 1 , 0 , 0 , 0 , 1 , 1 , 1 , 0 , 0 , 0 , 1 , 1 , 1 , 0 , 1 , 1 ),\\
\ell_4 =( 0 , 1 , 0 , 0 , 1 , 0 , 0 , 1 , 1 , 0 , 1 , 0 , 0 , 1 , 1 , 0 , 1 , 1 , 0 , 1 , 1 , 1 ),\\
\ell_5 =( 0 , 0 , 1 , 0 , 0 , 1 , 0 , 1 , 0 , 1 , 0 , 1 , 0 , 1 , 0 , 1 , 1 , 0 , 1 , 1 , 1 , 1 ),\\
\ell_6 =( 0 , 0 , 0 , 1 , 0 , 0 , 1 , 0 , 1 , 1 , 0 , 0 , 1 , 0 , 1 , 1 , 0 , 1 , 1 , 1 , 1 , 1 ).
\end{gather*}
The order of the entries corresponds to the order on cluster variables of $A_{3,6}$ in~\eqref{eq:order variables}.
Note that ${\bf d}=\frac{1}{3}(\ell_1+\dots+\ell_6)$.
In the Gr\"obner fan $\GF(I^{\rm ex})$ we identify a maximal cone $C$ and consider its image $\overline{C}\subset \GF(I^{\rm ex})/\mathcal L(I^{\rm ex})$.
The cone $\overline{C}$ is strongly convex (by~\thref{lem:strictly_convex}) and simplicial.
We choose the following representatives of the 16 minimal ray generators of $\overline{C}$:
\begin{gather}
 r_{1}=(0, 0, 0, 1, 0, 0, 0, 0, 0, 0, 0, 0, 0, 0, 0, 0, 0, 0, 0, 0, 0, 0),\nonumber\\
 r_{2}=(1, 1, 2, 1, 1, 2, 1, 2, 1, 2, 1, 1, 0, 1, 0, 1, 1, 0, 1, 1, 2, 2),\nonumber\\
 r_{3}=(0, 0, 1, 1, 0, 0, 0, 0, 0, 1, 0, 0, 0, 0, 0, 1, 0, 0, 0, 0, 0, 1),\nonumber\\
 r_{4}=(1, 0, 1, 2, 0, 1, 2, 0, 1, 2, 1, 2, 2, 1, 1, 2, 1, 1, 2, 1, 3, 2),\nonumber\\
 r_{5}=(1, 1, 1, 1, 1, 1, 1, 2, 1, 1, 1, 1, 0, 1, 0, 0, 1, 0, 0, 1, 2, 1),\nonumber\\
 r_{6}=(1, 1, 1, 0, 1, 1, 0, 2, 1, 1, 1, 1, 0, 1, 0, 0, 1, 0, 0, 1, 2, 1),\nonumber\\
 r_{7}=(0, 0, 0, 0, 0, 0, 0, 0, 0, 1, 0, 0, 0, 0, 0, 0, 0, 0, 0, 0, 0, 0),\nonumber\\
 r_{8}=(1, 2, 1, 1, 1, 0, 0, 1, 1, 0, 1, 0, 0, 1, 1, 0, 1, 1, 0, 1, 1, 2),\nonumber\\
 r_{9}=(2, 3, 2, 1, 3, 2, 1, 3, 2, 1, 2, 1, 0, 2, 1, 0, 2, 1, 0, 2, 3, 3),\nonumber\\
r_{10}=(2, 3, 2, 1, 3, 2, 1, 3, 2, 2, 2, 1, 0, 2, 1, 1, 2, 1, 1, 2, 3, 4),\nonumber\\
r_{11}=(2, 3, 3, 2, 3, 2, 1, 3, 2, 2, 2, 1, 0, 2, 1, 1, 2, 1, 1, 2, 3, 4),\nonumber\\
r_{12}=(1, 1, 1, 2, 0, 0, 1, 0, 1, 1, 1, 1, 2, 1, 2, 2, 1, 1, 1, 1, 2, 2),\nonumber\\
r_{13}=(1, 1, 0, 1, 1, 0, 1, 1, 1, 0, 1, 0, 1, 1, 1, 0, 1, 1, 0, 1, 2, 1),\nonumber\\
r_{14}=(1, 0, 0, 1, 0, 0, 1, 0, 1, 1, 1, 1, 2, 1, 1, 1, 1, 1, 1, 1, 2, 1),\nonumber\\
r_{15}=(1, 1, 0, 0, 1, 0, 0, 1, 1, 0, 1, 0, 0, 1, 1, 0, 1, 1, 0, 1, 1, 1),\nonumber\\
r_{16}=(1, 2, 1, 1, 1, 0, 0, 1, 1, 1, 1, 0, 0, 1, 1, 1, 1, 1, 0, 1, 1, 2).\label{eq:rays of C}
\end{gather}
From now on, we denote by ${\bf r}$ the $(16\times 22)$-matrix with rows $r_1,\dots,r_{16}$.
Recall that a monomial is called {\bf squarefree} if the exponent of each variable in it is either $0$ or $1$.

\begin{Theorem}\thlabel{thm:Gr36}
There exists a unique maximal cone $C\in\GF(I^{\rm ex})$ with the following properties:
\begin{itemize}\itemsep=0pt
 \item[$(i)$] the associated initial ideal $\init_C(I^{\rm ex})$ is generated by squarefree monomials of degree $2$ and it contains all exchange monomials;
\item[$(ii)$] the free $\mathbb C[t_1,\dots,t_{16}]$-algebra associated to $C$ and $\bf r$ defined in~\thref{def:lift} has the pro\-perty
 \[
 \tilde A_{\bf r}:=\mathbb C[t_1,\dots,t_{16}][p_{123}, \dots , p_{456}, X,Y]/\tilde I^{\rm ex}_{\bf r} \cong A_{3,6}^{\rm univ};
 \]
\item[$(iii)$] for every seed $s$ in the cluster structure of $A_{3,6}$ the cone $C$ has a 10-dimensional face~$\tau_s$ whose associated initial ideal $\init_{\tau_s}(I^{\rm ex})$ is a totally positive binomial prime ideal $($hence~$\tau_s\in \trop^+(I^{\rm ex}))$.
 Moreover, $C\cap \trop(I^{\rm ex})=\trop^+(I^{\rm ex})$.
\end{itemize}
\end{Theorem}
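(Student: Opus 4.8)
The plan is to verify directly that the maximal cone $C\in\GF(I^{\rm ex})$ spanned by the lineality space $\mathcal L(I^{\rm ex})$ and the sixteen rays in~\eqref{eq:rays of C} has the three listed properties, and then to deduce uniqueness. First I would fix a monomial term order refining the weight $w:=r_1+\dots+r_{16}$, compute the reduced Gr\"obner basis $\mathcal G$ of $I^{\rm ex}$ starting from its known generators --- the three-term Pl\"ucker relations together with the seven relations in~\eqref{eq:4terms} --- and let $C$ be the closure of the associated Gr\"obner cone. Checking $(i)$, that $\init_C(I^{\rm ex})$ is generated by squarefree degree-two monomials, contains every exchange monomial, and in fact equals the Stanley--Reisner ideal of the $\mathtt D_4$ cluster complex, is then a finite verification (Appendix~\ref{sec:data Gr36}); along the way one identifies the sixteen rays of $\overline C$ with the sixteen mutable cluster variables of $A_{3,6}$ and the ten-dimensional faces of $C$ (lineality plus four rays) with the seeds. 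For uniqueness I would argue that any maximal cone $C'$ with property $(i)$ also degenerates the cluster structure: no cluster monomial is divisible by an exchange monomial, so all cluster monomials are standard for $C'$, and since the standard monomials for $C'$ form a $\mathbb C$-basis of $A_{3,6}$ with the same Hilbert series as the cluster-monomial basis, the two bases coincide; hence $\init_{C'}(I^{\rm ex})$ is the Stanley--Reisner ideal of the cluster complex, so it equals $\init_C(I^{\rm ex})$ and therefore $C'=C$ (a maximal cone being determined by its initial ideal). The requisite facts about incompatible versus exchange pairs in type $\mathtt D_4$ are confirmed by the explicit data in Appendix~\ref{sec:data Gr36}.

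For $(ii)$ I would mirror the proof of~\thref{thm:A univ as quotient}. Writing $y_1,\dots,y_{16}$ for the coefficient variables of $A_{3,6}^{\rm univ}$, indexed so that $y_i$ is attached to the mutable cluster variable identified with $r_i$, define the $\mathbb C$-algebra map
\[
\Psi\colon\ \mathbb C_{\bf d}[p_{123},\dots,p_{456},X,Y][t_1,\dots,t_{16}]\longrightarrow A_{3,6}^{\rm univ}
\]
sending each Pl\"ucker variable, $X$, and $Y$ to the corresponding cluster variable of $A_{3,6}$ and $t_i\mapsto y_i$. By~\thref{prop:lifted generators} the lifted ideal $\tilde I^{\rm ex}_{\bf r}$ is generated by the lifts $\tilde g$ of the elements $g\in\mathcal G$, and by~\thref{lem:lifts} each such lift has the explicit form $\tilde g={\bf x}^\gamma+\sum_{\alpha\neq\gamma}c_\alpha{\bf x}^\alpha{\bf t}^{{\bf r}\cdot(\alpha-\gamma)}$, whose $t$-exponents are read off the matrix~\eqref{eq:rays of C}. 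On the cluster side, the universal exchange relation attached to a seed has coefficient monomials governed by the columns of $U_{Q^{\rm mut}}$, that is, by the truncated ${\bf g}$-vectors of $A_{3,6}$, which I would tabulate in Appendix~\ref{sec:data Gr36}. Matching the two families of exponents --- the analogue of~\eqref{eq:signs gv} --- shows that $\Psi(\tilde g)$ is the exchange relation of the corresponding seed for the lifted exchange generators, and that the lift of the four-term relation in~\eqref{eq:4terms} maps into the ideal generated by those exchange relations. Hence $\Psi$ descends to a surjection $\bar\Psi\colon\tilde A_{\bf r}\twoheadrightarrow A_{3,6}^{\rm univ}$. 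By~\thref{thm:family}$(i)$ together with $(i)$, $\tilde A_{\bf r}$ is a free $\mathbb C[t_1,\dots,t_{16}]$-module on the standard monomials for $C$, which are exactly the cluster monomials; and $A_{3,6}^{\rm univ}$ is a free $\mathbb C[y_1,\dots,y_{16}]$-module on the cluster monomials by~\cite[Theorem~0.3 and p.~502]{GHKK}. Since $\bar\Psi$ identifies these bases, it is an isomorphism.

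For $(iii)$, given a seed $s$ its four mutable cluster variables single out four rays, say $r_{i_1},\dots,r_{i_4}$; let $\tau_s$ be the face of $C$ spanned by those rays and $\mathcal L(I^{\rm ex})$, so $\dim\tau_s=6+4=10$. By~\thref{prop:special fibers} (equivalently~\thref{thm:family}$(ii)$) the fibre of $\pi$ over the corresponding point is ${\rm Proj}$ of $\mathbb C_{\bf d}[p_{123},\dots,X,Y]/\init_{\tau_s}(I^{\rm ex})$, and by $(ii)$, specialising $t_i\mapsto 0$ for $i\in\{i_1,\dots,i_4\}$ and $t_i\mapsto1$ otherwise turns the universal exchange relations into the exchange relations with principal coefficients at $s$. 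Their central fibre is, by~\cite[Theorem~8.30]{GHKK}, the affine toric variety whose cone is spanned by the ${\bf g}$-vectors of $A_{3,6}$ at $s$; in particular $\init_{\tau_s}(I^{\rm ex})$ is binomial and prime, and it is totally positive because, by~\thref{lem:lifts}, the only monomial of each generator with coefficient in $\mathbb C$ is the exchange monomial, so the binomial generators carry the sign pattern characterising total positivity (cf.~\cite{SW05} and the proof of~\thref{thm:trop+}). Thus every seed $s$ yields a face $\tau_s$ of $C$ lying in $\trop^+(I^{\rm ex})$. For the reverse inclusion I would argue as in~\thref{thm:trop+}: any cone of $C\cap\trop(I^{\rm ex})$ is a face $\mathcal L(I^{\rm ex})+{\rm conv}(r_{j_1},\dots,r_{j_k})$ of $C$, and if the corresponding set of mutable cluster variables were not contained in a common seed it would contain an incompatible pair, whose Pl\"ucker or exchange relation has an initial form acquiring a monomial at that weight, contradicting membership in $\trop(I^{\rm ex})$. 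Hence the maximal cones of $C\cap\trop(I^{\rm ex})$ are precisely the $\tau_s$; since the maximal cones of $\trop^+(I^{\rm ex})$ are also in bijection with the seeds of $A_{3,6}$ (by the cluster structure, see~\cite{GHKK}) and we have realised each of them as $\tau_s$, we conclude $C\cap\trop(I^{\rm ex})=\trop^+(I^{\rm ex})$.

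The hard part will be the computational core shared by $(i)$ and $(ii)$: establishing that the explicit weight $w$ produces the Stanley--Reisner ideal of the $\mathtt D_4$ cluster complex, and, above all, matching the $t$-exponents ${\bf r}\cdot(\alpha-\gamma)$ of the lifted Gr\"obner basis with the ${\bf g}$-vector data defining the universal coefficients of $A_{3,6}$. Unlike the $\Gr(2,\mathbb C^n)$ case there is no uniform combinatorial model (triangulations, planar trees) to organise this, so the matching must be carried out relation by relation using the tables of Appendix~\ref{sec:data Gr36}, and the two exceptional degree-two cluster variables $X,Y$ together with the four-term relation in~\eqref{eq:4terms} --- which is not an exchange relation --- require separate handling throughout.
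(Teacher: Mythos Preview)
Your plan for $(i)$ and $(ii)$ tracks the paper's proof closely, but you underestimate the role of the non-exchange elements of the reduced Gr\"obner basis. There are \emph{two} of them: an element $f$ with $\init_C(f)=p_{135}p_{246}$ (the reduction of the four-term relation you mention) and an element $g$ with $\init_C(g)=XY$, which arises as an $S$-pair of two degree-three exchange relations and does not come from~\eqref{eq:4terms} at all. In $(ii)$ you must show $\Psi(\tilde f)=\Psi(\tilde g)=0$; the paper does this by exhibiting explicit expressions of $p_{245}f$ and of $g$ as $\mathbb C[p_{ijk}]$-combinations of exchange relations (equations~\eqref{eq:f},~\eqref{eq:g}) and then using that $A_{3,6}^{\rm univ}$ is a domain. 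Your incompatible-pair argument in $(iii)$ likewise needs $f$ and $g$, since $\{\bar X,\bar Y\}$ and $\{\bar p_{135},\bar p_{246}\}$ are incompatible but are \emph{not} exchange pairs, so no ``Pl\"ucker or exchange relation'' witnesses their incompatibility.

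The more serious gap is your argument for $C\cap\trop(I^{\rm ex})=\trop^+(I^{\rm ex})$. Having shown $C\cap\trop(I^{\rm ex})=\bigcup_s\tau_s\subseteq\trop^+(I^{\rm ex})$, you then assert that the maximal cones of $\trop^+(I^{\rm ex})$ are in bijection with seeds ``by~\cite{GHKK}''. There is no such statement in~\cite{GHKK}; that reference concerns the intrinsic cluster variety, not the tropicalization of this particular presentation ideal, and your claim is essentially the conclusion you want. The paper's argument is direct and avoids this entirely: one checks from the explicit list in Appendix~\ref{sec:data Gr36} that for every $h\in\mathcal G_C(I^{\rm ex})$ the leading monomial $\init_C(h)$ is the \emph{unique} monomial of $h$ with positive coefficient. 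If $w\notin C$ then by Remark~\ref{rem:Eisenbud} some $\init_w(h)$ omits $\init_C(h)$ and hence lies in $-\mathbb R_{\ge 0}[p_{ijk},X,Y]$, so $w\notin\trop^+(I^{\rm ex})$; conversely if $w\in C\cap\trop(I^{\rm ex})$ then each $\init_w(h)$ contains $\init_C(h)$ together with at least one (negative) further term, giving total positivity. This sign observation is the missing idea.
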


Before we prove~\thref{thm:Gr36} we explain the conventions we use to describe the exchange relations of $A_{3,6}^{\rm univ}$.
The algebra $A_{3,6}^{\rm univ}$ is of cluster type $\mathtt D_4$.
In particular, as explained in~Section~\ref{sec:cluster}, we label the coefficients of $A_{3,6}^{\rm univ}$ with the set of almost positive coroots $\Phi^{\vee}_{\geq -1} $ in the root system dual to a root system of type $\mathtt{D}_4$.
For this we fix an initial seed for $A_{3,6}^{\rm univ}$ such that the mutable part of its quiver is a bipartite orientation of $\mathtt{D}_4$.
We choose the seed that contains the mutable variables $p_{246}$, $p_{346}$, $p_{124}$ and $p_{256}$ together with the frozen variables. The part of~the quiver that involves only the vertices corresponding to these variables is the following:
\begin{center}
 \begin{tikzpicture}[scale=.75]
 \node at (0,0) {$\mathbf{p_{246}}$};
 \node at (3,0) {$\mathbf{p_{256}}$};
 \node at (-1.5,2) {$\mathbf{p_{346}}$};
 \node at (-1.5,-2) {$\mathbf{p_{124}}$};
 \node[blue] at (1.5,-2) {$p_{126}$};
 \node[blue] at (1.5,2) {$p_{456}$};
 \node[blue] at (-3,0) {$p_{234}$};
 \node[blue] at (6,0) {$p_{156}$};
 \node[blue] at (-4.5,2) {$p_{345}$};
 \node[blue] at (-4.5,-2) {$p_{123}$};

 \draw[->] (-4,2) -- (-2,2);
 \draw[->] (-1,2) -- (1,2);
 \draw[->] (-2.5,0) -- (-.5,0);
 \draw[->,thick] (.5,0) -- (2.5,0);
 \draw[->] (3.5,0) -- (5.5,0);
 \draw[->] (-4,-2) -- (-2,-2);
 \draw[->] (-1,-2) -- (1,-2);
 \draw[->] (2.65,-.35) -- (1.65,-1.65);
 \draw[->] (1.35,-1.65) -- (.35,-.35);
 \draw[->,thick] (-.35,-.35) -- (-1.35,-1.65);
 \draw[->] (-1.65,-1.65) -- (-2.65,-.35);
 \draw[->] (2.65,.35) -- (1.65,1.65);
 \draw[->] (1.35,1.65) -- (.35,.35);
 \draw[->,thick] (-.35,.35) -- (-1.35,1.65);
 \draw[->] (-1.65,1.65) -- (-2.65,.35);
 \end{tikzpicture}
\end{center}
The coefficients are labeled by $\Phi^{\vee}_{\geq -1}$ and can be realized as frozen vertices of the quiver.
In order to compute the arrows between the coefficient vertices and the vertices corresponding to cluster variables we identify the mutable vertices of this quiver with $\{ 1, 2, 3, 4 \}$:
let $p_{246}$ correspond to~1, $p_{346}$ to 2, $p_{124}$ to 3 and $p_{256}$ to 4.
Now~\thref{prop:bipartite_univ_Q} contains all the information necessary to compute the arrows.
As the resulting quiver is rather complicated, we refrain from visualizing it here.
It is available for download on the homepage~\cite{homepage} in a format that can directly be opened in quiver mutation app~\cite{Qmut}.
Finally, we use the quiver mutation app to compute all exchange relations. They can be found explicitly in the Appendix~\ref{sec:data Gr36}.

\medskip

\looseness=1
One more ingredient we need for the proof of~\thref{thm:Gr36} is the basis of cluster monomials for $ A^{\rm univ}_{3,6}$.
By~\cite[Theorem~0.3 and p.~502]{GHKK} cluster monomials form a $\mathbb C[y_\alpha\colon \alpha\in \Phi^{\vee}_{\ge -1}]$-basis for $A_{3,6}^{\rm univ}$.
If $x$ and $x'$ are cluster variables that do not occur together in any seed, then any monomial divisible by $xx'$ cannot be a cluster monomial.
In fact, using~\cite[Theorem~7.12(b)]{Kel12} this gives us the following characterization of cluster monomials.
Write $\bar {\bf x}^{a}\in A_{3,6}^{\rm univ}$, $a\in \mathbb Z_{\ge 0}^{22}$, to denote a monomial in the (mutable and frozen) cluster variables $\bar p_{123},\dots, \bar p_{456},\bar X,\bar Y$. Then
\begin{gather}\label{eq:basis A36univ}
\bar {\bf x}^{a}\in A_{3,6}^{\univ} \text{ is a {\it cluster monomial} if and only if } m \not\vert \ \bar {\bf x}^a \text{ for all } m\in M_{3,6},
\end{gather}
where $M_{3,6}=\{\text{exchange monomials}\} \cup \big\{\bar X\bar Y,\bar p_{135}\bar p_{246}\big\}$. We write $\mathbb M_{3,6}$ to denote the $\mathbb C[y_\alpha\colon \alpha\in \Phi^{\vee}_{\ge -1}]$-basis of cluster monomials for $A_{3,6}^{\rm univ}$.

\begin{proof}[{\bf Proof of Theorem~\ref{thm:Gr36}}]
We prove the statements of the theorem in order.

 $(i)$ In \texttt{Macaulay2}~\cite{M2} we compute the initial ideal of $I^{\rm ex}$ with respect to the cone $C$.
 For the computation, we fix the weight vector $w=r_1+\dots+r_{16}$ in the relative interior of $C$:
 \begin{displaymath}
 w= (16, 19, 16, 16, 16, 11, 10, 19, 16, 16, 16, 10, 7, 16, 11, 10, 16, 10, 7, 16, 27, 27).
 \end{displaymath}
 We compute a minimal generating set of $\init_C(I^{\rm ex})$: it has 54 elements, 52 of those are exchange monomials, and the other two are $p_{135}p_{246}$ and $XY$.
 This implies the first claim of the Theorem.

$(ii)$ We prove this part in three steps: first, we compute the generators of the ideal $\tilde I_{\bf r}^{\rm ex}$, then we define a surjective map $\tilde A_{\bf r}\to A_{3,6}^{\rm univ}$, and lastly, we show that the map is also injective.

 \medskip
 \noindent{\it Step 1:} By~\thref{prop:lifted generators} the lifted ideal $\tilde I^{\rm ex}_{\bf r}$ is generated by the lifts of elements of a Gr\"obner basis for $I^{\rm ex}$ and $C$.
 As a Gr\"obner basis we choose the exchange relations (whose initial forms are the exchange monomials), the four-term relation in~\eqref{eq:4terms} (whose initial form is $p_{135}p_{246}$) and the following element (whose initial form is $XY$):
 \begin{gather*}
 S(p_{134}p_{256} - \textcolor{blue}{p_{156}}\textcolor{blue}{p_{234}} - Y,\ p_{134}X - p_{136}p_{145}\textcolor{blue}{p_{234}} - \textcolor{blue}{p_{123}}p_{146}\textcolor{blue}{p_{345}}),
 \end{gather*}
 which is computed explicitly in~\thref{exp:lem facet} below.
 By the proof of $(i)$ above, the set of exchange relations together with the four-term relation~\eqref{eq:4terms}
 and the above $S$-pair form a (minimal) Gr\"obner basis for $I^{\rm ex}$ with respect to $C$.
 The reduced Gr\"obner basis $\mathcal G_C(I^{\rm ex})$ consists of~the 52 exchange relations and the additional two elements
 \begin{gather}
 f:=p_{135}p_{246} - \textcolor{blue}{p_{156}}\textcolor{blue}{p_{234}} - Y - \textcolor{blue}{p_{123}}\textcolor{blue}{p_{456}} - X - \textcolor{blue}{p_{126}}\textcolor{blue}{p_{345}}, \nonumber
 \\
 g:=XY - \textcolor{blue}{p_{123}}(\textcolor{blue}{p_{156}}p_{246}\textcolor{blue}{p_{345}} + \textcolor{blue}{p_{156}}\textcolor{blue}{p_{234}}\textcolor{blue}{p_{456}} + \textcolor{blue}{p_{126}}\textcolor{blue}{p_{345}}\textcolor{blue}{p_{456}})\nonumber
 \\ \hphantom{ g:=}
 - \textcolor{blue}{p_{126}}(p_{135}\textcolor{blue}{p_{234}}\textcolor{blue}{p_{456}} + \textcolor{blue}{p_{156}}\textcolor{blue}{p_{234}}\textcolor{blue}{p_{345}}).
 \label{eq:f and g}
 \end{gather}
 The first monomial in $f$ (and $g$) is its leading monomial.
 We now compute the lifts of the elements in $\mathcal G_C(I^{\rm ex})$ with respect to the matrix ${\bf r}$ in~\thref{def:lift}, which are given explicitly in~Appendix~\ref{sec:data Gr36}.

 \medskip
 \noindent{\it Step 2:} $A_{3,6}^{\rm univ}$ has 22 cluster variables (in one-to-one correspondence with those of $A_{3,6}$) and 16 coefficients labeled by almost positive roots of type ${\mathtt D}_4$: $y_\alpha$ with $\alpha\in \Phi_{\geq -1}^{\vee}$.
 We extend the morphism $\psi$
 in~\eqref{eq:psi36} to $\Psi\colon \mathbb C[t_1,\dots,t_{16}][p_{123}, \dots , p_{456}, x,y]\to A_{3,6}^{\univ}$ by sending $t_i$'s to $y_\alpha$'s according to the following identification:
\begin{gather}\label{eq:y-t}
y_{-\alpha_1} \leftrightarrow t_{16},\qquad
y_{\alpha_1} \leftrightarrow t_{14},\qquad
y_{\alpha_1 + \alpha_2} \leftrightarrow t_{6},\qquad\quad\
y_{\alpha_1 +\alpha_2+\alpha_4} \leftrightarrow t_{9}, \nonumber
\\
y_{-\alpha_2} \leftrightarrow t_{12},\qquad
y_{\alpha_2} \leftrightarrow t_{10},\qquad
y_{\alpha_1 + \alpha_3} \leftrightarrow t_{4},\qquad\quad\
y_{\alpha_1 +\alpha_2+\alpha_3} \leftrightarrow t_{2}, \nonumber
\\
y_{-\alpha_3} \leftrightarrow t_{15},\qquad
y_{\alpha_3} \leftrightarrow t_{3},\qquad\
y_{\alpha_1 + \alpha_4} \leftrightarrow t_{13},\qquad\quad
y_{2\alpha_1 + \alpha_2+\alpha_3+\alpha_4} \leftrightarrow t_{5}, \nonumber
\\
y_{-\alpha_4} \leftrightarrow t_{7},\qquad\
y_{\alpha_4} \leftrightarrow t_{8},\qquad\
y_{\alpha_1 +\alpha_3+\alpha_4} \leftrightarrow t_{1},\qquad
y_{\alpha_1 + \alpha_2+\alpha_3+\alpha_4} \leftrightarrow t_{11}.
 \end{gather}
 We now verify that $\tilde I^{\rm ex}_{\bf r}\subseteq \ker(\Psi)$: the lifts of those elements in $I^{\rm ex}$ that correspond to exchange relations in $A_{3,6}$ are sent to exchange relations in $A_{3,6}^{\rm univ}$ by $\Psi$, so they lie in $\ker(\Psi)$.
 For the elements $f,g\in\mathcal{G}_C(I^{\rm ex})$ in~\eqref{eq:f and g}
 note that, for example, $p_{245}f$ has an expression in terms of exchange relations with coefficients that are Pl\"ucker variables (see~\eqref{eq:f} for the precise expression).
 Hence,
 \begin{displaymath}
 p_{245}{\bf t}^{\mu(f)}\tilde f= p_{245}t_2^2t_4^2t_5t_6t_8t_9^3t_{10}^3t_{11}^3t_{12}^2t_{13}t_{14}t_{15}t_{16}\tilde f
 \end{displaymath}
 has an expression in terms of the lifts of those exchange relations with monomial coefficients in $t$'s and Pl\"ucker variables.
 So, $ 0=\Psi\big(\tilde fp_{245}{\bf t}^{\mu(f)}\big)=\Psi\big(\tilde f\big)\bar p_{245}{\bf y}^{\mu(f)}$,
 where ${\bf y}^{\mu(f)}=\Psi\big({\bf t}^{\mu(f)}\big)$.
 As $A_{3,6}^{\rm univ}$ is a domain, this implies $\tilde f\in \ker(\Psi)$.
 A similar argument implies that $\tilde g\in \ker(\Psi)$ (see~\eqref{eq:g}) and so we obtain the induced morphism $\bar\Psi\colon \tilde A_{\bf r}\to A_{3,6}^{\rm univ}$.
 Note that the image of $\bar \Psi$ contains all cluster variables and all coefficients of $A_{3,6}^{\rm univ}$, so $\bar \Psi$ is in fact surjective.

 \medskip

\noindent{\it Step 3:} Lastly, we show that $\bar \Psi$ is injective.
 By~\thref{thm:family}$(i)$ the standard monomial basis $\mathbb B_{<}$ (for $<$ a monomial term order compatible with $C$) is a $\mathbb C[t_1,\dots,t_{16}]$-basis for $\tilde A_{\bf r}$.
 Similarly, $A_{3,6}^{\rm univ}$ has the $\mathbb C[y_\alpha\colon \alpha\in \Phi^{\vee}_{\ge -1}]$-basis of cluster monomials $\mathbb M_{3,6}$.
 The test for membership in $\mathbb M_{3,6}$ is given by $M_{3,6}$ in~\eqref{eq:basis A36univ}, which is in one-to-one correspondence with the set $\{\init_<(g)\colon g\in \mathcal G_<(I^{\rm ex})\}$.
 Hence, there is a bijection between the standard monomial basis $\mathbb B_{<}$ for $\tilde A_{\bf r}$ (see~\thref{thm:family}$(i)$) and the cluster monomial basis $\mathbb M_{3,6}$ of $A_{3,6}^{\rm univ}$ induced by $\bar \Psi$.
 In particular, $\bar \Psi$ is injective and $\tilde A_{\bf r}\cong A_{3,6}^{\rm univ}$.

$(iii)$ To prove this part, we identify the rays $r_1,\dots,r_{16}$ with mutable cluster variables.
 As we have already identified $y_\alpha$'s with $t_i$'s in~\eqref{eq:y-t} (and by definition $t_i$'s correspond to $r_i$'s) it is enough to identify the $y_\alpha$'s with the mutable cluster variables of $A_{3,6}$.
 This can be done by considering the exchange relations obtained by repeatedly mutating our bipartite initial seed at a sink.
 More precisely, we only consider the mutable part of the initial quiver and mutate at all the vertices with only incoming arrows from mutable vertices, which (by slight abuse of language) we refer to as sinks.
 The order of the individual mutations in this mutation sequence is irrelevant as they pairwise commute.
 Every exchange relation produced by mutation at a sink corresponding to a cluster variable $x$ has the property that one of the cluster monomials involves exactly one coefficient $y_{\alpha}$ (see~\cite[Lemma~12.7]{FZ_clustersIV}).
 When iterating the process of mutating at sinks, every mutable cluster variable appears as a sink at some point.
 Moreover,~\cite[Lemma~12.8]{FZ_clustersIV} implies that the assignment $x \mapsto y_\alpha$ defines a bijection between mutable cluster variables and coefficients.
 Combining with the identification of $y_\alpha$'s with $t_i$'s in~\eqref{eq:y-t} we obtain:
\begin{gather*}
r_{1} \leftrightarrow \bar p_{125}, \qquad\ r_{2} \leftrightarrow \bar p_{134},\ \qquad r_{3} \leftrightarrow \bar p_{124},\ \qquad r_{4} \leftrightarrow \bar p_{145}
\\
r_{5} \leftrightarrow \bar p_{135}, \qquad\ r_{6} \leftrightarrow \bar p_{136},\ \qquad r_{7} \leftrightarrow \bar p_{146},\ \qquad r_{8} \leftrightarrow \bar p_{256},
\\
r_{9} \leftrightarrow \bar p_{356}, \qquad\ r_{10} \leftrightarrow \bar p_{346}, \qquad r_{11} \leftrightarrow Y, \ \ \ \qquad r_{12} \leftrightarrow \bar p_{245},
\\
r_{13} \leftrightarrow \bar p_{235},\qquad r_{14} \leftrightarrow X, \ \ \qquad r_{15} \leftrightarrow \bar p_{236}, \qquad r_{16} \leftrightarrow \bar p_{246}.
\end{gather*}
Next, to every seed we associate a weight vector that is the sum of the rays corresponding to its mutable cluster variables.
For example, the weight vector associated to $s=\{\bar p_{124},\bar p_{125},\bar p_{245},\bar p_{256}\}$ is $w_s=r_1+r_3+r_8+r_{12}$.
Using \texttt{Macaulay2}~\cite{M2} we verify that $\init_{w_s}(I^{\rm ex})$ is a totally positive binomial prime ideal for every seed listed above.
The initial ideals can be found on~\cite{homepage}.

To see that $C\cap \trop(I^{\rm ex})=\trop^+(I^{\rm ex})$ observe that for every element $h\in \mathcal G_C(I^{\rm ex})$ its initial monomial $\init_C(h)$ is the unique monomial with positive coefficient (the complete list of $\mathcal G_C(I^{\rm ex})$ can be found in Section~\ref{sec:data Gr36}).
Hence, a weight vector $w$ lies in $C\cap \trop(I^{\rm ex})$ if and only if it lies in $\trop^+(I^{\rm ex})$.
\end{proof}

\begin{Remark}\thlabel{rmk:cluster techniques}
There are various methods in cluster theory to compute the exchange relations for $A_{3,6}^{\rm univ}$ and $M_{3,6} $, e.g.,~one can use the categorification of finite type cluster algebras with universal coefficients introduced in~\cite{NC_univ}.
To compute $M_{3,6} $ one can use the compatibility degree of cluster variables from~\cite{FZ_Y-systems}.
In fact, the elements of $M_{3,6}$ are exactly those pairs of cluster variables whose compatibility degrees are positive.
However, we have presented the case of~$\Gr\big(3,\mathbb C^6\big)$ with as few machinery from the cluster theory as possible to make it more digestible to a broader audience.
\end{Remark}
\begin{Example}\thlabel{exp:lem facet}
Here we demonstrate the need of~\thref{lem:facet} and that in fact the Lemma~is not true for arbitrary elements of the ideal $J$. To see this, let $J=I^{\rm ex}$ and take
\begin{displaymath}
v:= \frac{1}{4}\sum_{\begin{smallmatrix}
i=1\\
i\not =2,8
\end{smallmatrix}}^{16} r_i + \frac{5}{4}r_8\qquad \text{and}\qquad w := r_2 + v.
\end{displaymath}
Note that $w\in C^\circ$, hence the assumptions of~\thref{lem:facet} hold.
One can explicitly compute
\begin{gather*}
v = \frac{1}{4}(19, 26, 18, 19, 19, 9, 9, 21, 19, 14, 19, 9, 7, 19, 15, 9, 19, 14, 6, 19, 29, 33),
\\
w = \frac{1}{4}(23, 30, 26, 23, 23, 17, 13, 29, 23, 22, 23, 13, 7, 23, 15, 13, 23, 14, 10, 23, 37, 41).
\end{gather*}
Now take the $S$-pair of two exchange relations
\begin{displaymath}\begin{split}
h&:=S(p_{134}p_{256} - \textcolor{blue}{p_{156}}\textcolor{blue}{p_{234}} - {Y},\ p_{134}x - p_{136}p_{145}\textcolor{blue}{p_{234}} - \textcolor{blue}{p_{123}}p_{146}\textcolor{blue}{p_{345}}) \\
&= -{XY} - \textcolor{blue}{p_{156}}\textcolor{blue}{p_{234}}x + p_{136}p_{145}\textcolor{blue}{p_{234}}p_{256} - \textcolor{blue}{p_{123}}p_{146}p_{256}\textcolor{blue}{p_{345}} \in I^{\rm ex}.\end{split}
\end{displaymath}
The weights of the non-zero monomials in $h$ with respect to $v$ are (in order) $\frac{31}{2} , \frac{31}{2}, \frac{29}{2}, \frac{33}{2}$ and with respect to $w$ are $\frac{39}{2} , \frac{41}{2}, \frac{39}{2}, \frac{41}{2}$.
We compute
\begin{displaymath}
\init_{v}(h) = \init_{r_2}(\init_{v}(h)) = p_{136}p_{145}\textcolor{blue}{p_{234}}p_{256} \not = -{XY} + p_{136}p_{145}\textcolor{blue}{p_{234}}p_{256} = \init_w(h).
\end{displaymath}
In particular, the statement of~\thref{lem:facet} does not hold for $h$, hence it is false in general for arbitrary elements of $J$.
More importantly, the initial form of an arbitrary element $h\in J$ need not be the same with respect to different weight vectors in the relative interior of a maximal Gr\"obner cone of $J$.
This may occur when $h$ contains more than one monomial in the monomial initial ideal.
Here, the monomials $XY$, $p_{136}p_{145}\textcolor{blue}{p_{234}}p_{256}$ and $\textcolor{blue}{p_{123}}p_{146}p_{256}\textcolor{blue}{p_{345}}$ all lie in $\init_C(I^{\rm ex})$.
\end{Example}

\subsection{Stanley--Reisner ideals and the cluster complex}

\begin{Definition}
Let $(Q,F)$ be an ice-quiver and $\mathcal{V}$ the set of mutable cluster variables of~$A_{(Q,F)}$.
We call $x$ and $x'$ in $\mathcal{V}$ {\it compatible} if there exists a cluster containing both of them. Similarly, a subset of $\mathcal{V}$ is compatible if it consists of pairwise compatible cluster variables. The {\it cluster complex} $\Delta(A_{(Q,F)})$ is the simplicial complex on $\mathcal{V}$ whose simplices are the compatible subsets.
\end{Definition}

\begin{Remark}
Note that by definition the cluster complex can be realized by the ${\bf g}$-fan with simplices corresponding to simplicial cones.
It was shown in~\cite{GHKK} that the ${\bf g}$-fan is a simplicial fan in complete generality.
This occurred several years after the cluster complex had been defined.
\end{Remark}

\begin{Definition}\thlabel{def:SR-ideal}
Let $\Delta$ be a simplicial complex with vertex set $V=\{x_1,\dots,x_n\}$.
The {\it Stanley--Reisner ideal} of $\Delta$ is generated by monomials associated to the minimal non-faces of $\Delta$ as:
\begin{displaymath}
I_{\Delta}:=\langle x_{i_1} \cdots x_{i_s}\colon \{x_{i_1},\dots,x_{i_s}\}\not\in \Delta\rangle \subseteq \mathbb K[x_1,\dots,x_n].
\end{displaymath}
Reversely, to every squarefree monomial ideal one can associate its {\it Stanley--Reisner complex}, whose non-faces are defined by the monomials in the ideal.
\end{Definition}

\begin{Corollary}\thlabel{cor:Stanley-Reisner}
Let $A$ be $A_{2,n}$ or $A_{3,6}$ and $\Delta(A)$ the associated cluster complex.
Similarly, let $I$ be the ideal $I_{2,n}$ or $I^{\rm ex}$ and $C$ the maximal cone in $\GF(I)$ whose initial ideal contains all the exchange monomials.
Then the Stanley--Reisner ideal $I_{\Delta(A)}$ coincides with the initial ideal~$\init_C(I)$.
\end{Corollary}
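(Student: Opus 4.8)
The plan is to realize both sides as Stanley--Reisner ideals of simplicial complexes and to check that those complexes agree by comparing their minimal non-faces. First I would record that $\init_C(I)$ is a squarefree monomial ideal generated by degree-two monomials in the \emph{mutable} cluster variables. For $I=I_{2,n}$ this is \thref{lem: mono ideal}: $\init_C(I_{2,n})=\mathcal M_{2,n}$ is generated by the crossing monomials $p_{ik}p_{jl}$, and crossing arcs are never boundary edges, so these are products of mutable Pl\"ucker coordinates. For $I=I^{\rm ex}$ this is \thref{thm:Gr36}$(i)$: a minimal generating set consists of the $52$ exchange monomials together with $p_{135}p_{246}$ and $XY$, and the two variables exchanged at a mutation, as well as $p_{135},p_{246},X,Y$, are all mutable. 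Thus in both cases $\init_C(I)=\langle xx'\colon \{x,x'\}\in N\rangle$ for a set $N$ of unordered pairs of mutable cluster variables; viewing $I_{\Delta(A)}$ as an ideal of the ambient polynomial ring of $I$ via the inclusion of the mutable variables, it suffices to prove that $N$ is exactly the set of minimal non-faces of the cluster complex $\Delta(A)$.

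Next I would show that $\Delta(A)$ is the flag complex on the mutable cluster variables whose non-edge set is $N$. Every single cluster variable lies in some seed, hence is a face, so every pair in $N$ (an exchange pair, or $\{X,Y\}$, or $\{p_{135},p_{246}\}$, resp.\ a crossing pair of diagonals) is a non-face whose proper subsets are all faces, i.e.\ a minimal non-face. Conversely, suppose $S$ were a minimal non-face with $|S|\ge 3$; then every pair inside $S$ is an edge, so $S$ contains no pair of $N$, and hence $\prod_{x\in S}x$ is a cluster monomial --- for $A_{3,6}$ this is immediate from the characterization \eqref{eq:basis A36univ} since $N=M_{3,6}$, and for $A_{2,n}$ it holds because a collection of pairwise non-crossing diagonals always completes to a triangulation, exactly the argument used in the proof of \thref{cluster vs SM basis}. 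Therefore $S$ would be a face, a contradiction. So the minimal non-faces of $\Delta(A)$ are precisely the pairs in $N$, giving $I_{\Delta(A)}=\langle xx'\colon\{x,x'\}\in N\rangle=\init_C(I)$.

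Since every nontrivial ingredient --- the explicit description of $\init_C(I)$ and the combinatorial description of cluster monomials --- has already been established earlier, I do not expect a genuine obstacle here; the only points needing care are bookkeeping. One must check that no minimal generator of $\init_C(I)$ involves a frozen variable, so that the comparison of $\init_C(I)$ with $I_{\Delta(A)}$ takes place legitimately in a single polynomial ring and $\init_C(I)$ really is an extended Stanley--Reisner ideal; one must invoke the elementary fact that a flag complex is determined by its edge set; and, in the $\mathtt D_4$ case, one must note that the flag property of the cluster complex is precisely what the cluster-monomial criterion \eqref{eq:basis A36univ} encodes (so that no minimal non-face of $\Delta(A_{3,6})$ has more than two vertices).
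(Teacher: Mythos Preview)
Your proof is correct and follows essentially the same line as the paper's. The paper's own argument is a two-line observation: $\init_C(I)$ is squarefree and generated by the set $M$ (namely $M_{2,n}$ or $M_{3,6}$), and $M$ is precisely the set of minimal non-faces of $\Delta(A)$. You have unpacked that second assertion in detail, checking that the generators involve only mutable variables, that every pair in $N$ is an incompatible pair, and that nothing larger can be a minimal non-face.

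One remark on economy: in the paper, the cluster complex is \emph{defined} so that a subset is a face if and only if it consists of pairwise compatible elements. With that definition the complex is a flag complex by fiat, and the minimal non-faces are automatically the incompatible pairs; your contradiction argument for $|S|\ge 3$ is therefore not needed to rule out larger minimal non-faces. What remains is exactly to identify the incompatible pairs with $N$, and for this your references to \thref{lem: mono ideal} and \eqref{eq:basis A36univ} (together with the crossing-arcs picture for $A_{2,n}$) are the right ingredients. Your attention to the frozen variables and to matching the ambient polynomial rings is a useful piece of bookkeeping that the paper leaves implicit.
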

\begin{proof}
The initial ideal $\init_C(I)$ is squarefree and generated by the monomials in the set $M$, which is respectively, $M_{2,n}$ or $M_{3,6}$.
Observe that $M$ defines the minimal non-faces of $\Delta(A)$.
Hence, $\Delta(A)$ is the Stanley--Reisner complex of $\init_C(I)$.
\end{proof}

\subsection{Newton--Okounkov bodies and mutations}\label{sec:NO}

In this section, we explain how our results relate to Escobar--Harada's work on wall-crossing phenomena for Newton--Okounkov bodies in~\cite{EH-NObodies}.
Given a homogeneous ideal $J\subseteq \mathbb K[x_1,\dots,x_n]$ assume there exists a maximal cone $\sigma$ in $\trop(J)$ whose associated ideal is toric.
Then~\cite[Section~4]{KM16} provides a recipe to construct a full rank valuation $\val_\sigma\colon A\setminus \{0\}\to \mathbb Z^{d}$, where $A=\mathbb K[x_1,\dots,x_n]/J$ and $d$ is the dimension of the affine variety $V(J)$.
Furthermore,~\cite[Corollary~4.7]{KM16} shows how to compute the corresponding Newton--Okounkov body $\Delta(\sigma)$. Without loss of generality we may assume that the first entry of $\val_\sigma(f)$ equals the degree of $f$ (with respect to the standard grading). Then the Newton--Okounkov body of $\val_\sigma$ and $\mathbb K[x_1,\dots,x_n]/J$~is
\begin{displaymath}
\Delta(\sigma):={\rm Cone}(\val_\sigma(f)\colon \ f\in A\setminus \{0\}) \cap \{1\}\times \mathbb R^{d-1}.
\end{displaymath}
Escobar and Harada study the relation between the Newton--Okounkov bodies of two maximal prime cones intersecting in a facet.
They give two piecewise linear maps, called {\it flip} and {\it shift} which send one Newton--Okounkov body to another.

Here, we focus on the case of $\Gr(2,\mathbb C^n)$.
We fix a triangulation $T$ of the $n$-gon.
The output of Algorithm~\ref{alg:weight} can be used to define a total order on $\mathbb Z^{2n-3}$ as follows.

\begin{Definition}\thlabel{rmk:compatible order}
Let $V_0,\dots,V_i$ be the output of Algorithm~\ref{alg:weight} for some triangulation $T$.
To each~$V_j$ we associate a sequence $\overline{V}_j$ with the same elements as $V_j$.
Let $\overline{V}$ be the sequence $\big(\overline{V}_0,\dots,\overline{V}_i\big)$ and $\prec$ the associated lexicographic order on $\mathbb Z^{2n-3}$.
Call $\prec$ a {\it total order compatible with~$T$}.
\end{Definition}

Recall from Section~\ref{sec:monomial_degeneration} the standard monomial basis $\mathbb B_{<}$ consisting of non-crossing monomials
(i.e.,~every monomial in $\mathbb B_{<}$ corresponds to a collection of non-crossing (boundary and internal) arcs, where arcs may appear with multiplicity greater than 1).
Then the map
\begin{equation}\label{eq:g-map}
{\bf g}^{\widehat{T}}\colon\quad \mathbb B_{<} \to \mathbb Z^{2n-3} \qquad \text{given by} \quad
\bar{\bf p}^a \mapsto \sum_{\overline{ij}\in T} a_{ij}{\bf g}_{ij}^{\widehat{T}}.
\end{equation}
extends to a full rank valuation on $A_{2,n}\setminus \{0\}$ as follows.
Fix a total order $\prec$ compatible with $T$.
Every $0\neq f\in A_{2,n}$ is a unique linear combination of elements in $\mathbb B_{<}$, that is $f=\sum_{\bar{\bf p}^a\in \mathbb B_{<}}c_a\bar {\bf p}^a$. We define the valuation of $f$ as
${\bf g}^{\widehat{T}}(f):= {\rm min}_{\prec} \big\{{\bf g}^{\widehat{T}}(\bar{\bf p}^a)\colon c_a\not =0\big\}$.
Denote the associated graded algebra by ${\rm gr}_{T}(A_{2,n})$ and the corresponding Newton--Okounkov body by $\Delta(T)$.
By~\cite[Corollary~2]{B-quasival} and~\thref{prop:wt map} we have ${\rm gr}_T(A_{2,n})\cong S/\init_T(I_{2,n})$ and $\Delta(T)={\rm conv}\big({\bf g}^{\widehat{T}}_{ij}\big)_{ij}$.

\begin{Remark}\thlabel{rem:gv val}
The ${\bf g}$-vectors and the corresponding valuation make sense in greater generality. For example, the case of arbitrary Grassmannians is treated in~\cite{BCMN}, where the authors (among other things) explain how the associated Newton--Okounkov bodies arise in the context of~\cite{GHKK}.
Even more generally, the algebra $A_{2,n}$ can be replaced by the {\it middle cluster algebra} in the sense of~\cite{GHKK}, the standard monomial basis by the corresponding {\it theta basis}, and the total order $\prec$ by a lexicographic refinement of the {\it dominance order}.
Similarly, Fujita and Oya in~\cite[Section~3]{FO20} define a valuation on the ambient field of a cluster algebra (whose exchange matrix has full rank). Their valuation recovers Fomin--Zelevinsky's ${\bf g}$-vectors for cluster monomials.
\end{Remark}

Interpreting ${\bf g}$-vectors as a valuation reveals the necessity for working with \emph{extended} ${\bf g}$-vectors as opposed to their \emph{truncated} counterparts that are popular in algebraic or representation theoretic applications of cluster algebras.
The following example shows that truncated ${\bf g}$-vectors do not have the desired properties for applications.

\begin{Example}\thlabel{exp:need ext gv}
Consider for $\Gr(2,6)$ the triangulation $T$ consisting of arcs $\overline{24}$, $\overline{25}$, $\overline{26}$.
Order the cluster variables corresponding to $T$ in a compatible way using the output of Algorithm~\ref{alg:weight}, e.g.,~$p_{16}$, $p_{23}$, $\underline{p_{24}}$, $p_{56}$, $p_{12}$, $\underline{p_{25}}$, $\underline{p_{26}}$, $p_{34}$, $p_{45}$.
Now compute the ${\bf g}$-vectors with respect to $T$ and consider the following elements in $A_{2,n}$ endowed with their ${\bf g}$-vectors:
\begin{displaymath}
\begin{matrix}
(1,0,\underline{0},0,0,\underline{1},\underline{0},0,0) &=& (1,0,\underline{0},0,0,\underline{1},\underline{0},0,0) &\prec&
(0,0,\underline{0},1,1,\underline{0},\underline{0},0,0),\\
 \bar p_{15}\bar p_{26} & = & \bar p_{16}\bar p_{25} & + & \bar p_{12}\bar p_{56},
&&&& \\[1ex]
(0,1,\underline{0},0,0,\underline{0},\underline{0},0,1) & = &
(0,1,\underline{0},0,0,\underline{0},\underline{0},0,1) &\prec& (0,0,\underline{0},0,0,\underline{1},\underline{0},1,0),\\
\bar p_{24}\bar p_{35} & = & \bar p_{23}\bar p_{45} & + & \bar p_{25}\bar p_{34}. 
\end{matrix}
\end{displaymath}
The total order $\prec$ is compatible with $T$.
The truncated ${\bf g}$-vectors are the underlined parts of the ${\bf g}$-vectors above. So if we decided to only consider those we would need to find an order that satisfies $(0,0,0)\succ (0,1,0)$ and $(0,1,0)\succ (0,0,0)$, a contradiction.
\end{Example}

Next we describe how the Newton--Okounkov body $\Delta(T)$ behaves under changes of the triangulation.
As all triangulations can be obtained from one another by a sequence of flips of arcs, we focus on performing a single such flip.
Let $T_1$ and $T_2$ be two triangulations such that there exist $a<b<c<d$ in $[n]$ with $\overline{ac}\in T_1, \overline{bd}\in T_2$ and $T_1\setminus \{\overline{ac}\}=T_2\setminus\{\overline{bd}\}$.
In other words, $T_2$ is obtained from $T_1$ by flipping the arc $\overline{ac}$.
We denote by $\mathbb R^{T_1}$ the vector space $\mathbb R^{2n-3}$ with standard basis $\{f_{ij}\colon \overline{ij}\in T_1\}$ and similarly, $\mathbb R^{T_2}$ for $\mathbb R^{2n-3}$ with basis $\{f'_{ij}\colon \overline{ij}\in T_2\}$.

The theory of cluster varieties gives us two distinguished maps from $\mathbb R^{2n-3} $ to itself. The first map is a piecewise linear shear that can be obtained by the {\it Fock--Goncharov tropicalization of a cluster mutation} (see~\cite[Definition~1.22]{GHKK}):\vspace{-.5ex}
\begin{align*}
 \zeta_{ac}\colon\ \mathbb R^{T_1} &\to \mathbb R^{T_1},
 \\
 m=\sum_{\overline{ij}\in T_1} m_{ij}f_{ij} &\mapsto
 \begin{cases}
 m & \text{if}\quad m_{ac}\le 0,\\
 m+m_{ac}v_{ac} & \text{if}\quad m_{ac}>0,
 \end{cases}
\end{align*}
where $v_{ac}=-f_{ab}-f_{cd}+f_{ad}+f_{bc}$.
The second map is a ${\rm GL}_{2n-3}(\mathbb Z)$-base change from $\{f_{ij}\colon \overline{ij}$ $\in T_1\}$ to $\{f'_{ij}\colon \overline{ij}\in T_2\}$ corresponding to a seed mutation (see~\cite[equation~(8)]{FG_ensem}) denoted by $\mu^{T_1}_{T_2}\colon \mathbb R^{T_1} \to \mathbb R^{T_2}$.
It is given by $f'_{ij}=f_{ij}$ for $\overline{ij}\in T_1\cap T_2$ and $f'_{bd}=-f_{ac}+f_{ab}+f_{cd}$.

The following Lemma~follows by combining the results in~\cite[Sections~1.3 and~5]{GHKK}.
For the convenience of the reader, we give a self-contained elementary proof below.
\begin{Lemma}\thlabel{lem:mutate NObodies}
Let $T_1$ and $T_2$ be two triangulations related by a flip as above.
Then $\Delta(T_2) = \mu^{T_1}_{T_2} \circ \zeta_{ac} (\Delta(T_1))$. More precisely, for every $\bar{\bf p}^a\in \mathbb B_{<}$ we have
${\bf g}^{\widehat{T_2}}(\bar {\bf p}^a) = \mu^{T_1}_{T_2} \circ \zeta_{ac} \circ {\bf g}^{\widehat{T_1}}(\bar {\bf p}^a)$.
\end{Lemma}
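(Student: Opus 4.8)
The plan is to reduce the statement to a verification on each Pl\"ucker coordinate $\bar p_{ij}$ and then extend multiplicatively. First I would recall that by~\thref{cluster vs SM basis} every $\bar{\bf p}^a\in\mathbb B_{<}$ is a cluster monomial, so it is a product of Pl\"ucker coordinates $\bar p_{ij}$ all of whose arcs $\overline{ij}$ can be completed to a single triangulation; by~\thref{def:g} the ${\bf g}$-vector of $\bar{\bf p}^a$ with respect to $\widehat{T_1}$ (resp. $\widehat{T_2}$) is the sum of the ${\bf g}$-vectors of its factors. Both $\zeta_{ac}$ and $\mu^{T_1}_{T_2}$ are piecewise linear, but I would first check that on the cone $\operatorname{Cone}({\bf g}^{\widehat{T_1}}_{ij}\colon\overline{ij})$ spanned by the ${\bf g}$-vectors of any one cluster the composite $\mu^{T_1}_{T_2}\circ\zeta_{ac}$ is \emph{linear} (the sign of the $\overline{ac}$-coordinate is constant on such a cone because the ${\bf g}$-vectors of a cluster lie in a single chamber of the ${\bf g}$-fan, i.e.\ of $\trop^+(I_{2,n})$ by~\thref{thm:trop+}). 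Granting this, additivity over the factors of the cluster monomial is automatic, and it suffices to prove the identity ${\bf g}^{\widehat{T_2}}_{ij}=\mu^{T_1}_{T_2}\circ\zeta_{ac}({\bf g}^{\widehat{T_1}}_{ij})$ for every arc $\overline{ij}$.

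For a single arc $\overline{ij}$ I would argue combinatorially using the extended dual trees $\widehat D_{T_1}$ and $\widehat D_{T_2}$. The flip of $\overline{ac}$ changes $\widehat D_{T_1}$ only locally: the two interior vertices adjacent to the edge $e_{\overline{ac}}$ get reconnected, so the only path-turning data $\sigma^{pq}_{i\rightsquigarrow j}$ that can change are those at the edges of the quadrilateral $\overline{abcd}$ and at $e_{\overline{ac}}$ itself. I would split into cases according to whether the path $i\rightsquigarrow j$ in $\widehat D_{T_1}$ uses the edge $e_{\overline{ac}}$: if it does not, then $({\bf g}^{\widehat{T_1}}_{ij})_{ac}=0$, $\zeta_{ac}$ acts as the identity, the path is unchanged by the flip except possibly a relabelling, and one checks directly that $\mu^{T_1}_{T_2}$ matches ${\bf g}^{\widehat{T_2}}_{ij}$; if it does use $e_{\overline{ac}}$, I would use the explicit ${\bf g}$-vector formulas from the proof of~\thref{lem:g-v on relations} (which already express ${\bf g}_{ab},{\bf g}_{bc},{\bf g}_{cd},{\bf g}_{ad},{\bf g}_{ac}$ in the same local notation) to compute both sides and check $({\bf g}^{\widehat{T_1}}_{ij})_{ac}>0$ exactly in the cases where the shear term $m_{ac}v_{ac}$ with $v_{ac}=-f_{ab}-f_{cd}+f_{ad}+f_{bc}$ is needed to convert the $\widehat{D}_{T_1}$-path into the $\widehat{D}_{T_2}$-path.

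A cleaner alternative — and the route I would actually prefer to take for the write-up — is to bypass the tree bookkeeping entirely: by~\thref{lem:g are gv} the ${\bf g}$-vector ${\bf g}^{\widehat T}_{ij}$ equals the cluster-theoretic ${\bf g}$-vector ${\bf g}(\bar p_{ij})$ of~\thref{def:g-vectors}, and the transformation of ${\bf g}$-vectors under a single mutation is exactly the Fock--Goncharov tropicalized mutation $\zeta_{ac}$ followed by the lattice base change $\mu^{T_1}_{T_2}$ — this is precisely the content of~\cite[Sections~1.3 and~5]{GHKK} (cited in the statement of the Lemma). So I would invoke that, note the identification of coordinates is the one used in~\cite[equation~(8)]{FG\_ensem} and~\cite[Definition~1.22]{GHKK}, and conclude ${\bf g}^{\widehat{T_2}}(\bar{\bf p}^a)=\mu^{T_1}_{T_2}\circ\zeta_{ac}\circ{\bf g}^{\widehat{T_1}}(\bar{\bf p}^a)$ on all cluster monomials, hence on the basis $\mathbb B_{<}$; the statement $\Delta(T_2)=\mu^{T_1}_{T_2}\circ\zeta_{ac}(\Delta(T_1))$ then follows since $\Delta(T)=\operatorname{conv}({\bf g}^{\widehat T}_{ij})_{ij}$ and $\mu^{T_1}_{T_2}\circ\zeta_{ac}$ is linear on the relevant cone.

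The main obstacle is the bookkeeping in the tree-combinatorial approach: keeping the sign conventions $\sigma^{pq}_{i\rightsquigarrow j}$ straight through the flip and matching them term-by-term with the shear vector $v_{ac}$ is error-prone, especially in the case where a "doubled leaf" $i'$ sits between the relevant vertices (the same complication that appears in the inductive step of~\thref{prop:wt map}). For a self-contained elementary proof I would isolate the local computation in the quadrilateral into a single lemma-sized case analysis, reusing the notation and the already-computed ${\bf g}$-vectors from the proof of~\thref{lem:g-v on relations}, so that only the five arcs of the quadrilateral and $e_{\overline{ac}}$ need to be tracked and everything outside the quadrilateral is manifestly fixed by both $\zeta_{ac}$ and $\mu^{T_1}_{T_2}$.
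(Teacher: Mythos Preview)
Your proposal is correct and mirrors the paper's own proof: both reduce to Pl\"ucker coordinates via additivity of ${\bf g}$-vectors, then carry out a local case analysis on the $\overline{ac}$-coefficient (the paper splits into the three cases $({\bf g}^{\widehat{T_1}}_{ij})_{ac}\in\{1,-1,0\}$ and writes out the case $+1$ explicitly, which is exactly your ``path uses $e_{\overline{ac}}$'' computation). The paper likewise acknowledges the GHKK route you prefer, and your explicit justification of linearity of $\mu^{T_1}_{T_2}\circ\zeta_{ac}$ on a single cluster cone is a point the paper leaves implicit.
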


\begin{proof}
By definition of ${\bf g}^{\widehat{T_p}}$ for $p=1,2$ from~\eqref{eq:g-map} it is enough to show the second claim for the Pl\"ucker coordinates.
As $\Delta(T_p)$ is the convex hull of ${\bf g}^{\widehat{T_p}}_{ij}$ for $1\le i<j\le n$, this implies the first claim.

Given $1\le i<j\le n$, by~\thref{def:g}
the coefficient of $f_{ac}$ in ${\bf g}^{\widehat{T_1}}(\bar p_{ij})={\bf g}^{\widehat{T_1}}_{ij}$ can be $1$, $-1$ or~$0$.
We go through the details of the case when it is $1$, the other two cases are similar.
Recall the sign conventions for the ${\bf g}$-vectors from Figure~\ref{fig:sign g-v}.
Take
${\bf g}^{\widehat{T}_1}_{ij}= f_{ac}+\sum_{\overline{kl}\in T_1\setminus \{\overline{ac}\} } \sigma_{i \rightsquigarrow j}^{\overline{kl}} f_{kl}$.
Recall that $T_1\setminus \{\overline{ac}\}= T_2\setminus \big\{\overline{bd}\big\}$ and $f_{kl}=f'_{kl}$ for $\overline{kl}\in T_1 \setminus \{ \overline{ac} \}$. Now we compute $\mu^{T_1}_{T_2} \circ \zeta_{ac} \circ {\bf g}^{\widehat{T_1}}(p_{ij})$ step-by-step as follows:
\begin{align*}
{\bf g}^{\widehat{T}_1}(p_{ij}) = f_{ac} + \sum_{\overline{kl}\in T_1 \setminus \{ \overline{ac} \} } \sigma_{i \rightsquigarrow j}^{\overline{kl}} f_{kl} &\xmapsto{\zeta_{ac}} f_{ac} + v_{ac} + \sum_{\overline{kl}\in T_1 \setminus \{ \overline{ac} \} } \sigma_{i \rightsquigarrow j}^{\overline{kl}} f_{kl} \\
&\xmapsto{\mu^{T_1}_{T_2}} -f'_{bd}+f'_{ad}+f'_{bc}+\displaystyle{\sum_{\overline{kl}\in T_2\setminus \{ \overline{bd} \} }} \sigma_{i \rightsquigarrow j}^{\overline{kl}} f'_{kl} = {\bf g}^{\widehat{T}_2}(p_{ij}).
\end{align*}
Observe that the sign $\sigma_{i \rightsquigarrow j}^{\overline{kl}}$ depends on the triangulation, but the above equation takes into account the change of signs that might happen when we pass from $T_1$ to $T_2$.
\end{proof}

\begin{figure}[ht]
 \centering
\begin{center}
 \begin{tikzpicture}[scale=.35]
 \draw (-2,1) -- (-1,0) -- (-2,-1);
 \draw (-1,0) -- (0,0) -- (0,1);
 \draw (0,0) -- (1,0);
 \draw (2,1) -- (1,0) -- (2,-1);

 \node[left] at (-2,1) {1};
 \node[left] at (-2,-1) {3};
 \node[above] at (0,1) {5};
 \node[right] at (2,1) {2};
 \node[right] at (2,-1) {4};
 \node at (0,-1) {$\Upsilon_1$};

 \begin{scope}[xshift=10cm]
 \draw (-2,1) -- (-1,0) -- (-2,-1);
 \draw (-1,0) -- (0,0) -- (1.5,0);
 \draw (1,1) -- (0,0) -- (1,-1);

 \node[left] at (-2,1) {1};
 \node[left] at (-2,-1) {3};
 \node[right] at (1.5,0) {2};
 \node[right] at (1,1) {5};
 \node[right] at (1,-1) {4};
 \node at (-.25,-1) {$\Upsilon$};
 \end{scope}

 \begin{scope}[xshift=20cm]
 \draw (-2,1) -- (-1,0) -- (-2,-1);
 \draw (-1,0) -- (0,0) -- (0,1);
 \draw (0,0) -- (1,0);
 \draw (2,1) -- (1,0) -- (2,-1);

 \node[left] at (-2,1) {1};
 \node[left] at (-2,-1) {3};
 \node[above] at (0,1) {4};
 \node[right] at (2,1) {2};
 \node[right] at (2,-1) {5};
 \node at (0,-1) {$\Upsilon_2$};
 \end{scope}

 \begin{scope}[xshift=30cm]
 \node at (-5,0) {$\longleftrightarrow$};
 \draw (-2,1) -- (-1,0) -- (-2,-1);
 \draw (-1,0) -- (0,0) -- (0,-1);
 \draw (0,0) -- (1,0);
 \draw (2,1) -- (1,0) -- (2,-1);

 \node[left] at (-2,1) {1};
 \node[left] at (-2,-1) {3};
 \node[below] at (0,-1) {4};
 \node[right] at (2,1) {5};
 \node[right] at (2,-1) {2};
 \node at (0,1) {$\Upsilon_2'$};
 \end{scope}
 \end{tikzpicture}
\end{center}\vspace{-2ex}
 \caption{Trivalent trees for $\Gr(2,\mathbb C^5)$ corresponding to maximal cones in $\trop(I_{2,n})$ that intersect in a facet ($\Upsilon_1$ and $\Upsilon_2$) and the tree corresponding to the intersection is $\Upsilon$, see~\thref{exp:NOGr25}.}
 \label{fig:trees25}
\end{figure}
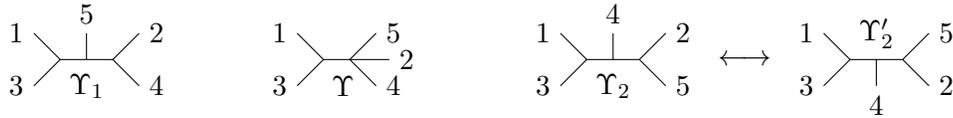

Note that although in Section~\ref{sec:monomial_degeneration} we are only considering a fixed maximal Gr\"obner cone $C$, all other maximal cones can be obtained from $C$ through a symmetric group action (see~\cite[Second Proof of $\supseteq$ Theorem~4.3.5]{M-S}).
Moreover,~\cite[Lemma~5.13]{EH-NObodies} shows that any two maximal cones in~$\trop(I_{2,n})$ that intersect in a facet are faces of one maximal Gr\"obner cone (up to symmetry).

\begin{Corollary}\thlabel{cor:mutate NObodies}
Let $\sigma_1,\sigma_2\in\trop(I_{2,n})$ be two maximal prime cones that intersect in a facet. Then $\Delta(\sigma_1)$ and $\Delta(\sigma_2)$ are 
related by the shear map $\zeta_{ac}$ for appropriate $a,c\in [n]$.
\end{Corollary}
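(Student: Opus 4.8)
The corollary is essentially a repackaging of \thref{lem:mutate NObodies} together with the reduction recorded in the remark preceding it, so the plan is to carry out that reduction carefully and to translate the hypothesis that $\sigma_1$ and $\sigma_2$ meet in a facet into the flip hypothesis of \thref{lem:mutate NObodies}.

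First I would reduce to the planar case. By the remark preceding the corollary --- combining \cite[Lemma~5.13]{EH-NObodies} with the symmetric group symmetry of \cite[Second Proof of $\supseteq$ Theorem~4.3.5]{M-S} --- there is a permutation $w \in S_n$ such that $w \cdot \sigma_1$ and $w \cdot \sigma_2$ are both faces of the distinguished maximal cone $C$ of Section~\ref{sec:monomial_degeneration}. The action of $S_n$ on $\mathbb R^{\binom{n}{2}}$ by permuting Pl\"ucker coordinates descends to a lattice automorphism of the ${\bf g}$-vector lattice $\mathbb Z^{2n-3}$; it carries $\Delta(\sigma_i)$ to $\Delta(w \cdot \sigma_i)$ up to unimodular equivalence, conjugates the shear $\zeta_{ac}$ to $\zeta_{w(a)w(c)}$, and hence preserves the statement. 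So it suffices to treat the case $\sigma_1, \sigma_2 \subseteq C$, working throughout up to unimodular equivalence as in the corresponding statement of the introduction.

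Next, by \thref{thm:trop+} the $(d+1)$-dimensional faces of $C$ are exactly the maximal cones of $\trop^+(I_{2,n})$, which by \cite[Section~5]{SW05} correspond to planar trivalent trees, equivalently, upon dualizing, to triangulations $T_1, T_2$ of the $n$-gon. Since $\overline{C}$ is simplicial with faces indexed by collections of arcs (\thref{cor:C simplicial}), two such maximal cones meeting in a facet differ by exactly one ray; equivalently, $T_1$ and $T_2$ agree outside a single arc, so there are $a<b<c<d$ in $[n]$ with $\overline{ac} \in T_1$, $\overline{bd} \in T_2$ and $T_1 \setminus \{\overline{ac}\} = T_2 \setminus \{\overline{bd}\}$ --- precisely the flip hypothesis of \thref{lem:mutate NObodies}. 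Using that $\Delta(\sigma_i)$ coincides, up to unimodular equivalence, with $\Delta(T_i) = \conv\big({\bf g}^{\widehat{T_i}}_{ij}\big)_{ij}$ (as explained before the corollary, via \cite[Corollary~2]{B-quasival} and \thref{prop:wt map}), \thref{lem:mutate NObodies} gives $\Delta(T_2) = \mu^{T_1}_{T_2} \circ \zeta_{ac}(\Delta(T_1))$. Since the seed-mutation base change $\mu^{T_1}_{T_2}$ belongs to $\mathrm{GL}_{2n-3}(\mathbb Z)$, it is unimodular, and therefore $\Delta(\sigma_2)$ is the image of $\Delta(\sigma_1)$ under the shear $\zeta_{ac}$ up to unimodular equivalence, as claimed.

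The main point requiring care --- more a matter of bookkeeping than a genuine obstacle, since \thref{lem:mutate NObodies} does the substantive work --- is to check that a single permutation can be chosen carrying both $\sigma_1$ and $\sigma_2$ into $C$ simultaneously, and that it is compatible with the identifications $\Delta(\sigma) \cong \Delta(T)$ and with the family of shear maps. A secondary point to spell out is the equivalence of ``the cones intersect in a facet'' and ``the triangulations differ by a flip'', which follows from the simplicial description of $\overline{C}$ in \thref{cor:C simplicial}.
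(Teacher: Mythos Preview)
Your proposal is correct and follows essentially the same route as the paper: reduce via \cite[Lemma~5.13]{EH-NObodies} and the $S_n$-symmetry so that both cones lie in the distinguished maximal cone $C$, identify the resulting faces with triangulations $T_1,T_2$, translate ``intersect in a facet'' into ``differ by a flip'', and apply \thref{lem:mutate NObodies}. Your version is even a bit more explicit than the paper's about why the facet condition gives a flip (via the simpliciality of $\overline C$ in \thref{cor:C simplicial}) and about why the residual $\mu^{T_1}_{T_2}$ contributes only a unimodular equivalence; the one phrase to tighten is that the $S_n$-action does not literally ``descend to a lattice automorphism of $\mathbb Z^{2n-3}$''---rather it induces isomorphisms of the toric fibers and hence unimodular equivalences $\Delta(\sigma_i)\cong\Delta(T_i)$, exactly as the paper phrases it.
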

\begin{proof}
The cones $\sigma_1$ and $\sigma_2$ are in correspondence with trivalent trees $\Upsilon_{1}$, $\Upsilon_{2}$ by~\thref{thm:SS}.
By~\cite[Lemma~5.13]{EH-NObodies} we can assume that $\sigma_1$ and $\sigma_2$ are faces of the same maximal Gr\"obner cone~$C$.
Hence, we may assume that the leaves of $\Upsilon_{1}$ and $\Upsilon_{2}$ are labeled in the same order.
The symmetric group action on $\trop(I_{2,n})$
translates to permuting the leaf labeling of trivalent trees.
So we can simultaneously move the cones $\sigma_1$ and $\sigma_2$ to faces of $C$.
Say these faces correspond to triangulation $T_1$ and $T_2$ of the $n$-gon, respectively.
One consequence of the symmetric group action is that $V(\init_{\sigma_i}(I))\cong V(\init_{T_i}(I))$ for $i=1,2$.
In particular, these isomorphisms induce unimodular equivalences
$\Delta(\sigma_i)\cong \Delta(T_i)$ for $i=1,2$.
The assumption that $\sigma_1$ and $\sigma_2$ intersect in a facet, directly implies that $T_1$ and $T_2$ differ by flipping one arc.
So there exist $a<b<c<d$ in $[n]$ with ${\overline{ac}}\in T_1$, $\overline{bd}\in T_2$ and $T_1\setminus \{\overline{ac}\}=T_2\setminus\{\overline{bd}\}$.
Now applying~\thref{lem:mutate NObodies} completes the proof.
\end{proof}

\begin{Example}\thlabel{exp:NOGr25}
We illustrate the proof of~\thref{cor:mutate NObodies} for $\Gr(2,\mathbb C^5)$.
Let $\Upsilon_1$ and $\Upsilon_2$ be the trees in Figure~\ref{fig:trees25}.
They correspond to cones $\sigma_1$ and $\sigma_2$ sharing a facet corresponding to the tree~$\Upsilon$ from Figure~\ref{fig:trees25}.
First, we apply~\cite[Lemma~5.13]{EH-NObodies} and replace $\Upsilon_2$ by a tree $\Upsilon_2'$ that gives the same initial ideal as $\Upsilon_2$, and its leaves are labeled in the same order as $\Upsilon_1$.
This implies that~$\sigma_1$ and $\sigma_2$ are indeed faces of the same maximal Gr\"obner cone (corresponding to the clockwise labeling of the leaves: $15243$).
Then we apply the symmetric group element $s=(235)$ to the leaves of $\Upsilon_1$ and $\Upsilon_2'$ to obtain the following trees which are both dual to triangulations of the $5$-gon.
Moreover, their triangulations are related by exchanging the diagonal $\overline{24}$ for $\overline{13}$ (this is also called a {\it flip}):
\begin{center}
 \begin{tikzpicture}[scale=.4]
 \draw[thick, caribbeangreen] (-2,1) -- (-1,0) -- (-2,-1);
 \draw[thick, caribbeangreen] (-1,0) -- (0,0) -- (0,1.5);
 \draw[thick, caribbeangreen] (0,0) -- (1,0);
 \draw[thick, caribbeangreen] (2,1) -- (1,0) -- (2,-1);

 \draw (-1,1) -- (1,1) -- (2,0) -- (0,-1.5) -- (-2,0) -- (-1,1);
 \draw (-1,1) -- (0,-1.5) -- (1,1);
 \node[above] at (-1,1) {1};
 \node[above] at (1,1) {2};
 \node[right] at (2,0) {3};
 \node[below] at (0,-1.5) {4};
 \node[left] at (-2,0) {5};

 \node[above, caribbeangreen] at (-2,1) {1};
 \node[below, caribbeangreen] at (-2,-1) {5};
 \node[above, caribbeangreen] at (0,1.5) {2};
 \node[above, caribbeangreen] at (2,1) {3};
 \node[below, caribbeangreen] at (2,-1) {4};
 \node[left, caribbeangreen] at (-4,0) {$s(\Upsilon_1)$};

 \begin{scope}[xshift=9cm]
 \draw[thick, caribbeangreen] (-2,1) -- (-1,0) -- (-2,-1);
 \draw[thick, caribbeangreen] (-1,0) -- (0,0) -- (0,-1.5);
 \draw[thick, caribbeangreen] (0,0) -- (1,0);
 \draw[thick, caribbeangreen] (2,1) -- (1,0) -- (2,-1);

 \draw (-2,0) -- (-1,-1) -- (1,-1) -- (2,0) -- (0,1.5) -- (-2,0);
 \draw (-1,-1) -- (0,1.5) -- (1,-1);

 \node[above, caribbeangreen] at (-2,1) {1};
 \node[below, caribbeangreen] at (-2,-1) {5};
 \node[below, caribbeangreen] at (0,-1.5) {4};
 \node[above, caribbeangreen] at (2,1) {2};
 \node[below, caribbeangreen] at (2,-1) {3};
 \node[right, caribbeangreen] at (4,0) {$s(\Upsilon_2')$};

 \node[below] at (-1,-1) {4};
 \node[below] at (1,-1) {3};
 \node[right] at (2,0) {2};
 \node[above] at (0,1.5) {1};
 \node[left] at (-2,0) {5};
 \end{scope}
 \end{tikzpicture}
 \end{center}
The symmetric group element $s$ induces an automorphism $s\colon S\!\to \!S$ given by $s(p_{ij})\!=p_{s^{-1}(i)s^{-1}(j)}$.
It is straight-forward to verify that $s(\init_{s(\Upsilon)}(I))=\init_{\Upsilon}(I)$ for $\Upsilon\in \{\Upsilon_1,\Upsilon_2'\}$.
\end{Example}

Consider $\sigma_1$ and $\sigma_2$ as in~\thref{cor:mutate NObodies} and assume they lie in the same maximal Gr\"obner cone~$C$.
Then the standard monomial basis of $C$ induces a bijective map between the value semigroups $\operatorname{im}(\val_{\sigma_1})$ and $\operatorname{im}(\val_{\sigma_2})$ (see~\cite[Section~4.2]{EH-NObodies}, where this is called an {\it algebraic wall-crossing}).
As seen in~\thref{lem:mutate NObodies} the map $\mu^{T_1}_{T_2}\circ \zeta_{ac}$ extends to a map between the value semigroups $\operatorname{im}\big({\bf g}^{\widehat{T_1}}\big)$ and $\operatorname{im}\big({\bf g}^{\widehat{T_2}}\big)$.
In~\cite[Theorem~5.15]{EH-NObodies} the authors show that their piecewise linear flip map induces the algebraic wall-crossing for $\Gr(2,\mathbb C^n)$.
Hence,~\thref{cor:mutate NObodies} implies that the flip map for $\Gr(2,\mathbb C^n)$ is of cluster nature in the sense that (up to unimodular equivalence) it is given by the Fock--Goncharov tropicalization of a cluster mutation.

\begin{Remark}
Cluster mutations are a very special class of automorphisms of a complex alge\-braic torus preserving its canonical volume form.
Automorphisms preserving this form have various names in the literature such as {\it Laurent polynomial mutations}~\cite{Ilten_mutation}, elements of the {\it special Cremona group}~\cite{Usn13} or {\it combinatorial mutations}~\cite{ACGK12}.
Ilten in~\cite[Appendix]{EH-NObodies} related the wall-crossing formulas to the theory of polyhedral divisors for complexity-one $T$-varieties~\cite{AH06} and outlined how this relates to combinatorial mutations in the sense of~\cite{ACGK12}.
\end{Remark}

\appendix
\section{Computational data}\label{sec:data Gr36}
Here, we present data on the ideal $I^{\rm ex}$.
Recall that the weighted homogeneous coordinate ring of $\Gr\big(3,\mathbb C^6\big)$ with respect to this embedding is $A_{3,6}\cong \mathbb C[p_{123},\dots,p_{456},X,Y]/I^{\rm ex}$.
We frequently identify the variables $p_{123},\dots,p_{456},X,Y$ with their cosets $\bar p_{123},\dots,\bar p_{456},\bar X,\bar Y$ in $A_{3,6}$, as well as elements of the ideal $I^{\rm ex}$ with the corresponding relations in $A_{3,6}$; that is, we identify, for example
\begin{displaymath}
p_{145}p_{236} - \textcolor{blue}{p_{123}}\textcolor{blue}{p_{456}} - X \in I^{\rm ex} \qquad \text{and} \qquad \bar p_{145}\bar p_{236} = \textcolor{blue}{\bar p_{123}}\textcolor{blue}{\bar p_{456}} + \bar X \ \text{ in } A_{3,6}.
\end{displaymath}

\medskip
\noindent{\bf A minimal generating set for $\boldsymbol{I^{\rm ex}}$.}
We now list a minimal generating set for $I^{\rm ex}$
consisting of elements of the reduced Gr\"obner basis $\mathcal G_C(I^{\rm ex})$. Note that except the last polynomial $f$, these are the exchange relations of $A_{3,6}$ with the first monomial being the exchange monomial.
\begin{gather*}
 p_{145}p_{236} - \textcolor{blue}{p_{123}}\textcolor{blue}{p_{456}} - X,\qquad\qquad\ \
 p_{124}p_{356} - \textcolor{blue}{p_{123}}\textcolor{blue}{p_{456}} - Y,
\\
 p_{136}p_{245} - \textcolor{blue}{p_{126}}\textcolor{blue}{p_{345}} - X,\qquad\qquad\ \
 p_{125}p_{346} - \textcolor{blue}{p_{126}}\textcolor{blue}{p_{345}} - Y,
\\
 p_{146}p_{235} - \textcolor{blue}{p_{156}}\textcolor{blue}{p_{234}} - X,\qquad\qquad\ \
 p_{134}p_{256} - \textcolor{blue}{p_{156}}\textcolor{blue}{p_{234}} - Y,
\\
 p_{246}p_{356} - p_{346}p_{256} - p_{236}\textcolor{blue}{p_{456}},\qquad
 p_{245}p_{356} - \textcolor{blue}{p_{345}}p_{256} - p_{235}\textcolor{blue}{p_{456}},
\\
 p_{146}p_{356} - p_{346}\textcolor{blue}{p_{156}} - p_{136}\textcolor{blue}{p_{456}},\qquad
 p_{145}p_{356} - \textcolor{blue}{p_{345}}\textcolor{blue}{p_{156}} - p_{135}\textcolor{blue}{p_{456}},
\\
 p_{245}p_{346} - \textcolor{blue}{p_{345}}p_{246} - \textcolor{blue}{p_{234}}\textcolor{blue}{p_{456}},\qquad
 p_{235}p_{346} - \textcolor{blue}{p_{345}}p_{236} - \textcolor{blue}{p_{234}}p_{356},
\\
 p_{145}p_{346} - \textcolor{blue}{p_{345}}p_{146} - p_{134}\textcolor{blue}{p_{456}},\qquad
 p_{135}p_{346} - \textcolor{blue}{p_{345}}p_{136} - p_{134}p_{356},
\\
 p_{146}p_{256} - p_{246}\textcolor{blue}{p_{156}} - \textcolor{blue}{p_{126}}\textcolor{blue}{p_{456}},\qquad
 p_{145}p_{256} - p_{245}\textcolor{blue}{p_{156}} - p_{125}\textcolor{blue}{p_{456}},
\\
 p_{136}p_{256} - p_{236}\textcolor{blue}{p_{156}} - \textcolor{blue}{p_{126}}p_{356},\qquad
 p_{135}p_{256} - p_{235}\textcolor{blue}{p_{156}} - p_{125}p_{356},
\\
 p_{235}p_{246} - p_{245}p_{236} - \textcolor{blue}{p_{234}}p_{256},\qquad
 p_{145}p_{246} - p_{245}p_{146} - p_{124}\textcolor{blue}{p_{456}},
\\
 p_{136}p_{246} - p_{236}p_{146} - \textcolor{blue}{p_{126}}p_{346},\qquad
 p_{134}p_{246} - \textcolor{blue}{p_{234}}p_{146} - p_{124}p_{346},
\\
 p_{125}p_{246} - p_{245}\textcolor{blue}{p_{126}} - p_{124}p_{256},\qquad
 p_{134}p_{245} - \textcolor{blue}{p_{234}}p_{145} - p_{124}\textcolor{blue}{p_{345}},
\\
 p_{135}p_{245} - p_{235}p_{145} - p_{125}\textcolor{blue}{p_{345}},\qquad
 p_{135}p_{236} - p_{235}p_{136} - \textcolor{blue}{p_{123}}p_{356},
\\
 p_{134}p_{236} - \textcolor{blue}{p_{234}}p_{136} - \textcolor{blue}{p_{123}}p_{346},\qquad
 p_{125}p_{236} - p_{235}\textcolor{blue}{p_{126}} - \textcolor{blue}{p_{123}}p_{256},
\\
 p_{124}p_{236} - \textcolor{blue}{p_{234}}\textcolor{blue}{p_{126}} - \textcolor{blue}{p_{123}}p_{246},\qquad
 p_{134}p_{235} - \textcolor{blue}{p_{234}}p_{135} - \textcolor{blue}{p_{123}}\textcolor{blue}{p_{345}},
\\
 p_{124}p_{235} - \textcolor{blue}{p_{234}}p_{125} - \textcolor{blue}{p_{123}}p_{245},\qquad
 p_{135}p_{146} - p_{145}p_{136} - p_{134}\textcolor{blue}{p_{156}},
\\
 p_{125}p_{146} - p_{145}\textcolor{blue}{p_{126}} - p_{124}\textcolor{blue}{p_{156}},\qquad
 p_{125}p_{136} - p_{135}\textcolor{blue}{p_{126}} - \textcolor{blue}{p_{123}}\textcolor{blue}{p_{156}},
\\
 p_{124}p_{136} - p_{134}\textcolor{blue}{p_{126}} - \textcolor{blue}{p_{123}}p_{146},\qquad
 p_{124}p_{135} - p_{134}p_{125} - \textcolor{blue}{p_{123}}p_{145},
\\
f=p_{135}p_{246} - \textcolor{blue}{p_{156}}\textcolor{blue}{p_{234}} - Y - \textcolor{blue}{p_{123}}\textcolor{blue}{p_{456}} - X - \textcolor{blue}{p_{126}}\textcolor{blue}{p_{345}}.
\end{gather*}

\looseness=-1 \noindent{\bf The reduced Gr\"obner basis $\boldsymbol{\mathcal G_C(I^{\rm ex})}$.} Let $C$ be the maximal cone in the Gr\"obner fan of~$I^{\rm ex}$ whose rays are given in~\eqref{eq:rays of C}.
Recall from~\thref{thm:Gr36}$(i)$ that the associated monomial initial ideal of $C$ is generated by the exchange monomials, together with the monomials $p_{135}p_{246}$ and $XY$.
To obtain the reduced Gr\"obner basis $\mathcal G_C(I^{\rm ex})$ we need to add the missing exchange relations:
\begin{align*}
 p_{235}Y - p_{125}\textcolor{blue}{p_{234}}p_{356} - \textcolor{blue}{p_{123}}p_{256}\textcolor{blue}{p_{345}}, \qquad
 p_{134}X - p_{136}p_{145}\textcolor{blue}{p_{234}} - \textcolor{blue}{p_{123}}p_{146}\textcolor{blue}{p_{345}},
 \\
 p_{146}Y - p_{124}\textcolor{blue}{p_{156}}p_{346} - \textcolor{blue}{p_{126}}p_{134}\textcolor{blue}{p_{456}}, \qquad
 p_{256}X - \textcolor{blue}{p_{156}}p_{236}p_{245} - \textcolor{blue}{p_{126}}p_{235}\textcolor{blue}{p_{456}},
 \\
 p_{136}Y - \textcolor{blue}{p_{123}}\textcolor{blue}{p_{156}}p_{346} - \textcolor{blue}{p_{126}}p_{134}p_{356}, \qquad
 p_{346}X - p_{136}\textcolor{blue}{p_{234}}\textcolor{blue}{p_{456}} - p_{146}p_{236}\textcolor{blue}{p_{345}},
 \\
 p_{245}Y - p_{125}\textcolor{blue}{p_{234}}\textcolor{blue}{p_{456}} - p_{124}p_{256}\textcolor{blue}{p_{345}}, \qquad
 p_{125}X - \textcolor{blue}{p_{123}}\textcolor{blue}{p_{156}}p_{245} - \textcolor{blue}{p_{126}}p_{145}p_{235},
 \\
 p_{145}Y - p_{125}p_{134}\textcolor{blue}{p_{456}} - p_{124}\textcolor{blue}{p_{156}}\textcolor{blue}{p_{345}}, \qquad
 p_{124}X - \textcolor{blue}{p_{126}}p_{145}\textcolor{blue}{p_{234}} - \textcolor{blue}{p_{123}}p_{146}p_{245},
 \\
 p_{236}Y - \textcolor{blue}{p_{126}}\textcolor{blue}{p_{234}}p_{356} - \textcolor{blue}{p_{123}}p_{256}p_{346}, \qquad
 p_{356}X - p_{136}p_{235}\textcolor{blue}{p_{456}} - \textcolor{blue}{p_{156}}p_{236}\textcolor{blue}{p_{345}},
 \\
p_{135}Y - p_{125}p_{134}p_{356} - \textcolor{blue}{p_{123}}\textcolor{blue}{p_{156}}\textcolor{blue}{p_{345}}, \qquad
p_{135}X - p_{136}p_{145}p_{235} - \textcolor{blue}{p_{123}}\textcolor{blue}{p_{156}}\textcolor{blue}{p_{345}},
\\
 p_{246}Y - p_{124}p_{256}p_{346} - \textcolor{blue}{p_{126}}\textcolor{blue}{p_{234}}\textcolor{blue}{p_{456}}, \qquad
 p_{246}X - p_{146}p_{236}p_{245} - \textcolor{blue}{p_{126}}\textcolor{blue}{p_{234}}\textcolor{blue}{p_{456}}.
\end{align*}
Further, we need to add the following additional element to the generating list above (the first monomial is its leading monomial in $\init_C(I^{\rm ex})$):
\begin{gather*}
g=XY - \textcolor{blue}{p_{123}}\textcolor{blue}{p_{156}}p_{246}\textcolor{blue}{p_{345}} - \textcolor{blue}{p_{126}}p_{135}\textcolor{blue}{p_{234}}\textcolor{blue}{p_{456}} - \textcolor{blue}{p_{126}}\textcolor{blue}{p_{156}}\textcolor{blue}{p_{234}}\textcolor{blue}{p_{345}} - \textcolor{blue}{p_{123}}\textcolor{blue}{p_{156}}\textcolor{blue}{p_{234}}\textcolor{blue}{p_{456}}
\\ \hphantom{g=}
{}- \textcolor{blue}{p_{123}}\textcolor{blue}{p_{126}}\textcolor{blue}{p_{345}}\textcolor{blue}{p_{456}}.
\end{gather*}
Here, we list the identities used in the proof of~\thref{thm:Gr36}:
\begin{gather}
p_{245}f = p_{246}(p_{135}p_{245} - p_{235}{p_{145}} - p_{125}\textcolor{blue}{p_{345}})
+ p_{145}(p_{235}p_{246} - p_{245}p_{236} - \textcolor{blue}{p_{234}}{p_{256}}) \nonumber
\\ \hphantom{p_{245}f =}
{} + p_{245}(p_{145}p_{236} - \textcolor{blue}{p_{123}p_{456}} - X)
+ \textcolor{blue}{p_{234}}(p_{145}p_{256} - p_{245}\textcolor{blue}{p_{156}} - \nonumber p_{125}\textcolor{blue}{p_{456}})
\\ \hphantom{p_{245}f =}
{} - p_{125}(p_{245}p_{346} - {p_{345}}{p_{246}} - \textcolor{blue}{p_{234}p_{456}})
 + p_{245}(p_{125}p_{346} - \textcolor{blue}{p_{126}}\textcolor{blue}{p_{345}} - Y).\label{eq:f}
\\[1ex]
g = p_{256}(p_{134}X - p_{136}p_{145}\textcolor{blue}{p_{234}} - \textcolor{blue}{p_{123}}p_{146}\textcolor{blue}{p_{345}})
 - X(p_{134}p_{256} - \textcolor{blue}{p_{156}}\textcolor{blue}{p_{234}} - Y) \nonumber
\\ \hphantom{g =}
{} - p_{145}p_{234}(p_{136}p_{256} - p_{236}\textcolor{blue}{p_{156}} - \textcolor{blue}{p_{126}}p_{356})
 - p_{156}p_{234}(p_{145}p_{236} - \textcolor{blue}{p_{123}}\textcolor{blue}{p_{456}} - X) \nonumber
 \\ \hphantom{g =}
{}- p_{126}p_{234}(p_{145}p_{356} - \textcolor{blue}{p_{345}}\textcolor{blue}{p_{156}} - p_{135}\textcolor{blue}{p_{456}})\nonumber
\\ \hphantom{g =}
{} + p_{123}p_{345}(p_{146}p_{256} - p_{246}\textcolor{blue}{p_{156}} - \textcolor{blue}{p_{126}}\textcolor{blue}{p_{456}}).\label{eq:g}
\end{gather}

\noindent{\bf The exchange relations of $\boldsymbol{A_{3,6}^{\rm univ}}$} (or equivalently the lifts of the elements of $\mathcal G_{C}(I^{\rm ex})$) are:
\begin{gather*}
p_{245}p_{356} - t_{3}t_{12}t_{16}\textcolor{blue}{p_{345}}p_{256}-t_{5}t_{6}t_{9}p_{235}\textcolor{blue}{p_{456}},
\\[.2ex]
p_{146}p_{356}-t_{3}t_{7}t_{16}p_{346}\textcolor{blue}{p_{156}}-t_{5}t_{9}t_{13}p_{136} \textcolor{blue}{p_{456}},
\\[.2ex]
p_{145}p_{256}-t_{4}t_{7}t_{14}p_{245}\textcolor{blue}{p_{156}}-t_{8}t_{9}t_{11}p_{125}\textcolor{blue}{p_{456}},\\[.2ex]
p_{136}p_{256}-t_{6}t_{7}t_{14}p_{236}\textcolor{blue}{p_{156}}-t_{1}t_{8}t_{11}\textcolor{blue}{p_{126}}p_{356},\\[.2ex]
p_{134}p_{245}-t_{2}t_{5}t_{6}\textcolor{blue}{p_{234}}p_{145}-t_{8}t_{12}t_{16}p_{124}\textcolor{blue}{p_{345}},\\[.2ex]
p_{134}p_{236}-t_{2}t_{4}t_{5}\textcolor{blue}{p_{234}}p_{136}-t_{8}t_{15}t_{16}\textcolor{blue}{p_{123}}p_{346},\\[.2ex]
p_{125}p_{146}-t_{1}t_{5}t_{13}p_{145}\textcolor{blue}{p_{126}}-t_{7}t_{10}t_{16}p_{124}\textcolor{blue}{p_{156}},\\[.2ex]
p_{124}p_{136}-t_{1}t_{3}t_{11}p_{134}\textcolor{blue}{p_{126}}-t_{6}t_{14}t_{15}\textcolor{blue}{p_{123}}p_{146},\\[.2ex]
p_{125}p_{236}-t_{1}t_{4}t_{5}p_{235}\textcolor{blue}{p_{126}}-t_{10}t_{15}t_{16}\textcolor{blue}{p_{123}}p_{256},\\[.2ex]
p_{124}p_{235}-t_{2}t_{3}t_{11}\textcolor{blue}{p_{234}}p_{125}-t_{13}t_{14}t_{15}\textcolor{blue}{p_{123}}p_{245},\\[.2ex]
p_{235}p_{346}-t_{12}t_{13}t_{14}\textcolor{blue}{p_{345}}p_{236}-t_{2}t_{10}t_{11}\textcolor{blue}{p_{234}}p_{356},\\[.2ex]
p_{145}p_{346}-t_{4}t_{12}t_{14}\textcolor{blue}{p_{345}}p_{146}-t_{9}t_{10}t_{11}p_{134}\textcolor{blue}{p_{456}},\\[.2ex]
p_{146}p_{256}-t_{7}p_{246}\textcolor{blue}{p_{156}}-t_{1}t_{5}t_{8}t_{9}t_{11}t_{13}\textcolor{blue}{p_{126}}\textcolor{blue}{p_{456}},\\[.2ex]
p_{124}p_{236}-t_{15}\textcolor{blue}{p_{123}}p_{246}-t_{1}t_{2}t_{3}t_{4}t_{5}t_{11}\textcolor{blue}{p_{234}}\textcolor{blue}{p_{126}},\\[.2ex]
p_{125}p_{136}-t_{1}p_{135}\textcolor{blue}{p_{126}}-t_{6}t_{7}t_{10}t_{14}t_{15}t_{16}\textcolor{blue}{p_{123}}\textcolor{blue}{p_{156}},
\\[.2ex]
p_{145}p_{236}-{}_{\phantom{1}}t_{4}X-t_{8}t_{9}t_{10}t_{11}t_{15}t_{16}\textcolor{blue}{p_{123}}\textcolor{blue}{p_{456}},\\[.2ex]
p_{136}p_{245}-{}_{\phantom{1}} t_{6}X - t_{1}t_{3}t_{8}t_{11}t_{12}t_{16}\textcolor{blue}{p_{126}}\textcolor{blue}{p_{345}},\\[.2ex]
p_{146}p_{235} - t_{13}X - t_{2}t_{3}t_{7}t_{10}t_{11}t_{16}\textcolor{blue}{p_{156}p_{234}},
\\[.2ex]
p_{135}p_{256}-t_{8}t_{11}p_{125}p_{356}-t_{4}t_{5}t_{6}t_{7}t_{14}p_{235}\textcolor{blue}{p_{156}},\\[.2ex]
p_{134}p_{246}-t_{8}t_{16}p_{124}p_{346}-t_{2}t_{4}t_{5}t_{6}t_{14}\textcolor{blue}{p_{234}}p_{146},\\[.2ex]
p_{246}p_{356}-t_{3}t_{16}p_{346}p_{256}-t_{5}t_{6}t_{9}t_{13}t_{14}p_{236}\textcolor{blue}{p_{456}},\\[.2ex]
p_{135}p_{245}-t_{5}t_{6}p_{235}p_{145}-t_{3}t_{8}t_{11}t_{12}t_{16}p_{125}\textcolor{blue}{p_{345}},\\[.2ex]
p_{136}p_{246}-t_{6}t_{14}p_{236}p_{146}-t_{1}t_{3}t_{8}t_{11}t_{16}\textcolor{blue}{p_{126}}p_{346},\\[.2ex]
p_{145}p_{246}-t_{4}t_{14}p_{245}p_{146}-t_{8}t_{9}t_{10}t_{11}t_{16}p_{124}\textcolor{blue}{p_{456}},\\[.2ex]
p_{125}p_{246}-t_{10}t_{16}p_{124}p_{256}-t_{1}t_{4}t_{5}t_{13}t_{14}p_{245}\textcolor{blue}{p_{126}},\\[.2ex]
p_{135}p_{236}-t_{4}t_{5}p_{235}p_{136}-t_{8}t_{10}t_{11}t_{15}t_{16}\textcolor{blue}{p_{123}}p_{356},\\[.2ex]
p_{135}p_{146}-t_{5}t_{13}p_{145}p_{136}-t_{3}t_{7}t_{10}t_{11}t_{16}p_{134}\textcolor{blue}{p_{156}},\\[.2ex]
p_{124}p_{135}-t_{3}t_{11}p_{134}p_{125}-t_{5}t_{6}t_{13}t_{14}t_{15}\textcolor{blue}{p_{123}}p_{145},\\[.2ex]
p_{135}p_{346}-t_{10}t_{11}p_{134}p_{356}-t_{4}t_{5}t_{12}t_{13}t_{14}\textcolor{blue}{p_{345}}p_{136},\\[.2ex]
p_{235}p_{246}-t_{13}t_{14}p_{245}p_{236}-t_{2}t_{3}t_{10}t_{11}t_{16}\textcolor{blue}{p_{234}}p_{256},\\[.2ex]
p_{245}p_{346}-t_{12}\textcolor{blue}{p_{345}}p_{246}-t_{2}t_{5}t_{6}t_{9}t_{10}t_{11}\textcolor{blue}{p_{234}}\textcolor{blue}{p_{456}},\\[.2ex]
p_{145}p_{356}-t_{9}p_{135}\textcolor{blue}{p_{456}}-t_{3}t_{4}t_{7}t_{12}t_{14}t_{16}\textcolor{blue}{p_{345}}\textcolor{blue}{p_{156}},\\[.2ex]
p_{134}p_{235}-t_{2}\textcolor{blue}{p_{234}}p_{135}-t_{8}t_{12}t_{13}t_{14}t_{15}t_{16}\textcolor{blue}{p_{123}}\textcolor{blue}{p_{345}},\\[.2ex]
p_{124}p_{356} -{}_{\phantom{1}} t_{3}Y - t_{5}t_{6}t_{9}t_{13}t_{14}t_{15}\textcolor{blue}{p_{123}}\textcolor{blue}{p_{456}},\\[.2ex]
p_{134}p_{256} - {}_{\phantom{1}}t_{8}Y - t_{2}t_{4}t_{5}t_{6}t_{7}t_{14}\textcolor{blue}{p_{156}p_{234}}, \\[.2ex]
p_{346}p_{125} - t_{10}Y - t_{1}t_{4}t_{5}t_{12}t_{13}t_{14}\textcolor{blue}{p_{126}}\textcolor{blue}{p_{345}},
\\[.2ex]
p_{125}X-t_{1}t_{5}\textcolor{blue}{p_{126}}p_{145}p_{235}-t_{7}t_{10}t_{14}t_{15}t_{16} \textcolor{blue}{p_{123}}\textcolor{blue}{p_{156}}p_{245},\\[.2ex]
p_{145}Y-t_{9}t_{11}p_{125}p_{134}\textcolor{blue}{p_{456}}-t_{4}t_{7}t_{12}t_{14}t_{16}p_{124}\textcolor{blue}{p_{156}}\textcolor{blue}{p_{345}},\\[.2ex]
p_{124}X-t_{14}t_{15}\textcolor{blue}{p_{123}}p_{146}p_{245}-t_{1}t_{2}t_{3}t_{5}t_{11}\textcolor{blue}{p_{126}}p_{145}\textcolor{blue}{p_{234}},\\[.2ex]
p_{236}Y-t_{15}t_{16}\textcolor{blue}{p_{123}}p_{256}p_{346}-t_{1}t_{2}t_{4}t_{5}t_{11}\textcolor{blue}{p_{126}}\textcolor{blue}{p_{234}}p_{356},\\[.2ex]
p_{134}X-t_{2}t_{5}p_{136}p_{145}\textcolor{blue}{p_{234}}-t_{8}t_{12}t_{14}t_{15}t_{16}\textcolor{blue}{p_{123}}p_{146}\textcolor{blue}{p_{345}},\\[.2ex]
p_{235}Y-t_{2}t_{11}p_{125}\textcolor{blue}{p_{234}}p_{356}-t_{12}t_{13}t_{14}t_{15}t_{16}\textcolor{blue}{p_{123}}p_{256}\textcolor{blue}{p_{345}},\\[.2ex]
p_{246}X-t_{14}p_{146}p_{236}p_{245}-t_{1}t_{2}t_{3}t_{5}t_{8}t_{9}t_{10}t_{11}^2t_{16}\textcolor{blue}{p_{126}}\textcolor{blue}{p_{234}}\textcolor{blue}{p_{456}},\\[.2ex]
p_{246}Y-t_{16}p_{124}p_{256}p_{346}-t_{1}t_{2}t_{4}t_{5}^2t_{6}t_{9}t_{11}t_{13}t_{14}\textcolor{blue}{p_{126}}\textcolor{blue}{p_{234}}\textcolor{blue}{p_{456}},\\[.2ex]
p_{256}X-t_{7}t_{14}\textcolor{blue}{p_{156}}p_{236}p_{245}-t_{1}t_{5}t_{8}t_{9}t_{11}\textcolor{blue}{p_{126}}p_{235}\textcolor{blue}{p_{456}},\\[.2ex]
p_{146}Y-t_{7}t_{16}p_{124}\textcolor{blue}{p_{156}}p_{346}-t_{1}t_{5}t_{9}t_{11}t_{13}\textcolor{blue}{p_{126}}p_{134}\textcolor{blue}{p_{456}},\\[.2ex]
p_{346}X-t_{12}t_{14}p_{146}p_{236}\textcolor{blue}{p_{345}}-t_{2}t_{5}t_{9}t_{10}t_{11}p_{136}\textcolor{blue}{p_{234}}\textcolor{blue}{p_{456}},\\[.2ex]
p_{136}Y-t_{1}t_{11}\textcolor{blue}{p_{126}}p_{134}p_{356}-t_{6}t_{7}t_{14}t_{15}t_{16}\textcolor{blue}{p_{123}}\textcolor{blue}{p_{156}}p_{346},\\[.2ex]
p_{356}X-t_{5}t_{9}p_{136}p_{235}\textcolor{blue}{p_{456}}-t_{3}t_{7}t_{12}t_{14}t_{16}\textcolor{blue}{p_{156}}p_{236}\textcolor{blue}{p_{345}},\\[.2ex]
p_{245}Y-t_{12}t_{16}p_{124}p_{256}\textcolor{blue}{p_{345}}-t_{2}t_{5}t_{6}t_{9}t_{11}p_{125}\textcolor{blue}{p_{234}}\textcolor{blue}{p_{456}},\\[.2ex]
p_{135}X-t_{5}p_{136}p_{145}p_{235}-t_{3}t_{7}t_{8}t_{10}t_{11}t_{12}t_{14}t_{15}t_{16}^2\textcolor{blue}{p_{123}}\textcolor{blue}{p_{156}}\textcolor{blue}{p_{345}},\\[.2ex]
p_{135}Y-t_{11}p_{125}p_{134}p_{356}-t_{4}t_{5}t_{6}t_{7}t_{12}t_{13}t_{14}^2t_{15}t_{16}\textcolor{blue}{p_{123}}\textcolor{blue}{p_{156}}\textcolor{blue}{p_{345}}.
\end{gather*}
The lifts of $f$, $g$ (the elements of $\mathcal G_C(I^{\rm ex})$ that do not correspond to exchange relations) are:
\begin{gather*}
\tilde{f}= p_{135}p_{246} - t_{2}t_{3}t_{4}t_{5}t_{6}t_{7}t_{10}t_{11}t_{14}t_{16}\textcolor{blue}{p_{156}}\textcolor{blue}{p_{234}} - t_{3}t_{8}t_{10}t_{11}t_{16}Y
\\ \hphantom{\tilde{f}=}
{}- t_{5}t_{6}t_{8}t_{9}t_{10}t_{11}t_{13}t_{14}t_{15}t_{16}\textcolor{blue}{p_{123}}\textcolor{blue}{p_{456}}
- t_{4}t_{5}t_{6}t_{13}t_{14}X - t_{1}t_{3}t_{4}t_{5}t_{8}t_{11}t_{12}t_{13}t_{14}t_{16}\textcolor{blue}{p_{126}}\textcolor{blue}{p_{345}},
\\
\tilde{g}= XY - t_{7}t_{12}t_{14}t_{15}t_{16}\textcolor{blue}{p_{123}}\textcolor{blue}{p_{156}} p_{246}\textcolor{blue}{p_{345}} - t_{1}t_{2}t_{5}t_{9}t_{11}\textcolor{blue}{p_{126}}p_{135}\textcolor{blue}{p_{234}}\textcolor{blue}{p_{456}} \\ \hphantom{\tilde{g}=}
{}- t_{1}t_{2}t_{3}t_{4}t_{5}t_{7}t_{11}t_{12}t_{14}t_{16}\textcolor{blue}{p_{126}} \textcolor{blue}{p_{156}}\textcolor{blue}{p_{234}}\textcolor{blue}{p_{345}}
- t_{2}t_{5}t_{6}t_{7}t_{9}t_{10}t_{11}t_{14}t_{15}t_{16}\textcolor{blue}{p_{123}} \textcolor{blue}{p_{156}}\textcolor{blue}{p_{234}}\textcolor{blue}{p_{456}}
\\ \hphantom{\tilde{g}=}
{}- t_{1}t_{5}t_{8}t_{9}t_{11}t_{12}t_{13}t_{14}t_{15}t_{16}\textcolor{blue}{p_{123}} \textcolor{blue}{p_{126}}\textcolor{blue}{p_{345}}\textcolor{blue}{p_{456}}.
\end{gather*}

\subsection*{Acknowledgements}
This work was partially supported by CONACyT grant CB2016 no. 284621.
L.B. was supported by ``Programa de Becas Posdoctorales en la UNAM 2018'' Instituto de Matem\'aticas, Universidad Nacional Aut\'onoma de M\'exico.
F.M. thanks the Instituto de Matem\'aticas, UNAM Unidad Oaxaca for their hospitality during this project and also acknowledges partial supports by the EPSRC Early Career Fellowship EP/R023379/1, the Starting Grant of Ghent University BOF/STA/201909/038, and the FWO grants (G023721N and G0F5921N).
L.B. would like to thank Kiumars Kaveh for providing an opportunity to present this work, and further Nathan Ilten for his insightful comments.
A special thanks goes to Christopher Manon who patiently explained his joint work with Kaveh to us. The connection between cluster mutation and Escobar--Harada's flip map was first observed in a discussion with Megumi Harada during the MFO-Workshop on Toric geometry in 2019, see~\cite{BHM}.
We would like to thank the organizers of the meeting and in particular Megumi Harada for explaining us the results of~\cite{EH-NObodies}.
We are grateful to Brenda Policarpo Sibaja and Nathan Ilten for pointing out misprints in an earlier version of this paper.

\pdfbookmark[1]{References}{ref}
\LastPageEnding

\end{document}